\documentclass[11pt]{amsart}

\usepackage{amssymb,verbatim}
\usepackage{amsmath,amsfonts}
\usepackage[mathscr]{euscript}
\usepackage{amsthm}
\usepackage{url}
\usepackage{graphicx}
\usepackage{enumitem}
\usepackage{hyperref}
\hypersetup{colorlinks}
\usepackage{bbm}
\usepackage{bm}
\usepackage{mathrsfs}
\usepackage{cancel}
\usepackage[colorinlistoftodos]{todonotes}

\RequirePackage{cleveref}
\usepackage{hypcap}
\hypersetup{colorlinks=true, citecolor=darkblue, linkcolor=darkblue}
\definecolor{darkblue}{rgb}{0.0,0,0.7}
\newcommand{\darkblue}{\color{darkblue}}

\definecolor{darkred}{rgb}{0.68,0,0}
\newcommand{\darkred}{\color{darkred}}

\definecolor{darkgreen}{rgb}{0,.38,0}

\definecolor{magenta}{rgb}{.51, 0, .51}
\newcommand{\magenta}{\color{magenta}}

\newcommand{\defn}[1]{\emph{\darkblue #1}}
\newcommand{\defna}[1]{\emph{\darkred #1}}
\newcommand{\defnb}[1]{\emph{\darkblue #1}}
\newcommand{\defnm}[1]{\emph{\magenta #1}}
\newcommand{\thmb}[1]{\textnormal{\darkblue #1}}
\newcommand{\defng}[1]{\emph{#1}}

\setlist[enumerate]{
	label=\textnormal{({\roman*})},
	ref={\roman*}}

\makeatletter
\def\th@plain{%
	\thm@notefont{}% same as heading font
	\itshape % body font
}
\def\th@definition{%
	\thm@notefont{}% same as heading font
	\normalfont % body font
}
\makeatother

\newtheorem{thm}{Theorem}[section]
\newtheorem{lemma}[thm]{Lemma}

\newtheorem*{claim*}{Claim}
\newtheorem{cor}[thm]{Corollary}
\newtheorem{prop}[thm]{Proposition}
\newtheorem{conj}[thm]{Conjecture}
\newtheorem{conv}[thm]{Convention}

\theoremstyle{definition}
\newtheorem{ex}[thm]{Example}
\newtheorem{rem}[thm]{Remark}
\newtheorem{definition}[thm]{Definition}

\numberwithin{figure}{section}
\numberwithin{equation}{section}

\def\wh{\widehat}

\def\emp{\nothing}

\def\zz{\mathbb Z}

\def\nn{\mathbb N}
\def\cc{\mathbb C}

\def\rr{\mathbb R}

\def\la{\lambda}

\def\de{\delta}

\def\al{\alpha}
\def\be{\beta}

\def\cC{\mathcal C}
\def\cB{{\mathcal{B}}}

\def\cA{\mathcal A}

\def\cD{\mathcal D}
\def\cE{\mathscr E}
\def\cF{\mathscr F}

\def\cL{\mathcal L}

\def\cO{\mathcal O}
\def\cp{\mathcal P}
\def\cP{\mathcal P}

\def\cQ{\mathcal Q}
\def\cR{\mathcal R}

\def\cU{\mathcal U}

\def\bP{\mathbf{P}}

\def\<{\langle}
\def\>{\rangle}

\def\rB{\text{{\rm B}}}

\def\GL{ {\text {\rm GL} } }

\def\Cat{ {\text {\rm Cat} } }
\def\IT{ {\text {\rm IT} } }
\def\EF{ {\text {\rm EF} } }
\def\Sh{ \small {\text {\rm $\mathbb{Y}$} } }

\def\0{{\mathbf 0}}

\def\nothing{\varnothing}

\def\.{\hskip.06cm}
\def\ts{\hskip.03cm}

\def\lra{\leftrightarrow}

\def\bx{\xx}

\def\by{\yy}
\def\yb{\yy}

\def\La{\Lambda}

\newcommand{\SSYT}{\operatorname{SSYT}}

\DeclareMathOperator{\aH}{\textnormal{H}}

\def\.{\hskip.06cm}
\def\ts{\hskip.03cm}

\def\nin{\noindent}

\newcommand{\textsu}[1]{\textup{\textsf{#1}}}

\newcommand{\ComCla}[1]{\textup{\textsu{#1}}}
\newcommand{\sharpP}{\ComCla{\#P}}
\newcommand{\SP}{\ComCla{\#P}}

\def\SP{\sharpP}

\DeclareMathOperator{\Pb}{\mathbb{P}}

\def\xx{\textbf{\textit{x}}}
\def\yy{\textbf{\textit{y}}}
\def\zb{\textbf{\textit{z}}}

\def\aai{\textbf{{\textit{a}}}}
\def\bbi{\textbf{{\textit{b}}}}

\DeclareMathOperator{\ab}{\mathbf{a}}
\DeclareMathOperator{\bb}{\mathbf{b}}

\DeclareMathOperator{\eb}{\mathbf{e}}

\DeclareMathOperator{\cX}{\mathcal{X}}
\DeclareMathOperator{\xb}{\xx}

\DeclareMathOperator{\aL}{\textnormal{$\mathcal{L}$}}

\DeclareMathOperator{\cM}{\mathscr{M}}
\DeclareMathOperator{\cN}{\mathscr{N}}
\DeclareMathOperator{\fa}{\textnormal{a}}

\DeclareMathOperator{\fb}{\textnormal{b}}

\DeclareMathOperator{\fc}{\textnormal{c}}

\DeclareMathOperator{\fd}{\textnormal{d}}

\DeclareMathOperator{\ff}{\textnormal{f}}

\DeclareMathOperator{\fg}{\textnormal{g}}
\DeclareMathOperator{\fm}{\textnormal{m}}
\DeclareMathOperator{\fh}{\textnormal{h}}
\DeclareMathOperator{\fG}{\textnormal{G}}

\DeclareMathOperator{\fs}{\textnormal{s}}
\DeclareMathOperator{\fS}{\textnormal{S}}

\DeclareMathOperator{\cV}{\mathcal{V}}
\DeclareMathOperator{\cW}{\mathcal{W}}
\DeclareMathOperator{\Imm}{\textnormal{Imm}}

\DeclareMathOperator{\sort}{\textnormal{sort}}
\DeclareMathOperator{\tG}{\widetilde{\textnormal{G}}}
\DeclareMathOperator{\TL}{\textnormal{TL}}

\newcommand{\blue}[1]{\textcolor{blue}{#1}}
\newcommand{\red}[1]{\textcolor{red}{#1}}
\title{Correlation inequalities for Schur positivity}
\date{\today}

\author{Swee Hong Chan}
\address[Swee Hong Chan]{Department of Mathematics, Rutgers University,  Piscataway, NJ 08854.}
\email{\texttt{sweehong.chan@rutgers.edu}}

\author{Hong Chen}
\address[Hong Chen]{Department of Mathematics, Rutgers University,  Piscataway, NJ 08854.}
\email{\texttt{hc813@scarletmail.rutgers.edu}}

\author[\ts Igor Pak]{Igor Pak}
\address[Igor Pak]{Department of Mathematics, UCLA,  Los Angeles, CA 90095.}
\email{\texttt{pak@math.ucla.edu}}

\author[\ts Daniel Soskin]{Daniel Soskin}
\address[Daniel Soskin]{Department of Mathematics, UCLA,  Los Angeles, CA 90095.}
\email{\texttt{dsoskin@math.ucla.edu}}

\begin{document}

\begin{abstract}
We generalize the Ahlswede--Daykin inequality (1978) to a Schur positive
\emph{ADS inequality}, which also contains the Lam--Postnikov--Pylyavskyy
inequality (2007) as a special case.  We then present a number of
further generalizations and applications.  Notably, we resolve Mihalcea's
conjecture on log-supermodularity of stable Grothendieck polynomials.
\end{abstract}

\maketitle
	
\vskip-.5cm

% \newpage

\section{Introduction}\label{s:intro}

\subsection{Foreword} \label{ss:intro-foreword}
The extensive use of correlation inequalities in combinatorics, probability and across the sciences
lets one forget the remarkable story behind these inequalities.  The first
instances of such correlation inequalities were introduced in the 1960s,
independently by Harris \cite{Har60} and Kleitman \cite{Kle66}, to solve
very specific problems in percolation theory and in extremal combinatorics,
respectively.

The original proofs by induction had somewhat obscured the real depth
of the \defng{Harris--Kleitman {\rm (HK)} inequality}.  What followed was a long series of
generalizations, all extending the inequalities and the inductive arguments
in the increasingly clever ways.  Of these, the
\defng{Fortuin--Kasteleyn--Ginibre {\rm (FKG)} inequality} \cite{FKG71},
the \defng{Holley inequality}  \cite{Hol74}, the \defng{Daykin inequality} \cite{Day77},
and the \defna{Ahlswede--Daykin {\rm (AD)} inequality} \cite{AD78}, are
probably the best known.  Further generalizations include $q$- and multivariate
correlation inequalities \cite{Bjo11,Chr09,CP-multi,LP07},
the \defng{localization inequalities}~by Lov\'asz--Saks \cite{LS06},
multiple events versions \cite{AK96,Gla-FKG,RS93},
and most recently the
decision tree versions \cite{Gla24,Kern20}.

Let us emphasize a key feature of AD type generalizations of the HK inequality:
the \defna{local-to-global principle}.  The goal is no longer to establish positive
correlations between specific probability measures, but between measures
which satisfy certain natural ``local'' assumptions.  These local properties hold
trivially for the probabilistic and counting applications in the Harris and Kleitman
setting.  The ``global'' conclusions have a similar structure as local assumptions,
the proofs become streamlined, and the results turn out to be much stronger
due to flexibility of local assumptions.

In this paper we introduce a local-to-global principle for correlation inequalities
involving symmetric functions, where the inequalities are stated in terms
of \defng{Schur positivity}.  We replace the \defng{log-supermodularity} \ts assumption
in the AD inequality, with the celebrated
\defng{Lam--Postnikov--Pylyavskyy inequality} \ts \eqref{eq:LPP}, where the
meet and join operations on subsets are defined as intersection and union of Young
diagrams.

Our main result is the \defna{ADS inequality} \eqref{eq:ADS},
a Schur positive generalization of the AD inequality, which similarly
has a number of important applications, see below.
Since ADS inequality contains the LPP inequality as a trivial special case,
our proofs are no longer inductive and involve large refinement into
what we call \defng{Schur orchestra inequalities} (Theorem~\ref{thm:orch}).
The latter are then proved
by a technical arguments involving \defng{Temperley--Lieb immanants}, a
technique introduced by Rhoades--Skandera \cite{RS05,RS06},
and further extended in \cite{LPP07,SS25} (see also \cite{AKOS25,NP25,NP26,PS26}).

We now give some probabilistic and algebraic combinatorics
background, before proceeding to state the main results.  The
interested reader familiar with the subject is encouraged to jump
ahead to~$\S$\ref{ss:intro-main}.

\smallskip

%%%%%%%%%%%%%%%%%%%%%%%%%%%%%%%%%%%%%%%%%%%%%%%%%%%%%%%%%%%%%%%%%%%%%%%%%%%%%%%%%%%
%%%%%%%%%%%%%%%%%%%%%%%%%%%%%%%%%%%%%%%%%%%%%%%%%%%%%%%%%%%%%%%%%%%%%%%%%%%%%%%%%%%
%%%%%%%%%%%%%%%%%%%%%%%%%%%%%%%%%%%%%%%%%%%%%%%%%%%%%%%%%%%%%%%%%%%%%%%%%%%%%%%%%%%
%%%%%%%%%%%%%%%%%%%%%%%%%%%%%%%%%%%%%%%%%%%%%%%%%%%%%%%%%%%%%%%%%%%%%%%%%%%%%%%%%%%
%%%%%%%%%%%%%%%%%%%%%%%%%%%%%%%%%%%%%%%%%%%%%%%%%%%%%%%%%%%%%%%%%%%%%%%%%%%%%%%%%%%
%%%%%%%%%%%%%%%%%%%%%%%%%%%%%%%%%%%%%%%%%%%%%%%%%%%%%%%%%%%%%%%%%%%%%%%%%%%%%%%%%%%
%%%%%%%%%%%%%%%%%%%%%%%%%%%%%%%%%%%%%%%%%%%%%%%%%%%%%%%%%%%%%%%%%%%%%%%%%%%%%%%%%%%

\subsection{Harris--Kleitman inequality}
\label{ss:intro-HK}
We start with Kleitman's enumerative version of the
\defn{HK inequality} \cite{Kle66}, which is easiest to state.
A collection \. $\cU\subset 2^{[n]}$ \.  of subsets of \ts $[n]=\{1,\ldots,n\}$ \ts
is called \defn{up-closed}, if \ts $B \in \cU$ \ts for all \ts $B \subseteq A$, \ts $A\in \cU$.
Then:
\begin{equation}\label{eq:Kleitman}\tag{HK}
|\cU| \cdot |\cV| \, \le \, |\cU\cap \cV| \ts \cdot \ts 2^n\ts,
\end{equation}
for every two up-closed collections \ts $\cU,\cV\subset 2^{[n]}$.  Stated in
probabilistic terms, this is equivalent to
\begin{equation}\label{eq:HK}
\bP[A\in \cU \cap \cV] \, \ge \, \bP[A\in \cU] \. \cdot \. \bP[A\in \cV]\ts,
\end{equation}
where the probability is over uniform random subsets of $\cU$.
For example, for collections of edges of a complete graph~$K_r\ts$,
the result implies:
\begin{equation}\label{eq:Kleitman-sub}
\bP[G~\text{is triangle-free and planar}] \, \ge \, \bP[G~\text{is triangle-free}] \. \cdot \. \bP[G~\text{is planar}]\ts,
\end{equation}
where the probability is over uniform subgraphs $G$ of~$K_r\ts$.

In this notation, Harris's version \cite{Har60}, can be viewed as a probabilistic
generalization of Kleitman's version \eqref{eq:Kleitman}, where the probability measure on
collections is a product measure on~$2^{[n]}$.  For example, this implies
that \eqref{eq:Kleitman-sub} holds for the probability \ts $\bP_p$ \ts
in the Erd\H{o}s--R\'enyi \ts $G(n,p)$ \ts random graph model.\footnote{To
quote Harris's own characterization of the HK inequality,
``its truth seems obvious, but I have not been able to find a shorter
proof [..] nor have I seen the result stated elsewhere'' \cite{Har60}.}

In its basic application, the HK inequality implies that the \defn{critical probability}
$$
p_c \, := \, \sup\big\{p : \bP_p(x\lra \infty)=0\big\}
$$
for the $p$-percolation on an infinite connected graph \ts $G=(V,E)$,
is independent on the vertex $x\in V$, see e.g.\ \cite{BR06,Gri99}.
Here by \ts $\bP_p(x\lra \infty)$ \ts we mean the probability that $x$ lies in the
infinite cluster (connected component).
Now, the idea is that for every two vertices \ts $x,y\in V$,
we have:
$$\aligned
\bP_p(x\lra \infty) \. &> \. 0 \ \ \Longleftrightarrow \ \ \bP_p(y\lra \infty) \. > \. 0 \quad \ \ \text{for all \, $p\in (0,1]$, \, since} \\
\bP_p(x\lra \infty)  \. & \ge \. \bP_p(x\lra y \lra \infty)  \. \ge_{\eqref{eq:HK}} \. \bP_p(x\lra y) \cdot \bP_p(y\lra \infty).
\endaligned
$$
For the case when $G = \zz^2$ is a square lattice, Harris used the HK~inequality
to prove that \ts $p_c\ge \frac12$ \ts \cite{Har60}.
Famously, Kesten~\cite{Kes80} established the equality \ts $p_c= \frac12$ \ts
twenty years later.

\smallskip

\subsection{Ahlswede--Daykin inequality}
\label{ss:intro-AD}
Skipping over many intermediate generalizations, we now state the
\defna{AD inequality}, also called the \defng{four functions theorem},
see e.g.\ \cite[$\S$6.1]{AS16}. This result is central to this paper,
both as a technical theorem and as an inspiration.

Fix integers \ts $\ell,w\ge 1$, and let \ts
$\cX=\cX^{(\ell,w)}$ \ts denote the set of integer sequences \.
$\lambda = (\lambda_1, \ldots,\lambda_\ell)\in \{0,1,\ldots,w\}^\ell$. 
For \ts $\lambda,\mu \in \cX$, define the \defnb{join} ($\vee$) and \defnb{meet} ($\wedge$) operations as follows:
\[
\lambda \vee \mu \ := \  \big(\max\{\lambda_1,\mu_1\}, \ldots,\max\{\lambda_\ell,\mu_\ell\}\big), \qquad   \lambda \wedge \mu \ := \  \big(\min\{\lambda_1,\mu_1\}, \ldots, \min\{\lambda_\ell,\mu_\ell\}\big).
\]
Under these operations, \ts $(\cX,\vee,\wedge)$ \ts forms a distributive lattice, cf.\ Remark~\ref{rem:dist}.

\smallskip

\begin{thm}[{\rm\defn{AD inequality{}\ts}~\cite{AD78}}{}]\label{thm:AD}
 Fix integers \ts $\ell,w\ge 1$, and let \ts  \ts $\cX=\cX^{(\ell,w)}$ \ts be as above.
 Let  \.
 $\fa,\fb,\fc,\fd: \cX \to \rr_{\geq 0}$ \ts be four functions satisfying
 \begin{equation}\label{eq:AD-cond}\tag{\defn{\em AD-cond}}
 	\fa({\lambda}) \. \fb({\mu}) \ \leq \  \fc({\lambda \vee \mu}) \. \fd({\lambda \wedge \mu}) \qquad \text{ for all \. $\lambda,\mu \in \cX$.}
 \end{equation}
Then:
\begin{equation}
	\label{eq:AD}\tag{\defn{\em AD}}
	\bigg( \sum_{\lambda \in \cX}  \fa (\lambda)\bigg)
	 \bigg( \sum_{\lambda \in \cX}  \fb (\lambda)\bigg)
	 \ \leq \
	 	\bigg( \sum_{\lambda \in \cX}  \fc (\lambda)\bigg)
	 \bigg( \sum_{\lambda \in \cX}  \fd (\lambda)\bigg).
\end{equation}
\end{thm}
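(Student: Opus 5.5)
We prove the AD inequality by induction on the number of coordinates $\ell$, following the classical argument. Everything reduces to the base case $\ell=1$, where $\cX=\{0,1,\ldots,w\}$ is a chain. For the inductive step, write $\cX^{(\ell,w)}=\cX^{(\ell-1,w)}\times\{0,\ldots,w\}$, so $\lambda=(\lambda',t)$. Join and meet act coordinatewise. For fixed $\lambda'\in\cX^{(\ell-1,w)}$, define
\[
\fa'(\lambda') := \sum_{t=0}^{w} \fa(\lambda',t),
\]
and define $\fb',\fc',\fd'$ in the same way. The identity $\sum_{\cX^{(\ell,w)}}\fa=\sum_{\cX^{(\ell-1,w)}}\fa'$ holds, and similarly for the other three functions. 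Hence it suffices to check \eqref{eq:AD-cond} for $\fa',\fb',\fc',\fd'$ on $\cX^{(\ell-1,w)}$.

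To check this condition, fix $\lambda',\mu'$ and consider the four one-variable functions
\[
t\mapsto \fa(\lambda',t),\qquad t\mapsto\fb(\mu',t),\qquad t\mapsto\fc(\lambda'\vee\mu',t),\qquad t\mapsto\fd(\lambda'\wedge\mu',t).
\]
By \eqref{eq:AD-cond} for the original functions, applied to $(\lambda',s)$ and $(\mu',t)$, these satisfy \eqref{eq:AD-cond} on the chain $\{0,\ldots,w\}$. The case $\ell=1$ then gives $\fa'(\lambda')\fb'(\mu')\le\fc'(\lambda'\vee\mu')\fd'(\lambda'\wedge\mu')$. Induction on $\ell$ now yields \eqref{eq:AD}.

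\textbf{The main obstacle is the chain case $\ell=1$.} Here the hypothesis reads $\fa(s)\fb(t)\le\fc(\max\{s,t\})\fd(\min\{s,t\})$ for all $s,t$. We could reduce further to $w=1$ by viewing the chain $\{0,\ldots,w\}$ as a sublattice of the Boolean lattice $2^{[w]}$, embedding $t\mapsto\{1,\ldots,t\}$ and extending all four functions by zero. This embedding preserves max and min, and off the image the left-hand side of the condition vanishes, so the condition still holds. Iterating the same product decomposition then reduces everything to the two-element lattice.

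For the two-element case, write $a_i=\fa(i)$ and similarly for $b_i,c_i,d_i$. The hypotheses are
\[
a_0b_0\le c_0d_0,\qquad a_1b_1\le c_1d_1,\qquad a_0b_1\le c_1d_0,\qquad a_1b_0\le c_1d_0,
\]
and we need $(a_0+a_1)(b_0+b_1)\le(c_0+c_1)(d_0+d_1)$. If $c_1d_0=0$, then $a_0b_1=a_1b_0=0$, and the claim follows directly from the first two inequalities. Otherwise, by the first two inequalities it suffices to show
\[
a_0b_1+a_1b_0\le c_1d_0+\frac{a_0b_0\,a_1b_1}{c_1d_0}.
\]
This follows from the AM--GM-type fact that $x+y\le u+xy/u$ whenever $x,y\le u$, since $(u-x)(u-y)\ge0$. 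We apply it with $x=a_0b_1$, $y=a_1b_0$, $u=c_1d_0$, using $xy=a_0b_0\,a_1b_1$.

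The care needed is in the bookkeeping for the zero cases and in verifying that each reduction preserves \eqref{eq:AD-cond}.
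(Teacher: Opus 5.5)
Your proof is correct. It is the classical Ahlswede--Daykin induction. You marginalize one coordinate and use the one-dimensional case to verify \eqref{eq:AD-cond} for the marginals. You then embed the chain into $2^{[w]}$ with zero extension, and settle the two-point lattice via $(u-x)(u-y)\ge 0$. The only terse step is the reduction in the two-point case. It silently uses $c_0d_1\cdot c_1d_0=(c_0d_0)(c_1d_1)\ge a_0b_0\,a_1b_1$ to bound $c_0d_1$ from below. With that spelled out, every step checks, including the zero case $c_1d_0=0$.

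The paper does not prove Theorem~\ref{thm:AD} directly; it cites \cite{AD78}. The only derivation it offers is in \S\ref{ss:recover}, where AD is recovered as a specialization of the skew ADS inequality. It first reduces to $\rB_\ell$ by Birkhoff, which is the same sublattice-plus-zero-extension trick you use for chains. It then encodes $E\subseteq[\ell]$ by a partition $\lambda(E)$ that differs from the staircase $\mu=(\ell-1,\ldots,1,0)$ by one box in each row $i\in E$. Under this encoding, union and intersection become $\vee$ and $\wedge$, and $\fs_{\lambda(E)/\mu}(1,0,0,\ldots)=1$. Finally it evaluates the Schur positive defect at $(1,0,0,\ldots)$.

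That route is not meant as an elementary proof. Skew ADS itself rests on the RLS inequality, which already contains the Boolean case of AD, together with the Temperley--Lieb immanant machinery. Its purpose is to show that the Schur positive statement genuinely contains AD. Your argument, by contrast, is short and self-contained. Its engine, however, is replacing $\fa$ by the scalar marginal $\fa'$ and reapplying the base case. That is precisely the induction the paper says it must abandon when passing to Schur positivity (see \S\ref{ss:under-subtle}), which is why it needs the Schur orchestra inequalities instead.
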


\smallskip

The assumption \eqref{eq:AD-cond} is a four function generalization of
\defn{log-supermodularity} \ts property
\begin{equation}\label{eq:lsm}
f(x) \. f(y) \, \le \, f(x \vee y) \. f(x\wedge y) \quad \text{for all} \quad x,y\in \cL,
\end{equation}
where $\cL$ is a distributive lattice.  See, e.g., \cite{Bjo11,SW75} for a discussion
of the significance of this property across combinatorics.

There are numerous combinatorial applications of the AD inequality, ranging
from extremal combinatorics to combinatorial number theory and order theory,
see e.g.\ monographs chapters \cite[$\S$6]{And87}, \cite[$\S$19]{Bol86},
and dedicated surveys \cite{FS00,Win86}.  Let us single out
the \defng{XYZ inequality} \ts for the numbers of linear extensions
\cite{She82} (see also \cite[$\S$6.4]{AS16} and \cite[Thm.~12.4.41]{West21}),
and other related correlation inequalities reviewed in \cite[$\S$14.4]{CP-LE}.

As we mentioned earlier, the FKG inequality is an important special
case used throughout percolation theory, see e.g.\ \cite{BR06,Gri99,Gri06,Wer09}
and~$\S$\ref{ss:finrem-more-corr}.
Let us emphasize that there are cases when the FKG is insufficient and
the full power of AD inequality is needed, see e.g.\ a discussion in
\cite{FS00}, and notable examples in \cite{FDS88,CP-multi}.
Finally, note that the HK inequality \eqref{eq:Kleitman} follows from \eqref{eq:AD}
by taking \ts $w=1$,
\ts $\ell=n$, \ts $\fc=\mathbf{1}$, and setting \ts $\fa$, $\fb$, $\fd$ \ts
to be indicator functions of \ts
$\cU$, \ts $\cV$ \ts and \ts $\cU\cap\cV$, respectively.

\smallskip

The following generalization of the \ts $w=1$ \ts case of the AD inequality
is both curious and essential to our work.  This result was first obtained
by Reuter \cite{Reu87}.  It was later rediscovered by Lov\'asz--Saks \cite[Cor.~4]{LS04}
as a special case of their family of localization inequalities, and most recently
rediscovered again in \cite[Claim~6.3]{CP-multi}.

\smallskip

\begin{thm}[{\rm \defn{RLS inequality}\ts}{}]\label{thm:RLS}\label{t:RLS} 
	Let \ts $k\geq 1$, 
	and let \. $\fa,\fb,\fc,\fd: 2^{[k]}\to \rr_{\geq 0}$ \. be functions satisfying \eqref{eq:AD-cond}.
	Then:
	\begin{equation}\label{eq:RLS}\tag{\defn{\em RLS}}
\sum_{S \ts\subseteq\ts [k]} \fa(S) \fb(\overline{S}) \ \leq \
		\sum_{S \subseteq [k]}   \fc(S)  \fd(\overline{S})\ts,
	\end{equation}
	where \ts $\overline{S} := [k] \setminus S$ \ts denotes the set complement.
\end{thm}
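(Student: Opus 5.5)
Induction on $k$, in the style of the standard proof of the AD inequality, with a two-point analysis at the base.

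\emph{Base case $k=1$.} Writing $a_0=\fa(\nothing)$, $a_1=\fa(\{1\})$, and similarly for $\fb,\fc,\fd$, we must show
\[
a_1 b_0 + a_0 b_1 \;\le\; c_1 d_0 + c_0 d_1 .
\]
Since $S\vee T=S\cup T$ and $S\wedge T=S\cap T$, the hypothesis \eqref{eq:AD-cond} gives:
\begin{itemize}
\item $a_0b_0\le c_0d_0$,
\item $a_1b_1\le c_1d_1$,
\item $a_1b_0\le c_1d_0$,
\item $a_0b_1\le c_1d_0$.
\end{itemize}
Note that, unlike the usual two-point lemma, the cross terms are both bounded by $c_1d_0$. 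If $c_1d_0=0$, then both left-hand terms vanish. Otherwise,
\[
c_0 d_1 \;\ge\; \frac{a_0b_0\,a_1b_1}{c_1d_0} \;=\; \frac{(a_1b_0)(a_0b_1)}{c_1d_0}.
\]
So with $x=a_1b_0$, $y=a_0b_1$ and $z=c_1d_0\ge\max(x,y)$, it suffices to check $x+y\le z+xy/z$. This is equivalent to $(z-x)(z-y)\ge 0$, which holds. (The degenerate cases where $c_0d_0$ or $c_1d_1$ is zero need separate handling; they force some $a$'s or $b$'s to vanish.)

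\emph{Inductive step.} Assume the statement for $k-1$. For $S\subseteq[k-1]$ define
\[
A(S)=\fa(S)\,\fb\bigl(\overline S\cup\{k\}\bigr)+\fa\bigl(S\cup\{k\}\bigr)\,\fb(\overline S),
\]
where now $\overline S=[k-1]\setminus S$. Define $C(S)$ similarly from $\fc,\fd$. Then the two sides of \eqref{eq:RLS} are $\sum_S A(S)$ and $\sum_S C(S)$.

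This decomposition does not immediately fit the inductive hypothesis, which needs four functions on $2^{[k-1]}$. A better route is to split the pair $(\fa,\fb)$ asymmetrically. For $\epsilon\in\{0,1\}$, set
\[
\fa_\epsilon(S)=\fa\bigl(S\cup\epsilon\{k\}\bigr),\qquad \fb_\epsilon(T)=\fb\bigl(T\cup(1-\epsilon)\{k\}\bigr),
\]
and define $\fc_\epsilon,\fd_\epsilon$ analogously. Then the left side of \eqref{eq:RLS} equals $\sum_\epsilon\sum_S \fa_\epsilon(S)\,\fb_\epsilon(\overline S)$.

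The plan is to fix $S,T\subseteq[k-1]$ and apply the base case $k=1$ to the four functions on $2^{\{k\}}$ given by
\[
\epsilon\mapsto \fa\bigl(S\cup\epsilon\{k\}\bigr),\quad \epsilon\mapsto\fb\bigl(T\cup\epsilon\{k\}\bigr),\quad \epsilon\mapsto\fc\bigl((S\cup T)\cup\epsilon\{k\}\bigr),\quad \epsilon\mapsto\fd\bigl((S\cap T)\cup\epsilon\{k\}\bigr).
\]
These satisfy \eqref{eq:AD-cond} on $2^{\{k\}}$, so the base case yields
\[
\alpha(S)\,\beta(T)\;\le\;\gamma(S\cup T)\,\delta(S\cap T)
\]
only for suitably defined "mixed" functions $\alpha,\beta,\gamma,\delta$ on $2^{[k-1]}$. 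The difficulty is that the base case is additive, not multiplicative: it gives sums of products, not a product of sums.

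\emph{Main obstacle.} The key step is converting the additive base case into a product-form condition usable by induction. I would resolve this by lifting to the full AD framework. Since $\overline S$ is the lattice anti-automorphism of $2^{[k]}$, consider $\fb'(S)=\fb(\overline S)$ and view the left side of \eqref{eq:RLS} as a "diagonal" sum. Then I would prove a strengthened statement by induction: for $\fa,\fb,\fc,\fd$ satisfying \eqref{eq:AD-cond},
\[
\sum_{S\cap T=\nothing,\ S\cup T=U}\fa(S)\,\fb(T)\;\le\;\sum_{S\cap T=\nothing,\ S\cup T=U}\fc(S)\,\fd(T)
\]
for every $U$. Coordinate by coordinate, this is exactly the $k=1$ computation: when $k\notin U$ the index $k$ contributes a factor $a_0b_0\le c_0d_0$, and when $k\in U$ it contributes the two-term inequality above.

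To make the induction go through, I would restrict to the sub-lattice structure coordinatewise. The hardest part, which I expect to be the real obstacle, is showing that the fiberwise sums
\[
\Phi_\fa(S)=\sum_\epsilon(\cdots)
\]
again satisfy \eqref{eq:AD-cond} on $2^{[k-1]}$. This is the analogue of the classical AD proof, where one shows that marginalizing out a coordinate preserves \eqref{eq:AD-cond}, again via the two-point inequality, here in its modified "anti-diagonal" form.
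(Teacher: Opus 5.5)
The paper does not prove this statement (it quotes it from Reuter, Lov\'asz--Saks and Chan--Pak), so your argument has to stand on its own. Your base case is correct. The aside on degenerate cases is unnecessary: $c_0d_0\,c_1d_1\ge a_0b_0\,a_1b_1$ holds as a product of inequalities between nonnegative numbers, and you only divide by $c_1d_0$.

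The genuine gap is the inductive step, which is never carried out, and the template you invoke cannot work as stated. With $\overline U:=[k-1]\setminus U$, the left side of \eqref{eq:RLS} equals $\sum_{U\subseteq[k-1]}\bigl[\fa(U\cup\{k\})\fb(\overline U)+\fa(\overline U)\fb(U\cup\{k\})\bigr]$. The bracket is in general not of the form $g(U)h(\overline U)$. So there are no ``fiberwise sums'' $\Phi_\fa,\Phi_\fb$ on $2^{[k-1]}$ whose RLS sum reproduces the left side, and the classical AD step (checking \eqref{eq:AD-cond} for marginals) has nothing to act on.

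Your $\epsilon$-split cannot be handled termwise either. We have $\fa(S)\fb(T\cup\{k\})\le \fc(S\cup T\cup\{k\})\fd(S\cap T)$, so the $\epsilon=0$ part of the left side is controlled by the $\epsilon=1$ part of the right side, not the $\epsilon=0$ part; already for $k=1$, $a_0b_1\le c_0d_1$ can fail. Finally, your ``strengthened'' statement for each $U$ is just \eqref{eq:RLS} on the sublattice $2^U$, and the coordinate-by-coordinate factorization is valid only for product functions.

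The missing idea is to apply your two-point algebra to aggregate sums rather than pointwise. Put
\[
\begin{gathered}
X=\sum_{U}\fa(U\cup\{k\})\fb(\overline U),\qquad Y=\sum_{U}\fa(\overline U)\fb(U\cup\{k\}),\\
Z=\sum_{U}\fc(U\cup\{k\})\fd(\overline U),\qquad W=\sum_{U}\fc(\overline U)\fd(U\cup\{k\}),
\end{gathered}
\]
so the claim is $X+Y\le Z+W$. Apply the induction hypothesis on $2^{[k-1]}$ to the quadruples $\bigl(\fa(\cdot\cup\{k\}),\fb,\fc(\cdot\cup\{k\}),\fd\bigr)$ and $\bigl(\fa,\fb(\cdot\cup\{k\}),\fc(\cdot\cup\{k\}),\fd\bigr)$, both of which satisfy \eqref{eq:AD-cond} there. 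This gives $X\le Z$ and $Y\le Z$.

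The product bound $XY\le ZW$ follows from the classical AD inequality on $2^{[k-1]}$ applied to
\[
\begin{gathered}
\tilde a(U)=\fa(U\cup\{k\})\fb(\overline U),\qquad \tilde b(V)=\fb(V\cup\{k\})\fa(\overline V),\\
\tilde c(P)=\fc(P\cup\{k\})\fd(\overline P),\qquad \tilde d(Q)=\fd(Q\cup\{k\})\fc(\overline Q).
\end{gathered}
\]
To check the hypothesis, pair $\fa(U\cup\{k\})$ with $\fb(V\cup\{k\})$ and $\fa(\overline V)$ with $\fb(\overline U)$. Using $\overline U\cap\overline V=\overline{U\cup V}$ and $\overline U\cup\overline V=\overline{U\cap V}$, this gives $\tilde a(U)\tilde b(V)\le \tilde c(U\cup V)\,\tilde d(U\cap V)$.

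Then $(Z-X)(Z-Y)\ge 0$ and $ZW\ge XY$ yield $X+Y\le Z+W$ exactly as in your base case, which is the case $k-1=0$ of this scheme. If you prefer not to quote AD, it follows on $2^{[k-1]}$ from \eqref{eq:RLS} on all intervals of $2^{[k-1]}$ by grouping pairs $(S,T)$ according to $(S\cap T,S\cup T)$. Strong induction therefore suffices.
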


\smallskip

{\small
\begin{rem}\label{rem:dist}
By \defng{Birkhoff's representation theorem}~\cite{Bir}, every finite distributive lattice is isomorphic to a sublattice of a Boolean lattice \ts $\rB_\ell=\cX^{(\ell,1)}$.
While the AD inequality is typically stated for finite distributive lattices,
we use this equivalent formulation as it is better adapted to the content of this paper.
\end{rem}
}
\smallskip

\subsection{Lam--Postnikov--Pylyavskyy inequality} \label{ss:intro-LPP}
\defng{Schur polynomials} \ts are symmetric polynomials which play a central
role in algebraic combinatorics, representation theory and other areas.
They are defined as irreducible characters of \ts $\GL(N,\cc)$, and their
stable limits (\defng{Schur functions}) form
an orthonormal basis in the ring of symmetric polynomials with respect to the Hall inner product.
We refer to \cite{Ful97,Mac95,Sag01} and  \cite[Ch.~7]{EC} for
standard introductions to the area, connections to other area
and numerous applications.

Let \ts $\cP$ \ts denote the set of integer partitions \ts $\la=(\la_1, \la_2, \ldots)$,
and let \ts $\lambda \vee \mu$, \ts $\la\wedge\mu$ \ts be the operations on~$\cP$ given
by the union and intersection of the corresponding Young diagrams.
Denote by \. $\fs_{\la}:= \fs_{\la}(\xb)$ \.  the \defnb{Schur function} \ts
indexed by \ts $\la=(\lambda_1,\lambda_2,\ldots)$, in variables  \ts $\xb:=(x_1,x_2,\ldots)$.
For symmetric functions \ts $\ff$ \ts and~$\fg$, we write \. $\ff \leqslant_{\fs} \fg$ \.
if \ts $(\fg-\ff)$ \ts is \defn{Schur positive}, i.e.,
a nonnegative linear combination of Schur functions.  {Schur positivity} \ts is a
fundamental property which remains largely mysterious in many cases, see e.g.\
\cite{Oko03,StanPos}; see also \cite{Pat19} for a brief introduction.

\smallskip

\begin{thm}[\textnormal{\defnb{LPP inequality}}~{\cite[Thm.~5]{LPP07}}]\label{thm:LPP}
For all \ts $\lambda,\mu \in \cP$, we have:
\begin{equation}\label{eq:LPP}\tag{\defn{\em LPP}}
	\fs_{\lambda} \. \fs_{\mu}  \ \leqslant_{\fs} \ \fs_{\lambda \vee \mu} \.\fs_{\lambda \wedge \mu}
\end{equation}
\end{thm}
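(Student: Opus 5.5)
We follow the Temperley--Lieb immanant strategy of Rhoades--Skandera as adapted in \cite{LPP07}, and work throughout with skew shapes in full generality. Any product $\fs_\lambda\fs_\mu$ can be written via the Jacobi--Trudi identity as a single $2n\times 2n$ minor, or more conveniently as a product of two $n\times n$ minors of one matrix $H=(\fh_{j-i})$ of complete homogeneous symmetric functions, indexed by row and column sets. Pad $\lambda,\mu$ with zeros to length $n$. Then
\[
\fs_\lambda\.\fs_\mu \,=\, \det H_{I,I'}\,\det H_{J,J'},\qquad \fs_{\lambda\vee\mu}\.\fs_{\lambda\wedge\mu}\,=\,\det H_{I\vee J,\,I'\vee J'}\,\det H_{I\wedge J,\,I'\wedge J'}.
\]
Here the index sets encode the parts, say $\lambda_i - i$ as column indices against fixed row indices $-i$. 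The operations $\max/\min$ of parts correspond to the coordinatewise max/min of these sequences, since $\lambda\vee\mu$ has parts $\max(\lambda_i,\mu_i)$. Both sides are then products of complementary minors of the same $2n\times 2n$ matrix, obtained by stacking the rows and columns of both pairs as multisets.

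The key step is the Rhoades--Skandera expansion. Each product of complementary minors $\Delta_{I,I'}(A)\Delta_{\bar I,\bar I'}(A)$ is a sum of Temperley--Lieb immanants $\Imm_\tau(A)$ over noncrossing matchings $\tau$ compatible with a coloring of the $2n$ boundary points. The coloring is determined by which minor each row and column belongs to. So the difference of the two sides equals
\[
\sum_\tau \big(c_\tau' - c_\tau\big)\Imm_\tau(A),
\]
where $c_\tau\in\{0,1\}$ records compatibility of $\tau$ with the coloring from $(\lambda,\mu)$, and $c'_\tau$ records compatibility with the coloring from $(\lambda\vee\mu,\lambda\wedge\mu)$. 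Since Temperley--Lieb immanants of Jacobi--Trudi matrices are Schur positive (Rhoades--Skandera, via total positivity of $H$ and Haiman/Greene-type positivity), it suffices to show $c_\tau\le c'_\tau$ for all $\tau$. Equivalently, every matching compatible with the coloring of $(\lambda,\mu)$ is also compatible with the coloring of $(\lambda\vee\mu,\lambda\wedge\mu)$.

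The combinatorial heart, and the main obstacle, is this inclusion. The approach is to sort out which boundary points change color when passing from $(\lambda,\mu)$ to $(\lambda\vee\mu,\lambda\wedge\mu)$. The points where $\lambda_i\ne\mu_i$ swap roles exactly when $\mu_i>\lambda_i$. We then check that a noncrossing matching which pairs opposite colors across rows and columns (or the same color within one side, per the Rhoades--Skandera rule) still satisfies the rule after the swap. Two cases arise. If $\tau$ matches a point to one lying in the same block of indices where the comparison $\lambda_i$ versus $\mu_i$ has a constant sign, the swap flips both endpoints and preserves compatibility. If instead $\tau$ connects points whose comparison signs differ, noncrossingness forces an interlacing pattern. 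We must show this pattern cannot occur for compatible $\tau$, or that it remains compatible anyway. This is where the ordering $\lambda_i-i$ strictly decreasing is used. The comparison is most delicate when $\lambda$ and $\mu$ cross multiple times, and we would first handle the case of a single crossing, then argue by induction on the number of sign changes in $\lambda-\mu$, swapping one block at a time. Each swap is an instance of the single-crossing case, and compatibility inclusions compose.
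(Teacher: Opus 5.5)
Your reduction is the right one, and it is the same as the one behind the paper. The paper obtains the LPP inequality as the indicator-function case of Theorem~\ref{thm:ADS}. Concretely, take $\alpha=\lambda\wedge\mu$, $\gamma=\lambda\vee\mu-\lambda\wedge\mu$, $\beta=\delta=0$ and $E=\{r:\lambda_r>\mu_r\}$. Then Proposition~\ref{prop:RS} and Lemma~\ref{lem:RS-2} reduce it to the inclusion $\Theta(E\sqcup F)\subseteq\Theta(\cE\sqcup\cF)$, which is your $c_\tau\le c'_\tau$.

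\textbf{The gap.} That inclusion is the whole combinatorial content, and you do not prove it.

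Your fallback ``or that it remains compatible anyway'' is not available. Rows never change color, and a column point changes color exactly when its part index $r$ has $\mu_r>\lambda_r$. So an edge joining a changing point to a non-changing point always changes status, and you must show such edges never occur.

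More seriously, the induction on sign changes cannot work: compatibility is not preserved along intermediate block swaps, even when it holds at both ends. Take $\lambda=(4,3,0)$, $\mu=(5,2,1)$ and label columns by $\lambda_r+4-r$.
\begin{itemize}
\item The $\lambda$-points are $\{7,5,1\}$ and the $\mu$-points are $\{8,4,2\}$.
\item Every row index occurs twice and duplicates must be matched together. So a compatible $\tau$ is a noncrossing matching of $1<2<4<5<7<8$ whose edges join the two factors.
\item The matching $\{1,8\},\{2,7\},\{4,5\}$ is compatible with $(\lambda,\mu)$. It is also compatible with $(\lambda\vee\mu,\lambda\wedge\mu)=((5,3,1),(4,2,0))$, whose points are $\{8,5,2\}$ and $\{7,4,1\}$.
\item It is compatible with neither one-sign-change pair. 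For $((5,3,0),(4,2,1))$ the points are $\{8,5,1\}$ and $\{7,4,2\}$; for $((4,3,1),(5,2,0))$ they are $\{7,5,2\}$ and $\{8,4,1\}$. In both, $1$ and $8$ land in the same factor.
\end{itemize}
These are the only single-crossing configurations here, so no chain of inclusions exists.

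\textbf{The missing idea.} What is needed is a direct counting argument, which is Lemma~\ref{lem:Theta} (adapted from \cite{LPP07}); it handles all sign changes at once.

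Call $\max(\lambda_r,\mu_r)+n+1-r$ the max-point of index $r$, and the other point of index $r$ its min-point. You must show every edge of a $(\lambda,\mu)$-compatible $\tau$ joins a max-point to a min-point.

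List the $\lambda$-points as $u_1<\dots<u_n$, and let $\overline{u_i}$ be the other point with the same index. The $\overline{u_i}$ also increase, since $\mu$ is a partition.

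Suppose an edge $a=u_i<b=\overline{u_j}$ joins two max-points. Then $i<j$, $\overline{u_i}<a$ and $u_j<b$. By noncrossingness, the open interval $(a,b)$ contains equally many $\lambda$-points and $\mu$-points, say $t$ of each. It contains $u_{i+1},\dots,u_j$, so $t\ge j-i$. Its $\mu$-points lie among $\overline{u_{i+1}},\dots,\overline{u_{j-1}}$, so $t\le j-i-1$, a contradiction. The min--min case and the case where $a$ is a $\mu$-point are symmetric.

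\textbf{Smaller slips.}
\begin{itemize}
\item A $2n\times 2n$ immanant involves $4n$ boundary points, not $2n$.
\item If color means ``which minor'', the rule in Lemma~\ref{lem:RS-1} is the reverse of what you wrote: same-side edges join different minors, and row--column edges join the same minor.
\item For straight shapes, the duplicate-pairing conditions (i)--(ii) make the row side trivial. That is what reduces everything to the column matching above, and you should state it.
\end{itemize}
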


\smallskip

This is a remarkable inequality and an important tool in the area, see e.g.\ \cite[$\S$8.2]{KT21} and \cite[$\S$4.7]{PPY19}.
The LPP inequality is a part of the ``log-concavity phenomena'' championed in recent years
by June Huh and his coauthors, see \cite{Huh18,Kal23}. Let us single out a closely related
log-concavity property of structure constants, see \cite{HMMS22,Oko03} and~$\S$\ref{ss:finrem-Oko}.

We refer to \cite{LP07} for a bijective proof of the \defng{monomial positivity} \ts
version of \eqref{eq:LPP}, a weaker property which still implies the
\defng{evaluation positivity} \. LHS~$\le$~RHS,
for all \ts $x_1,x_2,\ldots \in \rr_+\ts$.  See \cite{CP-multi},
for a multivariate extension of the AD inequality which also implies
the monomial positivity version of \eqref{eq:LPP},
and the extensive discussion of \eqref{eq:LPP}
in the context of poset inequalities (cf.~\cite[$\S$4.1]{CP-corr}).
See also Speyer's recent breakthrough \cite{Spe26}, where he resolves
a conjecture from \cite{DP07} by using the AD inequality \eqref{eq:AD},
thus giving a new proof of \eqref{eq:LPP} along the way.

In \cite{LPP07}, Lam--Postnikov--Pylyavskyy proved also the Okounkov,
FFLP, and LLT inequalities, conjectured in \cite{Oko97,FFLP,LLT}, respectively.
Of these, the Okounkov inequality gives log-concavity of LR coefficients and
will play an important role later in the paper (see~$\S$\ref{ss:app-back}).
Lam--Postnikov--Pylyavskyy also proved the following important generalization of \eqref{eq:LPP}
for skew Schur functions.

\smallskip

\begin{thm}[{\rm {\defnb{skew LPP inequality}}, \cite[Thm.~5]{LPP07}}{}]\label{thm:LPP-skew}
For all skew shapes \ts $\lambda/\mu$ \ts and \ts $\nu/\rho$, we have:
\begin{equation}\label{eq:LPP-skew}\tag{\defn{\em skew LPP}}
	\fs_{\lambda/\mu} \. \fs_{\nu/\rho}  \ \leqslant_{\fs} \
\fs_{(\lambda\vee \nu)/(\mu\vee \rho)} \. \fs_{(\lambda\wedge \nu)/(\mu\wedge \rho)}
\end{equation}
\end{thm}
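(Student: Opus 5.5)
We follow the Temperley--Lieb immanant approach of Rhoades--Skandera, which is how the original proof of \eqref{eq:LPP-skew} in \cite{LPP07} proceeds. First we write both sides as products of Jacobi--Trudi determinants. Fix $n$ at least the number of rows of all shapes involved and set $h_k=0$ for $k<0$. Then
\[
\fs_{\lambda/\mu}\,\fs_{\nu/\rho} \. = \. \det\big(h_{\lambda_i-\mu_j-i+j}\big)_{i,j=1}^n\cdot \det\big(h_{\nu_i-\rho_j-i+j}\big)_{i,j=1}^n .
\]
Form the $2n\times 2n$ matrix $H$ of complete homogeneous functions whose rows are indexed by the $2n$ numbers $\lambda_i-i$, $\nu_i-i$ and whose columns are indexed by the $2n$ numbers $\mu_j-j$, $\rho_j-j$, with entries $h_{(\text{row index})-(\text{column index})}$. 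The key observation is that the sets of row and column indices are the same multisets for the right-hand side, because
\[
\{\max(\lambda_i,\nu_i)-i,\ \min(\lambda_i,\nu_i)-i\} \. = \. \{\lambda_i-i,\ \nu_i-i\}\quad\text{for every } i,
\]
and similarly for $\mu,\rho$. So both sides are products of complementary $n\times n$ minors $\Delta_{I,J}(H)\,\Delta_{\bar I,\bar J}(H)$ of the same matrix $H$, for two different choices of row/column subsets $(I,J)$.

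Next we expand products of complementary minors in the Temperley--Lieb immanant basis. Any product $\Delta_{I,J}(X)\Delta_{\bar I,\bar J}(X)$ equals $\sum_\tau \Imm_\tau(X)$, where $\tau$ ranges over the Temperley--Lieb (noncrossing matching) basis elements \emph{compatible} with the coloring of the boundary given by $(I,J)$. Here a matching is compatible when each strand connects opposite-colored points, in the appropriate row/column convention. Rhoades--Skandera showed that $\Imm_\tau$ evaluated on a Jacobi--Trudi type matrix is Schur positive. Indeed, $H$ is a specialization of a totally nonnegative generic matrix in the Schur-positive sense: its entries $h_{a-b}$ with sorted indices form a planar network whose path weights are monomials, and the TL-immanants of such matrices are Schur positive. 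We invoke this result as in \cite{RS05,LPP07}. The inequality then reduces to a combinatorial containment: every matching compatible with the coloring for $(\lambda,\nu;\mu,\rho)$ must also be compatible with the coloring for $(\lambda\vee\nu,\lambda\wedge\nu;\mu\vee\rho,\mu\wedge\rho)$. Taking the difference, RHS $-$ LHS is then a sum of TL immanants, and hence Schur positive.

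The main obstacle is this containment. It requires ordering the $4n$ boundary points and checking that the ``sorted'' coloring is the most permissive one. Concretely, after sorting, in each pair $i$ the larger index gets color $1$ and the smaller gets color $2$, and likewise on the column side. One must show that a noncrossing matching connecting opposite colors in the original coloring still does so after recoloring by sorting within pairs. We would prove this by the standard argument of \cite{LPP07}: the compatibility condition can be reformulated as a family of balance inequalities on the colors of intervals of boundary points. Sorting each pair (which are adjacent or equal in the boundary order, since $\lambda_i-i$ and $\nu_i-i$ are interleaved monotonically in $i$) only increases these balances. Equal indices give repeated rows, which can be handled by a small perturbation or by treating them as the same point. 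Care is needed to verify that the two index sequences really interleave into consecutive pairs, since $\lambda_i-i$ is strictly decreasing.
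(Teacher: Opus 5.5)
Your framework is right, and it is in substance what the paper does. The paper obtains \eqref{eq:LPP-skew} as the indicator-function case of the skew ADS inequality (Theorem~\ref{thm:ADS-skew}). In that case the machinery reduces to your outline:
\begin{itemize}
\item Proposition~\ref{prop:RS} writes both sides as sums of Temperley--Lieb immanants of one matrix over compatible diagrams;
\item Lemma~\ref{lem:RS-2} gives Schur positivity of each immanant;
\item everything then hinges on the containment of compatible diagram sets, which is Lemma~\ref{lem:Theta}.
\end{itemize}
A side remark: your planar-network justification only gives monomial positivity of $\Imm_\tau$. Schur positivity is the deeper Rhoades--Skandera result, which you may simply cite.

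The gap is in your proof of that containment. It rests on the claim that the two members $\lambda_i-i$ and $\nu_i-i$ of each pair are adjacent (or equal) in the boundary order. This is false. For $\lambda=(2,2)$ and $\nu=(0,0)$, the row indices are $\{1,0\}$ and $\{-1,-2\}$, so the pairs $\{1,-1\}$ and $\{0,-2\}$ interleave. If pairs really were adjacent, the sorted coloring would alternate and be compatible with every noncrossing matching, so there would be nothing to prove.

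Your ``balance inequalities'' mechanism also does not work as stated. For a noncrossing matching, compatibility is an equality condition: for each edge $\{a,b\}$, the interval $[a,b]$ must be color-balanced. Recoloring a pair that straddles $a$ or $b$ can move this balance in either direction, so ``sorting only increases balances'' gives nothing.

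The structure you actually have is the following. Write $\overline{p_i}<p_i$ for the two boundary positions of pair $i$, taking row and column pairs together. Then:
\begin{itemize}
\item the sequences $(p_i)$ and $(\overline{p_i})$ are both increasing;
\item the old color classes $U=\{u_i\}$ and $\overline U=\{\overline{u_i}\}$, with $\{u_i,\overline{u_i}\}=\{p_i,\overline{p_i}\}$, are also increasing in $i$, because $\lambda,\mu,\nu,\rho$ are partitions.
\end{itemize}

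Now take an edge $\{a,b\}$ with $a<b$ of a diagram compatible with $U$, say $a=u_i$ and $b=\overline{u_j}$. Suppose both endpoints lie in $P=\{p_m\}$.
\begin{itemize}
\item Then $i<j$.
\item The points strictly inside $(a,b)$ are matched among themselves, so $t:=|U\cap(a,b)|=|\overline U\cap(a,b)|$.
\item Since $u_j=\overline{p_j}\in(a,b)$, we get $t\ge |U\cap(a,u_j)|+1=j-i$.
\item Since $\overline{u_i}=\overline{p_i}<a$, we get $t\le|\overline U\cap(\overline{u_i},b)|=j-i-1$.
\end{itemize}
This is a contradiction. The case where both endpoints lie in $\overline P$ is symmetric.

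This counting argument, which is the paper's proof of Lemma~\ref{lem:Theta} adapted from \cite{LPP07}, is the missing idea. Repeated indices should not be handled by merging points. The two copies are distinct adjacent boundary points forced to be matched to each other, and you need the consistent tie-breaking $\overline{p_{i'}}<p_i$ for equal values so that the monotonicity above survives.
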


\smallskip

Let us emphasize that Schur positivity in \eqref{eq:LPP-skew} is defined
in terms of nonnegative linear combinations of the ordinary Schur functions,
rather than skew Schur functions, and that the inequality coincides with
\eqref{eq:LPP} on straight shapes.

\smallskip

\subsection{Main results} \label{ss:intro-main}
Our first main result is a Schur positivity version of the AD inequality.
We call it the \defn{Ahlswede--Daykin--Schur {\rm (ADS)} inequality}.

\smallskip

\begin{thm}[{\rm \defna{ADS inequality}\ts}{}]\label{thm:ADS}
	 Let \. $\fa,\fb,\fc,\fd: \cP \to \rr_{\geq 0}$ \ts be functions satisfying
\begin{equation}\label{eq:AD-cond-first}\tag{{\defn{\em ADS-cond}}}
 	\fa({\lambda}) \. \fb({\mu}) \ \leq \  \fc({\lambda \vee \mu}) \. \fd({\lambda \wedge \mu})
 \qquad \text{ for all \. $\lambda,\mu \in \cP$.}
 \end{equation}
\nin
\ Then we have:
	 	\begin{equation}\label{eq:ADS}\tag{\defn{\em ADS}}
	\bigg( \sum_{\lambda \in \cP} \fa({\lambda}) \fs_{\lambda}\bigg)
		\bigg( \sum_{\lambda \in \cP} \fb({\lambda}) \fs_{\lambda}\bigg)
		 \ \leqslant_{\fs} \
		 	\bigg( \sum_{\lambda \in \cP} \fc({\lambda}) \fs_{\lambda}\bigg)
		 		\bigg( \sum_{\lambda \in \cP} \fd({\lambda}) \fs_{\lambda}\bigg)
	 \end{equation}
\end{thm}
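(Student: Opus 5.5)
Expanding both sides, the claim is that
\[
\sum_{\lambda,\mu} \big(\fc(\lambda)\fd(\mu) - \fa(\lambda)\fb(\mu)\big)\, \fs_\lambda \fs_\mu
\]
is Schur positive. Taking the coefficient of a fixed $\fs_\nu$, this becomes the family of scalar inequalities
\[
\sum_{\lambda,\mu} \fa(\lambda)\fb(\mu)\, c^{\nu}_{\lambda\mu} \ \le\ \sum_{\lambda,\mu} \fc(\lambda)\fd(\mu)\, c^{\nu}_{\lambda\mu} \qquad \text{for all } \nu\in\cP,
\]
where $c^\nu_{\lambda\mu}$ are Littlewood--Richardson coefficients. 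Only $\lambda,\mu\subseteq\nu$ contribute, so each inequality is finite. I would also use homogeneity in $|\lambda|+|\mu|=|\nu|$ to organise the sum.

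The naive attempt is to pair each term $\fa(\lambda)\fb(\mu)\fs_\lambda\fs_\mu$ with $\fc(\lambda\vee\mu)\fd(\lambda\wedge\mu)\fs_{\lambda\vee\mu}\fs_{\lambda\wedge\mu}$, using \eqref{eq:AD-cond-first} and \eqref{eq:LPP}. This fails because the map $(\lambda,\mu)\mapsto(\lambda\vee\mu,\lambda\wedge\mu)$ is not injective. A right-hand term $(\alpha,\beta)$ with $\beta\subseteq\alpha$ is hit by every pair $(\lambda,\mu)$ with that join and meet. So I would group terms by the fibre: fix $\alpha\supseteq\beta$ and let $F_{\alpha,\beta}$ be the set of pairs with $\lambda\vee\mu=\alpha$ and $\lambda\wedge\mu=\beta$. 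Such pairs correspond to splitting the connected components of the skew shape $\alpha/\beta$, or more precisely the cells where the row lengths differ, as in the Boolean lattice of Theorem~\ref{thm:RLS}, between $\lambda$ and $\mu$. The rows where $\alpha_i\neq\beta_i$ form a set $[k]$ after relabelling, and $(\lambda,\mu)\in F_{\alpha,\beta}$ corresponds to a subset $S\subseteq[k]$ of rows where $\lambda$ takes $\alpha_i$, with $\mu$ taking $\alpha_i$ on $\overline S$, subject to both being partitions. The orbit also contains the pairs $(\lambda',\mu')$ with the same multiset of coordinates, whose join and meet may be other shapes. This suggests working on the whole "swap class" $\{(\lambda,\mu): \{\lambda_i,\mu_i\}\text{ fixed for each } i\}$ rather than a single fibre.

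So the main step is a \emph{refined} inequality, one per swap class. Fix, for each row $i$, the unordered pair $\{p_i\ge q_i\}$, and let $[k]$ index rows with $p_i>q_i$. Each $S\subseteq[k]$ gives $(\lambda^S,\mu^S)$, where $\lambda^S$ takes $p_i$ on $S$ and $\mu^S$ takes $p_i$ on $\overline S$; set $\fs_{\lambda^S}=0$ if it is not a partition. I want the analogue of \eqref{eq:RLS} with Schur weights:
\[
\sum_{S\subseteq[k]} \fa(\lambda^S)\fb(\mu^S)\,\fs_{\lambda^S}\fs_{\mu^S} \ \leqslant_{\fs}\ \sum_{S\subseteq[k]} \fc(\lambda^S)\fd(\mu^S)\,\fs_{\lambda^S}\fs_{\mu^S}.
\]
Note that $\lambda^S\vee\mu^S=\lambda^{[k]}$. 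Summing over all swap classes yields \eqref{eq:ADS}, since every pair $(\lambda,\mu)$ appears in exactly one class with exactly one $S$.

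To prove the refined inequality, I would expand each product via the Jacobi--Trudi determinant, so that $\fs_{\lambda^S}\fs_{\mu^S}$ is a product of two $h$-determinants. Following Rhoades--Skandera and \cite{LPP07}, I would write this as a sum of Temperley--Lieb immanants $\Imm_\tau$ of a single $2\ell\times 2\ell$ Jacobi--Trudi-type matrix indexed by $\{p_i,q_i\}$, which are Schur positive. The product for the subset $S$ equals $\sum_\tau \epsilon_S(\tau)\Imm_\tau$, where $\epsilon_S(\tau)\in\{0,1\}$ records whether the coloring of rows by $S$ is compatible with the noncrossing matching $\tau$. Compatibility should mean that each matching component, a block of rows, is monochromatic-alternating, so that the admissible $S$ for a given $\tau$ form a set closed under flipping whole blocks. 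The refined inequality then reduces, coefficientwise in $\Imm_\tau$, to
\[
\sum_{S \text{ compatible with } \tau} \fa(\lambda^S)\fb(\mu^S) \ \le\ \sum_{S \text{ compatible with } \tau}\fc(\lambda^S)\fd(\mu^S).
\]
This is exactly \eqref{eq:RLS} on a Boolean lattice of blocks, provided compatible sets are closed under $\cup,\cap$ of block choices and $\lambda^{S\cup T}=\lambda^S\vee\lambda^T$. The latter holds coordinatewise.

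The main obstacle is this last combinatorial identification: showing that the set of subsets $S$ compatible with each Temperley--Lieb basis element is a Boolean lattice of "flippable blocks". One must also check that non-partition $S$ contribute zero consistently, and that the complement map $S\mapsto\overline S$ matches the $\lambda\leftrightarrow\mu$ swap in the RLS form. I would first verify this in the two-row case against \eqref{eq:LPP}, then adapt the LPP07 color-compatibility lemma.
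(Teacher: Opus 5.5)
Your outline is, in substance, the paper's proof. Your swap classes are the sets $\cU(\alpha,\beta,\gamma,\delta)$ from the proof of Theorem~\ref{thm:ADS-skew}; the paper works with skew shapes and then takes $\mu=\varnothing$. Your expansion $\fs_{\lambda^S}\fs_{\mu^S}=\sum_\tau\epsilon_S(\tau)\Imm_\tau$ is Proposition~\ref{prop:RS}. Applying \eqref{eq:RLS} to a Boolean lattice of blocks for each fixed $\tau$ is exactly how the paper verifies the orchestra inequalities and proves Theorem~\ref{thm:orch}. (A swap class is precisely the fibre of $(\lambda,\mu)\mapsto(\lambda\vee\mu,\lambda\wedge\mu)$, since join and meet are constant on it. Your remark that they ``may be other shapes'' is mistaken, though harmless.) What is missing is the step you yourself call the main obstacle. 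It carries essentially all of the technical content, and the proposal only states what it should say.

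Concretely, three facts are needed. Write $v_r^{+}$ and $v_r^{-}$ for the vertices carrying the values $p_r+\ell-r+1$ and $q_r+\ell-r+1$; these are denoted $p_{\ell+1-r}$ and $\overline{p_{\ell+1-r}}$ in \S\ref{ss:orch-proof-diagrams}.

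\emph{First}, the Jacobi--Trudi determinant of a non-partition need not vanish. So the immanant expansion may be used only for admissible $S$, meaning that $\lambda^S$ and $\mu^S=\lambda^{\overline S}$ are both partitions. Admissible sets are themselves unions of blocks: rows $r,r+1$ must be placed together exactly when $q_r<p_{r+1}$, and otherwise $\lambda^S_r\ge q_r\ge p_{r+1}\ge\lambda^S_{r+1}$ for every $S$ (Lemma~\ref{lem:admin}).

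\emph{Second}, and this is the real content, a diagram compatible with an admissible $S$ never contains an edge $\{v_r^{+},v_{r'}^{+}\}$ or $\{v_r^{-},v_{r'}^{-}\}$. Such an edge forces exactly one of the rows $r,r'$ into $S$, a constraint not preserved by unions or intersections, and then \eqref{eq:RLS} would not apply. The paper proves this in Lemma~\ref{lem:Theta}, adapted from \cite[Thm.~11]{LPP07}. The argument counts, in two ways, the vertices of each color nested under such an edge. It uses that for admissible $S$ the vertices of each color appear left to right in a fixed order of their rows, and that $v_r^{-}$ lies to the left of $v_r^{+}$. This requires a tie-breaking rule for repeated values of $\cM$. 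Such repeats can come from two different rows (when $p_r-r=q_{r'}-r'$), not only from rows with $p_r=q_r$.

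\emph{Third}, once every edge joins a $v^{+}$ to a $v^{-}$, adding the edges $\{v_r^{+},v_r^{-}\}$ turns $\tau$ into a disjoint union of even cycles. An admissible $S$ is compatible with $\tau$ if and only if it is a union of the row sets of these cycles (Lemma~\ref{lem:comp-kappa}). So the blocks are these cycles, not the matching components of $\tau$ itself.

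Only after combining all three facts is the set of $S$ contributing to $\Imm_\tau$ exactly the set of unions of blocks of the join of the two equivalence relations. That set is a full Boolean lattice, and on it your reduction to \eqref{eq:RLS} goes through.
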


\smallskip

Note that the assumptions \eqref{eq:AD-cond} and \eqref{eq:AD-cond-first} coincide
on partitions, but the conclusions are much stronger.
We note that the evaluation positivity
\ts $x_1,x_2,\ldots\ge 0$ \ts version of \eqref{eq:ADS} follows immediately
from \eqref{eq:AD}.  The monomial positivity version of \eqref{eq:ADS}
would follow from the result by the first and third author \cite[Thm.~6.1]{CP-multi}.
This is an intermediate notion between evaluation positivity and Schur positivity
(see~$\S$\ref{ss:notation-basic}).

One way to see the power of the ADS inequality, is to consider the case when functions
\ts $\fa,\fb,\fc,\fd$ \ts are indicators of \ts $\la,\mu,\la\vee \mu,\la\wedge\mu$,
respectively.  The AD inequality becomes trivial in this case, while the ADS inequality
coincides with the LPP inequality.  Thus, one can think of \eqref{eq:ADS} as an
advanced extension of \eqref{eq:LPP}.

\smallskip
We now turn to the skew version of the ADS inequality.  As with the skew LPP, it coincides with the ADS inequality
on straight shapes:

\smallskip

\begin{thm}[{\rm \defna{skew ADS inequality}\ts}{}]\label{thm:ADS-skew}
	Let \. $\fa,\fb,\fc,\fd: \cP \times \cP \to \rr_{\geq 0}$ \ts be functions satisfying
{\small
	\begin{equation}\label{eq:AD-cond-skew}\tag{\defn{\em skew ADS-cond}}
	 \fa(\lambda,\mu) \. \fb(\nu,\rho) \ \leq \ \fc(\lambda \vee \nu, \mu \vee \rho) \. \fd(\lambda \wedge \nu,\mu \wedge \rho)
\quad \text{ for all } \quad \lambda,\mu,\nu,\rho \in \cP.
	 \end{equation}
}
\nin
Then we have:\footnote{In the expansion of \eqref{eq:ADS-skew} in the Schur basis,
some coefficients can be infinite. In these cases, the relation \ts $\leqslant_{\fs}$ \ts means
that the infinite coefficient of $s_\lambda$ on the LHS, implies infinite
coefficient on the RHS.}
{\small
		\begin{equation}\label{eq:ADS-skew}\tag{\defn{\em skew ADS}}
		\bigg(\sum_{\lambda,\mu  \in \cP} \fa({\lambda},\mu) \fs_{\lambda/\mu} \bigg)  \bigg(\sum_{\lambda,\mu  \in \cP} \fb({\lambda},\mu) \fs_{\lambda/\mu} \bigg) \, \leqslant_{\fs} \, \bigg(\sum_{\lambda,\mu  \in \cP} \fc({\lambda},\mu) \fs_{\lambda/\mu} \bigg) \bigg(\sum_{\lambda,\mu  \in \cP} \fd({\lambda},\mu) \fs_{\lambda/\mu} \bigg).
	\end{equation}
}
\end{thm}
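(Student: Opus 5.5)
The plan is to reduce the skew ADS inequality to a statement about Schur orchestra inequalities for skew shapes, in the same spirit as the straight-shape case, and then to apply a Boolean-lattice inequality of RLS type to the coefficients. Fix a target partition $\kappa$ and extract the coefficient of $\fs_\kappa$ on both sides of \eqref{eq:ADS-skew}. The left side gives
\[
\sum_{\lambda,\mu,\nu,\rho} \fa(\lambda,\mu)\,\fb(\nu,\rho)\,\langle \fs_{\lambda/\mu}\fs_{\nu/\rho},\fs_\kappa\rangle ,
\]
and the right side is the analogous sum with $\fc,\fd$. Since the skew ADS-cond only compares $(\lambda,\mu,\nu,\rho)$ with $(\lambda\vee\nu,\mu\vee\rho,\lambda\wedge\nu,\mu\wedge\rho)$, I would group the quadruples into orbits under "swapping" operations. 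Concretely, write the pairs $(\lambda,\nu)$ and $(\mu,\rho)$ row by row. For each row $i$ there are two choices: $(\lambda_i,\nu_i)$ is either the ordered pair or its transpose, and similarly for $(\mu_i,\rho_i)$. Each orbit is determined by the row-sorted data $\alpha=\lambda\vee\nu$, $\beta=\lambda\wedge\nu$, $\gamma=\mu\vee\rho$, $\delta=\mu\wedge\rho$, together with a subset $S$ of rows recording where the larger entry goes to the first factor. Only those $S$ for which both sequences are genuinely partitions are admissible. The subset $S$ here is the set of rows for both the outer and inner shapes, with independent choices for each.

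The key step is to prove a refined Schur orchestra inequality for skew shapes. For a fixed tuple $(\alpha,\beta,\gamma,\delta)$, the sum over $S$ of products of weights times the skew products $\fs_{\lambda_S/\mu_S}\fs_{\nu_S/\rho_S}$ should be dominated, coefficientwise in the Schur basis, in a way that allows the RLS inequality (Theorem~\ref{thm:RLS}) to be applied. More precisely, I would express
\[
\sum_S \fa(S)\,\fb(\overline S)\,\fs_{\lambda_S/\mu_S}\fs_{\nu_{\overline S}/\rho_{\overline S}}
\]
using the Jacobi--Trudi determinant. Then $\fs_{\lambda/\mu}\fs_{\nu/\rho}$ is a product of two determinants in the complete homogeneous functions $\fh_{\lambda_i-\mu_j-i+j}$, which is a block-diagonal minor of a single matrix. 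Following Rhoades--Skandera and LPP, such products decompose as nonnegative combinations of Temperley--Lieb immanants evaluated at a fixed Jacobi--Trudi-type matrix, and these immanants are Schur positive. Each swap set $S$ corresponds to a coloring of the $2n$ indices, and the product of minors equals the sum of $\Imm_\tau$ over Temperley--Lieb diagrams $\tau$ compatible with the coloring. The coefficient of each fixed immanant then becomes an RLS-type sum over colorings compatible with $\tau$. Compatibility with a noncrossing matching is a parity/alternation condition along the connected components, which identifies compatible colorings with $2^{[k]}$, where $k$ is the number of components that can be flipped freely. The hypothesis then transfers to \eqref{eq:AD-cond} on $2^{[k]}$, and \eqref{eq:RLS} finishes the argument.

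The main obstacle is the second step for skew shapes. In the skew setting the row indices for the outer and inner shapes are coupled in the Jacobi--Trudi matrix through both $\lambda_i-i$ and $\mu_j-j$. A component of the Temperley--Lieb diagram may therefore join top vertices (from $\lambda,\nu$) with bottom vertices (from $\mu,\rho$), so flipping a component performs simultaneous swaps in outer and inner rows. I must check that the resulting pair always has the form (join, meet) relative to some other member of the orbit, so that skew ADS-cond applies. My first approach is to use the LPP trick: replace swaps of $\lambda_i,\nu_i$ by swaps of the shifted sequences $\lambda_i-i$, whose sorted join and meet agree with the Young-diagram $\vee,\wedge$. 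This lets the lattice condition be verified componentwise. The remaining steps, namely the infinite-coefficient convention and the reduction to finitely many rows by truncation (each Schur coefficient involves only finitely many quadruples of bounded size), are routine.
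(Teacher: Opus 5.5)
\textbf{There is a genuine gap.} Your architecture matches the paper's. You group quadruples by their meets and joins, and observe that $\aH_{\cN,\cM}$ is the same for every swap pattern. You expand each $\fs_{\lambda/\mu}\fs_{\nu/\rho}$ via Rhoades--Skandera into Schur positive Temperley--Lieb immanants, and apply the RLS inequality to the coefficient of each immanant. The gap is the sentence ``the hypothesis then transfers to \eqref{eq:AD-cond} on $2^{[k]}$,'' which is asserted rather than proved, and it is where the real work lies.

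For RLS to apply to the colorings compatible with a fixed diagram $\tau$, that family must be closed under union and intersection in $2^{\cE\sqcup\cF}$. In other words, it must consist of all unions of blocks of some set partition. Only then is $H\mapsto(\lambda(E_H),\mu(F_H))$ a lattice homomorphism, so that \eqref{eq:AD-cond-skew} pulls back. Alternation of colors along the cycles of $\tau$ together with the pairing edges $\{p_i,\overline{p_i}\}$ always gives a bijection with $2^{[k]}$, but not a lattice embedding. If $\tau$ has an edge $\{p_i,p_j\}$, joining two ``enlarged'' vertices, compatibility forces the corresponding rows to have \emph{opposite} swap status. A family such as $\{S:|S\cap\{i,j\}|=1\}$ is not a sublattice, so the four-function hypothesis gives no control over the resulting RLS-type sum.

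The missing idea is Lemma~\ref{lem:Theta}: every TL diagram compatible with some admissible $E\sqcup F$ matches $P$ to $\overline{P}$. This is a nontrivial counting argument. It uses noncrossingness together with the interlacing $\overline{p_i}<p_i$ and $u_1<\cdots<u_{2\ell}$ (adapted from LPP, Thm.~11), and it also needs the tie-breaking convention for repeated labels in $\cM,\cN$. Only after this does Lemma~\ref{lem:comp-kappa} show that the compatible sets are unions of $\sim_\kappa$-classes. Intersecting with the admissible sets, which are unions of $\sim$-classes by Lemma~\ref{lem:admin} (using that $\lambda\wedge\nu$ and $\lambda\vee\nu$ are partitions), then yields the union-of-blocks family on which RLS is applied.

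The obstacle you single out is not one. Since \eqref{eq:AD-cond-skew} is a joint condition on $(\lambda,\mu)$, simultaneous outer and inner swaps are handled automatically by the product lattice $2^{\cE}\times 2^{\cF}=2^{\cE\sqcup\cF}$. Likewise, no shifted-sequence trick is needed: swaps occur within a single row, so set union maps directly to coordinatewise maximum, i.e.\ to $\vee$ of Young diagrams. That proposed fix therefore does not address the actual difficulty.
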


\smallskip

When \ts $\mu=\rho=\varnothing$, the skew ADS inequality \eqref{eq:ADS-skew}
becomes the (usual) ADS inequality \eqref{eq:ADS}.
Similarly, when taking indicator functions as above, we obtain
the skew LPP inequality \eqref{eq:LPP-skew}.  As we show in~$\S$\ref{ss:recover},
the skew ADS inequality  implies the AD inequality.
Curiously, Theorem~\ref{thm:ADS} is proved as a corollary of Theorem~\ref{thm:ADS-skew},
which in turn uses Theorem~\ref{thm:RLS} as lemma, thus making the whole
story even more complicated.

For the proof of Theorem~\ref{thm:ADS-skew}, we follow the blueprint in \cite{CP-multi}
for the multivariate AD inequalities.  Namely, we isolate the core mechanism of
the AD inequalities by reducing them to a set of refined \defng{orchestra inequalities}
\eqref{eq:Orch}.  As can be shown using \cite[Thm~6.1]{CP-multi},
the monomial positivity version of these inequalities follow from \eqref{eq:AD-cond}.
The main step in our approach is to show that orchestra inequalities extend to
\defnm{Schur orchestra inequalities} (Theorem~\ref{thm:orch}),
i.e., hold for Schur positivity.
This is done by a technical argument involving Temperley--Lieb immanants,
a technique that was used by Lam--Postnikov--Pylyavskyy~\cite{LPP07}
to establish Theorems~\ref{thm:LPP} and~\ref{thm:LPP-skew}.

\smallskip

\subsection{Stable Grothendieck polynomials}\label{ss:intro-Groth}
We now present our main application,
to the celebrated \defng{symmetric Grothendieck polynomials}.
These are nonhomogeneous symmetric polynomials introduced by
Lascoux and Sch\"utzenberger \cite{LS83,Las90}, building on
their earlier work on \defng{Schubert polynomials}.  Originally
defined as representatives of $K$-theory classes of structure
sheaves of Schubert varieties, their geometric and algebraic properties
and significance are well understood by now, see e.g.\ \cite{Buch02,B+08}.

The \defng{stable Grothendieck polynomials} \ts $\{\fG_{\lambda}\}$ \ts
are obtained in the stable limit of Grothendieck polynomials.\footnote{The
name is standard in the literature and somewhat misleading as these
are symmetric functions, and thus power series rather than polynomials.}
They form a basis in the ring of symmetric functions, and were introduced
by Fomin and Kirillov in an algebraic context \cite{FK94,FK96}.
We use the term \defna{Grothendieck polynomials} \ts when the context is clear.

It is hard to overstate the importance of Grothendieck polynomials
in modern algebraic combinatorics.  We refer to \cite{Buch05}
for a brief introduction, to \cite{Kir16} for an in-depth overview
of algebraic generalizations, and to \cite{BKTY05,TY09a} for
combinatorial formulas and algorithms.   We postpone the definitions
until~$\S$\ref{ss:notation-Groth}, and proceed to state the main application.

First, we modify Grothendieck polynomials to make them monomial positive:
\begin{align}\label{eq:tG}
\tG_{\lambda}(\bx) \ := \ (-1)^{|\lambda|} \. G_{\lambda}(-\bx)\ts.
\end{align}
Our main application is the following \defn{LPPG inequality}, originally conjectured by Mihalcea~\cite{Mih}.

\smallskip

\begin{thm}[{\rm former \defn{Mihalcea's conjecture}\ts}{}]\label{thm:Gro-LPP}
	For all partitions \ts $\lambda, \mu \in \cP$, we have:
	\begin{equation}\label{eq:Gro-LPP}
	\tG_{\lambda} \. \tG_{\mu} \, \leqslant_{\fs} \, \tG_{\lambda \vee \mu} \. \tG_{\lambda \wedge \mu}\ts.
	\end{equation}
\end{thm}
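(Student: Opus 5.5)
The plan is to deduce \eqref{eq:Gro-LPP} from the ADS inequality (Theorem~\ref{thm:ADS}) applied to the Schur expansions of the four Grothendieck polynomials. I would use Lenart's expansion of stable Grothendieck polynomials in the Schur basis:
$$
\fG_\lambda \, = \, \sum_{\alpha \supseteq \lambda} (-1)^{|\alpha/\lambda|}\, g_{\lambda\alpha}\, \fs_\alpha\ts,
$$
where $g_{\lambda\alpha}\ge 0$ counts the \emph{elegant fillings} of $\alpha/\lambda$. These are fillings that strictly increase along rows and down columns, with every entry in row $i$ lying in $\{1,\ldots,i-1\}$. After the sign twist \eqref{eq:tG}, and using that $\fs_\alpha(-\xb)=(-1)^{|\alpha|}\fs_{\alpha'}(\xb)$ up to the standard $\omega$-involution bookkeeping (to be checked against the conventions of $\S$\ref{ss:notation-Groth}), this becomes a monomial-positive expansion $\tG_\lambda=\sum_\alpha g_{\lambda\alpha}\fs_\alpha$, possibly with all shapes conjugated. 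Conjugation commutes with $\vee$ and $\wedge$, so it does not affect the argument.

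Fix $\lambda,\mu$ and define the four functions on $\cP$ by
$$
\fa(\alpha)=g_{\lambda\alpha},\quad \fb(\beta)=g_{\mu\beta},\quad \fc(\gamma)=g_{\lambda\vee\mu,\,\gamma},\quad \fd(\delta)=g_{\lambda\wedge\mu,\,\delta}\ts.
$$
Then \eqref{eq:ADS} is exactly \eqref{eq:Gro-LPP}. The infinite sums cause no trouble, since we can truncate by degree or by the number of variables. The whole proof therefore reduces to verifying \eqref{eq:AD-cond-first}:
$$
g_{\lambda\alpha}\,g_{\mu\beta} \, \le \, g_{\lambda\vee\mu,\,\alpha\vee\beta}\; g_{\lambda\wedge\mu,\,\alpha\wedge\beta}\qquad\text{for all } \alpha\supseteq\lambda,\ \beta\supseteq\mu.
$$
This is the main obstacle. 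I would prove it by an explicit injection from pairs $(T,U)$ of elegant fillings of $\alpha/\lambda$ and $\beta/\mu$ into pairs of elegant fillings of $(\alpha\vee\beta)/(\lambda\vee\mu)$ and $(\alpha\wedge\beta)/(\lambda\wedge\mu)$.

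The first attempt is to encode an elegant filling as a family of nonintersecting lattice paths, one per row. In row $i$, the set of entries is a subset of $[i-1]$ of size $\alpha_i-\lambda_i$, and column strictness becomes an interlacing condition between consecutive rows. Within each row we have two paths: the one coming from $T$ and the one coming from $U$. The natural operation is to take the pointwise upper and lower envelopes of each pair, as in the Lindstr\"om--Gessel--Viennot proofs of log-supermodularity. Row by row, this should send the endpoint data $(\lambda_i,\alpha_i)$, $(\mu_i,\beta_i)$ to $(\max,\max)$ and $(\min,\min)$, which is exactly what the join and meet of the shapes require. The flag bound $\le i-1$ is preserved by max and min automatically.

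The delicate points are that column strictness between rows $i$ and $i+1$ must survive the envelope operation, and that the map must be injective. Injectivity should follow if we record, at each crossing, which input path was on top. If the naive envelope fails, I would fall back on a determinantal route. This means writing $g_{\lambda\alpha}$ as a Jacobi--Trudi type determinant of binomial coefficients (a flagged formula) and viewing it as a weighted count of nonintersecting path families in a planar network. The desired inequality is then an instance of the four-functions / LPP-type inequality for such path counts, which could be obtained via Theorem~\ref{thm:AD} applied to the lattice of path families ordered by heights.
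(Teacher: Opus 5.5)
Your reduction is correct and is essentially the paper's second proof. With $\fa(\alpha)=\fg_{\lambda,\alpha}$ and so on, \eqref{eq:ADS} is exactly \eqref{eq:Gro-LPP}; this is Theorem~\ref{thm:ADS-Groth} applied to indicator functions, and the paper's main proof feeds the same ingredients into the skew ADS inequality. Two small corrections:
\begin{enumerate}
\item The objects counted by $\fg_{\lambda,\alpha}$ are the increasing tableaux $\IT(\alpha/\lambda)$, not elegant fillings. Elegant fillings are the semistandard analogues used for $\fG^*_\lambda$.
\item $\fs_\alpha(-\xb)=(-1)^{|\alpha|}\fs_\alpha(\xb)$ simply by homogeneity, so $\tG_\lambda=\sum_\alpha \fg_{\lambda,\alpha}\fs_\alpha$ with no conjugation at all.
\end{enumerate}
You also do not need the paper's extra indicator of $\alpha\subseteq\wh\lambda$, since $\fg_{\lambda,\alpha}>0$ already forces $\alpha_i-\lambda_i\le i-1$.

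The gap is in the step you flag as the main obstacle, the inequality $\fg_{\lambda,\alpha}\fg_{\mu,\beta}\le \fg_{\lambda\vee\mu,\alpha\vee\beta}\,\fg_{\lambda\wedge\mu,\alpha\wedge\beta}$ (Lemma~\ref{lem:fg}).

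Pad each filling with $-\infty$ on its inner shape and $+\infty$ outside its outer shape. Then your upper and lower envelopes are just the entrywise minimum and maximum of the padded fillings. These land in the right sets. Column strictness is not delicate, because the constraints $T(i+1,j)\ge T(i,j)+1$ and $T(i,j+1)\ge T(i,j)+1$ are preserved by entrywise min and max.

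However, the envelope map is not injective. Take $\lambda=\mu=(1,1)$ and $\alpha=\beta=(1,1,1)$, where the inequality is the tautology $2\cdot 2\le 2\cdot 2$. Let $T$ and $U$ fill the cell $(3,1)$ by $1$ and $2$ respectively. Then the pairs $(T,U)$ and $(U,T)$ have the same image. Recording who was on top at each crossing only gives an injection into pairs of fillings \emph{together with} that extra data. That is a larger set, so it yields no inequality between the counts.

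The missing idea is to not seek an injection at all:
\begin{enumerate}
\item The padded fillings form a distributive lattice $\aL$ under entrywise max and min.
\item Your envelope observation says precisely that the indicator functions of the four shapes satisfy \eqref{eq:AD-cond} on $\aL$. The max goes to the meet shapes and the min to the join shapes.
\item Theorem~\ref{thm:AD}, whose proof is inductive rather than bijective and so is insensitive to such collisions, then gives Lemma~\ref{lem:fg} at once.
\end{enumerate}
This is your fallback without the flagged Jacobi--Trudi determinant and path network, which are unnecessary. It is exactly how the paper proves Lemma~\ref{lem:fg}.
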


\smallskip

Taking the lowest degree terms in \eqref{eq:Gro-LPP} gives \eqref{eq:LPP}.
Let us emphasize that even the evaluation positivity was open until now.
Note also that we only prove the Schur positivity, while the
\defng{Grothendieck positivity} \ts remains open:

\smallskip

\begin{conj}[{\rm Thomas--Yong~\cite[Conj.~9.2]{TY09b}, see also \cite{Mih}}{}]\label{conj:TY}
	For all partitions \ts $\lambda, \mu \in \cP$, we have:
	\[
	\tG_{\lambda} \. \tG_{\mu} \, \leqslant_{\fg} \, \tG_{\lambda \vee \mu} \. \tG_{\lambda \wedge \mu}\ts.
	\]
\end{conj}

\smallskip

Here by \. $P\leqslant_{\fg} Q$ \. we mean that \ts $(Q-P)$ \ts is a nonnegative sum of
positive Grothendieck polynomials \ts $\tG_\nu\ts$.  This is a stronger property than
Schur positivity since \ts $\tG_{\lambda}$ \ts are Schur positive,
so one can think of  Theorem~\ref{thm:Gro-LPP} as an evidence in
favor of the conjecture.

We show in Section~\ref{s:Gro}, that Theorem~\ref{thm:Gro-LPP} is a
corollary of both Theorems~\ref{thm:ADS} and~\ref{thm:ADS-skew},
with functions \ts $\fa,\fb,\fc,\fd$ \ts carefully chosen.  In fact, we derive
Theorem~\ref{thm:Gro-LPP} from a skew shape generalization given in
Theorem~\ref{thm:Gro-LPP-skew}.  Along the way, we prove log-supermodularity
of the numbers \ts $g_{\la,\mu}$ \ts of certain increasing  tableaux:
$$\fg_{\lambda, \ts\mu}  \.\cdot \. \fg_{\nu,\ts\rho}   \ \leq \
\fg_{\lambda\vee \nu, \ts \mu \vee \rho} \.\cdot \.\fg_{\lambda\wedge \nu, \ts \mu \wedge \rho}\.,
$$
see~$\S$\ref{ss:notation-Groth} for the definition and Lemma~\ref{lem:fg} for the proof.
This is a result of independent interest, similar to \defng{Bj\"orner's log-supermodularity}
\ts of the numbers \ts $f^\la$ \ts of standard Young tableaux
\cite[Prop.~6.1]{Bjo11}, see also a skew version in \cite[Cor.~3.3]{CP-multi}.

\smallskip

\subsection{Dual stable Grothendieck polynomials}
Recall that Schur functions are self-dual w.r.t.\ the
\defng{Hall inner product} \. $\<s_\la,s_\mu\>=\de_{\la\mu}\ts$, while the  stable
Grothendieck polynomials are not.  The \defna{dual stable Grothendieck polynomials}
\ts $\{\fG_{\lambda}^\ast\}$ \ts form a basis in the ring of symmetric
functions~$\La$ that is orthogonal dual to the stable Grothendieck polynomials,
and has its own interesting combinatorial properties.

This family of symmetric functions was defined by Lam--Pylyavskyy in \cite{LP}
from a Hopf algebraic point of view.  We refer to \cite{Gal,Yel17} for further
background and combinatorial formulas.  We postpone the definitions
until~$\S$\ref{ss:notation-dual}.  For now, we state a dual version of
Theorem~\ref{thm:Gro-LPP}.

\smallskip

\begin{thm}\label{thm:Gro-LPP-dual}
	For all partitions \ts $\lambda, \mu \in \cP$, we have:
	\[
	\fG_{\lambda}^* \. \fG_{\mu}^* \ \leqslant_{\fs} \ \fG_{\lambda \vee \mu}^* \. \fG_{\lambda \wedge \mu}^*.
	\]
\end{thm}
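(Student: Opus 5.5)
The plan is to deduce Theorem~\ref{thm:Gro-LPP-dual} from the ADS inequality (Theorem~\ref{thm:ADS}), by choosing the four functions to be the Schur expansion coefficients of the four dual Grothendieck polynomials. I would use the Lam--Pylyavskyy expansion (to be recalled in~$\S$\ref{ss:notation-dual}):
\[
\fG^*_\lambda \, = \, \sum_{\nu\subseteq\lambda} \fe_{\lambda/\nu}\,\fs_\nu,
\]
where \ts $\fe_{\lambda/\nu}\ge 0$ \ts counts \emph{elegant fillings} of \ts $\lambda/\nu$. These are semistandard fillings (rows weakly increasing, columns strictly increasing) in which every entry in row~$i$ lies in \ts $\{1,\ldots,i-1\}$.

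For fixed \ts $\lambda,\mu$, set \ts $\fa(\nu)=\fe_{\lambda/\nu}$, \ts $\fb(\nu)=\fe_{\mu/\nu}$, \ts $\fc(\nu)=\fe_{(\lambda\vee\mu)/\nu}$, \ts $\fd(\nu)=\fe_{(\lambda\wedge\mu)/\nu}$, with the convention \ts $\fe_{\lambda/\nu}=0$ \ts when $\nu\not\subseteq\lambda$. Then \eqref{eq:ADS} is literally the claimed inequality. The whole proof therefore reduces to verifying \eqref{eq:AD-cond-first}, namely the log-supermodularity
\[
\fe_{\lambda/\alpha}\,\fe_{\mu/\beta} \ \le \ \fe_{(\lambda\vee\mu)/(\alpha\vee\beta)}\,\fe_{(\lambda\wedge\mu)/(\alpha\wedge\beta)}\qquad\text{for all } \alpha,\beta\in\cP.
\]
This is the analogue of Lemma~\ref{lem:fg} for increasing tableaux, and it is the main step.

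To prove it, I would encode an elegant filling $T$ of \ts $\lambda/\alpha$ as a chain
\[
\alpha=\nu^0\subseteq\nu^1\subseteq\cdots\subseteq\nu^{k}=\lambda,
\]
where \ts $\nu^j$ \ts is the union of $\alpha$ with the cells whose entries are at most~$j$. The filling conditions translate as follows.
\begin{enumerate}
\item Semistandardness says that each \ts $\nu^j/\nu^{j-1}$ \ts is a horizontal strip. Equivalently, the interlacing inequalities \ts $\nu^{j-1}_{i}\le\nu^j_i$ \ts and \ts $\nu^j_{i+1}\le\nu^{j-1}_{i}$ \ts hold.
\item The elegance (flag) condition says that \ts $\nu^j_i=\lambda_i$ \ts for all \ts $j\ge i-1$. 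In particular, the first row of $\lambda/\alpha$ must be empty, i.e.\ \ts $\nu^0_1=\lambda_1$.
\end{enumerate}

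Given such a chain $(\nu^j)$ for $T\in$ fillings of $\lambda/\alpha$ and a chain $(\rho^j)$ for $T'\in$ fillings of $\mu/\beta$, I would form the rowwise max and min chains \ts $(\nu^j\vee\rho^j)_j$ \ts and \ts $(\nu^j\wedge\rho^j)_j$. Each of these is a chain of partitions. Each interlacing inequality is of the form $x\le y$ and is preserved coordinatewise under taking max or min of two pairs satisfying it, so the horizontal strip condition survives. The flag condition survives as well, since \ts $\max(\lambda_i,\mu_i)=(\lambda\vee\mu)_i$ \ts and similarly for min. The endpoints are \ts $\alpha\vee\beta$, $\lambda\vee\mu$ \ts and \ts $\alpha\wedge\beta$, $\lambda\wedge\mu$, respectively.

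This produces a map from pairs of fillings on the left-hand side to pairs of fillings on the right-hand side. Injectivity follows because the pair of coordinate multisets \ts $\{\nu^j_i,\rho^j_i\}$ \ts is recovered as \ts $\{\max,\min\}$. This is exactly the reason the map is not automatically injective, so here I expect the real obstacle. The standard fix, as in \cite{CP-multi} and the proof of Bj\"orner's log-supermodularity, is to avoid an explicit injection. Instead, view the set of chains as a distributive sublattice of a product of chains, closed under $\vee,\wedge$ by the argument above. Then apply the AD inequality (Theorem~\ref{thm:AD}) with $\fa,\fb,\fc,\fd$ the indicators of chains with the prescribed endpoints $(\alpha,\lambda)$, $(\beta,\mu)$, $(\alpha\vee\beta,\lambda\vee\mu)$, $(\alpha\wedge\beta,\lambda\wedge\mu)$. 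The condition \eqref{eq:AD-cond} holds by the closure just checked, and \eqref{eq:AD} gives the displayed inequality. One also needs to handle the degenerate cases where $\alpha\not\subseteq\lambda$ or $\beta\not\subseteq\mu$, in which the left-hand side vanishes.

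With \eqref{eq:AD-cond-first} established, Theorem~\ref{thm:ADS} yields the result immediately.
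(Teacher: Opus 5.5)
Your proposal is correct and follows essentially the paper's argument. The paper also applies the ADS inequality with the elegant-filling counts $\fg^*_{\lambda,\nu}$ as the four functions: formally it goes through the skew version, Theorem~\ref{thm:ADS-skew}, and your straight-shape use of Theorem~\ref{thm:ADS} is its special case with empty inner shapes. It likewise verifies the local condition via the AD inequality on a distributive lattice encoding the fillings (Lemma~\ref{lem:fg*}); your level-set chains are just an order-reversed re-encoding of the paper's entrywise max/min on fillings padded by $\pm\infty$. Delete the stray sentence claiming that $(T,T')\mapsto(T\vee T',T\wedge T')$ is injective, since it is false and not needed, and fix one common chain length $k\ge \ell(\lambda\vee\mu)-1$ for all four functions.
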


\smallskip

Again, note that we only prove Schur positivity.  The proof
is given in Section~\ref{s:dual}.  Along the way, we prove
log-supermodularity of the numbers of certain \defng{elegant fillings}
(Lemma~\ref{lem:fg*}).

\smallskip

\subsection{Shadow Schur functions}\label{ss:intro-shadow}
Fundamentally, the reason our skew ADS inequality (Theorem~\ref{thm:ADS-skew})
is applicable to the stable and dual stable Grothendieck polynomials, is because
there are well known combinatorial expansions of these families
in the basis of Schur functions.  Unfortunately, proofs of both
Theorem~\ref{thm:Gro-LPP} and~\ref{thm:Gro-LPP-dual} are somewhat
technical, obscuring the underlying philosophy.

To clarify these applications, we introduce a new family of
symmetric functions, where the applications of the
skew ADS inequality is straightforward.
Fix integers \ts $k\ge \ell\ge 1$ \ts and denote by \ts $\cP^\ell$ \ts
the set of partitions with at most~$\ell$ parts.
Define a \defn{shadow Schur function} \ts as a sum of Schur functions
of partitions with at most~$k$ rows, of which the first~$\ell$ rows are fixed:
\begin{equation}\label{eq:S_lambda}
	 \fS_{\lambda}^{(k,\ell)} \, := \,  \sum_{\mu \ts\in\ts \Sh(\la;k,\ell)} \. \fs_{\mu}\,, \quad \text{where} \quad \la\in \cP^\ell, \ \
\Sh(\la;k,\ell) \, := \, \{\mu \in \cP^k \. : \. \la_1=\mu_1,\ldots,\la_\ell=\mu_\ell\}.
\end{equation}
This family contains the (usual) Schur functions \ts for $k=\ell$, and is interesting in its own right.

In Section~\ref{s:app}, we state and prove both the usual and skew versions of log-supermodularity for shadow Schur functions, modeled after Theorems~\ref{thm:Gro-LPP} and~\ref{thm:Gro-LPP-dual} above.  We also present a roadmap for generalizations in this setting.

\smallskip

\subsection{Paper structure}\label{ss:intro-outline}
We start with standard definition and notation in algebraic combinatorics
(Section~\ref{s:def}), including definitions of both families of Grothendieck
polynomials discussed above.  In Section~\ref{s:app} we give applications of
the AD and ADS inequalities to shadow Schur functions.  This section can
be viewed as an accessible example illustrating the power of our results.

In a brief Section~\ref{s:under} we first relate the AD and ADS inequalities.
We then show the subtleties of the proofs of the ADS inequality and its
relatives, by giving a counterexample to the naive approach to \eqref{eq:ADS}.
This is another preliminary section aiming to help the reader absorb the results.
This discussion motivates the
\defnm{Schur duet inequalities} (Corollary~\ref{cor:duet-Schur}),
a special case of the \defnm{Schur orchestra inequalities} (Theorem~\ref{thm:orch}).
These are key technical results introduced in Section~\ref{s:Orch};  we need them
in lieu of the induction used in the standard proof of AD inequality.

In Section~\ref{s:TL} we present the algebraic approach to Schur positivity,
using the technology of \defnm{Temperley--Lieb immanants}.  These are used
in Section~\ref{s:orch-proof} to prove the Schur orchestra inequalities.
Then, in Section~\ref{s:ADS}, we prove both the ADS inequality (Theorem~\ref{thm:ADS})
and the skew ADS inequality (Theorem~\ref{thm:ADS}).

In Section~\ref{s:Gro}, we establish log-supermodularity and log-concavity
for Grothendieck polynomials (Theorem~\ref{thm:Gro-LPP} and Corollary~\ref{cor:Gro-LC}),
using the skew ADS inequality.  We also prove the \defnm{ADG inequality},
a generalization of the ADS inequality  to Grothendieck polynomials (Theorem~\ref{thm:ADS-Groth}).
This is followed by a closely related Section~\ref{s:dual} where we prove versions
of these inequalities for the dual stable Grothendieck polynomials
(Theorem~\ref{thm:Gro-LPP-dual} and Corollary~\ref{cor:Gro-LC-dual}).

In Section~\ref{s:further}, we outline four different directions in which
our results can be generalized, including an extension of ADS inequality
to \emph{supersymmetric functions} (Theorem~\ref{thm:ADS-Super}).
We also present two interesting open problems
(Conjectures~\ref{conj:SHKS} and~\ref{conj:SLC}).
We conclude with final remarks in Section~\ref{s:finrem}.
\medskip

%\newpage

%%%%%%%%%%%%%%%%%%%%%%%%%%%%%%%%%%%%%%%%%%%%%%%%%%%%%%%%%%%%%%%%%%%%%%%%%%%%%%%%%%%%%%
%%%%%%%%%%%%%%%%%%%%%%%%%%%%%%%%%%%%%%%%%%%%%%%%%%%%%%%%%%%%%%%%%%%%%%%%%%%%%%%%%%%%%%
%%%%%%%%%%%%%%%%%%%%%%%%%%%%%%%%%%%%%%%%%%%%%%%%%%%%%%%%%%%%%%%%%%%%%%%%%%%%%%%%%%%%%%
%%%%%%%%%%%%%%%%%%%%%%%%%%%%%%%%%%%%%%%%%%%%%%%%%%%%%%%%%%%%%%%%%%%%%%%%%%%%%%%%%%%%%%
%%%%%%%%%%%%%%%%%%%%%%%%%%%%%%%%%%%%%%%%%%%%%%%%%%%%%%%%%%%%%%%%%%%%%%%%%%%%%%%%%%%%%%
%%%%%%%%%%%%%%%%%%%%%%%%%%%%%%%%%%%%%%%%%%%%%%%%%%%%%%%%%%%%%%%%%%%%%%%%%%%%%%%%%%%%%%
%%%%%%%%%%%%%%%%%%%%%%%%%%%%%%%%%%%%%%%%%%%%%%%%%%%%%%%%%%%%%%%%%%%%%%%%%%%%%%%%%%%%%%

%\newpage

\section{Definitions and notation}\label{s:def}

\subsection{Basic notations}\label{ss:notation-basic}
We use \ts $\nn=\{0,1,2,\ldots\}$ \ts and \ts $\nn_{\ge 1} = \{1,2,\ldots\}$
\ts to denote the sets of nonnegative and positive integers.  Let
\ts $[n]=\{1,2,\ldots,n\}$ \ts and \ts $\rr_+ = \{x\ge 0\}$.
We use \ts $2^X$ \ts to denote the set of subsets of~$X$.
The indicator function of a subset \ts $A$ \ts is denoted~$\mathbf{1}_A\ts$.

For an inequality \ts $a \ge b$, the difference \ts $(a-b)$ \ts
is called the \defn{defect}.  For polynomials \ts $f,g \in \rr[x_1,x_2,\ldots]$,
we write \ts $f\geqslant g$ \ts if the defect \ts $(f-g)\ge 0$ \ts for all \ts
$x_1,x_2,\ldots \ge 0$.  We write \ts $f\geqslant_{\fm} g$ \ts if \ts
$(f-g)\in \rr_+[x_1,x_2,\ldots]$.  These properties are called the \defn{evaluation}
and the \defn{monomial positivity}, respectively.

\subsection{Partitions}\label{ss:notation-part}
Throughout the paper, a \defnb{partition} \ts is an infinite nonincreasing sequence
\ts $\la=(\la_1, \la_2, \ldots)$ \ts with bounded support.  We use \ts $\ell(\la)$ \ts
to denote the support size, i.e., the number of nonzero parts in~$\la$.
Let \ts $\cP$ \ts denote the set of partitions, and let \ts $\cP^\ell$ \ts
denote the set of partitions with at most \ts $\ell$ \ts nonzero parts.
These are not to be confused with \defn{set partitions} \. $X= B_1\sqcup B_2 \sqcup \ldots$

When writing partitions, we omit the infinite
tail of zeros, and write only the prefix with the support of the sequence,
so e.g.\ \ts $(4,3,1,0,0,\ldots)$ \ts is written as \ts $(4,3,1)$.
For convenience, we use both the sequence and word notation,
so for example \ts $(4,3,1)$ \ts and \ts $431$ \ts correspond to the same
partition.

Denote by \ts $|\la|:=\la_1+\la_2+\ldots$ \ts the \defn{size} \ts of the partition~$\la$.
A \defn{conjugate partition} \ts $\la'=(\la_1',\la_2',\ldots)$ \ts is defined by \.
$\la_i':=|\{j\.: \. \la_j\ge i\}|$.
As in the introduction, let
\[
\lambda \vee \mu \ := \  \big(\max\{\lambda_1,\mu_1\},\max\{\lambda_2,\mu_2\}, \ldots\big), \qquad   \lambda \wedge \mu \ := \  \big(\min\{\lambda_1,\mu_1\},\min\{\lambda_2,\mu_2\},\ldots\big).
\]
Clearly, these operations coincide with union and intersection of the corresponding
Young diagram, as defined in the introduction.  Note that under these operations,
\ts $(\cP,\vee,\wedge)$ \ts forms an infinite distributive lattice.

Next, we define two operations on all partitions $\la,\mu\in \cp$:
$$
\big\lceil \tfrac{\la+\mu}{2}\big\rceil \ := \ \big(\big\lceil \tfrac{\la_1+\mu_1}{2}\big\rceil,
\big\lceil \tfrac{\la_2+\mu_2}{2}\big\rceil, \ldots\big), \qquad
\big\lfloor \tfrac{\la+\mu}{2}\big\rfloor \ := \ \big(\big\lfloor \tfrac{\la_1+\mu_1}{2}\big\rfloor,
\big\lfloor \tfrac{\la_2+\mu_2}{2}\big\rfloor, \ldots\big).
$$
Clearly, when all parts \ts $(\la_i+\mu_i)$ \ts are even, these two partitions coincide.

Finally, for partitions \ts $\lambda,\mu\in \cp^\ell$,
let \ts $\nu \in \cP^{2\ell}$ \ts be a partition obtained by sorting the
combined parts of $\lambda$ and $\mu$ in weakly decreasing order.
We define:
$$
\sort_1(\lambda,\mu) \, := \, (\nu_1,\nu_3,\ldots), \qquad
\sort_2(\lambda,\mu) \, := \, (\nu_2,\nu_4,\ldots).
$$
Note also that this sorting operation is conjugate to the averaging operation
above.

\subsection{Young diagrams and Young tableaux} \label{ss:notation-SYT}
We refer to \cite{Mac95} and  \cite[Ch.~7]{EC} for standard definitions and notation
in algebraic combinatorics.
A \defn{Young diagram} \ts corresponding to partition \ts $\la=(\la_1,\la_2,\ldots)$ \ts
is the set of squares
$$
\big\{(i,j)\in \nn^2 \. : \. 1\le j \leq  \la_i, \ts 1\le i \le \ell(\la)\big\}.
$$
By a mild abuse of notation, we use \ts $\la$ \ts to also
denote the corresponding Young diagram, and refer to it as the \defn{straight shape}.
Note that the Young diagram of a conjugate partition $\la'$ is obtained by the \ts
$x\lra y$ \ts reflection.
We use $\emp$ to denote empty Young diagram corresponding to the $\mathbf{0}$ sequence.

Let \ts $\mu=(\mu_1,\mu_2,\ldots)$ \ts be a partition such that \.
$\mu_i \le \la_i$ \. for all \ts $i\ge 1$.  The difference of Young diagrams
is denoted by \. $\la/\mu$ \. and called the \defn{skew Young diagram} \ts
of shape \ts $\la/\mu$, or simply the \defn{skew shape}~$\la/\mu$.
We use \ts $|\la/\mu|:=|\la|-|\mu|$ \ts to denote the number of squares in~$\la/\mu$.

Let \ts $A: \la/\mu \to \nn_{\ge 1}$ \ts be a function which increases in rows and strictly
increases in columns.  We think of \ts $A$ \ts as a Young tableau with integers written
in squares of~$\la/\mu$.  Such~$A$ is called a \defn{semistandard Young tableau}.
The set of such tableaux is denoted \ts $\SSYT(\la/\mu)$.  We use \ts $\SSYT(\la/\mu,t)$ \ts
to denote semistandard Young tableaux with entries $\le t$.

The \defn{weight} \ts of a tableau \ts $A\in \SSYT(\la/\mu)$ \ts is a sequence \.
$\big(m_1(A),m_2(A),\ldots\big)$, where \ts $m_i(A):=|A^{-1}(i)|$ \ts is the number
of $i$'s in~$A$.  \defn{Kostka number} \ts $K_{\la,\mu}$ \ts is the number of
$A\in \SSYT(\la)$ \ts of weight~$\mu$.

\subsection{Schur functions} \label{ss:notation-Schur}
\defn{Skew Schur polynomial} \ts is a symmetric polynomial associated with the skew shape \ts $\la/\mu$ \ts
and can be defined as
\begin{equation}\label{eq:Schur-def}
\fs_{\la/\mu}(z_1,\ldots,z_n) \, = \, \sum_{A \ts \in \ts \SSYT(\la/\mu, \ts n)} \. z_1^{m_1(A)} \ts \cdots \. z_n^{m_n(A)}\..
\end{equation}
They form a linear basis
in the space $\La_n$ of all symmetric polynomials in~$n$ variables.  Here we include
only combinatorial definitions, rather than the (more standard) determinantal definition.

Similarly,
\defn{skew Schur functions} \ts are defined as
\begin{equation}\label{eq:Schur-def-inf}
\fs_{\la/\mu}(z_1,z_2,\ldots) \, = \, \sum_{A \ts \in \ts \SSYT(\la/\mu)} \. z_1^{m_1(A)} \. z_2^{m_2(A)} \ts \cdots
\end{equation}
They are the stable limits of Schur polynomials as \ts $n\to \infty$. Note that Schur functions
\ts $\fs_{\la/\mu}$ \ts are homogenous of degree \ts $|\la/\mu|$.

Recall that Schur functions \ts $\fs_\la$ \ts corresponding to straight shape \ts $\mu=\emp$ \ts
form a linear basis in the \defn{ring of symmetric functions} \.
$\La=\varprojlim \La_n$\ts, where \. $\La_n=\cc[x_1,\ldots,x_n]^{S_n}$.
It is convenient  to extend the definition of functions \ts $\fs_\lambda$ \ts
to arbitrary infinite integer sequences \ts $\lambda \in \nn^\infty$.
We adopt the convention that \. $\fs_{\lambda}=0$ \. when $\lambda$ is not a partition.

The \defn{Littlewood--Richardson coefficients} \ts are the structure constants
of multiplication of Schur functions:
$$
\fs_\mu \. \fs_\nu \ = \ \sum_{\la\in \cP} \. c^\la_{\mu,\nu} \. \fs_\la\ts.
$$
Taking the degrees shows that \ts $c^\la_{\mu,\nu}=0$ \ts unless \ts $|\la|=|\mu|\ts+\ts |\nu|$.

\subsection{Grothendieck polynomials} \label{ss:notation-Groth}
An \defn{increasing tableau} of skew shape $\lambda/\mu$ is a function
\ts $A: \lambda/\mu \to \nn_{\ge 1}$ \ts which is strictly increasing in
rows and down columns, and such that every entry in the $i$-th row is
restricted to $\{1, 2, \ldots, i-1\}$.  Denote by \ts $\IT(\la/\mu)$ \ts
the set of such tableaux, and let \ts $\fg_{\mu,\lambda}:=|\IT(\la/\mu)|$ \ts be
the number of such tableaux (note the order reversal in the index).

Denote by \ts $\widehat{\lambda}$ \ts the unique maximal partition satisfying \ts
$\widehat{\lambda}_i \le \lambda_i + i - 1$ \ts for all \ts $i \ge 1$.
Denote \ts
$\cR(\la):=\big\{\mu \.:\. \lambda \subseteq \mu \subseteq \widehat{\lambda}\big\}$.
For a partition \ts $\lambda \in \cP$, the \ts \defnb{stable Grothendieck polynomial} \ts is defined as
\begin{equation}\label{eq:fG}
\fG_{\lambda}(\bx) \, := \,  \sum_{\mu \in \cR(\la)} (-1)^{|\mu|-|\lambda|} \. \fg_{\lambda,\mu} \ts \fs_\mu(\bx)\ts.
\end{equation}
These functions form a basis in the ring of symmetric functions~$\La$.
Note that just like Schur functions, the stable Grothendieck polynomials are formal power series in
variables \ts $\bx=(x_1,x_2,\ldots)$.

There are several other equivalent definitions of $\fG_{\lambda}$, and the definition
presented here is due to Lenart \cite[Thm.~2.8]{Len}.  For an alternative combinatorial
formula for \ts $\fG_{\lambda}$ \ts in terms of set-valued tableaux,  see \cite[$\S$3]{Buch02}
and  \cite[Cor.~3.11]{MPS21}.  Yet another combinatorial definition in terms of standardized
increasing tableaux is given in \cite{McN06} in a more general context of \defng{factorial
Grothendieck polynomials}.

As in \eqref{eq:tG}, it will be convenient for us to define the following positive
version of stable Grothendieck polynomials:
\begin{align}\label{eq:tG-later}
\tG_{\lambda}(\bx) \, :=  \, (-1)^{|\lambda|}G_{\lambda}(-\bx) \, = \,  \sum_{\mu \in \cR(\la)} \. \fg_{\lambda,\mu} \ts \fs_\mu(\bx)\ts.	
\end{align}

\subsection{Dual stable Grothendieck polynomials} \label{ss:notation-dual}
An \defnb{elegant filling} \ts of the skew shape \ts $\lambda/\mu$ \ts
is a semistandard Young tableau \ts $A \in \SSYT(\lambda/\mu)$ \ts
subject to the condition that every entry in row $i$ belongs to the set
$\{1,2,\ldots, i-1\}$, for all $i \geq 1$.  We use \ts $\EF(\la/\mu)$ \ts
to denote the set of elegant fillings, and let \ts
$\fg^*_{\lambda,\mu}:=|\EF(\la/\mu)|$ \ts denotes the number of elegant
fillings of the skew shape~$\lambda/\mu$.

For partition $\lambda \in \cP$,
the \defnb{dual stable Grothendieck polynomial} \ts is defined as
\[
\fG^{*}_{\lambda}(\bx) \ := \  \sum_{\mu \subseteq \lambda} \. \fg^*_{\lambda,\mu} \. \fs_{\mu}(\bx).
\]
These functions also form a basis in the ring of symmetric functions~$\La(\bx)$.

\medskip

%%%%%%%%%%%%%%%%%%%%%%%%%%%%%%%%%%%%%%%%%%%%%%%%%%%%%%%%%%%%%%%%%%%%%%%%%%%%%%%%%%%%%%
%%%%%%%%%%%%%%%%%%%%%%%%%%%%%%%%%%%%%%%%%%%%%%%%%%%%%%%%%%%%%%%%%%%%%%%%%%%%%%%%%%%%%%
%%%%%%%%%%%%%%%%%%%%%%%%%%%%%%%%%%%%%%%%%%%%%%%%%%%%%%%%%%%%%%%%%%%%%%%%%%%%%%%%%%%%%%
%%%%%%%%%%%%%%%%%%%%%%%%%%%%%%%%%%%%%%%%%%%%%%%%%%%%%%%%%%%%%%%%%%%%%%%%%%%%%%%%%%%%%%
%%%%%%%%%%%%%%%%%%%%%%%%%%%%%%%%%%%%%%%%%%%%%%%%%%%%%%%%%%%%%%%%%%%%%%%%%%%%%%%%%%%%%%
%%%%%%%%%%%%%%%%%%%%%%%%%%%%%%%%%%%%%%%%%%%%%%%%%%%%%%%%%%%%%%%%%%%%%%%%%%%%%%%%%%%%%%
%%%%%%%%%%%%%%%%%%%%%%%%%%%%%%%%%%%%%%%%%%%%%%%%%%%%%%%%%%%%%%%%%%%%%%%%%%%%%%%%%%%%%%

%\newpage

\section{Shadow Schur functions} \label{s:app}

In this section we introduce a new family of \defng{shadow Schur functions} \ts
which generalize the ordinary Schur functions.  We then present extensions
of LPP and skew LPP inequalities, which follow easily from the ADS inequalities.
We then state without proof several other related inequalities for shadow Schur
functions.

\smallskip

\subsection{Shadow LPP inequality}\label{ss:app-LPP}
Fix integers \ts $k\ge \ell\ge 1$ \ts and note that \ts $\Sh(\la;k,\ell)=\{\la\}$ \ts
when \ts $k=\ell$, or when \ts $k>\ell$ \ts and \ts $\la_\ell=0$.  Recall the definition
\eqref{eq:S_lambda} of shadow Schur functions given in the introduction.

\smallskip

\begin{thm}[{\rm \defn{shadow LPP inequality}\ts}{}]\label{thm:LPP-global}
For all \. $k \ge \ell\ge 1$ \ts and \ts  $\lambda,\mu \in \cP^\ell$, we have:
\begin{equation}\label{eq:sh-LPP}\tag{sLPP}
\fS_{\lambda}^{(k,\ell)} \. \fS_{\mu}^{(k,\ell)} \ \leqslant_{\fs} \
\fS_{\lambda \vee \mu}^{(k,\ell)} \.\fS_{\lambda \wedge \mu}^{(k,\ell)}.
\end{equation}
\end{thm}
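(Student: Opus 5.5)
We apply the ADS inequality (Theorem~\ref{thm:ADS}) with indicator functions. Write
\[
\fS_{\lambda}^{(k,\ell)} \, = \, \sum_{\alpha\in\cP} \fa(\alpha)\ts \fs_\alpha\,, \qquad \fa := \mathbf{1}_{\Sh(\la;k,\ell)}\ts,
\]
and similarly set $\fb := \mathbf{1}_{\Sh(\mu;k,\ell)}$, $\fc := \mathbf{1}_{\Sh(\la\vee\mu;k,\ell)}$ and $\fd := \mathbf{1}_{\Sh(\la\wedge\mu;k,\ell)}$. These are functions $\cP\to\rr_{\ge 0}$. Note that $\la\vee\mu$ and $\la\wedge\mu$ again lie in $\cP^\ell$, so the right-hand side of \eqref{eq:sh-LPP} is $\big(\sum_\alpha \fc(\alpha)\fs_\alpha\big)\big(\sum_\alpha \fd(\alpha)\fs_\alpha\big)$. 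Once \eqref{eq:AD-cond-first} is verified for this quadruple, \eqref{eq:ADS} is exactly \eqref{eq:sh-LPP}.

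The only real step is the verification of \eqref{eq:AD-cond-first}. Since all four functions are $\{0,1\}$-valued, we only need the following: if $\fa(\alpha)\fb(\beta)=1$, then $\fc(\alpha\vee\beta)=\fd(\alpha\wedge\beta)=1$. So take $\alpha\in\Sh(\la;k,\ell)$ and $\beta\in\Sh(\mu;k,\ell)$.
\begin{itemize}
\item Both $\alpha$ and $\beta$ have at most $k$ rows. Hence so do $\alpha\vee\beta$ and $\alpha\wedge\beta$, since the componentwise max and min of partitions are partitions, and they vanish wherever both arguments vanish.
\item For $i\le \ell$ we have $(\alpha\vee\beta)_i=\max\{\alpha_i,\beta_i\}=\max\{\la_i,\mu_i\}=(\la\vee\mu)_i$.
\item Likewise $(\alpha\wedge\beta)_i=(\la\wedge\mu)_i$ for $i\le\ell$.
\end{itemize}
Therefore $\alpha\vee\beta\in\Sh(\la\vee\mu;k,\ell)$ and $\alpha\wedge\beta\in\Sh(\la\wedge\mu;k,\ell)$, which gives \eqref{eq:AD-cond-first}.

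There is no serious obstacle; the main point is that the ADS inequality allows arbitrary nonnegative functions on all of $\cP$. The sums are infinite formal sums of Schur functions, but each is finite in every degree. Schur positivity is therefore checked degree by degree, and each coefficient is a finite quantity, so Theorem~\ref{thm:ADS} applies as stated.
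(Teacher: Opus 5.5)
Your proof is correct and is essentially the paper's argument: the paper proves the skew version (Theorem~\ref{thm:LPP-global-skew}) by applying the skew ADS inequality to indicator functions that fix the first $\ell$ rows, and your argument is its $\mu=\rho=\varnothing$ case, or equivalently a direct application of Theorem~\ref{thm:ADS} with the same indicators. One small correction: the sums defining $\fS^{(k,\ell)}_\lambda$ are in fact finite, since rows $\ell+1,\ldots,k$ are bounded by $\lambda_\ell$, so your closing remark about infinite formal sums is unnecessary (though harmless).
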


\smallskip
Note that the original LPP inequality \eqref{eq:LPP} is a special case of
\eqref{eq:sh-LPP}, where $k=\ell$.
We define \defn{skew shadow Schur functions} \ts as follows:
\begin{equation}\label{eq:S_skew}
	\fS^{(k,\ell)}_{\lambda/\mu} \ := \ \sum_{\nu \ts \in \ts \Sh(\la;k,\ell), \. \rho \in \Sh(\mu;k,\ell)} \fs_{\nu/\rho}\.,
\end{equation}
the sum  over all skew shapes whose first $\ell$ row lengths are specified by $\lambda$ and $\mu$.
The following theorem extends \eqref{eq:sh-LPP} to the skew setting:

\smallskip

\begin{thm}[{\rm \defn{shadow skew LPP inequality}\ts}{}]\label{thm:LPP-global-skew}
	For all \, $\lambda,\mu, \nu,\rho \in \cP^\ell$, we have:
	\[  \fS_{\lambda/\mu}^{(k,\ell)} \. \fS_{\nu/\rho}^{(k,\ell)} \ \leqslant_{\fs} \  \fS_{(\lambda \vee \nu)/(\mu\vee \rho)}^{(k,\ell)} \. \fS_{(\lambda \wedge \nu)/(\mu\wedge \rho)}^{(k,\ell)}.   \]
\end{thm}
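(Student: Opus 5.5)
This is a direct application of the skew ADS inequality (Theorem~\ref{thm:ADS-skew}) with indicator-type functions. Fix $\lambda,\mu,\nu,\rho\in\cP^\ell$. We may assume $\mu\subseteq\lambda$ and $\rho\subseteq\nu$; otherwise the corresponding skew Schur functions vanish under the convention. For $\alpha,\beta\in\cP^\ell$, write $T(\alpha,\beta)\subseteq \cP\times\cP$ for the set of pairs $(\sigma,\tau)$ with $\sigma\in\Sh(\alpha;k,\ell)$ and $\tau\in\Sh(\beta;k,\ell)$. Thus $\sigma,\tau\in\cP^k$ agree with $\alpha,\beta$ in the first $\ell$ rows. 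By \eqref{eq:S_skew}, $\fS^{(k,\ell)}_{\alpha/\beta}=\sum_{(\sigma,\tau)} \mathbf{1}_{T(\alpha,\beta)}(\sigma,\tau)\,\fs_{\sigma/\tau}$, with the convention that $\fs_{\sigma/\tau}=0$ unless $\tau\subseteq\sigma$. So we set
\[
\fa=\mathbf{1}_{T(\lambda,\mu)},\quad \fb=\mathbf{1}_{T(\nu,\rho)},\quad \fc=\mathbf{1}_{T(\lambda\vee\nu,\,\mu\vee\rho)},\quad \fd=\mathbf{1}_{T(\lambda\wedge\nu,\,\mu\wedge\rho)}.
\]
The four sums in \eqref{eq:ADS-skew} are then exactly the four skew shadow Schur functions, and the conclusion is the desired inequality.

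It remains to verify \eqref{eq:AD-cond-skew}. Since all functions are $0/1$-valued, it suffices to show the following. If $(\sigma,\tau)\in T(\lambda,\mu)$ and $(\sigma',\tau')\in T(\nu,\rho)$, then $(\sigma\vee\sigma',\tau\vee\tau')\in T(\lambda\vee\nu,\mu\vee\rho)$ and $(\sigma\wedge\sigma',\tau\wedge\tau')\in T(\lambda\wedge\nu,\mu\wedge\rho)$. Both checks are coordinatewise.
\begin{enumerate}
\item Joins and meets of partitions in $\cP^k$ remain in $\cP^k$, since $\max$ and $\min$ of nonincreasing sequences are nonincreasing, and supports stay within $k$ rows.
\item For $i\le\ell$ we have $(\sigma\vee\sigma')_i=\max\{\lambda_i,\nu_i\}=(\lambda\vee\nu)_i$, and similarly for the meets and for $\tau,\tau'$.
\end{enumerate}
So the first $\ell$ rows are prescribed correctly.

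One subtlety concerns the convention that $\fs_{\sigma/\tau}=0$ when $\tau\not\subseteq\sigma$. Pairs with $\tau\not\subseteq\sigma$ contribute zero, so including them in the support of the indicator functions is harmless. Theorem~\ref{thm:ADS-skew} is stated for all $\lambda,\mu\in\cP$, with $\fs_{\lambda/\mu}$ read this way, so the supports need no restriction. Finiteness is also automatic: for fixed degree, only finitely many pairs in each $T$ contribute, since the rows beyond $\ell$ are bounded by $\alpha_\ell$ and $\beta_\ell$. Hence the infinite-coefficient caveat in the footnote does not arise.

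The main point needing care is not the condition check, which is immediate. It is confirming that the sums in \eqref{eq:ADS-skew} with these indicators reproduce \eqref{eq:S_skew} exactly, and in particular that $\Sh$ imposes no constraint interacting badly with the lattice operations. Because $\Sh(\alpha;k,\ell)$ is defined purely by fixing rows $1,\dots,\ell$ within $\cP^k$, it is a sublattice-compatible ``fiber'' of $\cP^k$, and the argument goes through.
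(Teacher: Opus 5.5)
Your proposal is correct and is essentially the paper's proof. The paper likewise takes $\fa,\fb,\fc,\fd$ on $\cP^k\times\cP^k$ to be the indicators that the first $\ell$ rows of $(\alpha,\beta)$ agree with those of $(\lambda,\mu)$, $(\nu,\rho)$, $(\lambda\vee\nu,\mu\vee\rho)$ and $(\lambda\wedge\nu,\mu\wedge\rho)$, checks \eqref{eq:AD-cond-skew} coordinatewise, and applies Theorem~\ref{thm:ADS-skew}. Your opening reduction to $\mu\subseteq\lambda$, $\rho\subseteq\nu$ is never used (justifying it would also require noting that the right side is Schur positive), so you can simply drop it.
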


\smallskip

\begin{proof}
Let \. $\fa,\fb,\fc,\fd: \cP^k \times \cP^k \to \rr_{\geq 0}$ \.
be given by
\begin{align*}
	\fa(\alpha,\beta) \ &:= \  \mathbf{1}_{\{\al_i =\lambda_i, \. \be_i =\mu_i \ \text{ for all } i \in [\ell]  \}}\\
	\fb(\alpha,\beta) \ &:= \  \mathbf{1}_{\{\al_i =\nu_i, \. \be_i =\rho_i \ \text{ for all } i \in [\ell]  \}}\\
	\fc(\alpha,\beta) \ &:= \  \mathbf{1}_{\{\al_i =\max\{\lambda_i,\nu_i\}, \. \be_i =\max\{\mu_i,\rho_i\} \ \text{ for all } i \in [\ell]  \}}\\
	\fd(\alpha,\beta) \ &:= \  \mathbf{1}_{\{\al_i =\min\{\lambda_i,\nu_i\}, \. \be_i =\min\{\mu_i,\rho_i\} \ \text{ for all } i \in [\ell]  \}}
\end{align*}
It is straightforward to verify that these functions satisfy \eqref{eq:AD-cond-skew}, and that
\begin{alignat*}{2}
&  \fS^{(k,\ell)}_{\la/\mu} \ = \ 	\sum_{\alpha,\beta\in \cP^k} \fa(\alpha,\beta) \fs_{\alpha/\beta}\,, \quad
&& \fS^{(k,\ell)}_{\nu/\rho} \ = \ \sum_{\alpha,\beta\in \cP^k} \fb(\alpha,\beta) \fs_{\alpha/\beta}\,,\\
& \fS^{(k,\ell)}_{(\la\vee \nu)/(\mu \vee \rho)} \ = \ \sum_{\alpha,\beta\in \cP^k} \fc(\alpha,\beta) \fs_{\alpha/\beta}\,, \quad
&& \fS^{(k,\ell)}_{(\la\wedge \nu)/(\mu \wedge \rho)} \ = \ \sum_{\alpha,\beta\in \cP^k} \fd(\alpha,\beta) \fs_{\alpha/\beta}\,.
\end{alignat*}
The  theorem now follows from the skew ADS inequality (Theorem~\ref{thm:ADS-skew}).
\end{proof}

\smallskip
\subsection{Further Schur positive inequalities} \label{ss:app-back}
The following two inequalities were conjectured by Okounkov \cite[p.~269]{Oko01}
and Fomin--Fulton--Li--Poon \cite[Conj.~2.7]{FFLP}.  They were also proved
by Lam--Postnikov--Pylyavskyy \cite[Thm.~4]{LPP07}.

\smallskip

\begin{thm}[{\rm \defn{Okounkov inequality}\ts}{}]\label{thm:Oko}
For all \ts  $\lambda,\mu \in \cP$, we have:
\begin{equation}\label{eq:Oko}\tag{Ok}
\fs_{\lambda} \. \fs_{\mu} \ \leqslant_{\fs} \   \fs_{\left \lceil (\lambda+\mu)/2 \right\rceil}
\. \fs_{\left \lfloor (\lambda+\mu)/2 \right\rfloor}.
\end{equation}
\end{thm}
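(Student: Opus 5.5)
We plan to prove \eqref{eq:Oko} by the Lam--Postnikov--Pylyavskyy method, that is, by Temperley--Lieb immanants applied to a Jacobi--Trudi matrix. This is the same technology we use later for the Schur orchestra inequalities.

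\textbf{Step 1: set up a single determinantal minor.} Fix $N \ge \max\{\ell(\lambda),\ell(\mu)\}$ and write each of $\fs_\lambda$, $\fs_\mu$, $\fs_{\lceil(\lambda+\mu)/2\rceil}$, $\fs_{\lfloor(\lambda+\mu)/2\rfloor}$ as a Jacobi--Trudi determinant $\det\big(h_{\alpha_i - i + j}\big)$. We realize all four as complementary minors of one $2N\times 2N$ matrix $H$ with entries $h_{a_i-b_j}$. Here the row indices $a$ are the multiset $\{\lambda_i - i\}\cup\{\mu_i-i\}$ and the column indices $b$ are $\{-j\}\cup\{-j\}$, after a suitable shift.

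\textbf{Step 2: express both sides via TL immanants.} By the Rhoades--Skandera/LPP expansion, any product of complementary minors $\Delta_{I,J}(H)\,\Delta_{\bar I,\bar J}(H)$ equals a sum $\sum_\tau \operatorname{Imm}_\tau(H)$. The sum runs over Temperley--Lieb basis elements $\tau$ compatible with the coloring of boundary vertices determined by $(I,J)$. The essential input is that each $\operatorname{Imm}_\tau$ of the Jacobi--Trudi matrix $(h_{a_i-b_j})$ is Schur positive, because that matrix is totally nonnegative in the appropriate sense. It then suffices to show that the set of noncrossing matchings compatible with the coloring for $(\lambda,\mu)$ is contained in the set compatible with the coloring for $(\lceil\cdot\rceil,\lfloor\cdot\rfloor)$.

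\textbf{Step 3: the combinatorial comparison.} Sort the merged row parameters $a$. For the averaged pair, one should check that the coloring alternates between the two factors along each run of coinciding parameters, so it is ``maximally alternating''. Concretely, the rows of $\lceil(\lambda+\mu)/2\rceil$ and $\lfloor(\lambda+\mu)/2\rfloor$ interleave: for each $i$, the two averaged parts straddle $\lambda_i$ and $\mu_i$ and differ by at most one. Equivalently, one can apply the conjugation/sorting remark from $\S$\ref{ss:notation-part}, under which averaging corresponds to $\sort_1,\sort_2$, where alternation is evident. A matching is compatible with a coloring exactly when every strand joins vertices of opposite color on the same side, or of the same color across sides. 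The compatible set is largest for the alternating coloring, since every noncrossing matching of the relevant kind is compatible with it. This gives the desired containment.

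\textbf{Main obstacle.} The hard part is Step 3. We must verify precisely that the averaged coloring admits every $\tau$ admitted by the $(\lambda,\mu)$ coloring. This has to hold even when some merged parameters coincide, which produces zero rows and equal-index ambiguities. For this we would first pass to conjugate partitions via the involution $\omega$, so that the statement becomes the sorting inequality $\fs_{\lambda'}\fs_{\mu'}\leqslant_{\fs}\fs_{\sort_1}\fs_{\sort_2}$. In that form the interleaving of sorted parameters is transparent and the containment argument becomes a direct check of colorings.
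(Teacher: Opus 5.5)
\textbf{The gap is in Step 1, and Steps 2--3 depend on it.} Comparing sets of compatible Temperley--Lieb diagrams only makes sense when both products expand as immanants of the \emph{same} matrix. That requires both pairs of Jacobi--Trudi minors to have the same row multiset and the same column multiset. This holds for $(\lambda\vee\mu,\lambda\wedge\mu)$, because $\{\lambda_i,\mu_i\}=\{(\lambda\vee\mu)_i,(\lambda\wedge\mu)_i\}$ row by row. It fails for the averaged pair. For $\lambda=(2)$, $\mu=\varnothing$, your row multisets are $\{1,-1\}$ versus $\{0,0\}$.

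Allowing an independent shift $c$ for each factor does not fix this. A minor realizing $s_\alpha$ with shift $c$ contributes the signed multiset (rows minus columns) $\sum_i(\delta_{\alpha_i+c-i}-\delta_{c-i})$. This quantity is additive over the two factors, so it must agree on both sides. Take $\lambda=(4,2)$, $\mu=\varnothing$, so the right side is $s_{21}^2$. The left side gives $\delta_{c+3}+\delta_{c}-\delta_{c-1}-\delta_{c-2}$, and the right side gives $\delta_{d_1+1}-\delta_{d_1-2}+\delta_{d_2+1}-\delta_{d_2-2}$. Matching positive parts forces $\{d_1,d_2\}=\{c+2,c-1\}$, while matching negative parts forces $\{d_1,d_2\}=\{c+1,c\}$.

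So no common matrix exists, and the coloring containment in Step 3 cannot even be formulated. Passing to conjugates does not help: $\omega$ turns this example into the FFLP instance $(2,2,1,1)$, $\varnothing$, which has the same obstruction. Separately, your claim that the averaged coloring is ``maximally alternating'' is unproved, but that is secondary.

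\textbf{How the paper gets the inequality.} The paper does not prove it with immanants. It cites \cite{LPP07} and notes that it follows from \eqref{eq:LPP} via translation invariance (Speyer's Theorem~\ref{t:speyer}). You can make this concrete as follows.
\begin{enumerate}
\item Write $s_\beta=s_{(\beta+c\mathbf{1})/(c\mathbf{1})}$ for an integer $c\ge 0$, with $\mathbf{1}$ of length $N\ge\ell(\alpha),\ell(\beta)$.
\item Apply \eqref{eq:LPP-skew} to $\alpha/\varnothing$ and $(\beta+c\mathbf{1})/(c\mathbf{1})$. This gives $s_\alpha s_\beta\leqslant_{\fs} s_{\alpha'}\,s_{\beta'}$ with $\alpha'_i=\max(\alpha_i-c,\beta_i)$ and $\beta'_i=\min(\alpha_i,\beta_i+c)$. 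The first factor is a translated straight shape, so it is a genuine Schur function.
\item This move preserves $\alpha+\beta$ and sends each row difference $d_i=\alpha_i-\beta_i$ to $|d_i-c|-c$.
\item Alternate $c=0$ with $c=\lfloor \max_i d_i/2\rfloor$. Each round strictly decreases $\max_i d_i$, until every $d_i\in\{0,1\}$.
\item At that point the pair is exactly $(\lceil(\lambda+\mu)/2\rceil,\lfloor(\lambda+\mu)/2\rfloor)$. Chaining these Schur-positive steps gives \eqref{eq:Oko}.
\end{enumerate}
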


\smallskip

\begin{thm}[{\rm \defn{FFLP inequality}\ts}{}]\label{thm:FFLP}
For all \ts  $\lambda,\mu \in \cP$, we have:
\begin{equation}\label{eq:FFLP}\tag{FFLP}
\fs_{\lambda} \. \fs_{\mu} \ \leqslant_{\fs} \   \fs_{\sort_1(\lambda,\mu)}
\. \fs_{\sort_2(\lambda,\mu)}.
\end{equation}
\end{thm}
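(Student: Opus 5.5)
The plan is to deduce \eqref{eq:FFLP} from the Okounkov inequality (Theorem~\ref{thm:Oko}) by applying the involution $\omega$ on $\La$, which sends $\fs_\la \mapsto \fs_{\la'}$. This uses the remark in~$\S$\ref{ss:notation-part} that sorting is conjugate to averaging. Two properties of $\omega$ are needed. First, it is a ring automorphism of $\La$. Second, since it permutes the Schur basis, it maps Schur positive functions to Schur positive functions. Together these give: if $\ff \leqslant_{\fs} \fg$, then $\omega(\ff)\leqslant_{\fs}\omega(\fg)$, and $\omega$ commutes with products.

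The key combinatorial step is the identity
\[
\sort_1(\la,\mu)' \, = \, \big\lceil \tfrac{\la'+\mu'}{2}\big\rceil, \qquad \sort_2(\la,\mu)' \, = \, \big\lfloor \tfrac{\la'+\mu'}{2}\big\rfloor .
\]
To verify it, let $\nu$ be the weakly decreasing rearrangement of the combined parts of $\la$ and $\mu$. Then $\nu'_j = \la'_j+\mu'_j$, because the number of parts $\ge j$ in the union is the sum of the two counts. Since $\nu$ is weakly decreasing, the set $\{i : \nu_i\ge j\}$ is the initial segment $\{1,\ldots,\nu'_j\}$. It contains exactly $\lceil \nu'_j/2\rceil$ odd indices and $\lfloor \nu'_j/2\rfloor$ even indices. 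Hence
\[
\big(\sort_1(\la,\mu)\big)'_j \, = \, \#\{i \text{ odd}: \nu_i\ge j\} \, = \, \lceil (\la'_j+\mu'_j)/2\rceil,
\]
and the analogous count for even indices gives the floor. Along the way we confirm that $\sort_1$ and $\sort_2$ return genuine partitions, so no sign conventions for non-partitions intervene. (The paper works with infinite sequences padded by zeros, so the restriction to $\cP^\ell$ in the definition of $\sort$ is harmless: take $\ell$ large.)

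Now apply Theorem~\ref{thm:Oko} to the conjugate partitions $\la',\mu'$:
\[
\fs_{\la'}\.\fs_{\mu'} \ \leqslant_{\fs}\ \fs_{\lceil(\la'+\mu')/2\rceil}\.\fs_{\lfloor(\la'+\mu')/2\rfloor}.
\]
Applying $\omega$ to both sides, and using the identity above together with $(\la')'=\la$, yields exactly $\fs_\la\fs_\mu \leqslant_{\fs} \fs_{\sort_1(\la,\mu)}\fs_{\sort_2(\la,\mu)}$.

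The only real content is therefore the Okounkov inequality itself, which is taken as given from \cite{LPP07}. If one instead wanted a self-contained argument, the hardest step would be proving Theorem~\ref{thm:Oko} directly. For that I would follow the Temperley--Lieb immanant method of~$\S$\ref{s:TL}: write both sides as products of Jacobi--Trudi minors and show that their difference is a nonnegative combination of TL immanants, which are Schur positive. The combinatorial identity above is routine, so no obstacle is expected beyond that.
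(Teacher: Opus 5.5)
Your argument is correct and is essentially the route the paper indicates. The paper takes \eqref{eq:FFLP} from \cite{LPP07} and remarks after Theorem~\ref{t:speyer} that it follows from \eqref{eq:Oko} ``after conjugation,'' since sorting is conjugate to averaging. That is exactly your application of $\omega$ with the identities $\sort_1(\la,\mu)'=\lceil(\la'+\mu')/2\rceil$ and $\sort_2(\la,\mu)'=\lfloor(\la'+\mu')/2\rfloor$; the only difference is that the paper also obtains \eqref{eq:Oko} itself from \eqref{eq:LPP} via Speyer's theorem, whereas you take \eqref{eq:Oko} as given.
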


\smallskip

Lam--Postnikov--Pylyavskyy proved further Schur positive inequalities
which fall outside the scope of this paper.  They also stated the
\defng{LPP conjecture}, see \cite[Conj.~1]{DP07}, which was
recently proved by Speyer \cite{Spe26}.  Somewhat surprisingly,
Speyer observed that his proof implies a general statement about
Schur positivity:

\smallskip

\begin{thm}[{\rm Speyer \cite[Rem.~4.19]{Spe26}}{}]\label{t:speyer}
Let \ts $f: \cP^\ell \to \La$ \ts such that \ts $f(\lambda+\mathbf{1})=f(\lambda)$ \ts for all \ts $\lambda \in \cP^\ell$,
and
$$
f(\la) \ts f(\mu) \, \leqslant_{\fs} \, f(\la\vee \mu) \ts f(\la\wedge \mu) \quad \forall \. \la,\mu \in \cp^\ell.
$$
Then:
$$
f(\la) \ts f(\mu) \, \leqslant_{\fs} \, f\big(\big\lceil\tfrac{\la+ \mu}{2}\big\rceil\big) \.
f\big(\big\lfloor\tfrac{\la+ \mu}{2}\big\rfloor\big) \quad \forall \. \la,\mu \in \cp^\ell.
$$
\end{thm}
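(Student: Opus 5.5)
We argue by induction on the discrepancy \(D(\la,\mu):=\sum_{i}|\la_i-\mu_i|\). The argument uses only the two hypotheses, the invariance \(f(\la+\mathbf 1)=f(\la)\) and the lattice log-supermodularity, together with the fact that \(\leqslant_{\fs}\) is preserved under multiplication by Schur positive functions. It is essentially the classical argument that a log-supermodular function invariant under the diagonal shift is "midpoint log-concave". Since \(\lceil\cdot\rceil,\lfloor\cdot\rfloor\) are symmetric in \(\la,\mu\), and \(f\) is shift invariant, we may shift and assume \(\la,\mu\) are arbitrary in \(\cP^\ell\). We will also use that \(f\) takes Schur positive values. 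This is implicit in the intended applications, and is needed to multiply inequalities; if it is not granted, I would instead work with chains of inequalities of the form \(f(\al)f(\be)\leqslant_{\fs} f(\ga)f(\de)\), which compose by transitivity without multiplication.

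\emph{Base cases and strategy.} If \(|\la_i-\mu_i|\le 1\) for all \(i\), then \(\lceil(\la+\mu)/2\rceil=\la\vee\mu\) and \(\lfloor(\la+\mu)/2\rfloor=\la\wedge\mu\), so the claim is exactly the hypothesis. In general, let \(m:=\max_i|\la_i-\mu_i|\ge 2\). The idea is to replace the pair \((\la,\mu)\) by a "closer" pair with the same ceiling and floor averages, while keeping the product \(f(\cdot)f(\cdot)\) Schur-increasing. Note that \(\la\vee\mu\) and \(\la\wedge\mu\) have the same averages as \((\la,\mu)\), since \(\max+\min=\la_i+\mu_i\) coordinatewise. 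So by the hypothesis we may assume \(\la\supseteq\mu\), i.e. \(\la_i\ge\mu_i\) for all \(i\).

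\emph{Reduction step.} With \(\la\supseteq\mu\), consider \(\la^-:=\la-\mathbf 1\), which may have negative entries; if so, shift everything by \(\mathbf 1\) first, using invariance. Then \(f(\mu)=f(\mu+\mathbf 1)\) and \(f(\la)=f(\la^-)\), so \(f(\la)f(\mu)=f(\la^-)f(\mu+\mathbf 1)\), although \(\la^-\) and \(\mu+\mathbf 1\) need not be partitions. The sharper move is to apply the hypothesis to the pair \(\al:=\la-\mathbf 1\), \(\be:=\mu\). Meet and join give \(\al\vee\be\) and \(\al\wedge\be\), whose coordinates are \(\la_i-1\) and \(\mu_i\) where \(\la_i-\mu_i\ge1\). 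Coordinates with \(\la_i=\mu_i\) get swapped, giving \(\mu_i\) and \(\mu_i-1\). After shifting the meet back by \(+\mathbf 1\), we obtain a pair \((\la',\mu')\) with \(\la'+\mu'=\la+\mu\) and \(D(\la',\mu')<D(\la,\mu)\) whenever \(m\ge2\). One must check that both \(\la\vee\be\)-type vectors are partitions (the meet and join of partitions are partitions, and shifting preserves this, so \(\la-\mathbf 1\) with possibly a final nonpositive part is handled by working in \(\cP^\ell\) shifted by a large multiple of \(\mathbf 1\)). The induction then closes: \(f(\la)f(\mu)\leqslant_{\fs} f(\la')f(\mu')\leqslant_{\fs}\) the target, since the averages agree.

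\emph{Main obstacle.} The delicate point is designing the exchange so that the coordinatewise sums are preserved exactly while the discrepancy strictly drops. A naive shift by \(\mathbf 1\) changes the sum \(\la+\mu\) by \(\pm\mathbf 1\) in the coordinates where \(\la_i=\mu_i\), and the parity bookkeeping in the ceiling/floor split must be tracked. I would handle this by shifting by \(\mathbf 1\) only the part of the vectors where \(|\la_i-\mu_i|\ge2\). Since \(f\) is invariant only under the global shift, this cannot be done in one step. The first thing I would try is to iterate the global-shift-plus-meet/join move on the set of coordinates with maximal gap, verifying on small examples (\(\ell=2\)) that the resulting sequence reaches the averaged pair.
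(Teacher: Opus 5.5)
Your proposal is correct. The paper gives no proof of this statement; it is quoted from Speyer's remark, so there is no internal argument to compare with. Your reduction step is already a complete, self-contained proof.

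For $\lambda\supseteq\mu$ with gaps $g_i=\lambda_i-\mu_i\ge 0$, one application of the hypothesis to the pair $(\lambda-\mathbf{1},\mu)$, followed by the diagonal shift, gives
\[
f(\lambda)\,f(\mu)\,=\,f(\lambda-\mathbf{1})\,f(\mu)\,\leqslant_{\fs}\,f(\lambda')\,f(\mu'),\qquad \lambda':=(\lambda-\mathbf{1})\vee\mu,\quad \mu':=\big((\lambda-\mathbf{1})\wedge\mu\big)+\mathbf{1}.
\]
Coordinatewise, this move does three things:
\begin{enumerate}
\item it leaves the entries with $g_i=0$ untouched;
\item it swaps the two entries when $g_i=1$;
\item it replaces a gap $g_i\ge 2$ by $g_i-2$.
\end{enumerate}
Hence $\lambda'+\mu'=\lambda+\mu$ holds exactly in every coordinate, and $D$ drops by $2\,|\{i: g_i\ge 2\}|$. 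The induction closes using three ingredients: this step, the join/meet step (which keeps both the sum and $D$), and the base case $\max_i|\lambda_i-\mu_i|\le 1$, where $\lceil(\lambda+\mu)/2\rceil=\lambda\vee\mu$ and $\lfloor(\lambda+\mu)/2\rfloor=\lambda\wedge\mu$. After the move the pair need not be nested, but the induction hypothesis covers all pairs with smaller $D$, so this is harmless.

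Three clean-ups are worth making.

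\emph{The ``main obstacle'' paragraph.} The worry there is unfounded: the global move already preserves $\lambda+\mu$ exactly. No parity bookkeeping, partial shifts, or experiments on small cases are needed, so you should not present this as open.

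\emph{Schur positivity of $f$.} Drop the assumption that $f$ takes Schur positive values. The argument never multiplies inequalities; it only chains inequalities of the form $f(\alpha)f(\beta)\leqslant_{\fs} f(\gamma)f(\delta)$ by transitivity. So the proof is purely formal and works for any preorder on $\Lambda$.

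\emph{Domain issues.} These are handled most cleanly by extending $f$ to all weakly decreasing vectors in $\mathbb{Z}^\ell$ via $f(\lambda):=f(\lambda+c\mathbf{1})$ for $c\gg 0$. This is well defined by the invariance. Both the hypothesis and the conclusion transfer, because $\vee$, $\wedge$, $\lceil\cdot\rceil$ and $\lfloor\cdot\rfloor$ all commute with adding $c\mathbf{1}$ to both arguments (for the last two, the sum shifts by $2c\mathbf{1}$).

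Your argument is the multiplicative analogue of the standard fact from discrete convex analysis that submodularity plus linearity along $\mathbf{1}$ implies discrete midpoint convexity. It also shows transparently why the shift condition is indispensable, consistent with the paper's remark that the shadow and skew versions do not follow from this theorem.
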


\smallskip

In particular, Speyer's theorem shows that the LPP inequality \eqref{eq:LPP}
implies both  \eqref{eq:Oko}  and, after conjugation, the inequality \eqref{eq:FFLP}.
We conclude with a skew version of \eqref{eq:Oko} which was also proved by
Lam--Postnikov--Pylyavskyy.

\smallskip

\begin{thm}[{\rm \defn{skew Okounkov inequality}~\cite[Thm~11]{LPP07}}{}]\label{thm:Oko-skew}
	For all \ts $\lambda,\mu, \nu,\rho \in \cP$, we have:
	\[  \fs_{\lambda/\mu} \. \fs_{\nu/\rho} \ \leqslant_{\fs} \
	\fs_{\left \lceil (\lambda+\nu)/2 \right\rceil/  \left \lceil (\mu+\rho)/2 \right\rceil}
	\. \fs_{\left \lfloor (\lambda+\nu)/2 \right\rfloor /
		\left \lfloor (\mu+\rho)/2 \right\rfloor
	}.   \]
\end{thm}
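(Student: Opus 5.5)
The plan is to prove the inequality by the Temperley--Lieb immanant method of Lam--Postnikov--Pylyavskyy, which is the same machinery the paper develops for the Schur orchestra inequalities. Fix $n$ at least the number of rows of all four shapes. Write each skew Schur function via the Jacobi--Trudi identity
\[
\fs_{\la/\mu}\;=\;\det\bigl(h_{\la_i-\mu_j-i+j}\bigr)_{i,j=1}^n .
\]
Both products
\[
\fs_{\lambda/\mu}\,\fs_{\nu/\rho}
\quad\text{and}\quad
\fs_{\lceil(\lambda+\nu)/2\rceil/\lceil(\mu+\rho)/2\rceil}\;\fs_{\lfloor(\lambda+\nu)/2\rfloor/\lfloor(\mu+\rho)/2\rfloor}
\]
are then products of two complementary minors of a single $2n\times 2n$ matrix $H=(h_{b_j-a_i})$. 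Here the column labels $b$ are the multiset $\{\la_i-i\}\cup\{\nu_i-i\}$ and the row labels $a$ are $\{\mu_i-i\}\cup\{\rho_i-i\}$. The key observation is that this multiset is also $\{\lceil(\la_i+\nu_i)/2\rceil-i\}\cup\{\lfloor(\la_i+\nu_i)/2\rfloor-i\}$ only up to a shift. So first I will pass to the standard trick of \cite{LPP07}: double all parts, replacing $\la,\nu,\mu,\rho$ by $2\la$, etc. (equivalently, use the $h$-matrix with a variable step). The aim is that the two row-sets and two column-sets on the two sides come from the same underlying label set with a different $2$-coloring.

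The second step is to apply the Rhoades--Skandera expansion. It says that a product of complementary minors $\Delta_{I,I'}(H)\,\Delta_{\overline I,\overline{I'}}(H)$ equals $\sum_\tau \Imm_\tau(H)$, where $\tau$ runs over the Temperley--Lieb (noncrossing) matchings on $4n$ boundary points that are \emph{compatible} with the coloring determined by $(I,I')$. Each $\Imm_\tau$ evaluated on a Jacobi--Trudi type matrix is Schur positive; I take this, as in Section~\ref{s:TL}, as the black box. The inequality therefore reduces to a purely combinatorial claim. The claim is that every noncrossing matching compatible with the coloring for $(\la/\mu,\nu/\rho)$ is also compatible with the coloring for the ceiling/floor pair. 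The multiplicities on the left are at most those on the right; when labels coincide one has to track multiplicities with care, as in \cite{LPP07}.

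The third step, which I expect to be the main obstacle, is verifying this compatibility inclusion. My plan is to sort the boundary labels $b$ (and likewise $a$). The averaged coloring then assigns the two copies coming from each index $i$ so that, along the sorted order restricted to each ``block'' where $\la_i-i$ and $\nu_i-i$ interleave, the colors alternate. An alternating coloring is the most permissive one: a matching is compatible iff each arc joins points whose color pattern is consistent with the parity of the number of points strictly between them. With colors alternating this parity condition is automatically satisfied by any noncrossing arc. The real work is in the segments where the original coloring is not alternating, and in the interaction between the row side $a$ and the column side $b$. There I would argue arc by arc, using that noncrossing arcs nest and can only connect points within a consistent interval. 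I would first try to reduce to $n=1$ type local configurations via nesting, and then use the ceiling/floor choice (ceil always to the first factor) to break ties consistently on the $a$ and $b$ sides.
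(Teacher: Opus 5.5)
Your plan has a genuine gap at the first step, and the rest of the argument depends on that step. The Rhoades--Skandera expansion lets you compare two products only when both are products of complementary minors of the \emph{same} matrix, i.e.\ when their row and column label multisets agree. For the averaged shapes the labels differ, and not just by a shift. Take $n=1$, $\lambda=(2)$, $\nu=(0)$, $\mu=\rho=\varnothing$: the column labels are $\{1,-1\}$ on the left and $\{0,0\}$ on the right. So $\fs_{2}\fs_{\varnothing}$ and $\fs_{1}\fs_{1}$ are minor products of two different matrices, with entries $(h_2,h_0)$ and $(h_1,h_1)$. There is therefore no common family of Temperley--Lieb immanants in which to compare compatible diagrams.

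Doubling the parts does not repair this. It replaces $\fs_{\lambda/\mu}$ by $\fs_{2\lambda/2\mu}$, which is a different function, and even then $\{2\lambda_i-i\}\uplus\{2\nu_i-i\}\ne\{\lambda_i+\nu_i-i\}\uplus\{\lambda_i+\nu_i-i\}$. The ``variable step'' matrix is never actually specified. Your third step is only heuristic in any case. You are right that an alternating colouring is compatible with every noncrossing matching, but that fact only compares colourings of one fixed label multiset, and no such common multiset exists here.

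The missing idea is to route through the lattice operations, where the labels do coincide. Coordinatewise $\{\lambda_i,\nu_i\}=\{\max(\lambda_i,\nu_i),\min(\lambda_i,\nu_i)\}$, which is exactly why the Temperley--Lieb method proves the skew LPP inequality (Theorem~\ref{thm:LPP-skew}; compare Lemma~\ref{lem:Theta}). The paper does not reprove the present theorem. It does indicate that the proof in \cite{LPP07} takes skew LPP as its input: the shadow analogue, Theorem~\ref{thm:Oko-skew-global}, is obtained by substituting Theorem~\ref{thm:LPP-global-skew}. Speyer's Theorem~\ref{t:speyer} records the general principle.

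One way to finish is the following.
\begin{enumerate}
\item Use translation invariance, $\fs_{\nu/\rho}=\fs_{(\nu+d\mathbf 1)/(\rho+d\mathbf 1)}$, and apply skew LPP to $\lambda/\mu$ and a translate of $\nu/\rho$. You may translate whichever shape is needed, so any $d\in\zz$ is allowed.
\item Write $x=(\lambda,\mu)$ and $y=(\nu,\rho)$ in $\zz^{2n}$. One step replaces the pair by the join and meet after translation. The product can only increase in the Schur order, the difference vector $u=x-y$ becomes $|u-d\mathbf 1|$, and the sum changes by $|d|\mathbf 1$.
\item Choosing $d$ at the midpoint of the range of $u$ roughly halves that range. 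After finitely many steps $u\in\{0,1\}^{2n}$.
\item At that point the pair equals $\bigl(\lceil (x+y)/2\rceil+e\mathbf 1,\ \lfloor (x+y)/2\rfloor+e\mathbf 1\bigr)$ if the accumulated shift $2e$ is even. If the accumulated shift is odd, one further step with $d=1$ corrects the parity.
\item Translation invariance then gives the stated inequality.
\end{enumerate}
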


\smallskip

\subsection{New inequalities for shadow Schur functions} \label{ss:app-further-shadow}
As a consequence  of Theorem~\ref{thm:LPP-global-skew}, we obtain the shadow generalization of
the skew Okounkov inequality:

\smallskip

\begin{thm}[{\rm \defn{shadow skew Okounkov inequality}\ts}{}]\label{thm:Oko-skew-global}
	For all \ts $k \ge \ell \ge 1$ \ts and \ts $\lambda,\mu, \nu,\rho \in \cP^\ell$, we have:
	\[  \fS^{(k,\ell)}_{\lambda/\mu} \. \fS_{\nu/\rho}^{(k,\ell)} \ \leqslant_{\fs} \
	\fS^{(k,\ell)}_{\left \lceil (\lambda+\nu)/2 \right\rceil/  \left \lceil (\mu+\rho)/2 \right\rceil}
	\. \fS^{(k,\ell)}_{\left \lfloor (\lambda+\nu)/2 \right\rfloor /
		\left \lfloor (\mu+\rho)/2 \right\rfloor
	}.   \]
\end{thm}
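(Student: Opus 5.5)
The plan is to deduce the statement from the shadow skew LPP inequality (Theorem~\ref{thm:LPP-global-skew}) by running Speyer's averaging argument (Theorem~\ref{t:speyer}) on the product lattice that indexes skew shapes. For $(\alpha,\beta)\in \zz^{\ell}\times\zz^{\ell}$, set $F(\alpha,\beta):=\fS^{(k,\ell)}_{\alpha/\beta}$ when $\alpha,\beta\in\cP^\ell$, and $F(\alpha,\beta):=0$ otherwise. We regard $\zz^{2\ell}$ as a distributive lattice under coordinatewise $\max$ and $\min$. The pair $\big((\lambda\vee\nu),(\mu\vee\rho)\big)$ is exactly the join of $(\lambda,\mu)$ and $(\nu,\rho)$ in this lattice, and similarly for meets. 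So Theorem~\ref{thm:LPP-global-skew} says that $F$ is Schur log-supermodular on the sublattice $\cP^\ell\times\cP^\ell$.

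\emph{Step 1: extend log-supermodularity to all of $\zz^{2\ell}$.} If either factor on the left is zero, the inequality $F(x)F(y)\leqslant_{\fs}F(x\vee y)F(x\wedge y)$ is trivial. If both are nonzero, then both arguments lie in $\cP^\ell\times\cP^\ell$. This set is closed under coordinatewise max and min, since these preserve weak decrease. Hence Theorem~\ref{thm:LPP-global-skew} applies directly.

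\emph{Step 2: check that the averaging operations are the right ones.} The ceiling and floor averages in the statement are exactly $\lceil (x+y)/2\rceil$ and $\lfloor (x+y)/2\rfloor$ for $x=(\lambda,\mu)$ and $y=(\nu,\rho)$ in $\zz^{2\ell}$, taken coordinatewise. So the conclusion has precisely the form of Speyer's conclusion, but for the function $F$ on $2\ell$ coordinates. Speyer's statement is only formulated for straight shapes, so I will redo his argument (\cite[Rem.~4.19]{Spe26}) with $\cP^\ell$ replaced by $\zz^{2\ell}$. As I understand it, his argument is purely lattice-theoretic: it repeatedly applies log-supermodularity to suitably shifted pairs, and Schur positivity is preserved under multiplication by Schur positive functions and under sums. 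It should therefore transfer verbatim to any function on $\zz^{m}$ that is Schur log-supermodular and invariant under the diagonal shift.

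\emph{Step 3, the main obstacle: the translation invariance hypothesis $f(x+\mathbf 1)=f(x)$.} This hypothesis is used in Speyer's theorem, and $F$ does not obviously satisfy it. Shifting all $2\ell$ coordinates by $1$ shifts the first $\ell$ rows of both the outer and inner shapes. For ordinary skew Schur functions, $\fs_{(\alpha+c)/(\beta+c)}=\fs_{\alpha/\beta}$ holds when all rows are shifted. For shadow functions, however, rows $\ell+1,\ldots,k$ are summed over, and a shift of only the first $\ell$ rows changes the set $\Sh(\cdot;k,\ell)$ in a nonuniform way.

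\begin{enumerate}
\item My first attempt is to check that Speyer's argument only needs the shift in the \emph{specific} direction in which it is applied. Since the averages are coordinatewise, I expect to be able to handle each coordinate block separately. For the rows $\ell+1,\ldots,k$, I would try to absorb the shift into the summation index by shifting all $k$ rows of both $\nu$ and $\rho$ in \eqref{eq:S_skew}. This is a bijection between the relevant summation sets once the inner and outer shapes are shifted together, and it leaves $\fs_{\nu/\rho}$ unchanged.
\item If that fails, the fallback is to avoid Speyer's theorem and prove the statement by induction on $\sum_i\big(|\lambda_i-\nu_i|+|\mu_i-\rho_i|\big)$. In the inductive step, I would apply Theorem~\ref{thm:LPP-global-skew} to a pair $(\lambda,\mu)$ and a modified pair $(\nu,\rho)$ in which one coordinate has been moved one unit toward the midpoint. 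This is the mechanism behind deducing the skew Okounkov inequality (Theorem~\ref{thm:Oko-skew}) from LPP-type statements. At the end, I would verify that every intermediate pair stays in $\cP^\ell\times\cP^\ell$, or else that $F$ vanishes there, which by Step 1 is harmless.
\end{enumerate}
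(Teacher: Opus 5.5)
Steps 1 and 2 are fine. The argument depends on Step 3, however, and branch (i) is false. Shifting all $k$ rows does give a map $(\nu,\rho)\mapsto(\nu+\mathbf 1_k,\rho+\mathbf 1_k)$ from the index set of \eqref{eq:S_skew} for $\lambda/\mu$ into the index set for $(\lambda+\mathbf 1)/(\mu+\mathbf 1)$, and it preserves $\fs_{\nu/\rho}$. But it is only an injection, not a bijection. After the shift, the free rows $\ell+1,\ldots,k$ may still take any value in $[0,\lambda_\ell+1]$, while the image only reaches pairs with $\nu_k,\rho_k\ge 1$, and the missed pairs contribute nonzero terms. Already for $k=2$, $\ell=1$ you have $\fS^{(2,1)}_{1/\varnothing}=\fs_1+\fs_{11}$. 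By contrast, $\fS^{(2,1)}_{2/1}=2\fs_1+\fs_2+2\fs_{11}+\fs_{21}$, where the extra terms come from $(2,0)/(1,0)$, $(2,1)/(1,0)$ and $(2,2)/(1,0)$. So $F(x+\mathbf 1)\neq F(x)$, Speyer's hypothesis genuinely fails, and no reindexing of the sum repairs it. The paper makes exactly this point: it says the theorem does not follow from Theorem~\ref{t:speyer}, and instead appeals to the Lam--Postnikov--Pylyavskyy derivation of the skew Okounkov inequality, with Theorem~\ref{thm:LPP-global-skew} substituted for the skew LPP inequality.

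Branch (ii) does not supply the missing ingredient either. Each application of Theorem~\ref{thm:LPP-global-skew} replaces a pair $(x,y)$ of index vectors by $(x\vee y,x\wedge y)$. This preserves the multiset $\{x_i,y_i\}$ in every one of the $2\ell$ coordinates. Chaining such steps therefore never moves a coordinate toward the midpoint, so they cannot perform your inductive step. Already for real-valued functions, log-supermodularity alone does not give midpoint log-concavity: $f(x)=g(x_1)$ is log-modular for every $g$. In the classical skew case, the step is powered by the identity $\fs_{(\lambda+\mathbf 1)/(\mu+\mathbf 1)}=\fs_{\lambda/\mu}$. Here the injection above only gives the one-sided bound $\fS^{(k,\ell)}_{\lambda/\mu}\leqslant_{\fs}\fS^{(k,\ell)}_{(\lambda+\mathbf 1)/(\mu+\mathbf 1)}$, and that bound points the wrong way on one side. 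For instance, with $k=2$, $\ell=1$ and $\fS_a:=\fS^{(2,1)}_{a}$, you get $\fS_{a+1}\fS_{a-1}\leqslant_{\fs}\fS^{(2,1)}_{(a+1)/\varnothing}\fS^{(2,1)}_{a/1}\leqslant_{\fs}\fS^{(2,1)}_{(a+1)/1}\fS_a$. But $\fS^{(2,1)}_{(a+1)/1}-\fS_a$ is a nonzero Schur positive function, so this route cannot reach $\fS_{a+1}\fS_{a-1}\leqslant_{\fs}\fS_a^2$. A complete argument needs a genuinely new relation between shadow functions whose first $\ell$ rows differ, one that plays the role of translation invariance. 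Your proposal does not identify one, so as it stands it does not prove the theorem.
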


\smallskip

We state Theorem~\ref{thm:Oko-skew-global} without proof, since it closely
parallels that of Theorem~\ref{thm:Oko-skew} given in \cite{LPP07},
substituting the shadow skew LPP inequality (Theorem~\ref{thm:LPP-global-skew})
for the original version (Theorem~\ref{thm:LPP-skew}).
Note that this theorem does not follow directly from  Speyer's Theorem~\ref{t:speyer},
because $S^{(k,\ell)}_{\lambda/\mu}$ fails to satisfy the first condition of Speyer's theorem.
In particular, we obtain the following straight version as corollary.

\smallskip

\begin{cor}[{\rm \defn{shadow Okounkov inequality}\ts}{}]\label{cor:sh-Oko}
For all \ts $k \ge \ell \ge 1$ \ts and
\ts  $\lambda,\mu \in \cP^\ell$, we have:
\[ \fS_{\lambda}^{(k,\ell)} \. \fS^{(k,\ell)}_{\mu} \ \leqslant_{\fs} \
\fS^{(k,\ell)}_{\left \lceil (\lambda+\mu)/2 \right\rceil}
\. \fS^{(k,\ell)}_{\left \lfloor (\lambda+\mu)/2 \right\rfloor}.
\]
\end{cor}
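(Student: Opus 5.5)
The plan is to derive the corollary from the shadow skew Okounkov inequality (Theorem~\ref{thm:Oko-skew-global}) by specializing to straight shapes. I would take $\mu=\rho=\varnothing$ in Theorem~\ref{thm:Oko-skew-global}, after renaming its variables. Concretely, apply it to the skew shapes $\lambda/\varnothing$ and $\mu/\varnothing$ with $\lambda,\mu\in\cP^\ell$. Then $\lceil(\varnothing+\varnothing)/2\rceil=\lfloor(\varnothing+\varnothing)/2\rfloor=\varnothing$, so the right-hand side becomes
\[
\fS^{(k,\ell)}_{\lceil(\lambda+\mu)/2\rceil/\varnothing}\,\fS^{(k,\ell)}_{\lfloor(\lambda+\mu)/2\rfloor/\varnothing}.
\]

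The key step is to identify $\fS^{(k,\ell)}_{\alpha/\varnothing}$ with $\fS^{(k,\ell)}_{\alpha}$. By definition~\eqref{eq:S_skew}, $\fS^{(k,\ell)}_{\alpha/\varnothing}$ is the sum of $\fs_{\nu/\rho}$ over all $\nu\in\Sh(\alpha;k,\ell)$ and $\rho\in\Sh(\varnothing;k,\ell)$. The set $\Sh(\varnothing;k,\ell)$ consists of partitions in $\cP^k$ whose first $\ell$ rows are zero. Since $\ell\ge1$, this forces $\rho=\varnothing$, as already noted at the start of~$\S$\ref{ss:app-LPP}. Hence
\[
\fS^{(k,\ell)}_{\alpha/\varnothing}=\sum_{\nu\in\Sh(\alpha;k,\ell)}\fs_\nu=\fS^{(k,\ell)}_\alpha
\]
by~\eqref{eq:S_lambda}. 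I would also check that $\lceil(\lambda+\mu)/2\rceil$ and $\lfloor(\lambda+\mu)/2\rfloor$ are again partitions in $\cP^\ell$. This is clear, since entrywise averaging with rounding preserves weak monotonicity and the support bound. Substituting both identifications into Theorem~\ref{thm:Oko-skew-global} yields exactly the claimed inequality.

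The only step needing care is the degenerate-skew identification. Without the hypothesis $\ell\ge1$, the set $\Sh(\varnothing;k,\ell)$ would not collapse to $\{\varnothing\}$. Also, Theorem~\ref{thm:Oko-skew-global} is stated without proof. If a self-contained argument is preferred, I would instead adapt the LPP07 route directly on straight shapes. That means using the shadow LPP inequality (Theorem~\ref{thm:LPP-global}) together with an averaging argument in the style of Speyer, while carefully handling the failure of translation invariance $f(\lambda+\mathbf 1)=f(\lambda)$. I expect that failure to be the real obstacle in any direct proof.
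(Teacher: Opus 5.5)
Your proposal is correct and follows the paper's own route. The paper obtains this corollary by setting both inner shapes to $\varnothing$ in the shadow skew Okounkov inequality (Theorem~\ref{thm:Oko-skew-global}), where $\fS^{(k,\ell)}_{\alpha/\varnothing}=\fS^{(k,\ell)}_{\alpha}$ because the only partition in $\cP^k$ with first $\ell\ge 1$ rows zero is $\varnothing$. Your remark that a direct Speyer-style argument is blocked by the failure of $f(\lambda+\mathbf{1})=f(\lambda)$ also matches the paper's own comment after that theorem.
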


\smallskip

Let us also mention that the FFLP inequality also holds for shadow Schur functions:

\smallskip

\begin{thm}[{\rm \defn{shadow FFLP inequality}\ts}{}]\label{thm:FFLP-global}
For all \ts $k \ge \ell \ge 1$ \ts and
\ts  $\lambda,\mu \in \cP^\ell$, such that \ts $\lambda_\ell=\mu_\ell$, we have:
\[
\fS_{\lambda}^{(k,\ell)} \.  \fS_{\mu}^{(k,\ell)}  \ \leqslant_{\fs} \   \fS_{\sort_1(\lambda,\mu)}^{(k,\ell)} \.
\fS_{\sort_2(\lambda,\mu)}^{(k,\ell)}.
\]
\end{thm}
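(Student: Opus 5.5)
The plan is to reduce the shadow FFLP inequality to an FFLP-type inequality for ordinary Schur functions in which only the first $\ell$ rows are sorted. Write $m:=\lambda_\ell=\mu_\ell$, $\sigma:=\sort_1(\lambda,\mu)$ and $\tau:=\sort_2(\lambda,\mu)$. Every $\alpha\in\Sh(\lambda;k,\ell)$ has the form $\alpha=(\lambda,\gamma)$, the concatenation of $\lambda$ with a partition $\gamma$ of at most $k-\ell$ parts, each at most $m$. Similarly every $\beta\in\Sh(\mu;k,\ell)$ is $\beta=(\mu,\delta)$ with $\delta$ of the same type.

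The key role of the hypothesis $\lambda_\ell=\mu_\ell$ is that all head parts of $\lambda$ and $\mu$ are at least $m$, while all tail parts are at most $m$. Hence $\sigma_\ell$ and $\tau_\ell$ both equal $m$, since the two smallest head parts are both $m$. So for the same tail pair $(\gamma,\delta)$, the sequences $(\sigma,\gamma)$ and $(\tau,\delta)$ are again partitions in $\Sh(\sigma;k,\ell)$ and $\Sh(\tau;k,\ell)$. The assignment $(\gamma,\delta)\mapsto(\gamma,\delta)$ is therefore a bijection between the index sets of the expansions
\[
\fS_{\lambda}^{(k,\ell)}\fS_{\mu}^{(k,\ell)}=\sum_{\gamma,\delta}\fs_{(\lambda,\gamma)}\fs_{(\mu,\delta)}\,,\qquad
\fS_{\sigma}^{(k,\ell)}\fS_{\tau}^{(k,\ell)}=\sum_{\gamma,\delta}\fs_{(\sigma,\gamma)}\fs_{(\tau,\delta)}\,.
\]
It therefore suffices to prove, for each fixed pair of tails, the \emph{partially sorted FFLP inequality}
\[
\fs_{(\lambda,\gamma)}\,\fs_{(\mu,\delta)}\ \leqslant_{\fs}\ \fs_{(\sigma,\gamma)}\,\fs_{(\tau,\delta)}\,.
\]
Here the first $\ell$ rows of the two shapes are sorted against each other, and the remaining rows are left untouched.

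I would not try to pass through the full FFLP inequality~\eqref{eq:FFLP} termwise. The map $(\alpha,\beta)\mapsto(\sort_1,\sort_2)$ also sorts the tails, is far from injective, and so would overcount the right-hand side. Instead I would prove the partially sorted inequality directly, by the same Temperley--Lieb immanant machinery that Lam--Postnikov--Pylyavskyy use for~\eqref{eq:FFLP}, to be developed in Section~\ref{s:TL}. Concretely, express both products through conjugate Jacobi--Trudi matrices $\big(e_{\alpha'_i-i+j}\big)$. In the conjugate picture, sorting the head rows becomes an operation on the column lengths coming from the first $\ell$ rows. The difference of the two products then becomes a sum of Temperley--Lieb immanants indexed by noncrossing matchings that are "compatible" with the LHS pair of index sequences but not with the RHS pair. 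These immanants are Schur positive by the Rhoades--Skandera result.

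The combinatorial point to check is the following. Because head entries dominate tail entries, the tail part of the index sequences contributes identical strands to both sides. The compatibility comparison then reduces to the head strands, where it is exactly the one used for the ordinary FFLP inequality. As a fallback, one could try to realize the partial sort as a composition of elementary transpositions of pairs of head rows, each step being an LPP/Okounkov-type move, and use transitivity of $\leqslant_{\fs}$. This would combine Speyer's Theorem~\ref{t:speyer} with the observation that a single swap is a lattice meet/join on conjugate shapes.

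The main obstacle is this immanant compatibility step. One must verify that the presence of the fixed tails $\gamma,\delta$, including the boundary case where tail parts equal $m$, does not create matchings compatible with the right-hand side but not with the left. The counterexample discussion in Section~\ref{s:under} suggests that naive shortcuts fail here, so I expect this verification to require care.
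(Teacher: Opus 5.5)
Your reduction to a termwise ``partially sorted FFLP'' inequality fails, because that inequality is false. So neither the Temperley--Lieb compatibility check nor the fallback chain of head-row moves can establish it.

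Take $k=3$, $\ell=2$, $\lambda=(1,1)$, $\mu=(2,1)$. Then $m=1$, $\sigma=(2,1)=\mu$, $\tau=(1,1)=\lambda$, and the tails are $\varnothing$ or $(1)$. For $(\gamma,\delta)=(\varnothing,(1))$ you would need $\fs_{11}\fs_{211}\leqslant_{\fs}\fs_{21}\fs_{111}$. By the Pieri rule,
\[
\fs_{21}\fs_{111}=\fs_{321}+\fs_{3111}+\fs_{2211}+\fs_{21111}, \qquad \fs_{211}\fs_{11}=\fs_{321}+\fs_{3111}+\fs_{222}+\fs_{2211}+\fs_{21111},
\]
so the defect is $-\fs_{222}$.

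The failure is structural. For $\ell=2$ and $\lambda_1<\mu_1$ one always has $(\sigma,\tau)=(\mu,\lambda)$. Your inequalities for the tail pairs $(\gamma,\delta)$ and $(\delta,\gamma)$ are then exact reverses of each other. Together they would force $\fs_{(\lambda,\gamma)}\fs_{(\mu,\delta)}=\fs_{(\mu,\gamma)}\fs_{(\lambda,\delta)}$ for all tails, which is false. The shadow inequality holds in this case (as an equality) only because the defects of the two tail pairs cancel after summation.

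In your conjugate picture, the termwise claim becomes a ``partial Okounkov'' inequality: average the columns $>m$ and keep the first $m$ columns. The same example gives $\fs_{2}\fs_{31}-\fs_{3}\fs_{21}=\fs_{33}$, which has the wrong sign. So the heuristic that the tails contribute identical strands to both sides is wrong, and the inclusion of compatible matchings you planned to verify does not hold.

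The missing idea is that shadow inequalities are intrinsically of four-functions type. Different tail assignments must be allowed to compensate one another. This is exactly the phenomenon of $\S$\ref{ss:under-subtle}, where termwise domination fails and the Schur duet inequalities are needed. The paper therefore never fixes tails:
\begin{enumerate}
\item It obtains the shadow (skew) LPP inequality from the skew ADS inequality. The indicator functions there depend only on the first $\ell$ rows, so the ADS machinery is free to redistribute tails through $\vee$ and $\wedge$.
\item It then obtains the shadow skew Okounkov inequality and Corollary~\ref{cor:sh-Oko} by repeating the argument of Lam--Postnikov--Pylyavskyy for Theorem~\ref{thm:Oko-skew}.
\item Finally, it derives Theorem~\ref{thm:FFLP-global} by following their derivation of skew FFLP, with the shadow Okounkov inequality in place of Theorem~\ref{thm:Oko}.
\end{enumerate}
Your observation $\sigma_\ell=\tau_\ell=m$ is correct and explains why the two index sets match. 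But a correct proof has to compare the two full sums, not the terms with the same $(\gamma,\delta)$.
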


\smallskip

By the argument as above, this inequality extends \eqref{eq:FFLP}.
We state this result without proof, since it closely parallels that
of a skew version of \eqref{eq:FFLP} given in \cite[Cor.~12]{LPP07}.
Here we substitute the shadow Okounkov inequality (Corollary~\ref{cor:sh-Oko})
for the original version (Theorem~\ref{thm:Oko}).
\smallskip

\begin{rem}
We note that the condition $\lambda_\ell=\mu_\ell$ in Theorem~\ref{thm:FFLP-global},
which is absent in \eqref{eq:FFLP}, is necessary for this generalization.
To see this, let \ts $k=3$, \ts $\ell=2$, \ts $\lambda=(2,2)$, and \ts $\mu=(1,1)$.
In this case, $\sort_1(\lambda,\mu)=\sort_2(\lambda,\mu)=(2,1)$, yet
\[
\fS^{(3,2)}_{22} \fS^{(3,2)}_{11}  \not\leqslant_{\fs} \fS_{21}^{(3,2)} \fS_{2,1}^{(3,2)}.
\]
\end{rem}

\smallskip

\subsection{Lower Schur functions}\label{ss:app-lower}
For a partition \ts $\lambda\in\cP$, define \defn{lower Schur functions}
\begin{align*}
\fS_{\subseteq \ts\lambda} \ := \ \sum_{\mu \in \cP, \, \mu \subseteq \lambda} \. \fs_{\mu}\ts.
\end{align*}
It is easy to see that all results in this section continue to hold when
shadow Schur functions are replaced with lower Schur functions.  The proofs are nearly identical;
we omit the details.

Note that lower Schur functions are perhaps more natural than shadow Schur functions.
However, in contrast with the latter, the former do not contain the ordinary Schur
functions as a special case.  On the other hand, the corresponding log-supermodularity,
i.e.\ version of Theorem~\ref{thm:LPP-global} in this case, does again generalize the LPP inequality by taking the maximal degree terms.

\medskip
%\newpage

%%%%%%%%%%%%%%%%%%%%%%%%%%%%%%%%%%%%%%%%%%%%%%%%%%%%%%%%%%%%%%%%%%%%%%%%%%%
%%%%%%%%%%%%%%%%%%%%%%%%%%%%%%%%%%%%%%%%%%%%%%%%%%%%%%%%%%%%%%%%%%%%%%%%%%%
%%%%%%%%%%%%%%%%%%%%%%%%%%%%%%%%%%%%%%%%%%%%%%%%%%%%%%%%%%%%%%%%%%%%%%%%%%%
%%%%%%%%%%%%%%%%%%%%%%%%%%%%%%%%%%%%%%%%%%%%%%%%%%%%%%%%%%%%%%%%%%%%%%%%%%%
%%%%%%%%%%%%%%%%%%%%%%%%%%%%%%%%%%%%%%%%%%%%%%%%%%%%%%%%%%%%%%%%%%%%%%%%%%%
%%%%%%%%%%%%%%%%%%%%%%%%%%%%%%%%%%%%%%%%%%%%%%%%%%%%%%%%%%%%%%%%%%%%%%%%%%%
%%%%%%%%%%%%%%%%%%%%%%%%%%%%%%%%%%%%%%%%%%%%%%%%%%%%%%%%%%%%%%%%%%%%%%%%%%%

\section{Understanding the ADS inequality} \label{s:under}

\subsection{The role of Schur functions}\label{ss:under-subtle}
It is natural to ask if Theorem~\ref{thm:ADS} can be strengthened to the
following formal algebraic statement which omits Schur functions altogether.

Let \ts $\cA = \rr\<\ts\ff_\lambda \.: \. \lambda \in \cP\ts\>$ \ts be a commutative
ring. Suppose that there exists a partial order  \ts ``$\preccurlyeq$'' \ts on~$\cA$
that satisfies the log-supermodularity property
\[
\ff_\lambda \. \ff_{\mu}  \, \preccurlyeq \, \ff_{\lambda \vee \mu} \. \ff_{\lambda \wedge \mu}
\quad \text{for all } \quad \lambda,\mu \in \cP,
\]
and is closed under  addition and multiplication by nonnegative scalars.
It is tempting to conjecture that these assumptions suffice to imply the inequality
\begin{equation}\label{eq:NAD}\tag{naive AD}
	\bigg( \sum_{\lambda \in \cP} \fa({\lambda}) \ff_{\lambda}\bigg)
\bigg( \sum_{\lambda \in \cP} \fb({\lambda}) \ff_{\lambda}\bigg)
\ \preccurlyeq \
\bigg( \sum_{\lambda \in \cP} \fc({\lambda}) \ff_{\lambda}\bigg)
\bigg( \sum_{\lambda \in \cP} \fd({\lambda}) \ff_{\lambda}\bigg).
\end{equation}
Indeed, for Schur functions, \eqref{eq:NAD} and \eqref{eq:LPP} immediately implies \eqref{eq:ADS}.

However, this naive generalization is false, as demonstrated by the following counterexample.
Let \. $\fa,\fb,\fc,\fd:\cP\to \{0,1\}$ \. be given by the indicator functions
$$
	\fa \, := \, \mathbf{1}_{\{221,  22 \}}\., \quad
	\fb \, := \, \mathbf{1}_{\{311, 31 \}}\., \quad
	\fc \, := \, \mathbf{1}_{\{321,  32 \}}\., \quad
	\fd \, := \, \mathbf{1}_{\{211, 21 \}}\..
$$
It is easy to verify that these functions satisfy \eqref{eq:AD-cond}.
The \eqref{eq:NAD} then states
\begin{align*}
(\ast) \qquad
\ff_{221}\ts \ff_{311}  \, + \, {\textcolor{blue}{\ff_{221}\ts \ff_{31}}}
\, + \, {\textcolor{blue}{\ff_{22}\ts \ff_{311}}} \, + \, \ff_{22}\ts \ff_{31}
\ \preccurlyeq \
\ff_{321}\ts \ff_{211}  \, + \, {\textcolor{red}{\ff_{321}\ts \ff_{21}}}
\, + \, {\ff_{32}\ts \ff_{211}}\, + \,\ff_{32}\ts \ff_{21}\ts.
\end{align*}
Observe that the LHS contains the terms \ts
$\blue{\ff_{221}\ff_{31}}$ \ts and \ts $\blue{\ff_{22}\ff_{311}}$.
In contrast, the only term on the RHS that dominates these under \ts $\preccurlyeq$ \ts
is \ts $\red{\ff_{321}\ff_{21}}$.  This provides a counterexample to \eqref{eq:NAD}.

On the other hand, the inequality $(\ast)$ does hold for Schur functions \ts
$\ff_{\lambda}=\fs_{\lambda}$.  This is justified by the following relations:
\begin{align*}
	\fs_{221} \. \fs_{311} \,  \leqslant_{\fs} \,
	\fs_{321} \. \fs_{211}\ts, \ \ \quad
	\fs_{22} \. \fs_{31} \,  \leqslant_{\fs} \,
	\fs_{32} \. \fs_{21}\ts, \ \ \quad
	\blue{\fs_{221} \. \fs_{31}} \. + \. \blue{\fs_{22} \. \fs_{311}}
	\, \leqslant_{\fs} \, \red{\fs_{321} \. \fs_{21}}  \. + \.
	\fs_{32} \. \fs_{211}.
\end{align*}
Here the first two inequalities are immediate consequences of \eqref{eq:LPP}.
By contrast, the third inequality is an special case of a new family of
Schur inequalities, which we call the \defng{Schur duet inequalities}
(Corollary~\ref{cor:duet-Schur}).

\smallskip

\subsection{Losing Schur functions}\label{ss:recover}
It is natural to ask whether \eqref{eq:AD}
is a special case of \eqref{eq:ADS}.
While we are not aware of such implication, we can deduce
it from the more general \eqref{eq:ADS-skew},
as follows.

By Birkhoff's representation theorem~\cite{Bir}, every finite
distributive lattice is isomorphic to a sublattice of a Boolean lattice, cf.\ Remark~\ref{rem:dist}.
Therefore, to prove the full
AD inequality, it suffices to establish it for the special case where the
underlying lattice is the Boolean lattice \ts $\rB_\ell = 2^{[\ell]}$.
For a subset \. $E \in  2^{[\ell]}$, \ts let $\la:=\la(E)=(\lambda_1,\ldots, \lambda_\ell)$ \ts
be a partition given by
\begin{align*}
	\lambda_i \ := \
	\begin{cases}
		\ell-i+1 & \text{ if } \ i \in E,\\
		\ell-i & \text{ if } \ i \notin E.
	\end{cases}
\end{align*}
Let \. $\mu:= (\ell-1,\ell-2,\dots,1,0)$.
Note that $\lambda/\mu$ consists of a collection of disjoint boxes.
Therefore, \. $\fs_{\lambda/\mu} (1,0,0,\ldots) = 1.$

Let \. $\fa,\fb,\fc,\fd:\rB_\ell \to \rr_{\geq 0}$ \. be four functions
satisfying the assumptions \eqref{eq:AD-cond} in the AD inequality.
Define function \. $\fa':\cP \times \cP \to \rr_{\geq 0}$ \. by
\[ \fa'(\nu,\rho) \ := \
\begin{cases}
\. \fa(\lambda,\mu)
& \text{ if } \ \nu=\lambda(E) \ \text{ for some } \ E \in 2^{[\ell]} \ \text{ and }  \ \rho=\mu,\\
\. 0 &\text{ otherwise}.
\end{cases}
\]
Let functions \. $\fb',\fc',\fd':\cP \times \cP \to \rr_{\geq 0}$ \. be defined analogously.
It is straightforward to verify that these four functions $\fa', \fb',\fc',\fd'$ satisfy \eqref{eq:AD-cond-skew}.
Also note that,
\begin{align*}
	\sum_{\nu,\rho \ts \in \ts \cP} \. \fa'(\nu,\rho) \. \fs_{\nu/\rho}(1,0,0,\ldots) \ = \  \sum_{E \ts \in \ts 2^{[\ell]}} \.\fa(E),
\end{align*}
with analogous identities holding for \ts $\fb', \ts \fc'$ \ts and \ts $\fd'$.
By the Schur positivity in \eqref{eq:ADS-skew}, evaluating both sides
at \ts $(1,0,0,\dots)$ \ts gives
\[	\bigg( \sum_{E \ts \in \ts 2^{[\ell]} }  \fa (E)\bigg)
	\bigg( \sum_{E \ts \in \ts 2^{[\ell]} }  \fb(E)\bigg)
\ \leq \
	\bigg( \sum_{E \ts \in \ts 2^{[\ell]} }  \fc (E)\bigg)
	\bigg( \sum_{E \ts \in \ts 2^{[\ell]} }  \fd (E)\bigg).
\]
This is the desired AD inequality on~$\rB_\ell\ts$.

%%%%%%%%%%%%%%%%%%%%%%%%%%%%%%%%%%%%%%%%%%%%%%%%%%%%%%%%%%%%%%%%%%%%%%%%%%%%%%%%%%%%
%%%%%%%%%%%%%%%%%%%%%%%%%%%%%%%%%%%%%%%%%%%%%%%%%%%%%%%%%%%%%%%%%%%%%%%%%%%%%%%%%%%%
%%%%%%%%%%%%%%%%%%%%%%%%%%%%%%%%%%%%%%%%%%%%%%%%%%%%%%%%%%%%%%%%%%%%%%%%%%%%%%%%%%%%
%%%%%%%%%%%%%%%%%%%%%%%%%%%%%%%%%%%%%%%%%%%%%%%%%%%%%%%%%%%%%%%%%%%%%%%%%%%%%%%%%%%%
%%%%%%%%%%%%%%%%%%%%%%%%%%%%%%%%%%%%%%%%%%%%%%%%%%%%%%%%%%%%%%%%%%%%%%%%%%%%%%%%%%%%
%%%%%%%%%%%%%%%%%%%%%%%%%%%%%%%%%%%%%%%%%%%%%%%%%%%%%%%%%%%%%%%%%%%%%%%%%%%%%%%%%%%%

\medskip

\section{Orchestra inequalities}\label{s:Orch}

In this section we introduce the \defng{AD cone} \ts and the \defng{orchestra inequalities},
generalizing the \defng{duet inequalities}.  This is the main technical tool we need to
prove Schur positive inequalities.

\subsection{Ahlswede--Daykin cone}\label{ss:AD-cone}

Fix a positive integer $\ell$.
Let $\cE$ and $\cF$ be two disjoint copies of $[\ell] := \{1, 2, \ldots, \ell\}$.
Formally, we may regard $\cE := [\ell] \times \{0\}$ and $\cF := [\ell] \times \{1\}$ to distinguish them.
However, to avoid making the notation overly heavy, we will refer to elements of $\cE$ simply by the index $i$ and elements of $\cF$ by the index $j$.
Throughout this paper, we use the disjoint union symbol $\sqcup$ to emphasize that elements are distinguished by their origin, even when they share the same numerical value.

Consider a function \. $\cO: 2^{\cE \sqcup \cF}\to \rr$.
For every set partition  \. $\{B_1, \ldots, B_t\}$ \. of $\cE \sqcup  \cF$,
we say that \ts $\cO$ \ts satisfies an \defnb{orchestra inequality}, if
	\begin{equation}\label{eq:Orch}\tag{Orch}
		\sum_{H \ts \subseteq \ts [t]} \. \cO \bigg(\bigsqcup_{h \ts \in \ts H} \.  B_h \bigg) \ \geq \ 0.
	\end{equation}
Here, the sum runs over all \ts $2^t$ \ts subsets of \ts $[t]=\{1, \dots, t\}$.\footnote{We
use the term ``orchestra'' because it aggregates the value of the function $\cO$
over every possible subset of partition part, akin to how an orchestra's collective
output is defined by the combination of all its individual sections.}
Define \defnb{Ahlswede--Daykin {\rm (AD)} cone} \.
 $\cC^{\ell}$ \. as the set of functions satisfying
orchestra inequalities for all set partitions of \ts $\cE \sqcup  \cF$.
\smallskip

\begin{ex}\label{ex:orchestra}
	For $\ell=2$, the set $\mathcal{E} \sqcup \mathcal{F}$ has 4 elements,
	resulting in 15 possible partitions. The structure of the orchestra
	inequality depends entirely on the number of blocks in the chosen
	partition. There are the four possible cases:
	
\smallskip

	\noindent \textit{Case 1. One-block partition:} $\{\mathcal{E} \sqcup \mathcal{F}\}$: \\
	There is 1 such partition. The inequality aggregates $2$ terms:
	\begin{equation*}
		\mathcal{O}(\varnothing) + \mathcal{O}(\mathcal{E} \sqcup \mathcal{F}) \ge 0.
	\end{equation*}
	
\smallskip
	\noindent \textit{Case 2. Two-block partitions:} $\{B_1, B_2\}$: \\
	There are 7 such partitions. For each, the inequality aggregates
	$2^2$ terms:
	\begin{equation*}
		\mathcal{O}(\varnothing) + \mathcal{O}(B_1) + \mathcal{O}(B_2)
		+ \mathcal{O}(\mathcal{E} \sqcup \mathcal{F}) \ge 0.
	\end{equation*}
	
\smallskip
	\noindent \textit{Case 3. Three-block partitions:} $\{B_1, B_2, B_3\}$: \\
	There are 6 such partitions. For each, the inequality aggregates
	$2^3$ terms:
{\small
	\[
\cO(\varnothing) + \cO(B_1) + \cO(B_2) + \cO(B_3) + \cO(B_1\sqcup B_2) + \cO(B_1\sqcup B_3) + \cO(B_2\sqcup B_3) + \cO(\cE\sqcup\cF) \ \geq \ 0.
\]
}
	
\smallskip
	\noindent \textit{Case 4. Four-block partition:} $\{B_1, B_2, B_3, B_4\}$: \\
	There is 1 such partition, where every element is its own block.
	The inequality aggregates $2^4$ terms:
{\small
	\begin{align*}
	&\mathrel{\phantom{=}} \cO(\varnothing) + \cO(B_1) + \cO(B_2) + \cO(B_3) + \cO(B_4) \\
	&+\cO(B_1\sqcup B_2) + \cO(B_1\sqcup B_3) + \cO(B_1\sqcup B_4) + \cO(B_2\sqcup B_3) + \cO(B_2\sqcup B_4) + \cO(B_3\sqcup B_4)\\
	&+\cO(B_1\sqcup B_2\sqcup B_3) + \cO(B_1\sqcup B_2\sqcup B_4) + \cO(B_1\sqcup B_3\sqcup B_4) + \cO(B_2\sqcup B_3\sqcup B_4)
	+\cO(\cE\sqcup\cF)
	\ \geq \ 0.
\end{align*}
}
\end{ex}

\smallskip

Intuitively, the definition of this cone captures the essential inequalities
derivable from the constraints \eqref{eq:AD-cond} in the Ahlswede--Daykin Theorem~\ref{thm:AD}.
Note that the cone definition incorporates both $\cE$ and $\cF$ to accommodate
the \emph{skew} \ts version of the ADS inequality.
If we are only aiming for the straight version,
the cone definition can be restricted only to~$\cE$.

\medskip

\subsection{Schur orchestra inequality} \label{ss:orch-Schur}
For convenience, we extend the definition of  skew Schur functions \. $\fs_{\la/\mu}$ \.
to arbitrary  vectors $\lambda,\mu \in \nn^\ell$.
We adopt the convention that \. $\fs_{\la/\mu}:=0$ \. if either $\lambda$ or $\mu$
is not a partition, or if \. $\mu \not\subseteq \lambda$.

For \ts $E \subseteq \cE$ \ts and \ts $F \subseteq \cF$,
let
\begin{equation}\label{eq:complement}
\overline{E} \ := \ \cE \setminus E,  \quad \overline{F} \ := \ \cF \setminus F  \quad \text{and} \quad \overline{E \sqcup F} \ts := \ts \overline{E} \ts \sqcup \ts \overline{F}\ts.
\end{equation}
The following result gives a mechanism to lift the real-valued inequalities in \eqref{eq:Orch}
to statements about Schur positivity.

\smallskip

\begin{thm}[{\rm \defn{Schur orchestra inequality}\ts}{}]\label{thm:orch}
Let $\alpha, \beta, \gamma,\delta \in \nn^\ell$ and let $\cO$ be a function that satisfies \eqref{eq:Orch}.
For subsets \ts $E \subseteq \cE$ \ts and \ts $F \subseteq \cF$, let \.
$\lambda(E)=(\lambda_1,\ldots, \lambda_\ell)$ \. and \. $ \mu(F)=(\mu_1,\ldots, \mu_\ell)$ \ts in \ts
$\nn^\ell$, where
\begin{align}\label{eq:lambda-E}
	\lambda_i
	\ := \  \begin{cases}
		\alpha_i+\gamma_i & \text{ if } \ i \in E,\\
		\alpha_i & \text{ if } \ i \notin E,
	\end{cases}
\end{align}
and
\begin{align}\label{eq:mu-F}
	\mu_j
	\ := \  \begin{cases}
		\beta_j+\delta_j & \text{ if } \  j \in F,\\
		\beta_j & \text{ if } \ j \notin F.
	\end{cases}
\end{align}
Then
\begin{equation}\label{eq:S-Orch}\tag{S-Orch}
	\sum_{E \subseteq \cE, \. F \subseteq \cF}  \cO \big(E \sqcup F\big)  \.  \fs_{\lambda(E)/\mu(F)}  \. \fs_{\lambda(\overline{E})/\mu(\overline{F})} \ \geqslant_{\fs} \ 0.
\end{equation}
\end{thm}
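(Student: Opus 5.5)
We follow the Temperley--Lieb immanant approach of Lam--Postnikov--Pylyavskyy. First write each product $\fs_{\lambda(E)/\mu(F)}\,\fs_{\lambda(\overline{E})/\mu(\overline{F})}$ via the Jacobi--Trudi identity as a product of two $\ell\times\ell$ determinants in the complete homogeneous functions $h_k$. The $(i,j)$ entry of the first factor is $h_{\lambda_i(E)-\mu_j(F)-i+j}$, where $\lambda_i(E)\in\{\alpha_i,\alpha_i+\gamma_i\}$ and $\mu_j(F)\in\{\beta_j,\beta_j+\delta_j\}$. The second factor uses the complementary choices.

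Both determinants are minors of a single $2\ell\times 2\ell$ matrix $M$. Its rows are indexed by the $2\ell$ values $\{\alpha_i-i,\ \alpha_i+\gamma_i-i\}$ and its columns by $\{\beta_j-j,\ \beta_j+\delta_j-j\}$. Row $i$ of each copy of $\cE$ is assigned to one of the two factors according to whether $i\in E$, and similarly columns according to $F$. We then invoke the Rhoades--Skandera/LPP expansion: a product of complementary minors $\Delta_{I,J}(M)\Delta_{\overline I,\overline J}(M)$ equals $\sum_\tau \mathrm{Imm}_\tau(M)$. The sum runs over the Temperley--Lieb (noncrossing matching) basis elements $\tau$ compatible with the coloring $(I,J)$, meaning every strand joins two boundary points of different ``color'' in the appropriate sense. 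Moreover, each $\mathrm{Imm}_\tau$ of a Jacobi--Trudi-type matrix in $h$'s is Schur positive (Rhoades--Skandera, LPP). Swapping the order of summation, the LHS of \eqref{eq:S-Orch} equals
\[
\sum_{\tau} \mathrm{Imm}_\tau(M)\cdot \Big(\sum_{E,F\,:\,\tau \text{ compatible with } (E,F)} \cO(E\sqcup F)\Big).
\]
It then suffices to show that each inner coefficient is nonnegative.

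The key combinatorial step is to identify the set of $(E,F)$ compatible with a fixed matching $\tau$. I expect it to have exactly the form $\{\bigsqcup_{h\in H}B_h : H\subseteq[t]\}$ for a set partition $\{B_1,\ldots,B_t\}$ of $\cE\sqcup\cF$ determined by $\tau$. The mechanism is as follows. Pair the two boundary points of index $i$ (coming from $\alpha_i$ and $\alpha_i+\gamma_i$), which must receive opposite colors. A strand of $\tau$ joining two boundary points forces a relation between the colors of their indices, namely equality or opposition depending on the sides. Taking connected components of the graph whose edges are these pairings together with the strands of $\tau$ yields the blocks $B_h$. Each block can then be flipped independently, and this flipping is exactly the complement-symmetric structure that generates the $2^t$ terms. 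Granting this, the inner coefficient is an instance of \eqref{eq:Orch}, which is $\geq 0$ by hypothesis, and the theorem follows. Degenerate cases where some entries are not partitions (so $\fs=0$ by convention) should be absorbed automatically, since Jacobi--Trudi yields zero or the appropriate sign. This needs checking, perhaps by ensuring the row and column labels are distinct, or by a perturbation argument on $\alpha,\gamma$.

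The main obstacle is the compatibility analysis. One must verify carefully that the map $(E,F)\mapsto$ (row/column coloring) and the LPP compatibility condition produce precisely a ``union of blocks'' family with every block freely choosable. In particular we must check the consistency conditions: when a component contains a cycle of constraints, parity must always be consistent, or else the family is empty, which is harmless. We also need to handle coinciding row labels, such as $\gamma_i=0$, where the matrix has repeated rows. I would first work out $\ell=1,2$ by hand, checking against the duet example $\fs_{221}\fs_{31}+\fs_{22}\fs_{311}\leqslant_{\fs}\fs_{321}\fs_{21}+\fs_{32}\fs_{211}$, and then formalize the component argument.
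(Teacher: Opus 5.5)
Your strategy matches the paper's. You expand by Rhoades--Skandera into Temperley--Lieb immanants of $\aH_{\cN,\cM}$ with $\cM=M_\alpha\sqcup M_{\alpha+\gamma}$ and $\cN=N_\beta\sqcup N_{\beta+\delta}$, swap the sums, and read each coefficient off the graph built from the strands of $\kappa$ and the pairing edges $\{p_i,\overline{p_i}\}$, which is the paper's $G_\kappa$. However, the step you defer is where the argument breaks, and not for the reason you expect.

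$G_\kappa$ is a union of two perfect matchings, hence a union of even cycles, so the colouring constraints are always consistent; parity is not the issue. The real danger is a strand joining two vertices of the same type ($p$ to $p$, or $\overline{p}$ to $\overline{p}$), i.e.\ your ``opposition'' constraint. In that case the compatible sets form a translate $E_0\triangle\{\bigsqcup_{h\in H}B_h : H\subseteq[t]\}$ that contains neither $\varnothing$ nor $\cE\sqcup\cF$. A sum of $\cO$ over such a family is \emph{not} an instance of \eqref{eq:Orch}, and flipping blocks independently only produces this translate.

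This really happens. For $\ell=2$, $\alpha=(1,1)$, $\gamma=(3,3)$, the top row is $\overline{p_1}<\overline{p_2}<p_1<p_2$. Any diagram with top strands $\{\overline{p_1},\overline{p_2}\}$ and $\{p_1,p_2\}$ is compatible only with sets whose $\cE$-part is $\{1\}$ or $\{2\}$.

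The missing idea is Lemma~\ref{lem:Theta}: a diagram compatible with an \emph{admissible} $E\sqcup F$ is compatible with $\cE\sqcup\cF$, i.e.\ every strand joins some $p_i$ to some $\overline{p_j}$. Its proof is a non-crossing counting argument adapted from \cite[Thm.~11]{LPP07}. It uses $\overline{p_i}<p_i$ together with $u_1<\cdots<u_{2\ell}$, and the latter requires admissibility. Only with this lemma do the compatible admissible sets become exactly unions of $\sim_\kappa$-classes (Lemma~\ref{lem:comp-kappa}).

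For the same reason, the degenerate terms cannot be ``absorbed automatically.'' For a non-partition the Jacobi--Trudi determinant is generally a nonzero signed Schur function, not the $0$ the convention requires. For example, $\lambda=(0,2)$ gives $\det(\fh_{\lambda_i-i+j})=\fh_2-\fh_1^2=-\fs_{11}$. The same-type strands above live exactly on such sets: in the example, $\lambda(\{2\})=(1,4)$ occurs in both the $E=\{1\}$ and $E=\{2\}$ terms.

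You must therefore discard non-admissible $E\sqcup F$ before expanding. You then need to show that admissible sets are themselves unions of classes of an equivalence relation. This is the paper's $\sim$ in Lemma~\ref{lem:admin}, which relies on $\alpha,\alpha+\gamma,\beta,\beta+\delta$ being partitions (Lemma~\ref{lem:lat-top}). The partition $\{B_1,\ldots,B_t\}$ fed into \eqref{eq:Orch} is the one generated by $\sim$ and $\sim_\kappa$ jointly, not the cycle partition alone.

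Repeated labels (e.g.\ $\gamma_i=0$) are handled by the multiset form of the Rhoades--Skandera identity, which allows multiplicity at most $2$ and forces duplicates to be joined by a strand. This is combined with a tie-breaking rule for the $p_i,\overline{p_i}$. Perturbing the integer data $\alpha,\gamma$ is not an available option.
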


\smallskip

The sum in \eqref{eq:S-Orch} can be viewed as a sum over the finest partition of $\cE\cup\cF$ as in Example~\ref{ex:orchestra}, Case~4. A generalization of \eqref{eq:S-Orch} involving other partitions of $\cE\cup\cF$ is discussed in~$\S$\ref{ss:Schur-orchestra-finrem}. The present formulation suffices for all subsequent applications in this paper.
In particular, Theorem~\ref{thm:orch} implies the following corollary.

\smallskip

\begin{cor}[\thmb{Schur duet inequality}]\label{cor:duet-Schur}
Let $\lambda:=(\lambda_1,\lambda_2,\lambda_3)$ and $\mu:=(\mu_1,\mu_2,\mu_3)$ be nonnegative integer vectors of length $3$ such that $\lambda \supseteq \mu$.
Then
\[ \fs_{(\mu_1,\lambda_2,\lambda_3)} \.
\fs_{(\lambda_1,\mu_2,\mu_3)}  \, + \, \fs_{(\lambda_1,\mu_2,\lambda_3)}\.
\fs_{(\mu_1,\lambda_2,\mu_3)}     \ \leqslant_{\fs} \  \fs_{(\lambda_1,\lambda_2,\lambda_3)}\.
\fs_{(\mu_1,\mu_2,\mu_3)} \, + \, \fs_{(\lambda_1,\lambda_2,\mu_3)} \.
\fs_{(\mu_1,\mu_2,\lambda_3)}.
\]
\end{cor}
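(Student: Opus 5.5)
We will derive the corollary from Theorem~\ref{thm:orch} with $\ell=3$, taking all skew shapes to be straight. Set $\beta=\delta=\mathbf{0}$, so that $\mu(F)=\emp$ for every $F\subseteq\cF$. Then $\fs_{\lambda(E)/\mu(F)}=\fs_{\lambda(E)}$ no matter what $F$ is. Next take $\alpha:=\mu$ and $\gamma:=\lambda-\mu\in\nn^3$; this is possible because $\lambda\supseteq\mu$. With these choices, $\lambda(E)$ is the vector whose $i$-th entry is $\lambda_i$ for $i\in E$ and $\mu_i$ for $i\notin E$. The complementary vector $\lambda(\overline E)$ swaps these roles. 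The left side of \eqref{eq:S-Orch} therefore becomes
\[
\sum_{E\subseteq[3]} \Big(\sum_{F\subseteq \cF}\cO(E\sqcup F)\Big)\, \fs_{\lambda(E)}\,\fs_{\lambda(\overline E)}.
\]
Since $\cO$ ranges over a cone, it is enough to choose $\cO$ depending only on $E$ and supported on $F=\varnothing$, say $\cO(E\sqcup F):=o(E)\cdot\mathbf{1}_{F=\varnothing}$.

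The terms pair up under $E\leftrightarrow\overline E$, so there are four unordered products:
\begin{itemize}
\item $\{[3],\varnothing\}$ gives $\fs_{\lambda}\fs_{\mu}$;
\item $\{\{1,2\},\{3\}\}$ gives $\fs_{(\lambda_1,\lambda_2,\mu_3)}\fs_{(\mu_1,\mu_2,\lambda_3)}$;
\item $\{\{1,3\},\{2\}\}$ gives $\fs_{(\lambda_1,\mu_2,\lambda_3)}\fs_{(\mu_1,\lambda_2,\mu_3)}$;
\item $\{\{2,3\},\{1\}\}$ gives $\fs_{(\mu_1,\lambda_2,\lambda_3)}\fs_{(\lambda_1,\mu_2,\mu_3)}$.
\end{itemize}
To produce the claimed inequality we want each product to have total coefficient $+1$ on the right-hand pairs and $-1$ on the left-hand pairs. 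So we set $o([3])=o(\{1,2\})=1$ and $o(\{1,3\})=o(\{2,3\})=-1$, and put $o=0$ on the other four subsets.

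It then remains to check that this $\cO$ satisfies \eqref{eq:Orch} for every set partition of $\cE\sqcup\cF$. Because $\cO$ vanishes unless $F=\varnothing$, the only subsets that contribute to the orchestra sum for a partition $\{B_h\}$ are unions of blocks lying entirely inside $\cE$. Let $\pi$ denote the set partition of $\cE$ formed by those blocks, and let $U\subseteq\cE$ be the union of the blocks that meet $\cF$. Then the orchestra sum equals $\sum o(S)$ over all $S$ that are unions of blocks of $\pi$; the elements of $U$ are never included.

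We check this case by case.
\begin{itemize}
\item If $U=\varnothing$, then $\pi$ is a partition of $[3]$. The finest partition gives sum $1+1-1-1=0$. A partition $\{\{a,b\},\{c\}\}$ with $c=3$ gives $1+1=2$. With $c\in\{1,2\}$ it gives $1-1=0$. The one-block partition gives $1$.
\item If $U\neq\varnothing$, the only subsets available avoid $U$, so $[3]$ is never reachable. Then $o(\{1,2\})$ contributes only when $3\in U$ and $\{1,2\}$ is a union of blocks. In that case $\{1,3\}$ and $\{2,3\}$ are unreachable, so the sum is $\ge 0$. If $3\notin U$, only negative subsets could appear, but $\{1,3\}$ requires $2\in U$ while avoiding $1$, and so on. For instance, with $U=\{2\}$ and blocks $\{1\},\{3\}$, the set $\{1,3\}$ is reachable and gives $-1$ with no compensating term.
\end{itemize}
The last instance is the main obstacle: the naive choice of $\cO$ violates \eqref{eq:Orch}. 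I would fix it by spreading the positive mass over $F$ as well, letting $\cO$ depend on $E\sqcup F$ symmetrically. For example, set $\cO(E\sqcup F)=o(E)$ when $F$ equals the copy of $E$ in $\cF$, or more flexibly distribute $o(E)$ uniformly over all $F$; either choice preserves the row sums $\sum_F\cO(E\sqcup F)=o(E)$, which are all that the reduction above uses. The choice would be tuned so that every partition containing a block meeting both $\cE$ and $\cF$ pairs each negative reachable subset with a positive one. The guiding principle is that orchestra inequalities are exactly the AD-type constraints. Equivalently, one can verify that the coefficient vector is a nonnegative combination of the orchestra-generating vectors, which is a finite check on $2^6$ subsets. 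If uniform distribution fails, I would instead try $\cO$ supported on $F=\cF$ for positive $o$ and on $F=\varnothing$ for negative $o$, since then every block meeting $\cF$ blocks only the negative terms.
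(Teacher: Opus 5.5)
Your setup is the paper's: $\ell=3$, $\alpha=\mu$, $\gamma=\lambda-\mu$, $\beta=\delta=\mathbf{0}$, with $o=+1$ on $\{1,2,3\},\{1,2\}$ and $o=-1$ on $\{1,3\},\{2,3\}$. You are also right that only the row sums $\sum_F\cO(E\sqcup F)$ enter \eqref{eq:S-Orch}, because $\mu(F)=\varnothing$ for every $F$.

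\textbf{The gap.} You never exhibit an $\cO$ that is shown to satisfy \eqref{eq:Orch}.
\begin{itemize}
\item The only choice you analyze, $\cO=o(E)\mathbf{1}_{F=\varnothing}$, violates it, and your counterexample is correct.
\item The remark that ``$\cO$ ranges over a cone'' never justified that choice. The cone is cut out by exactly the inequalities it fails.
\item After that you only list candidates to be ``tuned'', with a fallback in case the first fails.
\end{itemize}
Showing that $\cO$ lies in the AD cone is the one nontrivial input to the corollary, so the argument is incomplete as written.

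\textbf{How to close it.} Take $\cO(E\sqcup F):=o(E)$ for every $F$. This is the paper's choice, and it is your uniform distribution scaled by $8$. Fix any set partition $\{B_1,\ldots,B_t\}$ of $\cE\sqcup\cF$. The orchestra sum is $\sum_{H}o(B_H\cap\cE)$, and:
\begin{itemize}
\item each block contained in $\cF$ contributes only a factor $2$;
\item the nonempty traces $B_h\cap\cE$ form a set partition $\pi$ of $\cE$, whose unions of blocks are exactly the attainable sets $B_H\cap\cE$.
\end{itemize}
Hence the sum equals $2^m\sum_S o(S)$, with $m$ the number of blocks contained in $\cF$ and $S$ ranging over unions of blocks of $\pi$. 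A block meeting $\cF$ no longer excludes anything, which is precisely what broke your $F=\varnothing$ choice.

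You already computed these five sums over partitions of $[3]$ in your case $U=\varnothing$: they are $0,2,0,0,1$, all nonnegative. Since $\mu(F)=\varnothing$ throughout, the left side of \eqref{eq:S-Orch} is then $8$ times (RHS $-$ LHS) of the corollary, which finishes the proof.

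Your other two candidates also work, but you would still need to check one of them:
\begin{itemize}
\item For the diagonal choice $F=E$, the attainable $E$ form a Boolean subalgebra of $2^{[3]}$, i.e.\ the unions of blocks of some partition of $[3]$, so the same five sums apply.
\item For the split choice, both negative sets $\{1,3\}\sqcup\varnothing$ and $\{2,3\}\sqcup\varnothing$ can be attainable only if $\{3\}\subseteq\cE$ is a block by itself. In that case $\{1,2\}\sqcup\cF$ is attainable too, so the sum is $1+1-1-1=0$; otherwise it is at least $0$.
\end{itemize}
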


\smallskip

\begin{proof}
	We apply Theorem~\ref{thm:orch} as follows. Let \ts $\ell=3$, and
\ts $\alpha \gets \mu$\., \. $\beta \gets (0,0,0)$\., \. $\gamma \gets \lambda-\mu$\., \. $\delta \gets (0,0,0)$.
Then, for \ts $E \subseteq \cE$ \ts and \ts $F \subseteq \cF$, we have \ts $\la(E)=(\la_1,\la_2,\la_3)$ \ts where
	\begin{align*}
		\lambda_i
		\ := \  \begin{cases}
			\lambda_i & \text{ if } \ i \in E,\\
			\mu_i & \text{ if } \ i \notin E,
		\end{cases}
	\end{align*}
	and \ts $\mu(F)=(0,0,0)$.
	Define a function \ts $\cO:2^{\cE \sqcup \cF}\to \rr$ \ts by
	\[ \cO(E \sqcup F) \ := \
	\begin{cases}
		1 & \text{ if }  E=\{1,2,3\} \text{ or } \ E=\{1,2\},\\
		-1 & \text{ if }  E=\{2,3\} \text{ or }  \ E=\{1,3\},\\
		0 & \text{ otherwise.}
	\end{cases}  \]
	Note that function \ts $\cO$ \ts does not depend on~$F$ in this case.
	It is straightforward to verify that the function \ts $\cO$ \ts satisfies
 the orchestra inequalities \eqref{eq:Orch}, and that \eqref{eq:S-Orch} gives the desired inequality.
\end{proof}

%%%%%%%%%%%%%%%%%%%%%%%%%%%%%%%%%%%%%%%%%%%%%%%%%%%%%%%%%%%%%%%%%%%%%%%%%%%%%%%%%%%%
%%%%%%%%%%%%%%%%%%%%%%%%%%%%%%%%%%%%%%%%%%%%%%%%%%%%%%%%%%%%%%%%%%%%%%%%%%%%%%%%%%%%
%%%%%%%%%%%%%%%%%%%%%%%%%%%%%%%%%%%%%%%%%%%%%%%%%%%%%%%%%%%%%%%%%%%%%%%%%%%%%%%%%%%%
%%%%%%%%%%%%%%%%%%%%%%%%%%%%%%%%%%%%%%%%%%%%%%%%%%%%%%%%%%%%%%%%%%%%%%%%%%%%%%%%%%%%
%%%%%%%%%%%%%%%%%%%%%%%%%%%%%%%%%%%%%%%%%%%%%%%%%%%%%%%%%%%%%%%%%%%%%%%%%%%%%%%%%%%%
%%%%%%%%%%%%%%%%%%%%%%%%%%%%%%%%%%%%%%%%%%%%%%%%%%%%%%%%%%%%%%%%%%%%%%%%%%%%%%%%%%%%

\medskip

%\newpage

\section{Temperley--Lieb algebra and immanants}\label{s:TL}

In this section we give a quick review of \defng{Temperley--Lieb immanants},
which we use to prove orchestra inequalities in the next section.

\subsection{Temperley--Lieb algebra}\label{ss:TL-alg}
For a fixed \ts $\xi \in \cc$, define \defnb{Temperley--Lieb algebra} \.
$\mathrm{TL}_k(\xi)$ \. by the basis \ts $t_1,\dotsc,t_{k-1}$,  subject
to the relations
\begin{alignat*}{2}
	t_i^2 &= \xi t_i, &\qquad &\text{for } \ i=1,\dotsc,k-1, \\
	t_i t_j t_i &= t_i,   &\qquad &\text{if } \  |i-j|=1,\\
	t_i t_j &= t_j t_i,   &\qquad &\text{if } \ |i-j| \geq 2.
\end{alignat*}
We refer to \cite{DG26} for a friendly historical survey and many helpful references.

The basis elements of $\mathrm{TL}_k(\xi)$ can be represented by planar diagrams on the vertex set $\cV \sqcup \cW$,
\begin{align}\label{eq:VW}
 \cV := \{1,2,\ldots,k\} \quad \text{and} \quad \cW := \{k+1,k+2,\ldots,2k\},
\end{align}
where we use the symbol $\sqcup$ to emphasize that the union is disjoint.
Each basis element $\kappa$ corresponds to a perfect matching of these $2k$ points via $k$ non-intersecting strands. From now on these $2k$ points form two rows of $k$ points in each. The points in the first row are increasing from $1$ to $k$, and points in the second row are decreasing from $2k$ to $(k+1)$.
Equivalently, this says that there are no edge pairs \ts $\{a,b\}$ \ts and \ts $\{c,d\}$ \ts satisfying the crossing condition $a<c<b<d$,
see e.g.~\cite{TemLieb71}. We denote the set of these basis elements (or equivalently, \defng{planar diagrams}) by $\mathcal{B}_k$.
Note that the  cardinality of $\mathcal{B}_k$ is given by the \defnb{Catalan number} \.
$\Cat(k)=\frac{1}{k+1}\binom{2k}{k}$.

An example of generators of $\mathrm{TL}_{4}(2)$ is shown in Figure~\ref{fig:TL-1}, and remaining basis elements in $\cB_4$ are shown in Figure~\ref{fig:TL-1-rest}, giving \ts $\Cat(4)=14$ \ts elements in total.

\begin{figure}[hbt]
\begin{center}
	\includegraphics[scale=1]{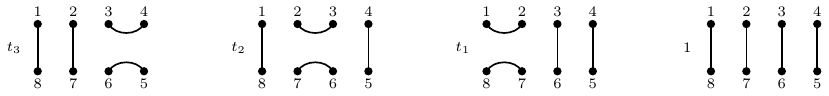}
\end{center}
\caption{Multiplicative generators for $\TL_4(2)$.}
\label{fig:TL-1}
\end{figure}

\begin{figure}[hbt]
\begin{center}
	\includegraphics[scale=1]{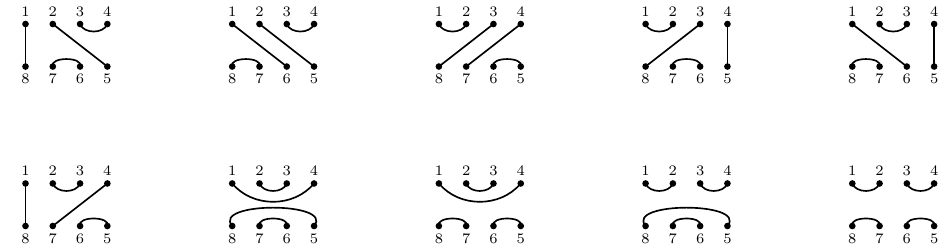}
\end{center}
\caption{Remaining basis elements in $\mathcal{B}_4$.}
\label{fig:TL-1-rest}
\end{figure}

\subsection{Temperley--Lieb immanants}\label{ss:imm}
Following Littlewood \cite{LittlewoodTGC} and Stanley \cite{StanPos},
the \defnb{immanants} \ts are polynomials in matrix entries defined
as follows.

Fix \ts $k\geq1$ \ts and let \ts $S_k$ \ts be symmetric group on \ts $k$ \ts elements.
Given a function $f: S_k \rightarrow \mathbb C$ and a $k\times k$ matrix $X=(x_{ij})_{1\leq i,j\leq k}$, the \defnb{$f$-immanant} of $X$ is the polynomial
\begin{equation}\label{eq:immdef}
	\Imm_f(X) \ := \ \sum_{w \in S_k} \. f(w) \. x_{1,w_1} \cdots x_{k,w_k}\..
\end{equation}

From now on, let \ts $\xi=2$.  In this case, there is a natural surjection
	\begin{equation}\label{eq:sntotn}
		\begin{aligned}
			\sigma \. : \.  \mathbb{C}[S_k] \rightarrow \TL_k(2) \ \ \ \mbox{ with } \ \ s_i \mapsto t_i - 1, \quad i\in[k-1],
		\end{aligned}
	\end{equation}
	where \ts $s_i=(i,i+1)$ \ts is an adjacent transposition.
	Since \. $(\kappa)_{\kappa\in \cB_k}$ \. forms a basis of \ts $\TL_k(2)$, for each \ts $\kappa\in\cB_k$,
define a function \ts $f_\kappa : S_k\to \mathbb R$ \ts as
	\begin{align}
		\sigma(w) \. := \. \sum_{\kappa\ts\in\ts \cB_k} \. f_\kappa(w) \. \kappa\ts.
	\end{align}
	Following \cite{RS05,RS06}, the \defnb{Temperley--Lieb immanant} \ts of a matrix~$X$,
denoted \. $\Imm_{\kappa}(X)$,  is defined as
	\begin{equation}
		\Imm_{\kappa}(X) \ := \  \Imm_{f_{\kappa}}(X)
		\ = \ \sum_{w \in S_k} \. f_{\kappa}(w) \. x_{1,w_1} \. \cdots \. x_{k,w_k}\ts.
	\end{equation}

\smallskip

\subsection{From diagrams to multisets}\label{ss:TL-multi}
Fix \ts $\ell\ge 1$ \ts and set \ts $k := 2\ell$. Let \.
$\cM = \{m_1 \leq \dots \leq m_k\}$ \. and \. $\cN = \{n_1 \leq \dots \leq n_k\}$ \.
be multisets of size \ts $k$ \ts with maximum multiplicity~$2$. For $M \subset \cM$ and $N \subset \cN$, let $\overline{M} := \cM \setminus M$ and $\overline{N} := \cN \setminus N$.  We extend this notation to unions by  \. $\overline{M \sqcup N} := \overline{M} \sqcup \overline{N}$. Throughout this paper, we restrict our attention to subsets $M$ and $N$ of size~$\ell$, such that \ts
$M, N, \overline{M}$ and \ts $\overline{N}$ \ts are \defn{simple}, i.e., contain no repeated elements.

Let $\kappa \in \mathcal{B}_k$ be a Temperley--Lieb diagram on the vertex set $\cV \sqcup \cW$.
We identify the multiset $\cM$ with $\cV$, and $\cN$ with $\cW$,
via a bijection \. $\psi \colon \cM \sqcup \cN \to \cV \sqcup \cW$.
This bijection is defined by
\begin{equation}\label{eq:psi}
	\psi(x) \ := \ \begin{cases}
		i & \text{if } x = m_i \in \cM, \\
		2k+1-j & \text{if } x = n_j \in \cN.
	\end{cases}
\end{equation}
For any subset $V \sqcup W \subseteq \cV \sqcup \cW$, we define its complement by $\overline{V \sqcup W} := (\cV \setminus V) \sqcup (\cW \setminus W)$.
Note that the map $\psi$ commutes with the complement operation; that is, for any subset $M \sqcup N \subseteq \cM \sqcup \cN$,
\[
\psi(\overline{M\sqcup N}) \ = \ \overline{\psi(M \sqcup N)}.
\]

\smallskip

\begin{ex}\label{ex:1}
Our running example is  by the following multisets of size 6:
\[  \cM \ = \ \{2,4,7,8,10,11\}, \qquad \cN \ = \ \{1,2,3,3,5,6\}.  \]	
Now let $M\subseteq \cM$ and $N\subseteq \cN$ be given by
\[ M \ = \  \{2,10,11\} \ = \  \{m_1,m_5,m_6\}, \qquad N \ = \ \{3,5,6\}  \ = \  \{n_4,n_5,n_6\}. \]
Notice that
\[ \overline{M} \ = \ \{4,7,8\} \  = \  \{m_2,m_3,m_4\}, \qquad \overline{N} \ = \ \{1,2,3\}  \ = \  \{n_1,n_2,n_3\},  \]
so $M$, $N$, $\overline{M}$, $\overline{N}$ are all simple subsets.
Then, applying  $\psi$  gives:
\begin{alignat*}{2}
  \psi(M \sqcup N) \ &= \ \{1,5,6\} \sqcup \{13-6,13-5,13-4\} \ &&= \ \{1,5,6,7,8,9\},   \\
   \psi(\overline{M \sqcup N}) \ &= \ \{2,3,4\} \sqcup \{13-3,13-2,13-1\} \ &&= \ \{2,3,4,10,11,12\}.
\end{alignat*}
\end{ex}

\smallskip

\subsection{Compatible diagrams}\label{ss:TL-comp}
We say that a diagram \ts $\kappa$ \ts is \defnb{compatible} \ts with  \ts $(\cM,\cN)$,
if it satisfies the following conditions:
\begin{enumerate}
	\item All duplicate labels in $\cM$ are paired:
	for every index $i$ such that $m_i = m_{i+1}$, the vertices $i$ and $(i+1)$ are connected by an edge.
	
	\item All duplicate labels in $\cN$ are paired:
	for every index $j$ such that $n_j = n_{j+1}$, the vertices $(2k+1-j)$ and $(2k-j)$ are connected by an edge.
\end{enumerate}

Let \ts $K \subseteq \cM \sqcup \cN$ \ts be a subset such that the intersections \ts $K \cap \cM$ \ts and \ts
$K \cap \cN$ \ts are both simple subsets of size~$\ell$.
We say that $\kappa$ is \defnb{compatible} with~$K$, if it is compatible with \ts $(\cM,\cN)$ \ts
and satisfies the following bipartite condition:
\begin{enumerate}
	\setcounter{enumi}{2}
	\item The perfect matching pairs the set $\psi(K)$ with its complement $\psi(\overline{K})$.
	In other words, every edge of $\kappa$ connects a vertex in $\psi(K)$ to a vertex not in $\psi(K)$.
\end{enumerate}
Denote by \. $\Theta(K):=\Theta_{\cM,\cN}(K)$ \. the set of diagrams from \ts $\cB_{k}$ \ts
that are compatible with~$K$.
Note that \ts $\Theta(K)=\Theta(\overline K)$ \ts by the symmetry.

\smallskip
\begin{ex}\label{ex:2}
Let $\cM $, $\cN$, $M$, $N$ be as in Example \ref{ex:1}, and  $K=M\sqcup N$.
Note that  $\psi(K)=\{1,5,6,7,8,9\}$ and $\overline{\psi(K)}=\{2,3,4,10,11,12\}$. Since $n_3=n_4$, the vertices 9 and 10 must be connected by an edge, as shown in Figure~\ref{fig:prematching}.
Then $\Theta(K)$ consists of the two diagrams shown in Figure~\ref{fig:compatible}.

\begin{figure}[hbt]
\begin{center}
	\includegraphics[scale=1]{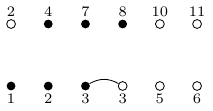}
\end{center}
\caption{Diagrams compatible with $(\cM,\cN)$ in Example~\ref{ex:1}.
The top and bottom rows are labeled by the elements of $\cM$ and $\cN$, respectively.
White nodes indicate elements in $\psi(K)$.}
\label{fig:prematching}
\end{figure}

\begin{figure}[hbt]
\begin{center}
	\includegraphics[scale=1]{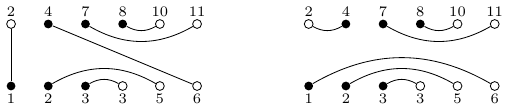}
\end{center}
\caption{Two  diagrams in $\Theta(K)$, where $K$ is as in Example~\ref{ex:2}. Note that white nodes are paired with black nodes.}
\label{fig:compatible}
\end{figure}
\end{ex}

\smallskip

\subsection{Rhoades--Skandera approach}\label{ss:TL-RS}
We now present a  result of Rhoades and Skandera expressing products of Schur
polynomials in terms of immanants.
Let \ts $X = (x_{i,j})_{i,j \geq 1}$ \ts be an \ts $\nn \times \nn$ \ts matrix.
We denote by \ts $X_{M,N} := (x_{m_i, n_j})_{1 \le i, j \le k}$ \ts the \ts
$k \times k$ \ts submatrix of~$X$ indexed by the multisets $M$ and~$N$.
Note that this submatrix may contain repeated rows or columns.

\smallskip

\begin{lemma}[{\cite[Prop.~4.4]{RS05}}]\label{lem:RS-1}
	Let $\ell \geq 1$.
	Let $\cM$ and $\cN$ be multisets of size $2\ell$ with maximum multiplicity $2$.
	For any  subsets $M \subseteq \cM$ and $N \subseteq \cN$ of size $\ell$
	such that $M, N, \overline{M}$, and $\overline{N}$ are all simple, and any $\nn \times \nn$ matrix $X$,
	\begin{equation}\label{eq:RS-1}
		\det(X_{M,N})\. \det(X_{\overline{M},\overline{N}}) \ =
\ \sum_{\kappa \in \Theta (M \sqcup \overline{N})} \. \Imm_{\kappa}(X_{\cM, \cN}).
	\end{equation}
\end{lemma}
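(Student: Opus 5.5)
The plan is to derive \eqref{eq:RS-1} from the classical Temperley--Lieb expansion of a product of two complementary minors of a single $k\times k$ matrix with \emph{distinct} row and column indices. The multiset case then follows by specialization.

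\emph{Step 1: the simple case.} First take $\cM$ and $\cN$ simple. Set $Y:=X_{\cM,\cN}$, a $k\times k$ matrix with $k=2\ell$, so that $\det(X_{M,N})\det(X_{\overline M,\overline N})$ is a product of complementary minors of $Y$. Expand both determinants as signed sums over permutations. The product then becomes $\sum_{w\in S_k} c(w)\, y_{1,w_1}\cdots y_{k,w_k}$, where $c(w)$ is a signed count of the ways to split $w$ into bijections $M\to N$ and $\overline M\to\overline N$.

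Next apply the surjection $\sigma$ from \eqref{eq:sntotn} and the standard Kauffman-type skein identity at $\xi=2$. This says that $\sum_w c(w)\,w$ maps to $\sum_\kappa \kappa$, where $\kappa$ runs over the planar matchings in which every strand joins a vertex of $\psi(M\sqcup\overline N)$ to a vertex of its complement. The coloring uses the convention that bottom vertices are colored by the complement, which is why $\overline N$ rather than $N$ appears.

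Taking the coefficient functional $f_\kappa$ then gives exactly the sum of $\Imm_\kappa(Y)$ over $\kappa$ satisfying condition (iii). This is the content of Rhoades--Skandera's Prop.~4.4 in the simple setting, and it is proved via Lindström--Gessel--Viennot paths: pairs of path families are uncrossed, and each crossing configuration is resolved into planar matchings.

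\emph{Step 2: multisets.} Now allow repeated indices. Replace $X$ by a generic matrix $Z$ whose rows and columns are doubled formally, with indices $m_i\neq m_{i+1}$, and apply Step 1. Then specialize, identifying the duplicated rows $m_i=m_{i+1}$ (and likewise the duplicated columns).

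Because $M$, $\overline M$, $N$, $\overline N$ are simple, each repeated pair is split with one copy in $M$ and one in $\overline M$. Hence the left side specializes to $\det(X_{M,N})\det(X_{\overline M,\overline N})$ directly.

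On the right side we must show that the specialization of $\Imm_\kappa$ vanishes whenever $\kappa$ fails to pair $i$ with $i+1$ for a duplicated pair $m_i=m_{i+1}$ (and similarly for $\cN$). This is the vanishing lemma for Temperley--Lieb immanants on matrices with two equal adjacent rows. Its proof: $\Imm_\kappa(Y)$ is the coefficient of $\kappa$ in $\sum_w y_{1,w_1}\cdots y_{k,w_k}\,\sigma(w)$. Equal adjacent rows make this sum invariant under $w\mapsto s_iw$, so it is a multiple of $(1+\sigma(s_i))=t_i$. Every term of $t_i\cdot(\text{anything})$ is a diagram in which $i$ and $i+1$ are joined, so $f_\kappa$-coefficients vanish on all other $\kappa$. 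Columns are handled in the same way using right multiplication.

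The surviving $\kappa$ are exactly the diagrams compatible with $(\cM,\cN)$, and together with (iii) they form $\Theta(M\sqcup\overline N)$.

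\emph{Main obstacle.} I expect the hardest step to be the simple-case expansion in Step 1. Pinning down the signs of $\sigma(s_i)=t_i-1$ so that every coefficient is $+1$ on bipartite-compatible diagrams and $0$ elsewhere requires care. My first attempt would be induction on $k$ via the planar-path (LGV) picture, checking that each crossing resolves consistently with the relation $t_i^2=2t_i$. Since the statement is quoted from \cite{RS05}, one may alternatively cite it directly for this step.
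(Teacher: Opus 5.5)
The paper gives no argument for this lemma: it quotes it, already in the multiset form, from \cite{RS05} and uses it as a black box. Your proposal takes a different and more self-contained route. You use only the set case as input and obtain the multiset case by specialization, and that reduction is correct.

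Write $y^w:=y_{1,w_1}\cdots y_{k,w_k}$ and $S:=\sum_{w}y^w\sigma(w)=\sum_{\kappa}\Imm_\kappa(Y)\,\kappa$. Two equal adjacent rows make $S$ invariant under multiplication by $\sigma(s_i)$ on the row side. Hence $S=\tfrac12\,t_iS$ (or $\tfrac12\,St_i$), and every diagram that appears has $i$ joined to $i+1$ on that side. Columns are handled the same way.

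Simplicity of $M,\overline M,N,\overline N$ forces each duplicated pair to be split between $M$ and $\overline M$ (resp.\ $N$ and $\overline N$). So such a forced edge automatically satisfies condition (iii), and the surviving terms are exactly $\Theta(M\sqcup\overline N)$, whichever copy of a repeated value you assign to $M$. This extension is what the paper actually needs, since the multisets in \eqref{eq:MN} genuinely have repeated entries.

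One point of bookkeeping remains. Two things depend on conventions the paper never fixes: whether equal rows give invariance under $w\mapsto ws_i$ or under $w\mapsto s_iw$, and whether that places $t_i$ on the top or the bottom of the diagram. You must fix composition and stacking conventions so that rows correspond to the top vertices $\cV$, as the statement requires.

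Your sketch of Step~1, however, states the wrong identity. What you need is $c(w)=\sum_{\kappa\in\Theta(M\sqcup\overline N)}f_\kappa(w)$ for every $w\in S_k$. Equivalently, the linear functional $w\mapsto c(w)$ on $\cc[S_k]$ factors through $\sigma$ as the indicator functional of $\Theta(M\sqcup\overline N)$ on the basis $\cB_k$.

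You instead assert that $\sigma\bigl(\sum_w c(w)\,w\bigr)=\sum_{\kappa\in\Theta}\kappa$. That is the transposed statement, and it is false. Take $\ell=1$, $M=\{m_1\}$, $N=\{n_1\}$. Then $c=\mathbf 1_{\{e\}}$, so $\sigma\bigl(\sum_w c(w)\,w\bigr)$ is just the identity diagram. But $\Theta(M\sqcup\overline N)$ is all of $\cB_2$, since the white vertices are $\{1,3\}$. The correct identity is $y_{11}y_{22}=(y_{11}y_{22}-y_{12}y_{21})+y_{12}y_{21}=\Imm_{\mathrm{id}}(Y)+\Imm_{t_1}(Y)$, with $\mathrm{id}$ the identity diagram. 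Taking coefficients of your element would give $\sum_w c(w)f_\kappa(w)$, which is not the quantity in \eqref{eq:RS-1}.

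Because you also offer to cite the set case from \cite{RS05}, which is all the paper does, this does not invalidate your proof. If you intend to prove Step~1 yourself, the target is the functional statement above. You would show that $c$ vanishes on $\ker\sigma$, and then evaluate the induced functional on each $\kappa\in\cB_k$.
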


\smallskip

In this paper, we apply Lemma~\ref{lem:RS-1} to the
\defnb{generalized Jacobi--Trudi matrix}~$\ts\aH$, defined as follows.
Let \ts $\fh_r(\bx):= \fs_r(\bx)$ \ts denote the $r$-th \defng{complete
homogeneous symmetric function} \ts
in \ts $\bx = (x_1, x_2, \dots)$.  We adopt the convention that \ts
$\fh_0 = 1$ \ts and \ts $\fh_r = 0$ \ts for \ts $r < 0$.
The matrix \ts $\aH$ \ts is the \. $\nn \times \nn$ \. matrix
with $(i,j)$-th entry given by \ts $\fh_{j-i}$, for all \ts $i,j \ge 1$.

\smallskip
\begin{lemma}[{\cite[Prop.~3.5]{RS06}}]\label{lem:RS-2}
	Let $\ell \geq 1$.
	Let $\cM$ and $\cN$ be multisets of size $2\ell$ with maximum multiplicity $2$.
	Then for any $\kappa \in \cB_{2\ell}$, we have:
	\begin{equation}\label{eq:RS-2}
		\Imm_{\kappa}(\aH_{\cN,\.\cM}) \, \geqslant_{\fs} \, 0.
	\end{equation}
	In other words, the Temperley--Lieb immanants of generalized Jacobi--Trudi matrices are Schur positive.
\end{lemma}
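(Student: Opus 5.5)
The plan is to deduce the statement from positivity of Kazhdan--Lusztig immanants (Haiman) rather than to attack Temperley--Lieb immanants directly. First, identify $\Imm_\kappa$ with a Kazhdan--Lusztig immanant. Rhoades and Skandera show that the functions $f_\kappa$ defined via the surjection $\sigma$ in \eqref{eq:sntotn} coincide, up to the sign normalization $s_i\mapsto t_i-1$, with the functions $w\mapsto Q_{w,v}(1)$ for $321$-avoiding permutations $v\in S_k$. Here $Q_{w,v}$ are the inverse Kazhdan--Lusztig polynomials. The reason is that the kernel of $\cc[S_k]\to \TL_k(2)$ is spanned by the Kazhdan--Lusztig basis elements $C'_v(1)$ with $v$ containing $321$, and the remaining $C'_v(1)$ map bijectively onto the diagram basis $\cB_k$. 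I would take this identification from \cite{RS05,RS06} as the first step, checking only that the indexing $\kappa\leftrightarrow v$ matches the planar-diagram conventions of $\S$\ref{ss:TL-alg}. Then $\Imm_\kappa(X)=\Imm_{v}^{\mathrm{KL}}(X)$ is a Kazhdan--Lusztig immanant.

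Second, I would expand the immanant of a generalized Jacobi--Trudi matrix. Write $\aH_{\cN,\cM}=(\fh_{m_j-n_i})$. Then
\[
\Imm_f(\aH_{\cN,\cM}) \ = \ \sum_{w\in S_k} f(w)\.\prod_{i=1}^k \fh_{m_{w_i}-n_i}.
\]
When $\cM,\cN$ have distinct entries, one may shift to the case where $\aH_{\cN,\cM}$ is a submatrix of the ordinary Jacobi--Trudi matrix. Via the Frobenius characteristic, $\prod_i \fh_{\nu_i}$ is the characteristic of an induced trivial character, so Schur positivity of the sum is equivalent to nonnegativity of certain sums $\sum_w f(w)\.\chi(\ldots)$ over double cosets of Young subgroups. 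Haiman's theorem, a consequence of the Kazhdan--Lusztig positivity of Hecke algebra structure constants, states exactly that these are nonnegative for $f=Q_{\cdot,v}(1)$. This gives \eqref{eq:RS-2} for simple multisets.

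Third, I would handle repeated entries (multiplicity $2$). A repeated row or column index means $\aH_{\cN,\cM}$ is obtained from a matrix with distinct indices by a specialization that identifies two rows or columns. I would argue that Haiman's statement is formulated for arbitrary sequences of indices, i.e.\ for products $\fh_{\nu_1}\cdots\fh_{\nu_k}$ with any integer vector $\nu$, so repetitions cost nothing. If needed, I would instead use that the compatibility pairing in $\S$\ref{ss:TL-comp} forces adjacent duplicate vertices to be matched; this makes the corresponding $f_\kappa$ invariant under the relevant transposition, so the immanant remains a KL immanant of the specialized matrix.

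I expect the main obstacle to be the first step. One must get exactly the right sign and normalization so that $f_\kappa$ equals a KL basis coefficient and not its negative or dual. I would verify it on $\TL_3(2)$ by hand before invoking the general Rhoades--Skandera theorem. The other steps are citations of Haiman's positivity together with bookkeeping.
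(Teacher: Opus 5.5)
Your route is essentially the one behind the cited result. The paper itself gives no proof and simply quotes Rhoades--Skandera, who identify $\Imm_\kappa$ with the Kazhdan--Lusztig immanant of the $321$-avoiding $v$ with $\sigma(C'_v(1))=\kappa$ and then apply Haiman's theorem that Kazhdan--Lusztig immanants of generalized Jacobi--Trudi matrices are Schur nonnegative. The coefficient function of that immanant is $w\mapsto(-1)^{\ell(w)-\ell(v)}Q_{v,w}(1)$, so the sign is part of the definition and matters.

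Haiman's theorem is already stated for weakly increasing index sequences, so repeated rows and columns need no extra step. This matters because your fallback argument is false: in $\TL_2(2)$ the diagram $t_1$ pairs the duplicated vertices, yet $f_{t_1}(e)=0\neq 1=f_{t_1}(s_1)$.
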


\smallskip
Let us now apply Lemmas~\ref{lem:RS-1} and~\ref{lem:RS-2} in conjunction with the
\defnb{Jacobi--Trudi identity}, see e.g.\ \cite[$\S$7.19]{EC}.
For partitions \. $\lambda=(\lambda_1, \ldots, \lambda_\ell)$ \. and \.
$\mu=(\mu_1, \ldots,\mu_\ell) \in \cP^{\ell}$, we have:
\begin{equation}\label{eq:JT}\tag{JT}
	\det(\aH_{N_{\mu},M_{\lambda}}) \ = s_{\lambda / \mu}\ts,
\end{equation}
where
\begin{equation}\label{eq:M-lambda}
M_{\lambda} \ := \ \{\la_{1}+\ell,\la_{2}+\ell-1, \ldots, \la_{\ell}+1\}\,, \quad N_{\mu} \ = \ \{\mu_1+\ell,
\mu_2+\ell-1, \ldots, \mu_{\ell}+1\}.
\end{equation}

We note that this identity holds even if $\mu_i > \lambda_i$ for some $i \in [\ell]$,
since both sides of the equation are equal to zero in that case.
However, this identity is not directly applicable if \ts $\lambda$ \ts or \ts $\mu$ \ts
are not valid partitions, i.e., if \. $\lambda_i < \lambda_{i+1}$ \. or \. $\mu_i < \mu_{i+1}$ \. for some~$i$. In such instances, the (combinatorial) Schur function \ts $\fs_{\lambda/\mu}$ \ts in the RHS vanishes by the convention adopted in this paper, whereas the determinant (the LHS) is not necessarily zero.

\smallskip
Finally, we have the following proposition as a direct consequence of
\eqref{eq:JT} and Lemma~\ref{lem:RS-1}.
We denote by \ts $\uplus$ \ts the \defng{multiset union}.

\begin{prop}\label{prop:RS}
	For all partitions \. $\la,\mu,\nu,\rho \in \cP^{\ell}$, we have:
 \begin{equation}\label{eq:RS}\tag{RS06}
	\begin{split}
		&	s_{\lambda / \mu} \. s_{\nu / \rho}
		\ = \ \sum_{\kappa \ts\in\ts \Theta(M_{\lambda} \sqcup N_{\rho})} \. \Imm_{\kappa}(\aH_{N_{\mu} \ts \uplus \ts N_{\rho}, \ts M_{\lambda} \ts \uplus \ts M_{\nu}}).
	\end{split}
\end{equation}
\end{prop}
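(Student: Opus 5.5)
The plan is to read \eqref{eq:RS} as Lemma~\ref{lem:RS-1} applied to the generalized Jacobi--Trudi matrix $\aH$, followed by the identity \eqref{eq:JT}. The row multiset plays the role of $\cM$ in the lemma and the column multiset the role of $\cN$. Concretely, I would take
$$
\cM' \, := \, N_\mu \uplus N_\rho, \qquad \cN' \, := \, M_\lambda \uplus M_\nu,
$$
both of size $k=2\ell$, with row subset $N_\mu$ and column subset $M_\lambda$. Their complements are $\overline{N_\mu}=N_\rho$ and $\overline{M_\lambda}=M_\nu$.

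The first step is to check the hypotheses of Lemma~\ref{lem:RS-1}. Since $\lambda,\mu,\nu,\rho$ are partitions, the entries $\lambda_i+\ell-i+1$ are strictly decreasing in~$i$, so each of $M_\lambda,M_\nu,N_\mu,N_\rho$ is simple. Hence each multiset union has maximum multiplicity at most~$2$, and the four chosen subsets and their complements are simple.

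There is one bookkeeping point when an element occurs twice, say in both $M_\lambda$ and $M_\nu$. We must decide which of the two equal copies $n_j=n_{j+1}$ belongs to $M_\lambda$. The corresponding rows or columns of $\aH$ are identical, so the determinants do not depend on this choice. The compatibility condition~(ii) forces the two copies to be matched to each other, and condition~(iii) forces them to lie on opposite sides. So $\Theta$ is also independent of the choice, and any consistent choice works.

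Next, Lemma~\ref{lem:RS-1} gives
$$
\det(\aH_{N_\mu,M_\lambda})\,\det(\aH_{N_\rho,M_\nu}) \, = \, \sum_{\kappa\in\Theta(N_\mu\sqcup M_\nu)} \Imm_\kappa(\aH_{N_\mu\uplus N_\rho,\, M_\lambda\uplus M_\nu}).
$$
Since $\Theta(K)=\Theta(\overline K)$, the index set equals $\Theta(N_\rho\sqcup M_\lambda)$, which is $\Theta(M_\lambda\sqcup N_\rho)$ in the notation of the proposition. This is just a relabeling of which side is called $\cM$ and which $\cN$; I would state this identification explicitly.

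Finally, \eqref{eq:JT} converts the two determinants into $\fs_{\lambda/\mu}$ and $\fs_{\nu/\rho}$. One must check that the orderings agree. In \eqref{eq:JT} rows and columns are listed decreasingly, while in the lemma submatrices inherit the increasing order of the ambient multisets. Passing between the two reverses both the rows and the columns of an $\ell\times\ell$ matrix, which leaves the determinant unchanged. The case $\mu\not\subseteq\lambda$ is covered by the remark after \eqref{eq:JT}.

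I expect the only real obstacle to be this convention-matching: which side is $\cM$ versus $\cN$ in $\Theta$, the treatment of ties, and the sign under reordering. The substance is already in Lemma~\ref{lem:RS-1}.
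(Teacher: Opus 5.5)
Your proposal is correct and is the paper's own argument: \eqref{eq:JT} turns $\fs_{\lambda/\mu}\,\fs_{\nu/\rho}$ into $\det(\aH_{N_\mu,M_\lambda})\det(\aH_{N_\rho,M_\nu})$, and Lemma~\ref{lem:RS-1} expands this product into Temperley--Lieb immanants, while your remarks on ties, on the sign-free reversal of row and column orders, and on $\Theta(K)=\Theta(\overline{K})$ make explicit what the paper's two-line proof leaves implicit. One correction to your wording: rewriting $\Theta(N_\rho\sqcup M_\lambda)$ as $\Theta(M_\lambda\sqcup N_\rho)$ needs no change at all if $\Theta$ is taken with the row multiset $N_\mu\uplus N_\rho$ of $\aH$ on the top row (which is what Lemma~\ref{lem:RS-1} literally gives), whereas putting the $M$'s on top reflects each diagram top-to-bottom and is not merely a relabeling --- the paper makes the same identification tacitly, and it is harmless for the later use because Lemma~\ref{lem:RS-2} makes every $\Imm_\kappa(\aH_{\cN,\cM})$ Schur positive.
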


\begin{proof}
We have:
\begin{equation*}
	\begin{split}
		&	s_{\lambda / \mu} \. s_{\nu / \rho} \ = \ \det(\aH_{N_{\mu},M_{\lambda}}) \. \det(\aH_{N_{\rho},M_{\nu}})
		\ = \ \sum_{\kappa \in  \Theta(M_{\lambda} \sqcup N_{\rho})} \. \Imm_{\kappa}(\aH_{N_{\mu} \uplus N_{\rho}, M_{\lambda} \uplus M_{\nu}}),
	\end{split}
\end{equation*}
where the first equality is due to \eqref{eq:JT}, and the second equality is due to
Lemma~\ref{lem:RS-1}.
\end{proof}

\smallskip

We emphasize that the equation above applies only when $\lambda, \mu, \nu$, and $\rho$ are partitions.
Otherwise, the product $\fs_{\lambda / \mu}\fs_{\nu / \rho}$ vanishes by our adopted convention.
We also note that one can replace the index set $\Theta(M_{\lambda}\sqcup N_{\rho})$ in
\eqref{eq:RS} with $\Theta(M_{\nu} \sqcup N_{\mu})$ due to the symmetry of $\Theta$,
so the apparent asymmetry in \eqref{eq:RS} is merely notational.

\smallskip

\begin{ex}\label{ex:3}
Let $\lambda=(8,8,1),~\mu=(0,0,0),~\nu=(5,5,3),~\rho=(3,3,2)$. Then
\begin{align*}
	M_{\lambda} \ = \ \{2,10,11\}, \quad M_{\nu} \ = \ \{4,7,8\}, \quad N_{\mu} \ = \ \{1,2,3\}, \quad N_{\rho} \ = \ \{3,5,6\}.
\end{align*}
Note that $M_{\lambda} \cup M_{\nu}= \cM$ and $N_{\mu} \cup N_{\rho}=\cN$,
where $\cM,\cN$ is as in Example~\ref{ex:1}.
Also note that $M_{\lambda}=M$, $M_{\nu}=\overline{M}$, $N_{\mu}=\overline{N}$, and $N_{\rho}=N$, where $M,N, \overline{M},\overline{N}$ is as in Example~\ref{ex:1}.
Finally, note that $M_{\lambda} \sqcup N_{\rho}=K$, where $K$ is as in Example~\ref{ex:2}.
Then  Proposition \ref{prop:RS} says  that
\begin{align*}
	\fs_{881} \. \fs_{553/332} \ &= \ \det\big(\aH_{\{1,2,3\},\{2,10,11\}}\big)\. \det\big(H_{\{3,5,6\},\{4,7,8\}}\big) \\
	&= \  \Imm_{\kappa_1}(\aH_{\cN,\cM})\. + \. \Imm_{\kappa_2} (\aH_{\cN,\cM}),
\end{align*}
where $\kappa_1$ and $\kappa_2$ are shown in the Figure~\ref{fig:compatible-2}.
\end{ex}

\begin{figure}[htb]
\begin{center}
	\includegraphics[scale=1]{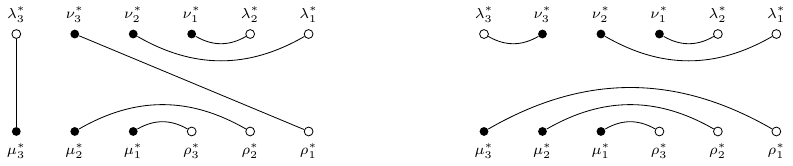}
\end{center}
\caption{Two compatible diagrams in \. $\Theta(M_{\lambda} \sqcup N_{\rho})$ \.
as in Example~\ref{ex:3}. Vertices are labeled by \. $\lambda_i^*, \mu_i^*, \nu_i^*$, and \ts $\rho_i^*$, where \. $\lambda_i^* = \lambda_i + \ell + 1  - i$ (with $\mu_i^*, \nu_i^*$, and $\rho_i^*$ defined analogously). These diagrams are equivalent to those in Figure~\ref{fig:compatible} up to the relabeling of nodes, following the correspondence in Example~\ref{ex:3}.
}
\label{fig:compatible-2}
\end{figure}

\medskip

%\newpage

%%%%%%%%%%%%%%%%%%%%%%%%%%%%%%%%%%%%%%%%%%%%%%%%%%%%%%%%%%%%%%%%%%%%%%%%%%%%%%%%%%%%%%
%%%%%%%%%%%%%%%%%%%%%%%%%%%%%%%%%%%%%%%%%%%%%%%%%%%%%%%%%%%%%%%%%%%%%%%%%%%%%%%%%%%%%%
%%%%%%%%%%%%%%%%%%%%%%%%%%%%%%%%%%%%%%%%%%%%%%%%%%%%%%%%%%%%%%%%%%%%%%%%%%%%%%%%%%%%%%
%%%%%%%%%%%%%%%%%%%%%%%%%%%%%%%%%%%%%%%%%%%%%%%%%%%%%%%%%%%%%%%%%%%%%%%%%%%%%%%%%%%%%%
%%%%%%%%%%%%%%%%%%%%%%%%%%%%%%%%%%%%%%%%%%%%%%%%%%%%%%%%%%%%%%%%%%%%%%%%%%%%%%%%%%%%%%

\section{Proof of orchestra inequalities}\label{s:orch-proof}
In this section, we introduce the definitions and auxiliary lemmas needed for the proof of the Schur orchestra inequality, and we conclude by proving the inequality itself. The main idea of the proof is to show that the left-hand side of \eqref{eq:S-Orch} can be expressed as a positive linear combination of TL immanants.
\subsection{Admissible subsets}\label{ss:lemmas}
In this subsection we collect various lemmas that will be used in the proof of Theorem~\ref{thm:orch}.
We need to set up the following notations.
Let $\ell\geq 1$  and
\. $\alpha, \beta, \gamma,\delta \in \nn^\ell$  \. be
as in the proposition, and are fixed throughout the rest of this section.

Recall that $\cE$ and $\cF$ are two disjoint copies of $[\ell]$.
We say that $E \subseteq \cE$ is \defnb{admissible} if both \.
$\lambda(E)=(\la_1,\ldots,\la_\ell)$ \. and \.
$\lambda(\overline{E})=(\la^\circ_1,\ldots,\la^\circ_\ell)$ \.
defined in \eqref{eq:lambda-E}, are partitions.
Similarly, we say that  $F \subseteq \cF$ is \defnb{admissible} if both \.
$\mu(F)=(\mu_1,\ldots,\mu_\ell)$ \. and \.
$\mu(\overline{F})=(\mu^\circ_1,\ldots,\mu^\circ_\ell)$ \.
defined in \eqref{eq:mu-F}, are partitions.
Finally, we call a union $E \sqcup F \subseteq \cE \sqcup \cF$ \defnb{admissible} \ts
if its components $E$ and $F$ are individually admissible.
We denote by $\cA$ the collection of all admissible subsets of $\cE \sqcup \cF$.

It is straightforward to verify that $(\cA,\vee,\wedge)$ forms a distributive lattice, where the lattice operations $\vee$ and $\wedge$ are given by set union and intersection, respectively.
We assume without loss of generality that there exists at least one admissible set, as otherwise the LHS of \eqref{eq:S-Orch} vanishes and the proposition holds vacuously.

\smallskip

\begin{lemma}\label{lem:lat-top}
	The empty set $\varnothing$ and the total set $\cE \sqcup \cF$ are both admissible.
	In other words, the vectors $\lambda(\cE)=\alpha+\gamma$, $\lambda(\varnothing)=\alpha$, $\mu(\cF)=\beta+\delta$, $\mu(\varnothing)=\beta$ are all partitions.
\end{lemma}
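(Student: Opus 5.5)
The plan is to show that admissibility is preserved under union and intersection, since $(\cA,\vee,\wedge)$ is a lattice, and then use the standing assumption that $\cA$ is nonempty. Concretely, I will pick any admissible $E_0\sqcup F_0$. Its complement $\overline{E_0}\sqcup\overline{F_0}$ is also admissible, because the definition of admissibility is symmetric under $E\leftrightarrow\overline{E}$ and $F\leftrightarrow \overline F$. Then the join of the two is $\cE\sqcup\cF$ and the meet is $\varnothing$. So it suffices to verify directly that admissible sets are closed under $\cup$ and $\cap$, which is the substance behind the paper's claim that $\cA$ is a distributive lattice.

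The first step is closure for the $\cE$-component; the $\cF$-component is identical with $\beta,\delta$ in place of $\alpha,\gamma$. Since $\gamma\ge 0$, we have $\lambda(E)_i=\alpha_i+\gamma_i\mathbf{1}[i\in E]$. Hence $\lambda(E\cup E')=\lambda(E)\vee\lambda(E')$ and $\lambda(E\cap E')=\lambda(E)\wedge\lambda(E')$, taken coordinatewise. The coordinatewise max and min of two weakly decreasing sequences are weakly decreasing. So if $\lambda(E)$ and $\lambda(E')$ are partitions, then so are $\lambda(E\cup E')$ and $\lambda(E\cap E')$.

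For the complements, I use $\overline{E\cup E'}=\overline E\cap\overline{E'}$ and $\overline{E\cap E'}=\overline E\cup\overline{E'}$. This gives $\lambda(\overline{E\cup E'})=\lambda(\overline E)\wedge\lambda(\overline{E'})$, and similarly for the intersection, so these are also partitions. Thus $E\cup E'$ and $E\cap E'$ are admissible.

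Finally, I apply this with $E'=\overline{E_0}$ and $F'=\overline{F_0}$. This yields admissibility of $\cE\sqcup\cF$ and $\varnothing$. Equivalently, $\alpha+\gamma=\lambda(E_0)\vee\lambda(\overline{E_0})$ and $\alpha=\lambda(E_0)\wedge\lambda(\overline{E_0})$ are partitions, and the same holds for $\beta+\delta$ and $\beta$. I expect no real obstacle. The only point needing care is that the max/min identity uses $\gamma_i,\delta_i\ge0$, which holds because these vectors lie in $\nn^\ell$. The sole hypothesis really needed is the nonemptiness of $\cA$, which the paper assumed without loss of generality.
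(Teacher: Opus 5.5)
Your proof is correct and follows the paper's argument: take an admissible $E_0\sqcup F_0$ (assumed to exist), observe that its complement is also admissible, and use closure of $\cA$ under union and intersection to obtain $\cE\sqcup\cF$ and $\varnothing$. The only difference is that you explicitly verify this closure via $\lambda(E\cup E')=\lambda(E)\vee\lambda(E')$ and $\lambda(E\cap E')=\lambda(E)\wedge\lambda(E')$ (using $\gamma,\delta\ge 0$), whereas the paper just asserts that $(\cA,\vee,\wedge)$ is a lattice as straightforward.
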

\smallskip

\begin{proof}
	Let $E \sqcup F$ be an admissible set, which exists by assumption.
	By the definition of admissibility, the complement $\overline{E} \sqcup \overline{F}$ is also admissible.
	Observe that
	\begin{align*}
		(E \sqcup F) \vee (\overline{E} \sqcup \overline{F}) &= (E \cup \overline{E}) \sqcup (F \cup \overline{F}) = \cE \sqcup \cF, \\
		(E \sqcup F) \wedge (\overline{E} \sqcup \overline{F}) &= (E \cap \overline{E}) \sqcup (F \cap \overline{F}) = \varnothing.
	\end{align*}
	Since the family of admissible sets is a lattice (and thus closed under join and meet), it follows that $\cE \sqcup \cF$ and $\varnothing$ are admissible.
\end{proof}

 \smallskip

 We now characterize admissibility in terms of an equivalence relation on elements of $\cE \sqcup \cF$.

 \begin{definition}[{\rm \defn{Admissibility equivalence relation}\ts}{}]\label{def:equiv-admis}
 We first define an equivalence relation $\sim_{\cE}$ on the set $\cE$,  by setting $i \sim_{\cE} i+1$.,
  for every \. $i \in \{1, \ldots, \ell-1\}$ satisfying
\[
\alpha_i < \alpha_{i+1} + \gamma_{i+1},
\]
and then taking the reflexive and transitive closure of this condition.
Note that  this condition identifies indices $i$ and $i+1$ that must share the same membership status in $E$ to prevent the partition condition from failing in either $\lambda(E)$ or $\lambda(\overline{E})$.
Similarly, we define an equivalence relation $\sim_{\cF}$ on the set $\cF$  by setting $j \sim_{\cF} j+1$ for any $j \in \{1, \ldots, \ell-1\}$ satisfying
\[
\beta_j < \beta_{j+1} + \delta_{j+1},
\]
and taking the reflexive and transitive closure.
Finally, we extend these to a single equivalence relation $\sim$ on $\cE \sqcup \cF$ by taking the disjoint union of $\sim_{\cE}$ and $\sim_{\cF}$.
 \end{definition}

 \smallskip

 \begin{lemma}\label{lem:admin}
 A subset $E \sqcup F \subseteq \cE \sqcup \cF$ is admissible if and only if it is a union of equivalence classes of $\sim$.
 \end{lemma}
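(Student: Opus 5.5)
The plan is to treat the $\cE$ and $\cF$ parts separately. Admissibility of $E\sqcup F$ is by definition the conjunction of admissibility of $E$ (a condition on $\lambda(E)$, $\lambda(\overline E)$ only) and of $F$ (a condition on $\mu(F)$, $\mu(\overline F)$ only). The relation $\sim$ is the disjoint union of $\sim_\cE$ and $\sim_\cF$, so its classes are exactly the $\sim_\cE$-classes together with the $\sim_\cF$-classes. Hence it suffices to show that $E\subseteq\cE$ is admissible if and only if $E$ is a union of $\sim_\cE$-classes. The $\cF$ case is then identical, with $(\beta,\delta)$ in place of $(\alpha,\gamma)$.

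Fix $i\in\{1,\dots,\ell-1\}$. I will analyze the two conditions $\lambda_i\ge\lambda_{i+1}$ for $\lambda(E)$ and for $\lambda(\overline E)$, according to the membership of $i,i+1$ in $E$. There are four cases.
\begin{enumerate}
\item If $i,i+1\in E$, the conditions read $\alpha_i+\gamma_i\ge\alpha_{i+1}+\gamma_{i+1}$ and $\alpha_i\ge\alpha_{i+1}$.
\item If $i,i+1\notin E$, we get the same two conditions, with the roles of $E$ and $\overline E$ swapped.
\item If $i\in E$ and $i+1\notin E$, the conditions are $\alpha_i+\gamma_i\ge\alpha_{i+1}$ and $\alpha_i\ge\alpha_{i+1}+\gamma_{i+1}$.
\item If $i\notin E$ and $i+1\in E$, the same pair arises with the roles of $E$ and $\overline E$ exchanged.
\end{enumerate}

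Since $\gamma\ge 0$, in the split cases the condition $\alpha_i\ge\alpha_{i+1}+\gamma_{i+1}$ implies the other one. So a split at position $i$ is allowed exactly when $i\not\sim_\cE i+1$, i.e.\ when $\alpha_i\ge \alpha_{i+1}+\gamma_{i+1}$. The unsplit cases instead require the two "diagonal" conditions $\alpha_i\ge\alpha_{i+1}$ and $\alpha_i+\gamma_i\ge\alpha_{i+1}+\gamma_{i+1}$. These are exactly the statement that $\varnothing$ and $\cE$ are admissible.

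The main obstacle is this diagonal requirement, which is not encoded in $\sim_\cE$. To handle it I will invoke the standing assumption that some admissible set exists, together with Lemma~\ref{lem:lat-top}. By that lemma, $\alpha=\lambda(\varnothing)$ and $\alpha+\gamma=\lambda(\cE)$ are partitions, so both diagonal conditions hold for every $i$, and the unsplit cases never fail.

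Consequently, $\lambda(E)$ and $\lambda(\overline E)$ are both partitions if and only if, for every $i$ with $i\sim_\cE i+1$, the elements $i$ and $i+1$ have the same membership in $E$. Since $\sim_\cE$ is generated by these adjacent relations, this says exactly that $E$ is a union of $\sim_\cE$-classes. Combining this with the analogous statement for $\cF$ proves the lemma.
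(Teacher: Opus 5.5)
Your proof is correct and follows essentially the same route as the paper: you treat the $\cE$ and $\cF$ parts separately, use Lemma~\ref{lem:lat-top} (via the standing assumption that some admissible set exists) so that the unsplit cases never fail, and observe that a split at $i$ is possible exactly when $\alpha_i\ge\alpha_{i+1}+\gamma_{i+1}$. Your four-case analysis simply spells out the forward direction that the paper calls immediate.
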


\smallskip

\begin{proof}
	The forward implication ($\Rightarrow$) is immediate. We focus on the reverse implication ($\Leftarrow$).
	Let $E \subseteq \cE$ be a union of equivalence classes of $\sim_{\cE}$.
    We will show that $\lambda(E)$ is a partition, i.e., \. $\lambda_i \geq \lambda_{i+1}$ \ts for all \.
    $i \in \{1, \ldots, \ell-1\}$.
	Clearly, the argument for \ts $\lambda(\overline{E})$ \ts is identical by the symmetry.
	
	First, suppose that $i \sim_{\cE} i+1$. By the definition of $E$ as a union of equivalence classes, it follows that either $\{i, i+1\} \subseteq E$ or $\{i, i+1\} \cap E = \varnothing$.  We have:
	\begin{itemize}
		\item If \. $\{i, i+1\} \subseteq E$, then
		\[ \lambda_{i} = \alpha_i + \gamma_i \geq \alpha_{i+1} + \gamma_{i+1} = \lambda_{i+1}\., \]
		where the inequality holds because \. $\lambda(\cE) = \alpha + \gamma$ \. is a partition (by Lemma~\ref{lem:lat-top}).
		\item If \. $\{i, i+1\} \cap E = \varnothing$, then
		\[ \lambda_{i} = \alpha_i \geq \alpha_{i+1} = \lambda_{i+1}\., \]
		where the inequality holds because \. $\lambda(\varnothing) = \alpha$ \. is a partition (by Lemma~\ref{lem:lat-top}).
	\end{itemize}
	
	Next, suppose that $i \not\sim_{\cE} i+1$. By the definition of the relation $\sim_{\cE}$, the failure of the relation implies the strict inequality condition does not hold, so we must have:
	\begin{equation*}
		\alpha_i \geq \alpha_{i+1} + \gamma_{i+1}.
	\end{equation*}
	Consequently, $\lambda_i \geq \lambda_{i+1}$ regardless of whether $i$ or $i+1$ are in $E$, as the minimum possible value of \ts $\lambda_i$, which is $\alpha_i$, is at least the maximum possible value of \ts $\lambda_{i+1}$, which is  \ts $\alpha_{i+1} + \gamma_{i+1}$.
	
	This confirms that $E$ is admissible. By an analogous argument, if $F \subseteq \cF$ is a union of equivalence classes of $\sim_{\cF}$, then $\mu_j \geq \mu_{j+1}$ for all $j$, ensuring that $F$ is admissible.
	This completes the proof of the lemma.
\end{proof}

\smallskip

\subsection{From vectors to TL diagrams} \label{ss:orch-proof-diagrams}
We now relate the vectors \. $\alpha,\beta,\gamma,\delta$ \.
to TL diagrams and multisets discussed
in Section~\ref{s:TL}.
Let $k := 2\ell$, and let   $\cM$ and $\cN$ be the multisets of size $k$ given by
\begin{align}\label{eq:MN}
	\cM \ &:= \ M_{\alpha} \sqcup M_{\alpha+\gamma} \quad \text{and} \quad \cN \ := \ N_{\beta} \sqcup N_{\beta+\delta},
\end{align}
where $M_{\lambda}$ and $N_{\mu}$ are as defined in \eqref{eq:M-lambda}.
We reiterate that $\cM$ and $\cN$ are treated as disjoint sets.
Recall the set of vertices $\cV \sqcup \cW$ from \eqref{eq:VW} and the map $\psi$ from \eqref{eq:psi}.

\smallskip

\begin{definition}
	Let $p_1, \dots, p_{2\ell}$ and $\overline{p_1}, \dots, \overline{p_{2\ell}}$ be vertices in $\cV \sqcup \cW$ defined as follows:
\begin{equation}\label{eq:p_i}
\begin{aligned}
	p_i \, &:= \, \psi(\alpha_{\ell-i+1} + \gamma_{\ell-i+1} + i), & \overline{p_i} \. &:= \, \psi(\alpha_{\ell-i+1} + i) &&\text{for} \, \ i \in [\ell], \\
	p_{\ell+j} \, &:= \,\psi(\beta_{j} + \ell-j+1), & \overline{p_{\ell+j}} \, &:= \, \psi(\beta_{j} +\delta_j + \ell-j+1) &&\text{for} \, \ j \in [\ell].
\end{aligned}
\end{equation}
	To resolve potential ambiguities given by \. $\alpha_{\ell-i+1} + \gamma_{\ell-i+1} + i = \alpha_{\ell-i'+1} + i'$ \.  for some $i, i' \in [\ell]$, we impose the tie-breaking condition $\overline{p_{i'}} < p_i\ts$.
	Similarly, when \. $\beta_{j} + \ell-j+1= \beta_{j'} +\delta_{j'} + \ell-j'+1$\. for some $j,j' \in [\ell]$, we impose the tie-breaking condition $\overline{p_{\ell+j'}} < p_{\ell+j}\ts$.
	\end{definition}

\smallskip

By construction, these vertices satisfy the following inequalities:
\begin{align*}
	p_1 \ < \  p_2 \ < \ \cdots \ < \ p_{2\ell}\., \quad
	\overline{p_1} \ < \  \overline{p_2} \ < \ \cdots \ < \ \overline{p_{2\ell}}\., \quad \text{and} \quad
	\overline{p_i} \ < \ p_i \quad \text{ for all } \ i \in [2\ell].
\end{align*}

\smallskip

\begin{ex}\label{ex:4}
Let $\ell=3$, and let
\[\alpha \ = \ (5,5,1), \quad \beta \ = \ (0,0,0), \quad  \gamma \ = \ (3,3,2), \quad \delta \ = \ (3,3,2).   \]
It then follows by direction calculation that
{
$$
\alpha_{3}+1=2 < \alpha_{3}+\gamma_3+1=4<\alpha_{2}+2=7 < \alpha_{1}+3=8<\alpha_{2}+\gamma_2+2=10<\alpha_{1}+\gamma_1+3=11
$$
$$\overline{p_1}=1 \, < \, p_1=2 \, < \, \overline{p_2}=3\, < \,\overline{p_3}=4\, < \,p_2=5\, < \,p_3=6.
$$
}
\nin
Similarly, we have:
{
$$\beta_{3}+1=1<\beta_{2}+2=2< \beta_{1}+3=3\leq \beta_{3}+\delta_3+1=3<\beta_{2}+\delta_2+2=5<\beta_1+\delta_1+3=6
$$
$$
{p_6}=12\, > \,{p_5}=11 \, > \,{p_4}=10\, > \,\overline{p_6}=9\, > \,\overline{p_5}=8\, > \,\overline{p_4}=7.
$$
}

\nin
These  uniquely  determine \, $p_1, \dots, p_{6}$ \, and \, $\overline{p_1}, \dots, \overline{p_{6}}$ \, for this example.
\end{ex}

\smallskip

\subsection{From admissible sets to vertex coloring} \label{ss:orch-proof-Psi}
We introduce map~$\Psi$ to transition from a choice of admissible sets to a corresponding coloring of $2k$ vertices of the diagram. We start with the following:

\smallskip

\begin{definition}\label{defLp_i}
Let  $\Psi$ be the function that maps
an admissible set \. $E \sqcup F \subseteq \cE \sqcup \cF$ \.  to the subset \. $\{u_1, u_2, \dots, u_{2\ell}\}$ \.  of $\cV \sqcup \cW$ given by
\begin{equation}\label{eq:u_i}
	u_i \ = \ \begin{cases}
		p_i & \text{if } \ell-i+1 \in E \\
		\overline{p_i} & \text{if } \ell-i+1 \notin E
	\end{cases}
	\qquad \text{and} \qquad
	u_{\ell+j} \ = \ \begin{cases}
		p_{\ell+j} & \text{if } j \in F \\
		\overline{p_{\ell+j}} & \text{if } j \notin F
	\end{cases}
\end{equation}
for all \ts $i,j \in [\ell]$.
\end{definition}

\smallskip
It is straightforward to check that the admissibility assumption on $E \sqcup F$ implies that
\[ u_1 \, < \, u_2 \,  < \, \cdots \, < \, u_{2\ell} \.,
\]
and that
\[
\Psi(\cE \sqcup \cF) \, = \, \{p_1, p_2, \ldots, p_{2\ell} \}, \qquad \Psi(\varnothing) \, = \, \{\overline{p_1}, \overline{p_2}, \ldots, \overline{p_{2\ell}} \}.
\]
We write \. $P:=\{p_1,\ldots, p_{2\ell}\}$ \., and
\. $\overline{P}:=\{\overline{p_1},\ldots, \overline{p_{2\ell}}\}$\..
We extend the complement notation to any selected vertex $u_i \in \{p_i, \overline{p_i}\}$  $(i \in [2\ell])$ by
\[
\overline{u_i} \ := \ \begin{cases} \overline{p_i} & \text{if } u_i = p_i, \\ p_i & \text{if } u_i = \overline{p_i}. \end{cases}
\]

It is straightforward to verify that the image of the complement set is simply the set of complementary vertices:
\[
\Psi\big( \overline{E \sqcup F} \big) \, = \, \overline{\Psi\big(E \sqcup F\big)}  \, = \, \big\{ \overline{u_1} < \overline{u_2} < \cdots < \overline{u_{2\ell}} \big\}\..
\]

\smallskip

\begin{ex}\label{ex:5}
Let $\ell=3$ and let $\alpha,\beta,\gamma,\delta$ be as in Example~\ref{ex:4}.
Take \. $E = \{1,2\}$ \. and \. $F =  \varnothing$. It then follows from the definition that
\begin{align*}
	\Psi(E \sqcup F) \ &= \ \{\overline{p_1}, p_2, p_3, \overline{p_4}, \overline{p_5}, \overline{p_6}\} =\{u_1,\dots,u_6\}= \{1,5,6,7,8,9\},	
	\\
	\Psi(\overline{E \sqcup F}) \ &= \ \{{p_1}, \overline{p_2}, \overline{p_3}, {p_4}, {p_5}, {p_6}\} = \{\overline{u_1},\dots,\overline{u_6}\} = \{2,3,4,10,11,12\},
\end{align*}
which agree with the sets $\psi(M\sqcup N)$ and $\psi(\overline{M\sqcup N})$ in Example~\ref{ex:1}, respectively.

Note that the map $\Psi$ is defined so that its image $\Phi(E\sqcup F) = \{u_1,\dots,u_{2\ell}\}$ corresponds to white nodes and $\Phi(\overline{E\sqcup F}) =\{\overline{u_1},\dots,\overline{u_{2\ell}}\}$ to black nodes, as shown in Figure~\ref{fig:uandubar}.
\begin{figure}[hbt]
	\begin{center}
		\includegraphics[scale=1]{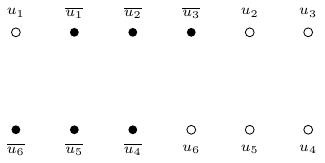}
	\end{center}
	\caption{White nodes indicate elements in $\Psi(E\sqcup F)=\{u_1,\dots,u_6\}=\{1,5,6,7,8,9\}$.}
	\label{fig:uandubar}
\end{figure}
\end{ex}

\smallskip

The next lemma justifies the definitions of \ts $p_i$'s \ts and \ts $\Psi:$

\smallskip

\begin{lemma}\label{lem:Psi}
	For any admissible set $E \sqcup F \subseteq \cE \sqcup \cF$,
	we have
	\begin{align*}
		\Psi(E \sqcup F) \ = \  \psi\big(M_{\lambda(E)} \sqcup \overline{N_{\mu({F})}}\big).
	\end{align*}
	\end{lemma}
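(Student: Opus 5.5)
The plan is to compare the two sides coordinate by coordinate, treating the $\cM$-part and the $\cN$-part separately, since both $\Psi$ and $\psi$ send $\cM$ into $\cV$ and $\cN$ into $\cW$. Throughout, $\cM=M_\alpha\sqcup M_{\alpha+\gamma}$ and $\cN=N_\beta\sqcup N_{\beta+\delta}$ as in \eqref{eq:MN}.

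\emph{The $\cM$-side.} By \eqref{eq:M-lambda}, the $i$-th smallest entry of $M_{\lambda(E)}$ (reading the list from the bottom) is $\lambda(E)_{\ell-i+1}+i$. By \eqref{eq:lambda-E}, this equals $\alpha_{\ell-i+1}+\gamma_{\ell-i+1}+i$ if $\ell-i+1\in E$, and $\alpha_{\ell-i+1}+i$ otherwise. These are exactly the labels of $p_i$ and $\overline{p_i}$ in \eqref{eq:p_i}. Hence $M_{\lambda(E)}\subseteq\cM$ as a sub-multiset, and its elements are the labels of $u_1,\dots,u_\ell$ in \eqref{eq:u_i}. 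What remains is to check that $\psi$ sends each selected element to $u_i$ itself, and not to the other copy of a repeated value. For this I would check that the tie-breaking rule $\overline{p_{i'}}<p_i$ is the same as the ordering of $\cM$ implicit in $\psi$. Here I adopt the convention that in $\cM=M_\alpha\sqcup M_{\alpha+\gamma}$ the copy coming from $M_\alpha$ is listed first among equal values. With that convention, $\psi$ applied to the $M_{\alpha+\gamma}$-copy of the value $\alpha_{\ell-i+1}+\gamma_{\ell-i+1}+i$ is exactly $p_i$, and $\psi$ applied to the $M_\alpha$-copy of $\alpha_{\ell-i+1}+i$ is $\overline{p_i}$.

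\emph{The $\cN$-side.} Here one must be careful, because $\overline{N_{\mu(F)}}=\cN\setminus N_{\mu(F)}$. The $j$-th entry of $N_{\mu(F)}$ is $\beta_j+\delta_j+\ell-j+1$ if $j\in F$ and $\beta_j+\ell-j+1$ if $j\notin F$. Removing it from $\cN$ leaves the other value: $\beta_j+\ell-j+1$ when $j\in F$, and $\beta_j+\delta_j+\ell-j+1$ when $j\notin F$. By \eqref{eq:p_i} these are the labels of $p_{\ell+j}$ and $\overline{p_{\ell+j}}$ respectively, which matches \eqref{eq:u_i}. Again the tie-breaking condition $\overline{p_{\ell+j'}}<p_{\ell+j}$ has to be matched against the reversed labelling $\psi(n_j)=2k+1-j$ on the $\cN$-side, taking the $N_\beta$-copy of a repeated value to be the later element of $\cN$.

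\emph{Where admissibility enters.} Admissibility of $E\sqcup F$ is what makes the argument work. By Lemma~\ref{lem:admin}, $\lambda(E)$ and $\mu(F)$ are partitions, and so are $\lambda(\overline E)$ and $\mu(\overline F)$. Therefore $M_{\lambda(E)}$, $\overline{M_{\lambda(E)}}=M_{\lambda(\overline E)}$, $N_{\mu(F)}$ and $\overline{N_{\mu(F)}}$ are all simple sets of size $\ell$, so the sub-multiset selections above are well-defined. The same fact gives $u_1<\dots<u_{2\ell}$.

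\emph{Main obstacle.} The hardest step is the bookkeeping for repeated values, that is, showing that the tie-breaking in \eqref{eq:p_i} is consistent with the identification $\psi$ for the specific copies chosen. I would handle it by a short case check. A coincidence $\alpha_{\ell-i+1}+\gamma_{\ell-i+1}+i=\alpha_{\ell-i'+1}+i'$ forces $i'=i+1$, so the two equal values sit in adjacent positions of $\cM$. In that case, if both copies are selected or both are unselected, one of $M_{\lambda(E)}$ or $M_{\lambda(\overline E)}$ would contain a repeated value, contradicting simplicity. So exactly one copy is selected, and the tie-break settles which vertex it lands on. The $\cN$-side is handled the same way, using the reversed labelling.
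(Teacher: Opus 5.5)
Your main line is exactly the paper's argument: the $i$-th entry of $M_{\lambda(E)}$ is the label of $u_i$, and the $j$-th entry of $\overline{N_{\mu(F)}}$ is the label of $u_{\ell+j}$. The paper writes this as the membership chain $p_i\in\Psi(E\sqcup F)\iff \ell-i+1\in E\iff \alpha_{\ell-i+1}+\gamma_{\ell-i+1}+i\in M_{\lambda(E)}\iff\cdots$, and simply cites that $\psi$ is a bijection to handle repeated values.

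The step that fails is the extra bookkeeping you flag as the main obstacle: your $\cN$-side convention is backwards. On that side the $N_\beta$-copy is the one labelled $p_{\ell+j}=\psi(\beta_j+\ell-j+1)$, and the $N_{\beta+\delta}$-copy is labelled $\overline{p_{\ell+j'}}$. The tie-break requires $\overline{p_{\ell+j'}}<p_{\ell+j}$. Since $\psi(n_r)=2k+1-r$ decreases in $r$, the $N_\beta$-copy must be the \emph{earlier} element $n_r$, not the later one. The reversed labelling and the swapped roles of $p$ and $\overline{p}$ cancel, so the convention is ``base copy first'' on both sides. With your convention the identity is false. In Examples~\ref{ex:4}--\ref{ex:5} ($E=\{1,2\}$, $F=\varnothing$, $\cN=\{1,2,3,3,5,6\}$), $\overline{N_{\mu(F)}}=N_{\beta+\delta}$ contains the $N_{\beta+\delta}$-copy of $3$. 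You place it at $n_3$, so $\psi$ sends it to $10=p_4$, whereas $\Psi(E\sqcup F)$ contains $\overline{p_6}=9$ instead.

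Separately, a coincidence $\alpha_{\ell-i+1}+\gamma_{\ell-i+1}+i=\alpha_{\ell-i'+1}+i'$ does not force $i'=i+1$. One has $i'=i$ whenever $\gamma_{\ell-i+1}=0$, which is the case where the tie-break gives $\overline{p_i}<p_i$. Also, $\alpha=(3,1,0)$, $\gamma=(2,4,5)$ gives the common value $6$ with $i=1$, $i'=3$. The claim is not actually needed: equal entries of a sorted multiset are adjacent anyway. Your ``exactly one copy is selected'' argument uses only that $\lambda(E)$ and $\lambda(\overline E)$ are partitions, which is the definition of admissibility rather than Lemma~\ref{lem:admin}. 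With the $\cN$-side convention corrected, your argument is complete and agrees with the paper's.
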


\smallskip

\begin{proof}
For any $i \in [\ell]$, we have
		\begin{align*}
	& p_i \in \Psi(E\sqcup F) \quad \ \Longleftrightarrow \quad	\ell-i+1 \in E  \quad \ \Longleftrightarrow \quad   \alpha_{\ell-i+1} + \gamma_{\ell-i+1} + i \in  M_{\lambda(E)}   \\
	& \quad \ \Longleftrightarrow \quad  \psi(\alpha_{\ell-i+1} + \gamma_{\ell-i+1} + i) \in  \psi\big(M_{\lambda(E)} \sqcup \overline{N_{\mu({F})}}\big) \\
	& \quad \ \Longleftrightarrow \quad p_i \in  \psi\big(M_{\lambda(E)} \sqcup \overline{N_{\mu({F})}}\big)
	\end{align*}
where the first equivalence is by \eqref{eq:u_i}, the second equivalence is by \eqref{eq:lambda-E} and \eqref{eq:M-lambda}, 	
the third equivalence is because $\psi$ is a bijection, and the fourth equivalence is by \eqref{eq:p_i}.
By an analogous argument, we have for all $j \in [\ell]$,
		\begin{align*}
	& p_{\ell+j} \in \Psi(E\sqcup F) \quad \ \Longleftrightarrow \quad	j \in F \quad \ \Longleftrightarrow \quad   \beta_{j} + \delta_{j} + \ell-j+1 \in  N_{\mu(F)}   \\
		& \quad \ \Longleftrightarrow \quad  \beta_{j} + \ell-j+1 \in   \overline{N_{\mu({F})}} \\
	& \quad \ \Longleftrightarrow \quad  \psi(\beta_{j} + \ell-j+1) \in  \psi\big(M_{\lambda(E)} \sqcup \overline{N_{\mu({F})}}\big) \\
	& \quad \ \Longleftrightarrow \quad p_{\ell+j} \in  \psi\big(M_{\lambda(E)} \sqcup \overline{N_{\mu({F})}}\big),
\end{align*}
which completes the proof.
\end{proof}

\smallskip

\subsection{Compatible TL diagrams and admissible sets} \label{ss:orch-proof-comp}
We now extend the definition of compatible diagrams to admissible sets.

For an admissible set \. $E \sqcup F \subseteq \cE \sqcup \cF$, we say that a
Temperley--Lieb diagram \ts $\kappa \in \cB_{2\ell}$ \ts is
\defnb{compatible} \ts with \ts $E \sqcup F$, if \ts $\kappa$ \ts
is compatible with \. $M_{\lambda(E)} \sqcup \overline{N_{\mu({F})}}$.
We denote by \ts $\Theta(E \sqcup F)$ \ts the set of diagrams compatible
with \ts $E \sqcup F$. Formally, let
\begin{equation}\label{eq:Theta}
	\Theta(E \sqcup F) \ := \ \Theta_{\cM,\cN}\big(M_{\lambda(E)} \sqcup \overline{N_{\mu({F})}}\big),
\end{equation}
where \ts $\Theta_{\cM,\cN}$ \ts is defined as in~$\S$\ref{ss:imm}.

\smallskip

\begin{lemma}\label{lem:comp}
	Let \ts $\kappa \in \cB_{2\ell}$ \ts be a Temperley--Lieb diagram
compatible with \ts $(\cM,\cN)$, and let \ts $E \sqcup F \subseteq \cE \sqcup \cF$ \ts
be an admissible set.  Then \ts $\kappa$ \ts is compatible with \ts $E \sqcup F$ \ts
if and only if every edge of \ts $\kappa$ \ts connects a vertex in \ts $\Psi(E \sqcup F)$
\ts to a vertex in \ts $\overline{\Psi(E \sqcup F)}$.
\end{lemma}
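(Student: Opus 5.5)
This is essentially a matter of unwinding definitions, using Lemma~\ref{lem:Psi}. By \eqref{eq:Theta}, $\kappa$ is compatible with $E\sqcup F$ exactly when $\kappa$ is compatible with $K:=M_{\lambda(E)}\sqcup\overline{N_{\mu(F)}}$ in the sense of $\S$\ref{ss:TL-comp}. That notion has three parts: conditions (i) and (ii), which together say that $\kappa$ is compatible with $(\cM,\cN)$, and the bipartite condition (iii). Before invoking it, I must check that $K$ is a legitimate argument of $\Theta_{\cM,\cN}$. That is, $K\cap\cM=M_{\lambda(E)}$ and $K\cap\cN=\overline{N_{\mu(F)}}$ must be simple subsets of size~$\ell$. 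This holds because admissibility of $E\sqcup F$ makes $\lambda(E)$ and $\mu(F)$ partitions, so $M_{\lambda(E)}$ and $N_{\mu(F)}$ have strictly decreasing entries.

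I also need $K$ and its complement to be genuine sub-multisets of $\cM\sqcup\cN$. For this I will use the identifications from \eqref{eq:MN}: $M_{\lambda(E)}$ takes the $i$-th entry from $M_{\alpha+\gamma}$ or $M_\alpha$ according to whether $i\in E$, and $N_{\mu(F)}$ does the same with $N_{\beta+\delta}$ and $N_\beta$. Here I rely on the tie-breaking convention in \eqref{eq:p_i} to decide which copy of a repeated label is meant. This bookkeeping is exactly what makes $\Psi$ well defined, so it is already implicit in Lemma~\ref{lem:Psi}.

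With this in place, the argument has two directions. Since $\kappa$ is assumed compatible with $(\cM,\cN)$, conditions (i) and (ii) hold automatically. Hence $\kappa\in\Theta(E\sqcup F)$ if and only if condition (iii) holds, namely that every edge of $\kappa$ joins $\psi(K)$ to $\psi(\overline K)$. By Lemma~\ref{lem:Psi}, $\psi(K)=\Psi(E\sqcup F)$. Since $\psi$ commutes with complements, $\psi(\overline{K})=\overline{\psi(K)}=\overline{\Psi(E\sqcup F)}$. Substituting these two equalities into condition (iii) gives precisely the criterion in the statement, and this proves both directions at once.

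The only step I expect to need care is the consistency of the complement conventions. The complement $\overline K$ in $\cM\sqcup\cN$ must be $M_{\lambda(\overline E)}\sqcup N_{\mu(F)}$, so that it matches the identity $\Psi(\overline{E\sqcup F})=\overline{\Psi(E\sqcup F)}$ noted after Definition~\ref{defLp_i}. The complement of $\psi(K)$ inside $\cV\sqcup\cW$ must be taken with respect to the labelled copies rather than the underlying values. I would verify this coordinatewise, exactly as in the proof of Lemma~\ref{lem:Psi}: for each $i$, exactly one of $p_i$ and $\overline{p_i}$ lies in $\Psi(E\sqcup F)$, and these two vertices are the $\psi$-images of the two copies of the corresponding label. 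Once this is settled, the lemma follows by direct substitution.
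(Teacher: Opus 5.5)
Your proposal is correct and takes the same route as the paper, which dismisses the lemma in one line as an immediate consequence of Lemma~\ref{lem:Psi} and the definition of compatibility: conditions (i) and (ii) hold by hypothesis, and condition (iii) becomes the stated criterion via $\psi(K)=\Psi(E\sqcup F)$ together with the fact that $\psi$ commutes with complements. Your additional checks are bookkeeping the paper leaves implicit: that $K$ is a legitimate argument of $\Theta_{\cM,\cN}$, and the complement and tie-breaking conventions. One small correction there: simplicity of $\overline{N_{\mu(F)}}=N_{\mu(\overline{F})}$ comes from admissibility guaranteeing that $\mu(\overline{F})$ is a partition, not from $N_{\mu(F)}$ itself being strictly decreasing.
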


\begin{proof}	
	The lemma is an immediate consequence of Lemma~\ref{lem:Psi} and the definition
of compatibility given in~$\S$\ref{ss:imm}.
\end{proof}
\smallskip

\begin{lemma}\label{lem:Theta}
	Every Temperley--Lieb diagram compatible with an admissible set $E \sqcup F$ is also compatible with $\cE \sqcup \cF$, i.e.
	\[	\Theta(E \sqcup F)  \ \subseteq  \ \Theta(\cE \sqcup \cF). \]
\end{lemma}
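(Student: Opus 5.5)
The plan is to reduce everything to a statement about colourings of the $2k$ vertices and then use planarity. First, note that the conditions (i)–(ii) in the definition of compatibility depend only on $(\cM,\cN)$, so they hold for $\kappa$ with respect to $\cE\sqcup\cF$ as well. By Lemma~\ref{lem:lat-top}, $\cE\sqcup\cF$ is admissible and $\Psi(\cE\sqcup\cF)=P$. So by Lemma~\ref{lem:comp} it suffices to prove the following. If every edge of $\kappa$ joins $\Psi(E\sqcup F)$ to its complement, then every edge of $\kappa$ joins $P$ to $\overline{P}$.

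Write $U:=\Psi(E\sqcup F)$. By Lemma~\ref{lem:admin}, $E\sqcup F$ is a union of $\sim$-classes. Definition~\ref{defLp_i} pairs each index $r\in[2\ell]$ with the vertices $\{\overline{p_r},p_r\}$. Hence on the vertices belonging to a single class $C$, the set $U$ coincides either with $P$ or with $\overline{P}$. In other words, $U$ is obtained from $P$ by flipping the colours on the union of those classes not contained in $E\sqcup F$. The target statement therefore reduces to one claim: every edge of $\kappa$ has both endpoints in the vertex set of a single class. Granted this, an edge inside a class $C$ joins $U$ to $\overline U$. On $C$ we have $U=P$ or $U=\overline{P}$, so in both cases the edge joins $P$ to $\overline P$.

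To prove the claim, I would first establish the interval structure of the classes. Suppose $i\not\sim_\cE i+1$. Then $\alpha_i\ge\alpha_{i+1}+\gamma_{i+1}$, so after the index shift in \eqref{eq:p_i} and the tie-breaking convention, all vertices $\{p_r,\overline{p_r}\}$ of the class containing $i+1$ precede all those of the class containing $i$. The same holds on the $\cF$ side, with the reversed labelling via $\psi$. Thus each class occupies a contiguous block of vertices, and each block contains equally many $P$- and $\overline P$-vertices, one of each per index. Now apply the hypothesis in two ways. Take $U$ itself, and also take $\overline U=\Psi(\overline{E\sqcup F})$, noting that $\Theta(E\sqcup F)=\Theta(\overline{E\sqcup F})$.

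The main obstacle is excluding an edge between two different blocks. My first attempt would be an innermost-edge induction. Choose an edge $\{a,b\}$ of $\kappa$ with $a$ and $b$ in different blocks and with the fewest vertices strictly between them. By planarity, the vertices strictly between $a$ and $b$ are matched among themselves, and none of those edges crosses blocks. Hence the vertices between $a$ and $b$ form complete halves of blocks together with entire blocks. Since each block is internally balanced, the parity of $U$-colours on the segment from $a$ to $b$ would be forced. I would then compare the counts of $P$- and $\overline P$-vertices inside the two partial blocks containing $a$ and $b$. The key inputs are the inequalities $\overline{p_r}<p_r$ and the monotonicity $p_1<\dots<p_{2\ell}$, $\overline{p_1}<\dots<\overline{p_{2\ell}}$. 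These mean that inside any block, every initial segment has at least as many $\overline P$-vertices as $P$-vertices. Using this prefix-dominance, I expect the innermost edge $\{a,b\}$ to be forced to join two vertices of the same $U$-colour, which is a contradiction. If this counting turns out not to close the argument, the fallback is to use that $E\sqcup F$ can be replaced by the single-class sets $C$ and $\overline C$, which are admissible by Lemma~\ref{lem:admin}. The idea would be to argue that compatibility with $E\sqcup F$ is equivalent to compatibility with the finest refinement in which these blocks are separated.
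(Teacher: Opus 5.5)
Your first reduction, to showing that every edge of $\kappa$ joins $P$ to $\overline P$, is correct. The second reduction, however, is to a false statement: an edge of $\kappa\in\Theta(E\sqcup F)$ need not stay inside the vertex block of a single $\sim$-class.

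\emph{Top--bottom edges.} Since $\sim$ never relates $\cE$ to $\cF$, your claim would forbid every edge between the top and bottom rows. Yet the diagram of Example~\ref{ex:6} has the $\sim_\kappa$-class $\{(3,0),(1,1),(3,1)\}$. Concretely, it contains the edge $\{1,12\}$, joining $\overline{p_1}$ (index $3\in\cE$) to $p_6$ (index $3\in\cF$).

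\emph{Edges within one row.} The claim fails here too. Take $\ell=2$, $\alpha=(3,0)$, $\gamma=(1,1)$, $\beta=\delta=(0,0)$. Then $1\not\sim_{\cE}2$, and the blocks are $\{\overline{p_1},p_1\}=\{1,2\}$ and $\{\overline{p_2},p_2\}=\{3,4\}$. The diagram with edges $\{1,4\},\{2,3\},\{5,6\},\{7,8\}$ lies in $\Theta(\cE\sqcup\cF)$; it is one of the two diagrams in the Rhoades--Skandera expansion of $\fs_{41}\fs_{3}$. Both of its top edges nevertheless join different blocks.

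So your innermost-edge count cannot produce a contradiction: the innermost cross-block edge $\{2,3\}$ legitimately joins $U$ to $\overline U$. Your fallback fails for the same reason. This diagram is compatible with $\cE\sqcup\cF$ but not with the admissible set $\{1\}\sqcup\varnothing$, whose image under $\Psi$ is $\{1,4,5,7\}$ and contains both ends of $\{1,4\}$. What is actually true is weaker: edges may join different classes, but only classes lying on the same side of $E\sqcup F$. This is also why the proof of Theorem~\ref{thm:orch} has to use the relation generated by both $\sim$ and $\sim_\kappa$.

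The ingredients you list do suffice once you drop the class structure and apply them to an arbitrary edge. Let $\{a,b\}$ be an edge with $a<b$. By Lemma~\ref{lem:comp}, replacing $E\sqcup F$ by its complement if necessary, assume $a=u_i\in U$ and $b=\overline{u_j}\in\overline U$. By planarity, the open interval $(a,b)$ contains the same number $t$ of vertices of $U$ and of $\overline U$.

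Suppose $a=p_i$ and $b=p_j$. Then $i<j$, $\overline{u_i}=\overline{p_i}<a$, and $u_j=\overline{p_j}$ lies in $(a,b)$.
\begin{itemize}
\item Counting $U$ in $(a,u_j)$ gives exactly $u_{i+1},\dots,u_{j-1}$, and $u_j$ is excluded, so $j-i-1\le t-1$.
\item Counting $\overline U$ in $(\overline{u_i},b)\supseteq(a,b)$ gives exactly $\overline{u_{i+1}},\dots,\overline{u_{j-1}}$, so $t\le j-i-1$.
\end{itemize}
These contradict each other. The case $a,b\in\overline P$ is symmetric. This per-edge argument is the paper's proof.
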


 \smallskip

The proof of this lemma is adapted from the argument in \cite[Thm.~11]{LPP07}.

\smallskip

 \begin{proof}
Let $\kappa \in \cB_{2\ell}$ be any  diagram compatible with $E \sqcup F$, and let $\{a,b\}$ be any edge of $\kappa$, with vertices $a < b$ in the set $\cV \cup \cW = [4\ell]$.
Let \. $U:=\{u_1< \cdots < u_{2\ell}\}:= \Psi(E \sqcup F) $\..
It follows from Lemma~\ref{lem:comp} that we can without loss of generality assume that \. $a =u_i$ \. and  $b:=\overline{u_j}$ for some $i,j \in [2\ell]$.
Since no edges of $\kappa$ can cross the edge $\{a,b\}$,
the elements of $U\cap [a+1,b-1]$ are matched with the elements of $\overline{U}\cap [a+1,b-1]$.
Let
\[ t  \ := \  |U \cap [a+1,b-1]| \ = \  |\overline{U} \cap [a+1,b-1]|. \]

By Lemma~\ref{lem:comp}, it suffices to show that one of $a,b$ lies in $P=\{p_1, \ldots, p_{2\ell}\}$ and the other lies in $\overline{P}=\{\overline{p_1}, \ldots, \overline{p_{2\ell}}\}$. Suppose to the contrary that this is not the case.
There are now two possibilities:
First suppose that both  $a,b$ lie in $P$.
This implies that
\[ a= p_i \quad \text{ and } \quad b=p_j \..  \]
By using the inequalities \. $\overline{p_i}< p_i$ \. and \. $\overline{p_j}< p_j$\., this implies
\begin{equation}\label{eq:fou}
\overline{u_i} \ = \  \overline{a} \ < \ a \ = \ u_i \qquad \text{ and } \qquad
u_j \ = \  \overline{b} \ < \ b \ = \  \overline{u_j}\ts.
\end{equation}

 	We now estimate $t$ in two ways.
First note that
\begin{align*}
	\big|U \cap [a+1,\overline{b}-1] \big|  \ \leq \  t-1,\end{align*}
because the interval \ts $[a+1, \overline{b}-1]$ \ts does not contain \ts
$\overline{b}=u_j$, but the interval \ts $[a+1, {b}-1]$ \ts does, as a consequence of \eqref{eq:fou}.
On the other hand, we also have
\[   \big|U \cap [a+1,\overline{b}-1] \big|  \ = \ \{u_{i+1},u_{i+2},\ldots, u_{j-1}\} \ = \  j-i-1. \]
Hence we have
\begin{equation}\label{eq:fou-1}
t -1 \ \geq \ j-i-1.
\end{equation}
Now note that
\begin{align*}
	\big|\overline{U} \cap [\overline{a}+1,{b}-1] \big|  \ \geq \  t,\end{align*}
as a consequence of \eqref{eq:fou}.
On the other hand, we also have
\[   \big|\overline{U} \cap [\overline{a}+1,{b}-1] \big|  \ = \ \{\overline{u_{i+1}},\overline{u_{i+2}},\ldots, \overline{u_{j-1}}\} \ = \  j-i-1. \]
Hence we have
\begin{equation}\label{eq:fou-2}
	t \ \leq \ j-i-1,
\end{equation}
which contradicts \eqref{eq:fou-1}, as desired.
The case that both $a,b$ lie in $\overline{P}$ is analogous.
This completes the proof.
 \end{proof}

 \smallskip

\subsection{Compatibility equivalence relation} \label{ss:orch-proof-equiv}
Let $\kappa \in \cB_{2\ell}$ be a Temperley--Lieb diagram compatible with $\cE \sqcup \cF$.
We define an equivalence relation $\sim_\kappa$ on the set $\cE \sqcup \cF$ via the following construction.

\smallskip
\begin{definition}[{\rm \defn{Compatibility equivalence relation}\ts}{}]\label{def:equiv-comp}
Let $G_{\kappa}$ be the graph obtained by adding the $2\ell$  edges \.
$\{p_i, \overline{p_i}\}$ \. for all \. $1\le i \le 2\ell$, to the diagram~$\ts \kappa$.
Since the original edges of $\kappa$ also form a perfect matching between $P$ and $\overline{P}$ (by Lemma~\ref{lem:comp}), the graph $G_{\kappa}$ is the union of two perfect matchings.
Consequently, $G_{\kappa}$
decomposes into a collection of disjoint even cycles.

The equivalence relation $\sim_{\kappa}$ is defined by whether the vertices in $P$ corresponding to specific indices belong to the same connected component (cycle) in $G_{\kappa}$. Explicitly:

$\circ$ \. for \ts $i, i' \in \cE$, let \ts $i \sim_{\kappa} i'$ \. if and only if \. $p_{\ell+1-i}$ \ts and \ts $p_{\ell+1-i'}$ \ts
belong to the same component,

$\circ$ \.  for \ts $j, j' \in \cF$, let \ts $j \sim_{\kappa} j'$ \. if and only if \. $p_{\ell+j}$ \ts and \ts $p_{\ell+j'}$ belong to the same component.

$\circ$ \. for mixed elements \ts $i \in \cE$ \ts and \ts $j \in \cF$, \ts $i \sim_{\kappa} j$ \.
if and only if \. $p_{\ell+1-i}$ \ts and \ts $p_{\ell+j}$ \ts belong to the same component.
\end{definition}

\smallskip

\begin{ex}\label{ex:6}
Let $\ell=3$, let $\alpha,\beta,\gamma,\delta$ be as in Example~\ref{ex:4}, and let $E,F$ be as in Example~\ref{ex:5}.
Consider the diagram $\kappa \in \Theta(E \sqcup F)$ in the LHS of Figure~\ref{fig:cycle}.
In this example, the relation $\sim_\kappa$ has three equivalence classes, namely
\[  \{(1,0), (2,0) \}, \qquad  \{(3,0), (1,1), (3,1)\} \qquad \text{ and } \qquad   \{ (2,1) \},  \]
where recall our convention that $\cE=[\ell]\times\{0\}$ and $\cF=[\ell] \times \{1\}$.

\begin{figure}[hbt]
	\begin{center}
		\includegraphics[scale=1.1]{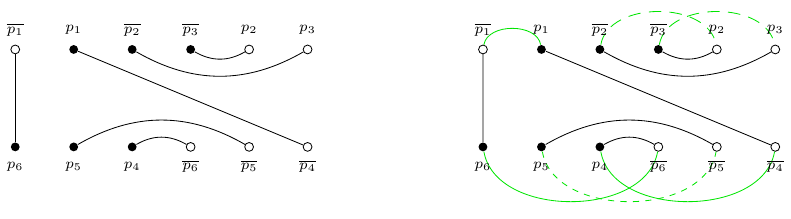}
	\end{center}
	\caption{(a) A diagram $\kappa$ that is compatible with $E\sqcup F$, where $E,F$ are as in Example~\ref{ex:6}. White nodes indicate elements in $\Psi(E \sqcup F)$. (b) the graph $G_{\kappa}$, where the additional edges are colored in green. }
	\label{fig:cycle}
\end{figure}
\end{ex}

 \smallskip

\begin{lemma}\label{lem:comp-kappa}
	Let $\kappa \in \cB_{2\ell}$ be a Temperley--Lieb diagram compatible with $\cE \sqcup \cF$, and let $E \sqcup F \subseteq \cE \sqcup \cF$ be an admissible set.
	Then $\kappa$ is compatible with $E \sqcup F$ if and only if the set $E \sqcup F$ is a union of equivalence classes of $\sim_{\kappa}$.
\end{lemma}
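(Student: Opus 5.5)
The plan is to reduce both directions to a local statement about single edges of $G_\kappa$, using Lemma~\ref{lem:comp}. Since $\kappa$ is compatible with $\cE\sqcup\cF$, Lemma~\ref{lem:comp} applied to $\cE\sqcup\cF$ says that every edge of $\kappa$ joins a vertex of $P=\Psi(\cE\sqcup\cF)$ to a vertex of $\overline P$. Also, $\kappa$ is compatible with $(\cM,\cN)$ by hypothesis. Applying Lemma~\ref{lem:comp} to $E\sqcup F$, compatibility of $\kappa$ with $E\sqcup F$ is equivalent to the following: every edge of $\kappa$ joins a vertex of $U:=\Psi(E\sqcup F)$ to a vertex of $\overline U$.

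The key observation concerns the pairs $\{p_r,\overline{p_r}\}$, $r\in[2\ell]$. By Definition~\ref{defLp_i}, $U$ contains exactly one vertex of each pair. The element of $\cE\sqcup\cF$ attached to index $r$ is $\ell-r+1\in\cE$ for $r\le\ell$, and $r-\ell\in\cF$ for $r>\ell$. If that element lies in $E\sqcup F$, then $U$ contains $p_r$; otherwise it contains $\overline{p_r}$. So colour vertex $p_r$ by $\epsilon_r:=1$ if its index element is in $E\sqcup F$ and $0$ otherwise, and colour $\overline{p_r}$ by $1-\epsilon_r$. Then $U$ is exactly the set of vertices of colour $1$, and the added green edges $\{p_r,\overline{p_r}\}$ always join opposite colours.

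Now take an edge $\{p_r,\overline{p_s}\}$ of $\kappa$; every edge has this form. It joins $U$ to $\overline U$ if and only if $\epsilon_r\neq 1-\epsilon_s$, that is, $\epsilon_r=\epsilon_s$.

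Hence $\kappa$ is compatible with $E\sqcup F$ if and only if $\epsilon$ agrees on the two $P$-indices of every $\kappa$-edge. Walk around a cycle of $G_\kappa$: it alternates green edges and $\kappa$-edges, passing $p_{r_1},\overline{p_{r_1}},p_{r_2},\overline{p_{r_2}},\dots$. The $P$-vertices visited in order are $p_{r_1},p_{r_2},\dots$, and consecutive ones are linked through a single $\kappa$-edge $\{\overline{p_{r_t}},p_{r_{t+1}}\}$. So the condition "$\epsilon_r=\epsilon_s$ for every $\kappa$-edge $\{p_r,\overline{p_s}\}$" is equivalent to "$\epsilon$ is constant on the $P$-vertices of each cycle". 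By Definition~\ref{def:equiv-comp}, this says precisely that $E\sqcup F$ is a union of $\sim_\kappa$-classes, since the classes are the index elements of the $P$-vertices lying in one component. Both implications follow at once.

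The main obstacle is bookkeeping rather than substance. First, the index reversal $i\leftrightarrow \ell+1-i$ on $\cE$ must be matched consistently in Definitions~\ref{defLp_i} and~\ref{def:equiv-comp}. Second, the tie-breaking conventions in the definition of the $p_i$ must make $\Psi(E\sqcup F)$ an honest set of $2\ell$ distinct vertices, so that the colouring is well defined. I would check both against Lemma~\ref{lem:Psi}. Admissibility of $E\sqcup F$ is used only to make Lemma~\ref{lem:comp} applicable.
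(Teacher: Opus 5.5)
Your proposal is correct and is essentially the paper's argument. The paper uses the same local fact in both directions: a $\kappa$-edge $\{p_r,\overline{p_s}\}$ joins $\Psi(E\sqcup F)$ to its complement exactly when $p_r$ and $p_s$ have the same membership status. Your walk around a cycle of $G_\kappa$ matches the paper's reduction to two $P$-vertices at distance~$2$, and the only difference is packaging: you give one chain of equivalences via the colouring $\epsilon$, where the paper proves $(\Leftarrow)$ directly and $(\Rightarrow)$ by contrapositive.
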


 \smallskip

 \begin{proof}
Let \. $U:=\{u_1 < \cdots < u_{2\ell}\} :=\Psi(E \sqcup F)$.
We first prove the \ts $(\Leftarrow)$ \ts direction.
Since $\kappa$ is compatible with $\cE \sqcup \cF$,
it follows from Lemma~\ref{lem:comp} that
every edge of $\kappa$ connects $p_i$ to $\overline{p_j}$ for some $i,j \in [2\ell]$.
It then follows from the definition of $G_{\kappa}$ that $p_i$ and $p_j$
belongs to the same connected component of $G_{\kappa}$.
On the other hand, since $E \sqcup F$ is a union of equivalence classes of $\sim_{\kappa}$,
it follows that either both $p_i$ and $p_j$ lie in $U$, or both  $p_i$ and $p_j$ lie in $\overline{U}$.

In the first case, it follows from \eqref{eq:u_i} that $p_i$ lies in $U$, while
$\overline{p_j}$ lies in~$\overline{U}$.
In the second  case, it follows from \eqref{eq:u_i} that $p_i$ lies in \ts $\overline{U}$,
while \ts $\overline{p_j}$ \ts lies in~${U}$.
Since the edge was chosen arbitrarily, it follows that every edge of \ts $\kappa$ \ts
connects a vertex in $U$ to a vertex in~$\overline{U}$.
By Lemma~\ref{lem:comp}, this implies that $\kappa$ is compatible with \ts $E \sqcup F$.

We now prove the \ts $(\Rightarrow)$ \ts direction:
We proceed by proving the contrapositive.
Suppose that  $E \sqcup F$ is not a union of equivalence classes of $\sim_{\kappa}$.
This  implies that there exists an equivalence class of $\sim_{\kappa}$ containing two elements  with different membership statuses in $E \sqcup F$.
It then follows  that there exist $i, j \in [2\ell]$ such that $p_i$ and $p_j$ belong to the same connected component of $G_{\kappa}$, yet $p_i \in U$ while $p_j \in \overline{U}$.

Since $G_{\kappa}$ is a bipartite graph with parts $P$ and $\overline{P}$, it follows that $p_i$ and $p_j$ must be connected by a path of even length in $G_{\kappa}$, where every vertex at an even position along the path belongs to $P$.
It follows that along this path, there exists a pair of vertices at distance 2 such that one belongs to $P \cap U$ and the other to $P \cap \overline{U}$. By substituting the original endpoints with this pair, we may assume without loss of generality that $p_i$ and $p_j$ are at distance $2$ in $G_{\kappa}$.
This implies that either $\{p_i, \overline{p_j}\}$ or $\{\overline{p_i}, p_j\}$ is an edge in $\kappa$. By symmetry, we assume the former.
On the other hand, since $p_i \in U$ and $p_j \in \overline{U}$, it follows from \eqref{eq:u_i} that $p_i$ and $\overline{p_j}$ both belong to~$U$.
Consequently, by Lemma~\ref{lem:comp}, we have $\kappa$ is not compatible with $E \sqcup F$, which gives us the desired contrapositive.
 \end{proof}

\smallskip

\subsection{Proof of Theorem~\ref{thm:orch}}
Let $\ell\geq 1$, $\alpha, \beta, \gamma,\delta$ and $\cO$ be as in the theorem.
Let $\cM$ and $\cN$ be the multisets defined in \eqref{eq:MN}.
We fix these parameters throughout  this proof.
Recall the definition of admissible sets and compatible diagrams from $\S$\ref{ss:lemmas}.
It follows from Proposition~\ref{prop:RS} that
\begin{align*}
	& \sum_{E \subseteq \cE, \. F \subseteq \cF}  \cO \big(E \sqcup F\big)  \.  \fs_{\lambda(E)/\mu(F)}  \. \fs_{\lambda(\overline{E})/\mu(\overline{F})} \ \ = \, \sum_{\substack{\text{admissible } \\ E \sqcup  F \. \subseteq \.\cE \sqcup \cF}}  \. \sum_{\kappa \ts\in\ts \Theta(E \sqcup F)}  \cO \big(E \sqcup F\big)  \Imm_{\kappa}(\aH_{\cN, \cM}).
\end{align*}
Let $c(E\sqcup F, \kappa)$ be the indicator function that $E \sqcup F$ is an admissible set and that $\kappa$ is compatible with  $E \sqcup F$.
It then follows from Lemma~\ref{lem:Theta}  that the sum above is equal to
\begin{align*}
\sum_{{E \sqcup  F \. \subseteq \. \cE \sqcup \cF}}  \. \sum_{\kappa \ts\in\ts \Theta(\cE \sqcup \cF)}
\. c(E\sqcup F, \kappa) \. \cO \big(E \sqcup F\big) \. \Imm_{\kappa}(\aH_{\cN, \cM}).
\end{align*}
By exchanging the order of summation, the sum above is equal to
\begin{align*}
	 \. \sum_{\kappa \ts\in\ts \Theta(\cE \sqcup \cF)} \Imm_{\kappa}(\aH_{\cN, \cM})  \.
\sum_{{E \sqcup  F \.\subseteq\. \cE \sqcup \cF}} \.  c(E\sqcup F, \kappa) \. \cO \big(E \sqcup F\big)\ts.
\end{align*}
Recall that \ts $\Imm_{\kappa}(\aH_{\cN, \cM})$ \ts is Schur positive by Lemma~\ref{lem:RS-2}.
Thus, it suffices to show that, for all \ts $\kappa \in \Theta(\cE \sqcup \cF)$,
we have
\begin{equation*}
  \sum_{{E \sqcup  F \. \subseteq \. \cE \sqcup \cF}} \. c(E\sqcup F, \kappa) \. \cO \big(E \sqcup F\big)    \ \geq \ 0\ts.
\end{equation*}

Let $\sim$ and $\sim_{\kappa}$ be the equivalence relations defined in
Definition~\ref{def:equiv-admis} and~\ref{def:equiv-comp}, respectively. Let \ts $\{B_1, \ldots, B_t\}$ \ts be the set partition of \ts
$\cE \sqcup \cF$ \ts into equivalence classes under the relation generated by \ts $\sim$ \ts and \ts $\sim_{\kappa}$.
It follows from Lemmas~\ref{lem:admin} and \ref{lem:comp-kappa},
that \ts $c(E\sqcup F, \kappa)=1$ \ts if and only if \ts $E \sqcup F$ \ts
is a union of parts of the partition \ts $\{B_1, \ldots, B_t\}$.
Hence, by the orchestra inequality~\eqref{eq:Orch}, we have:
\begin{equation*}
	\sum_{{E \sqcup  F \. \subseteq \. \cE \sqcup \cF}} \. c(E\sqcup F, \kappa)
\. \cO \big(E \sqcup F\big)    \ = \ \sum_{H \subseteq [t]}  \. \cO \bigg(\bigsqcup_{h \in H} B_h \bigg)\ \geq \ 0\ts.
\end{equation*}
This proves the proposition.
\qed

\subsection{Generalization of the Schur orchestra inequality}\label{ss:Schur-orchestra-finrem}
Here we present a generalization of Theorem~\ref{thm:orch}.
Recall the definitions of $\cE$, $\cF$, and the Ahlswede--Daykin cone.

\smallskip

\begin{thm}\label{thm:orch-ext}
	Let $\cO$ be a function in the Ahlswede--Daykin cone.
	Let $\alpha, \beta, \gamma, \delta \in \nn^\ell$ be nonnegative
	integer vectors of length $\ell$. Then, for every partition
	$\{B_1, \ldots, B_t\}$ of $\cE \sqcup \cF$,
	\begin{equation*}
		\sum_{H \subseteq [t]} \cO \big(B_H\big) \,
		\fs_{\lambda(B_H \cap \cE)/\mu(B_H \cap \cF)} \,
		\fs_{\lambda(\overline{B_H \cap \cE})/\mu(\overline{B_H \cap \cF})}
		\ \geqslant_{\fs} \ 0,
	\end{equation*}
	where $\lambda(\cdot)$ and $\mu(\cdot)$ are defined as in
	\eqref{eq:lambda-E} and \eqref{eq:mu-F}, respectively,
	and \. $B_H:=\bigsqcup_{h \in H} B_h$.
\end{thm}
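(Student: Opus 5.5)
The plan is to reduce Theorem~\ref{thm:orch-ext} to Theorem~\ref{thm:orch} by choosing a new orchestra function. Given the partition $\{B_1,\ldots,B_t\}$, define $\cO'\colon 2^{\cE\sqcup\cF}\to\rr$ by
\[
\cO'(S) \ := \ \begin{cases} \cO(S) & \text{if } S=B_H \text{ for some } H\subseteq[t],\\ 0 & \text{otherwise.}\end{cases}
\]
Since the blocks are disjoint and cover $\cE\sqcup\cF$, the map $H\mapsto B_H$ is injective. Hence the left-hand side of the desired inequality equals
\[
\sum_{E\subseteq\cE,\,F\subseteq\cF}\cO'(E\sqcup F)\,\fs_{\lambda(E)/\mu(F)}\,\fs_{\lambda(\overline E)/\mu(\overline F)}.
\]
This uses $E=B_H\cap\cE$ and $F=B_H\cap\cF$, together with the compatibility of complements, $\overline{B_H}=B_{[t]\setminus H}$. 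The statement then follows from Theorem~\ref{thm:orch} applied to $\cO'$, once we check that $\cO'$ satisfies \eqref{eq:Orch} for \emph{every} set partition of $\cE\sqcup\cF$.

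The main step is therefore to verify that $\cO'$ lies in the AD cone $\cC^\ell$. Take an arbitrary partition $\{C_1,\ldots,C_s\}$. A union $C_J$ contributes a nonzero term to $\sum_{J\subseteq[s]}\cO'(C_J)$ only if $C_J$ is simultaneously a union of $C$-blocks and a union of $B$-blocks.

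Let $\{D_1,\ldots,D_r\}$ be the join of the two partitions in the partition lattice, that is, the finest common coarsening, whose blocks are the connected components of the relation ``lies in a common $B$-block or a common $C$-block''. A set is a union of $C$-blocks and also a union of $B$-blocks exactly when it is a union of $D$-blocks. Each $D$-union arises from exactly one $J$, because the $C_J$ are distinct. Consequently,
\[
\sum_{J\subseteq[s]}\cO'(C_J) \ = \ \sum_{H'\subseteq[r]}\cO(D_{H'}) \ \ge\ 0,
\]
by \eqref{eq:Orch} for $\cO$ applied to the partition $\{D_1,\ldots,D_r\}$. This is exactly the same mechanism used at the end of the proof of Theorem~\ref{thm:orch}, where the partition generated by $\sim$ and $\sim_\kappa$ appeared.

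I expect the only delicate point to be bookkeeping rather than mathematics. First, the vectors $\lambda(\cdot)$ and $\mu(\cdot)$ may fail to be partitions, but Theorem~\ref{thm:orch} is stated for arbitrary $\alpha,\beta,\gamma,\delta\in\nn^\ell$ with the vanishing convention, so this needs no special treatment. Second, the degenerate case where some $B_h$ is empty cannot occur, because the blocks of a set partition are nonempty.

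As an alternative, one could redo the proof of Theorem~\ref{thm:orch} directly, intersecting the admissibility and compatibility equivalences with the $B$-partition. The reduction above is cleaner and is what I would write.
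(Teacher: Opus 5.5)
Your argument is correct, and it takes a different route from the paper. The paper omits the proof and says only that it ``proceeds analogously'' to Theorem~\ref{thm:orch} with heavier notation. In effect, one would rerun the Temperley--Lieb immanant argument. For each compatible diagram $\kappa$, the coefficient of the corresponding immanant would then be a sum of $\cO$ over unions of blocks of the partition generated by $\sim$, $\sim_\kappa$ and $\{B_1,\ldots,B_t\}$.

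You instead use Theorem~\ref{thm:orch} as a black box and isolate a purely combinatorial closure property of the AD cone. If $\cO\in\cC^\ell$ and $\{B_h\}$ is any set partition, then $\cO$ times the indicator of ``being a union of $B$-blocks'' again lies in $\cC^\ell$. The reason is that, for any other partition $\{C_j\}$, the sets that are unions of both $B$-blocks and $C$-blocks are exactly the unions of blocks of the join partition, and each is hit by exactly one index set $J$. Your bookkeeping is all in order: injectivity of $H\mapsto B_H$, $\overline{B_H\cap\cE}=B_{[t]\setminus H}\cap\cE$, nonempty blocks, and the vanishing convention for non-partitions.

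Your route is shorter and needs no new immanant computations. It also shows that Theorem~\ref{thm:orch-ext} is equivalent to Theorem~\ref{thm:orch}; the paper only notes the specialization in the other direction. So wherever the paper invokes Theorem~\ref{thm:orch-ext}, Theorem~\ref{thm:orch} plus your closure observation suffices. The direct approach would give slightly more explicit information, namely the coefficient of each immanant, but the statement does not need it.
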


\smallskip

In particular, Theorem~\ref{thm:orch} corresponds to the special
case of Theorem~\ref{thm:orch-ext} for the set partition \.
$\{\{1\}, \{2\}, \ldots, \{\ell\}\}$. The proof of the theorem
proceeds analogously to that of Theorem~\ref{thm:orch},
albeit with heavier notation.  Since we do not need this generalization,
we omit the details (cf.~$\S$\ref{ss:Schur measures}).

\medskip

\section{Proof of the ADS inequality}\label{s:ADS}

In this section, we establish the skew ADS inequality (Theorem~\ref{thm:ADS-skew}),
which we use to derive the ADS inequality (Theorem~\ref{thm:ADS}).

\subsection{Proof of Theorem~\ref{thm:ADS-skew}}
Expanding the product in \eqref{eq:ADS-skew} and grouping the summands, we have:
	\begin{align*}
		&	\bigg(\sum_{\lambda, \ts\mu \ts \in \ts\cP} \fc({\lambda},\mu) \fs_{\lambda/\mu} \bigg)  \bigg(\sum_{\lambda,\ts\mu  \ts\in \ts\cP} \fd({\lambda},\mu) \fs_{\lambda/\mu} \bigg) \ - \ \bigg(\sum_{\lambda, \ts\mu \ts \in \ts\cP} \fa({\lambda},\mu) \fs_{\lambda/\mu} \bigg) \bigg(\sum_{\lambda, \ts\mu \ts \in \ts\cP} \fb({\lambda},\mu) \fs_{\lambda/\mu} \bigg) \\
		&	\quad = \ \sum_{\lambda,\ts\mu,\ts\nu,\ts\rho  \ts\in \ts\cP} \big(\fc({\lambda},\mu) \fd(\nu,\rho) \ - \ \fa({\lambda},\mu)  \fb(\nu,\rho)\big)  \fs_{\lambda/\mu} \fs_{\nu/\rho}  \\
		& \quad = \ \sum_{\substack{(\alpha,\ts\beta,\ts\gamma,\ts\delta)}} \ \ \sum_{\substack{(\lambda,\ts\mu,\ts\nu,\ts\rho)}} \.  \big(\fc(\lambda,\mu)  \fd(\nu,\rho)  \. - \.  \fa(\lambda,\mu)  \fb(\nu,\rho)\big)   \fs_{\la/\mu} \. \fs_{\nu/\rho}.
	\end{align*}
	where the outer sum is over all \. $\alpha,\beta,\gamma,\delta \in \nn^\infty$ \. with finite support,
and the inner sum is over all \. $(\lambda,\mu,\nu,\rho) \in\cU(\alpha,\beta,\gamma,\delta)$, where
$$
\cU(\alpha,\beta,\gamma,\delta) \ := \
\big\{\lambda,\mu,\nu,\rho \in \cP \. : \.
\lambda \vee \nu=\alpha+\gamma, \.  \lambda \wedge \nu =\alpha, \.
				\mu \vee \rho = \beta+\delta, \. \mu \wedge \rho = \beta\big\}.
$$
Now, define \.
	$\Phi_{\fs}(\alpha,\beta,\gamma,\delta) \. := \. \Phi_{\fs}(\alpha,\beta,\gamma,\delta;\fa,\fb,\fc,\fd)$ \. to be the inner sum
	\begin{equation}\label{eq:AAD-skew}
		\Phi_{\fs}(\alpha,\beta,\gamma,\delta) \ := \ \sum_{\lambda,\ts\mu,\ts\nu,\ts\rho\ts \in\ts\cU(\alpha,\beta,\gamma,\delta)}  \big(\fc(\lambda,\mu)  \fd(\nu,\rho)  \. - \.  \fa(\lambda,\mu)  \fb(\nu,\rho)\big)  \. \fs_{\la/\mu} \. \fs_{\nu/\rho}.
	\end{equation}
	To prove Theorem~\ref{thm:ADS-skew}, it suffices to show that each \.
$\Phi_{\fs}(\alpha,\beta,\gamma,\delta)$ \. is Schur positive, which we state as Theorem~\ref{thm:SAAD} below.  This completes the proof. \qed

\smallskip

\begin{thm}[{\rm \defn{LSS inequality}{}\ts}{}]\label{thm:SAAD}
	Let \. $\ell\ge 1$, \ts $\alpha,\beta,\gamma,\delta \in \nn^{\ell}$,
let \. $\fa,\fb,\fc,\fd: \cP \times \cP \to \rr_{\geq 0}$ \.
be four functions satisfying \eqref{eq:AD-cond-skew}, and let \.
$\Phi_{\fs}(\alpha,\beta,\gamma,\delta)$ \. be defined by \eqref{eq:AAD-skew}.
	Then we have:
	\begin{align}\label{eq:LSS} \tag{LSS}
		\Phi_{\fs}(\alpha,\beta,\gamma,\delta) \ \geqslant_{\fs} \ 0.
	\end{align}
\end{thm}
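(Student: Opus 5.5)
The plan is to reduce \eqref{eq:LSS} to the Schur orchestra inequality (Theorem~\ref{thm:orch}) by parametrizing $\cU(\alpha,\beta,\gamma,\delta)$ with subsets $E\sqcup F\subseteq\cE\sqcup\cF$.

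\emph{Parametrization.} If $\lambda\vee\nu=\alpha+\gamma$ and $\lambda\wedge\nu=\alpha$, then for each $i\in[\ell]$ the pair $(\lambda_i,\nu_i)$ is either $(\alpha_i+\gamma_i,\alpha_i)$ or $(\alpha_i,\alpha_i+\gamma_i)$. We encode the choice by $E=\{i:\lambda_i=\alpha_i+\gamma_i\}$, so that $\lambda=\lambda(E)$ and $\nu=\lambda(\overline E)$ as in \eqref{eq:lambda-E}. Similarly $F=\{j:\mu_j=\beta_j+\delta_j\}$ gives $\mu=\mu(F)$ and $\rho=\mu(\overline F)$. When $\gamma_i=0$ or $\delta_j=0$ both choices give the same tuple. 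To avoid overcounting I will first assume $\gamma,\delta$ have all entries positive. The general case then follows by deleting coordinates with $\gamma_i=0$ (or $\delta_j=0$) from $\cE$ (resp.\ $\cF$); equivalently, I would verify that Theorem~\ref{thm:orch} still applies with $\cO$ divided by the multiplicity $2^{\#\{\text{zero coordinates}\}}$, and that this rescaled $\cO$ is still in the cone since those coordinates never affect the value.

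With tuples that are not partitions contributing $0$ by the convention, we get
\[
\Phi_{\fs}(\alpha,\beta,\gamma,\delta)=\sum_{E\subseteq\cE,\,F\subseteq\cF}\cO(E\sqcup F)\,\fs_{\lambda(E)/\mu(F)}\,\fs_{\lambda(\overline E)/\mu(\overline F)},
\]
where
\[
\cO(E\sqcup F):=\fc\big(\lambda(E),\mu(F)\big)\,\fd\big(\lambda(\overline E),\mu(\overline F)\big)-\fa\big(\lambda(E),\mu(F)\big)\,\fb\big(\lambda(\overline E),\mu(\overline F)\big).
\]
Here $\fa,\ldots,\fd$ are evaluated only on genuine partitions. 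On non-partition vectors I set the value to $0$; the Schur factor vanishes there anyway. By Theorem~\ref{thm:orch}, it then suffices to show that $\cO$ satisfies \eqref{eq:Orch} for every set partition $\{B_1,\dots,B_t\}$ of $\cE\sqcup\cF$.

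\emph{Verifying \eqref{eq:Orch}.} Fix a set partition and identify $H\subseteq[t]$ with $S=H\in 2^{[t]}$. Define $A(S):=\fa(\lambda(B_S\cap\cE),\mu(B_S\cap\cF))$, and similarly $C(S)$ via $\fc$, $B'(S):=\fb(\lambda(\overline{B_S\cap\cE}),\mu(\overline{B_S\cap\cF}))$, and $D(S)$ via $\fd$. Then \eqref{eq:Orch} reads $\sum_S C(S)D^{c}(\overline S)\ge\sum_S A(S)B^{c}(\overline S)$, where $B^c(T):=B'([t]\setminus T)$ and $D^c(T):=D([t]\setminus T)$ are written as functions of the complementary set $\overline S=[t]\setminus S$. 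This is exactly the shape of the RLS inequality (Theorem~\ref{thm:RLS}) with $k=t$, applied to the four functions $S\mapsto A(S)$, $T\mapsto \fb(\lambda(B_T\cap \cE)\ldots)$ read appropriately, and the corresponding $C,D$. Concretely, set $\tilde\fb(T):=\fb(\lambda(B_{\overline{T}}^{\,c}\ldots))$ so that the second factor is indexed by $T=\overline S$, and use $\lambda(\overline{B_{[t]\setminus T}\cap\cE})=\lambda(B_T\cap\cE)$.

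The key check is \eqref{eq:AD-cond} on $2^{[t]}$: for $S,T\subseteq[t]$,
\[
\fa\big(\lambda(B_S\cap\cE),\mu(B_S\cap\cF)\big)\,\fb\big(\lambda(B_T\cap\cE),\mu(B_T\cap\cF)\big)\le \fc(\cdot_{S\cup T})\,\fd(\cdot_{S\cap T}).
\]
This follows from \eqref{eq:AD-cond-skew}, because $E\mapsto\lambda(E)$ and $F\mapsto\mu(F)$ are lattice homomorphisms: $\lambda(E)\vee\lambda(E')=\lambda(E\cup E')$ and $\lambda(E)\wedge\lambda(E')=\lambda(E\cap E')$, since coordinatewise $\gamma_i\ge0$. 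If some argument is not a partition, the corresponding value is $0$ by my extension. The LHS then vanishes whenever a LHS argument is not a partition. However, I must also handle the case where the LHS arguments are partitions but a join or meet is not — this cannot happen, since the join and meet of partitions are partitions. So RLS gives \eqref{eq:Orch}.

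\emph{Main obstacle.} I expect the delicate part to be the bookkeeping in the reduction, namely matching $\sum_H$ in \eqref{eq:Orch} precisely to the two sums in \eqref{eq:RLS}, including the complement conventions \eqref{eq:complement}. The degenerate coordinates with $\gamma_i=0$ or $\delta_j=0$, which create multiplicities in $\cU$, also need care. I would handle the latter by restricting $\ell$ to the support of $\gamma$ and $\delta$ via separate index sets for $\cE$ and $\cF$, noting that Theorem~\ref{thm:orch} only uses $\cE,\cF$ as index sets.
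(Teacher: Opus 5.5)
Your proposal is correct and follows the paper's proof essentially step for step. You take the same $\cO$: the paper sets $\cO=0$ on non-admissible sets, which coincides with your extension of $\fa,\fb,\fc,\fd$ by zero on non-partitions. You verify \eqref{eq:Orch} by applying the RLS inequality on $2^{[t]}$, using that $E\mapsto\lambda(E)$ and $F\mapsto\mu(F)$ are lattice homomorphisms, and then conclude with Theorem~\ref{thm:orch}. Finally, your rescaling by $2^{N}$ for the zero coordinates of $\gamma,\delta$ is the same as the paper's observation that the orchestra sum equals $2^{N}\,\Phi_{\fs}(\alpha,\beta,\gamma,\delta)$.
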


\smallskip

Note that Theorem~\ref{thm:SAAD} is a Schur positive generalization
of the \defn{Lov\'asz--Saks inequality{}\ts}~\cite[Thm.~8]{LS06},
of which the RLS inequality (Theorem~\ref{t:RLS})
is an easy corollary.  Although somewhat technical to state,
the LSS inequality \eqref{eq:LSS} is the most powerful inequality
we have in this paper.

\smallskip

\subsection{Proof of Theorem~\ref{thm:SAAD}}
The following argument combines the RLS inequality
(Theorem~\ref{t:RLS}) and the Schur orchestra inequalities
(Theorem~\ref{thm:orch}), to obtain the result.

Let $\ell$, $\alpha,\beta,\gamma,\delta$ be  as in Theorem~\ref{thm:SAAD}, and set $k:=2\ell$.
Let $\cE$ and $\cF$ be two disjoint copies of $[\ell]$.
Note that $2^{\cE \sqcup \cF}$ forms a finite   distributive lattice with set union as the join and set intersection as the meet.
Recall the definitions of $\lambda(E)$ and $\mu(F)$ for $E \subseteq \cE$ and $F \subseteq \cF$ given in \eqref{eq:lambda-E} and \eqref{eq:mu-F}.
Recall the definition of admissible subsets from $\S$\ref{ss:lemmas}.

We  define the function $\cO:2^{\cE \sqcup \cF} \to \rr$ by
\begin{align}\label{eq:fv}
	\cO(E \sqcup F) \ := \
\fc\big(\lambda(E), \mu(F)\big)\cdot\fd\big(\lambda(\overline{E}), \mu(\overline{F})\big)  \, - \,  \fa\big(\lambda(E), \mu(F)\big)  \cdot \fb\big(\lambda(\overline{E}), \mu(\overline{F})\big)
\end{align}
if  \ts $E \sqcup F$ \ts is admissible, and let \ts $\cO(E \sqcup F) := 0$ \ts otherwise.
We now verify that this function $\cO$ satisfies the orchestra inequalities~\eqref{eq:Orch}.

Let
\. $\{B_1, \ldots, B_t\}$ \. be a set partition of  $\cE \sqcup  \cF$.
We define the functions \. $\fa', \fb', \fc', \fd' \colon 2^{[t]} \to \mathbb{R}_{\geq 0}$ \. as follows. For any subset $H \subseteq [t]$, let
$$
E_H \. := \. \bigcup_{h \in H} (B_h \cap \mathcal{E}) \quad \text{and}
\quad F_H \. :=  \. \bigcup_{h \in H} (B_h \cap \mathcal{F}).
$$
We define \. $\fa'\colon 2^{[t]} \to \mathbb{R}_{\geq 0}$ \. by
$$
\fa'(H) \, := \,
\begin{cases}
	\fa\big(\lambda(E_H), \mu(F_H)\big) & \text{if } \ \ \lambda(E_H), \mu(F_H) \in \cP, \\
	0 & \text{otherwise.}
\end{cases}
$$
The functions \ts $\fb'$, \ts $\fc'$ \ts and \ts $\fd'$ \ts are defined analogously.

We now check that \. $\fa',\fb',\fc',\fd'$ \. satisfies \eqref{eq:AD-cond}, i.e., for all \. $G,H \in 2^{[t]}$, we have:
\begin{equation}\label{eq:GH}
   \fa'(G)  \fb'(H) \ \leq  \ \fc'(G \vee H) \fd'(G \wedge H).
\end{equation}
Note that \eqref{eq:GH} holds trivially if any of the vectors \.
$\lambda(E_G), \mu(F_G), \lambda(E_H)$ \. or \. $\mu(F_H)$ \. is not a partition,
as the LHS equals zero. Therefore, we assume that all four are partitions.

This implies that the vectors \.
$\lambda(E_{G\vee H}), \mu(F_{G \vee H})$, \. $\lambda(E_{G \wedge H})$ \.
and \. $\mu(F_{G \wedge H})$ \. are also partitions. Indeed, observe that
\[
\lambda(E_{G \vee H}) \, = \, \lambda(E_G) \vee \lambda(E_H) \quad \text{and}
\quad \mu(F_{G \vee H}) \, = \, \mu(F_G) \vee \mu(F_H)\ts.
\]
Since $\mathcal{P}$ is closed under join and meet operations \ts $\vee$ \ts and \ts $\wedge$,
the resulting vectors are partitions. Analogous arguments apply to the other vectors.
Thus in this case, \eqref{eq:GH} reduces to checking that
\begin{equation*}
\aligned
& \fa\big(\lambda(E_G), \mu(F_G)\big) \. \cdot \. \fb\big(\lambda(E_H), \mu(F_H)\big) \\
& \qquad \leq  \  \fc\big(\lambda(E_G) \vee \lambda(E_H), \mu(F_G) \vee \mu(F_H)\big) \. \cdot \. \fd\big(\lambda(E_G) \wedge \lambda(E_H), \mu(F_G) \wedge \mu(F_H)\big),
\endaligned
\end{equation*}
which follows from the assumption \eqref{eq:AD-cond-skew}.
This completes the verification of \eqref{eq:GH}.

We can now apply the RLS inequality (Theorem~\ref{t:RLS}) to \. $\fa',\fb',\fc',\fd'$.
to obtain:
	\begin{align}\label{eq:AAD-1}
	\sum_{H \subseteq [t]}   \fc'(H)  \fd'(\overline{H})  \. - \. \fa'(H) \fb'(\overline{H})  \ \geq \ 0.
\end{align}
When \. $E_H \sqcup F_H$ \. is admissible, the corresponding term in the sum above are equal to
\begin{align*}
&\fc'(H)  \fd'(\overline{H})  \. - \. \fa'(H) \fb'(\overline{H})\\
&\quad = \ \fc\big(\lambda(E_H),\mu(F_H)\big)
	\fd\big(\lambda(\overline{E_H}),\mu(\overline{F_H})\big) \. - \.
	\fa\big(\lambda(E_H),\mu(F_H)\big)
	\fb\big(\lambda(\overline{E_H}),\mu(\overline{F_H})\big) \\
& \quad =_{\eqref{eq:fv}} \ \cO(E_H \sqcup F_H) \ = \  \cO\big( \bigsqcup_{h \in H} B_h\big).
\end{align*}
Similarly, when \. $E_H \sqcup F_H$ \. is not admissible, we have:
\begin{align*}
	&\fc'(H)  \fd'(\overline{H})  \. - \. \fa'(H) \fb'(\overline{H})  \ = \ 0 \ = \   \cO\big( \bigsqcup_{h \in H} B_h\big).
\end{align*}
Plugging these terms into summation \eqref{eq:AAD-1}, gives:
\begin{align*}
	\sum_{H \subseteq [t]}  \cO \big(\bigcup_{h \in H} B_h\big) \ \geq \ 0.
\end{align*}
In other words, this shows that \ts $\cO$ \ts satisfies the orchestra inequalities~\eqref{eq:Orch}.

It then follows from the Schur orchestra inequality (Theorem~\ref{thm:orch}), that
\[
	\sum_{E \subseteq \cE, \. F \subseteq \cF}  \cO \big(E \sqcup F\big)  \.  \fs_{\lambda(E)/\mu(F)}  \. \fs_{\lambda(\overline{E})/\mu(\overline{F})} \ \geqslant_{\fs} \ 0.
\]
Plugging \eqref{eq:fv} into the equation above, we obtain:
\[
\aligned
& \sum_{E \subseteq \cE, \. F \subseteq \cF}
\Big[\fc\big(\lambda(E), \mu(F)\big) \cdot
\fd\big(\lambda(\overline{E}), \mu(\overline{F})\big)  \ - \  \fa\big(\lambda(E), \mu(F)\big) \cdot \fb\big(\lambda(\overline{E}), \mu(\overline{F})\big) \Big] \, \times \\
& \hskip9.5cm \times \,
 \fs_{\lambda(E)/\mu(F)} \.  \fs_{\lambda(\overline{E})/\mu(\overline{F})} \ \geqslant_{\fs} \ 0.
\endaligned
\]
Finally, note that the LHS is equal to
\[ 2^{N} \. \Phi_{\fs}(\alpha,\beta,\gamma,\delta) \quad \text{where} \quad
N \. := \. |\{i \in [\ell] \. : \. \gamma_i=0 \}| \. + \. |\{i \in [\ell] \. : \. \delta_i=0 \}|,
\]
after the substitution \. $\lambda(E)\gets \lambda$, \ts $\mu(F)\gets \mu$,
\ts $\lambda(\overline{E})\gets \nu$, \ts and \. $\mu(\overline{F})\gets \rho$.
The completes the proof of Theorem~\ref{thm:SAAD}.
\qed

\smallskip

\subsection{Proof Theorem~\ref{thm:ADS}}\label{ss:proof-ADS}
We extend the domain of the functions $\fa,\fb,\fc,\fd: \cP \to \rr_{\geq 0}$
to  $\fa',\fb',\fc',\fd': \cP  \times \cP \to \rr_{\geq 0}$ by
\[ \fa'(\lambda,\mu ) \ = \
\begin{cases}
	\fa(\lambda) & \text{ if } \mu = (0),\\
	0 & \text{ otherwise.}
\end{cases}
 \]
The functions \ts $\fb',\fc',\fd'$ \ts are defined analogously.  It then follows that
\begin{align*}
\sum_{\lambda  \in \cP} \fa({\lambda}) \fs_{\lambda}  \ = \	\sum_{\lambda,\mu  \in \cP} \fa'({\lambda},\mu) \fs_{\lambda/\mu}\.,
\end{align*}
with  analogous identities for \ts $\fb,\fc,\fd$ \ts and \ts $\fb',\fc',\fd'$.
It is straightforward to verify that $\fa',\fb',\fc',\fd'$ satisfy \eqref{eq:AD-cond-skew}.
The theorem now follows from \eqref{eq:ADS-skew}.
\qed
\medskip

\section{Grothendieck polynomials}\label{s:Gro}

In this section we prove Theorem~\ref{thm:Gro-LPP} via its skew generalization.
We then generalize the Okounkov inequality \eqref{eq:Oko}
to this setting (Corollary~\ref{cor:Gro-LC}).
The proof uses log-supermodularity property for the numbers of increasing
tableaux (Lemma~\ref{lem:fg}), a result of independent interest.

\subsection{Two log-supermodularity lemmas}\label{ss:Gro-two-lemmas}
For partitions $\lambda,\mu  \in \cP$,
let $\ff_{\lambda,\mu}$ be the indicator function for the condition $\lambda \subseteq \mu \subseteq
\widehat{\lambda}$. Recall that \. $\widehat{\lambda}=(\wh\la_1,\wh\la_2,\ldots)$ \. is
given by
\begin{equation}\label{eq:widehat}
\widehat{\lambda}_i \ := \ \min \{\lambda_j + j - 1 \. : \. j \in [i]\}\ts.
\end{equation}

\smallskip

\begin{lemma}\label{lem:ff}
For all \. $\lambda,\mu,\nu,\rho \in \cP$, we have:
	\begin{equation}\label{eq:indic}
	 \ff_{\lambda, \mu} \. \cdot \. \ff_{\nu,\rho}   \ \leq \
\ff_{\lambda\vee \nu, \mu \vee \rho} \. \cdot \. \ff_{\lambda\wedge \nu, \mu \wedge \rho}.
	\end{equation}
\end{lemma}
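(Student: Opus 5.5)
Since $\ff$ takes values in $\{0,1\}$, the inequality \eqref{eq:indic} only needs checking when the left-hand side equals $1$. So I assume $\lambda \subseteq \mu \subseteq \widehat{\lambda}$ and $\nu \subseteq \rho \subseteq \widehat{\nu}$. I then need to show that
\[
\lambda\vee\nu \subseteq \mu\vee\rho \subseteq \widehat{\lambda\vee\nu}
\qquad\text{and}\qquad
\lambda\wedge\nu \subseteq \mu\wedge\rho \subseteq \widehat{\lambda\wedge\nu}.
\]
Everything is a componentwise statement about row lengths, so I work row by row with the formula \eqref{eq:widehat}.

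The lower containments are immediate. From $\lambda_i\le\mu_i$ and $\nu_i\le\rho_i$ we get $\max\{\lambda_i,\nu_i\}\le\max\{\mu_i,\rho_i\}$ and $\min\{\lambda_i,\nu_i\}\le\min\{\mu_i,\rho_i\}$ for every $i$.

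The meet upper bound is also straightforward. By \eqref{eq:widehat},
\[
\widehat{\lambda\wedge\nu}_i \;=\; \min_{j\le i}\bigl(\min\{\lambda_j,\nu_j\}+j-1\bigr) \;=\; \min\bigl\{\widehat{\lambda}_i,\widehat{\nu}_i\bigr\},
\]
because the two minima commute. So $\widehat{\lambda\wedge\nu}=\widehat\lambda\wedge\widehat\nu$, and therefore $\mu_i\wedge\rho_i\le \widehat\lambda_i\wedge\widehat\nu_i$ follows directly from $\mu_i\le\widehat\lambda_i$ and $\rho_i\le\widehat\nu_i$.

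The join upper bound is the main obstacle. Here $\widehat{\lambda\vee\nu}_i=\min_{j\le i}\bigl(\max\{\lambda_j,\nu_j\}+j-1\bigr)$, which is in general at least $\max\{\widehat\lambda_i,\widehat\nu_i\}$ but not equal to it. That is still the direction I need. I need $\max\{\mu_i,\rho_i\}\le \max\{\lambda_j,\nu_j\}+j-1$ for every $j\le i$. Fix $j\le i$ and suppose without loss of generality that $\mu_i\ge\rho_i$. Then
\[
\max\{\mu_i,\rho_i\} \;=\; \mu_i \;\le\; \widehat\lambda_i \;\le\; \lambda_j+j-1 \;\le\; \max\{\lambda_j,\nu_j\}+j-1.
\]
Taking the minimum over $j\le i$ gives $(\mu\vee\rho)_i\le \widehat{\lambda\vee\nu}_i$. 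In other words, the key step is the inequality $\widehat\lambda\vee\widehat\nu\subseteq\widehat{\lambda\vee\nu}$, which holds because $\max$ of minima is at most the minimum of maxima. This completes the verification in both the join and meet cases.

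One point to check is that all the objects involved are honest partitions. The sets $\lambda\vee\nu$, $\mu\vee\rho$, etc., are partitions because $\cP$ is a lattice. The formula \eqref{eq:widehat} always produces a weakly decreasing sequence, and it agrees with the description of $\widehat\lambda$ as the maximal partition satisfying $\widehat\lambda_i\le\lambda_i+i-1$. With these facts in place, the proof is complete.
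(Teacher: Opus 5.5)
Your proof is correct and follows the paper's argument essentially step for step. You reduce to the case where the left side is $1$, get the lower containments componentwise, use $\widehat{\lambda\wedge\nu}=\widehat\lambda\wedge\widehat\nu$ for the meet, and use $\mu\vee\rho\subseteq\widehat\lambda\vee\widehat\nu\subseteq\widehat{\lambda\vee\nu}$ for the join. The only differences are that you justify that last inclusion, which the paper asserts without comment, and one harmless quibble: $\widehat\lambda$ is weakly decreasing but need not have finite support, e.g.\ $\widehat{(1)}=(1,1,1,\ldots)$, though nothing in your argument requires it to.
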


\smallskip

\begin{proof}
The inequality holds trivially if the LHS is zero, so we
may assume it is equal to~$1$. By definition, this means that
\begin{equation}\label{eq:ff-1}
\lambda \subseteq \mu \subseteq \widehat{\lambda} \quad \text{and} \quad
\nu \subseteq \rho \subseteq \widehat{\nu}.
\end{equation}

\nin
Note that, \eqref{eq:ff-1} implies the following inclusions:
\begin{align*}
	\lambda \vee \nu  \ \subseteq \ \mu \vee \rho,\\
	\lambda \wedge  \nu  \ \subseteq \ \mu \wedge \rho,\\
	\mu \vee \rho \ \subseteq \ \widehat{\lambda} \vee \widehat{\nu} \ \subseteq \  \widehat{\lambda \vee \nu},\\
	\mu \wedge  \rho \ \subseteq \ \widehat{\lambda} \wedge \widehat{\nu} \ = \  \widehat{\lambda \wedge \nu}.
\end{align*}
Here the last equality follows from the fact that, for all \ts $i\geq 1$, we have
\begin{align*}
(\widehat{\lambda} \wedge \widehat{\nu})_i \ & = \  \min\big\{\widehat{\lambda}_i,  \widehat{\nu}_i \big\} \ =_{\eqref{eq:widehat}} \  \min \big\{\min_{j \in [i]} \{\lambda_j + j - 1\}, \.\min_{j \in [i]}\{ \nu_j+j-1\}\big\} \\
& = \  \min_{j \in [i]}\big\{ \min\{\lambda_j,\nu_j\big\} \. + \ts j \ts - \ts 1\big\} \ =_{\eqref{eq:widehat}} \ \big(\widehat{\lambda \wedge \nu}\big)_i\..
\end{align*}
This completes the proof of the lemma.
\end{proof}

\smallskip

Recall the definition of constants \ts $\fg_{\lambda,\mu}=|\IT(\mu/\la)|$ \ts
 the number of increasing tableaux, see~$\S$\ref{ss:notation-Groth}.

\smallskip

\begin{lemma}\label{lem:fg}
For all \. $\lambda,\mu,\nu,\rho \in \cP$ \. satisfying \. $\lambda \subseteq \mu$ \. and \. $\nu \subseteq \rho$,
	\begin{equation}\label{eq:fg}
	\fg_{\lambda, \mu}  \.\cdot \. \fg_{\nu,\rho}   \ \leq \ \fg_{\lambda\vee \nu, \mu \vee \rho} \.\cdot \.\fg_{\lambda\wedge \nu, \mu \wedge \rho}.
\end{equation}
\end{lemma}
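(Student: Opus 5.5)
The plan is to write $\fg_{\lambda,\mu}$ as the number of lattice points of a region cut out by difference constraints of the form $x\le y+c$ or $x=\text{const}$. Such regions are closed under componentwise $\max$ and $\min$. Inequality \eqref{eq:fg} then follows from the AD inequality (Theorem~\ref{thm:AD}) applied to four indicator functions on a product of chains.

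\emph{Encoding.} Fix $\ell$ larger than the number of rows of $\mu$ and $\rho$, and set $m:=\ell-1$. Given $A\in\IT(\mu/\lambda)$, let $\kappa^t$ be the set of cells of $\mu$ lying in $\lambda$ or having entry $\le t$, for $0\le t\le m$. Then $\kappa^0=\lambda$ and $\kappa^m=\mu$. Each $\kappa^t$ is a partition in $\cP^\ell$ because $A$ increases along rows and columns. Since $A$ is \emph{strictly} increasing in rows and columns, each difference $\kappa^t/\kappa^{t-1}$ has at most one cell in every row and every column. The row restriction (entries in row $i$ lie in $\{1,\ldots,i-1\}$) says exactly that $\kappa^t_i=\mu_i$ whenever $t\ge i-1$. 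Conversely, every such chain determines a unique increasing tableau. So $\fg_{\lambda,\mu}$ is the number of sequences $\kappa=(\kappa^0,\ldots,\kappa^m)\in(\nn^\ell)^{m+1}$ satisfying the following constraints:
\begin{itemize}
\item $\kappa^t_{i+1}\le\kappa^t_i$ (each $\kappa^t$ is a partition);
\item $\kappa^{t-1}_i\le\kappa^t_i\le\kappa^{t-1}_i+1$ (at most one new cell per row);
\item $\kappa^t_{i+1}\le\kappa^{t-1}_i$ (at most one new cell per column);
\item the boundary conditions $\kappa^0=\lambda$ and $\kappa^t_i=\mu_i$ for $t\ge i-1$.
\end{itemize}
The column condition is the step I expect to need the most care, and I would first check it on small examples. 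It is a condition on the step $\kappa^{t-1}\to\kappa^t$: a new cell in column $c$ of row $i+1$ means $\kappa^t_{i+1}=c$, and this cell must already have the cell above it in $\kappa^{t-1}$, i.e.\ $c\le\kappa^{t-1}_i$.

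\emph{Lattice closure.} Let $\mathbf 1_{\lambda,\mu}(\kappa)$ be the indicator of these constraints, viewed as a function on the finite distributive lattice $\{0,\ldots,w\}^{\ell(m+1)}$, where $w:=\max(\mu_1,\rho_1)$. Every constraint has the form $x\le y+c$ with $c\in\{0,1\}$, or $x=$ const. If $x\le y+c$ and $x'\le y'+c$, then $\max(x,x')\le\max(y,y')+c$ and $\min(x,x')\le\min(y,y')+c$. Equalities fixed to $\lambda_i,\mu_i$ for one configuration and to $\nu_i,\rho_i$ for the other become equalities fixed to $\max$ and $\min$ respectively. Hence if $\kappa$ is valid for $(\lambda,\mu)$ and $\kappa'$ is valid for $(\nu,\rho)$, then
\[
\mathbf 1_{\lambda,\mu}(\kappa)\,\mathbf 1_{\nu,\rho}(\kappa')\ \le\ \mathbf 1_{\lambda\vee\nu,\mu\vee\rho}(\kappa\vee\kappa')\;\mathbf 1_{\lambda\wedge\nu,\mu\wedge\rho}(\kappa\wedge\kappa').
\]
Here the boundary condition $\kappa^t_i=\mu_i$ for $t\ge i-1$ involves the same index set for both configurations, so it is indeed preserved.

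\emph{Conclusion.} Apply Theorem~\ref{thm:AD} with
\[
\fa=\mathbf 1_{\lambda,\mu},\qquad \fb=\mathbf 1_{\nu,\rho},\qquad \fc=\mathbf 1_{\lambda\vee\nu,\mu\vee\rho},\qquad \fd=\mathbf 1_{\lambda\wedge\nu,\mu\wedge\rho}.
\]
The displayed inequality is exactly \eqref{eq:AD-cond}. Summing each indicator over the lattice gives the corresponding count $\fg$, by the encoding bijection. This yields \eqref{eq:fg}.

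The hypotheses $\lambda\subseteq\mu$ and $\nu\subseteq\rho$ only ensure that the counts are the intended ones. If no valid chain exists, the left-hand side of \eqref{eq:fg} is $0$ and there is nothing to prove. The only real work is the bijection in the encoding step, in particular checking that the strict row and column conditions translate exactly into the two step constraints.
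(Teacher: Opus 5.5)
Your proposal is correct and follows essentially the same route as the paper: both prove \eqref{eq:fg} by applying the AD inequality (Theorem~\ref{thm:AD}) to four $\{0,1\}$-valued indicator functions on a finite distributive lattice with entrywise $\max$ and $\min$, whose supports are in bijection with the relevant sets of increasing tableaux. The only difference is the encoding: the paper uses the fillings $T$ of $[\ell]\times[w]$ directly, padded with $-\infty$ on the inner shape and $+\infty$ outside the outer shape, while your level-set chains are the order-reversed version of this (with $\kappa^t_i=\#\{j : T(i,j)\le t\}$), which turns the shape data into fixed coordinates and makes closure under $\vee,\wedge$ an immediate consequence of the difference constraints.
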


\smallskip

The proof of Lemma~\ref{lem:fg} given below is an application of the AD inequality \eqref{eq:AD}.
It is based on the approach in \cite[$\S$8]{CP-multi}.

\smallskip

\begin{proof}[Proof of Lemma~\ref{lem:fg}]
Let \ts $\ell$ \ts and \ts $w$ \ts be the length and width of the partition \ts $\mu \vee \rho$,
respectively. Let \ts $\aL$ \ts be the set of functions \. $T \colon [\ell] \times [w]
\to \nn_{\ge 1} \cup \{-\infty, \infty\}$ \. satisfying
\begin{align*}
	& T(i+1, j) \ge T(i, j) + 1 \quad \text{whenever } (i, j) \text{ and } (i+1, j) \in [\ell] \times [w], \\
	& T(i, j+1) \ge T(i, j) + 1 \quad \text{whenever } (i, j) \text{ and } (i, j+1) \in [\ell] \times [w],\\
	&T(i,j) \in \{1,2,\ldots,i-1\} \cup \{-\infty,\infty\} \quad \text{ for all } (i,j) \in [\ell] \times [w].
\end{align*}
Note that here we adopt the convention that $\infty + 1 = \infty$ and
$-\infty + 1 = -\infty$. It is straightforward to check that $\aL$ is a
distributive lattice, where $\vee$ is given by entrywise maximum and $\wedge$
is given by entrywise minimum.

Let $\fa,\fb,\fc,\fd:\aL \to \{0,1\}$ be the function given by
{\small
\begin{align*}
    \fa(T) &:= \mathbf{1}_{\left\{
T(i,j)
		\right\}}, \quad \text{where} \quad T(i,j)
        \begin{cases}
		= -\infty & \text{if } (i,j) \in \lambda, \\
		\in \nn_{\ge 1} & \text{if } (i,j) \in \mu/\lambda, \\
		= \infty & \text{if } (i,j) \notin \mu
	\end{cases}\\
    \fb(T) &:= \mathbf{1}_{\left\{
T(i,j)
		\right\}}, \quad \text{where} \quad T(i,j)
        \begin{cases}
		= -\infty & \text{if } (i,j) \in \nu, \\
		\in \nn_{\ge 1} & \text{if } (i,j) \in \rho/\nu, \\
		= \infty & \text{if } (i,j) \notin \nu
	      \end{cases}\\
    \fc(T) &:= \mathbf{1}_{\left\{
T(i,j)
		\right\}}, \quad \text{where} \quad T(i,j)
        \begin{cases}
	= -\infty & \text{if } (i,j) \in \lambda \wedge \nu, \\
	\in \nn_{\ge 1} & \text{if } (i,j) \in (\mu \wedge \rho)/(\lambda \wedge \nu), \\
	= \infty & \text{if } (i,j) \notin \mu \wedge \rho
        \end{cases}\\
    \fd(T) &:= \mathbf{1}_{\left\{
T(i,j)
		\right\}}, \quad \text{where} \quad T(i,j)
        \begin{cases}
	    = -\infty & \text{if } (i,j) \in \lambda \vee \nu, \\
	\in \nn_{\ge 1} & \text{if } (i,j) \in (\mu \vee \rho)/(\lambda \vee \nu). \\
	= \infty & \text{if } (i,j) \notin \mu \vee \rho
        \end{cases}
\end{align*}}
Note that the support of $\fa$ consists of functions $T$ that
correspond to tableaux of skew shape $\mu/\lambda$, obtained by
ignoring the entries equal to $\pm\infty$.
Analogous observations apply to $\fb,\fc,\fd$.
This implies that
\begin{align*}
	\sum_{T \in \aL} \fa(T) \ = \  \fg_{\lambda,\mu}\,, \quad
	\sum_{T \in \aL} \fb(T) \ = \ \fg_{\nu,\rho}\,, \quad
	\sum_{T \in \aL} \fc(T) \ = \  \fg_{\lambda \wedge \nu, \mu \wedge \rho}\,, \quad
	\sum_{T \in \aL} \fd(T) \ = \  \fg_{\lambda \vee \nu, \mu \vee \rho}.
	\end{align*}
It is also straightforward to verify that $\fa,\fb,\fc,\fd$ satisfy \eqref{eq:AD-cond}.
The conclusion of the lemma now follows from the classical AD inequality~(Theorem~\ref{thm:AD}).
\end{proof}

\smallskip

\subsection{Proof of Theorem~\ref{thm:Gro-LPP}}\label{ss:Gro-LPP}
It will be convenient for us to define a skew version of Grothendieck polynomials.
For \ts $\ell \geq 1$ \ts and \ts $\lambda,\mu \in \cP^{\ell}$, we define
\[  \tG_{\lambda/\mu}^{(\ell)}  \ :=  \ \sum_{\rho \ts \in\ts \cP^{\ell} \. :
\. \lambda \ts\subseteq \ts\rho \ts\subseteq \ts \widehat{\lambda}} \. \fg_{\lambda,\rho} \.\fs_{\rho/\mu},
\]
where the sum runs over all partitions $\rho$ of length at most $\ell$.\footnote{This
differs from the standard notion of the \emph{skew stable Grothendieck polynomial},
see e.g.~\cite{Buch02}.}
We now prove the following skew version of Theorem~\ref{thm:Gro-LPP}.

\smallskip

\begin{thm}\label{thm:Gro-LPP-skew}
	For all partitions \. $\lambda, \mu,\nu,\rho \in \cP^{\ell}$, we have:
\[
\tG_{\lambda/\mu}^{(\ell)} \. \tG_{\nu/\rho}^{(\ell)} \leqslant_{\fs} \tG_{(\lambda \vee \nu)/(\mu \vee \rho)}^{(\ell)} \. \tG_{(\lambda \wedge \nu)/ (\mu \wedge \rho)}^{(\ell)}.
\]
\end{thm}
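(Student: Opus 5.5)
The plan is to apply the skew ADS inequality (Theorem~\ref{thm:ADS-skew}) with four carefully chosen weight functions on $\cP\times\cP$. We write
\[
\tG^{(\ell)}_{\lambda/\mu} \ = \ \sum_{\sigma,\tau\in\cP} \fa(\sigma,\tau)\,\fs_{\sigma/\tau},
\qquad \fa(\sigma,\tau) \ := \ \mathbf{1}_{\{\sigma\in\cP^\ell\}}\,\mathbf{1}_{\{\tau=\mu\}}\,\ff_{\lambda,\sigma}\,\fg_{\lambda,\sigma}.
\]
Here $\ff_{\lambda,\sigma}$ is the indicator of $\lambda\subseteq\sigma\subseteq\widehat\lambda$ from Lemma~\ref{lem:ff}. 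In the same way define $\fb$ from $(\nu,\rho)$, $\fc$ from $(\lambda\vee\nu,\mu\vee\rho)$, and $\fd$ from $(\lambda\wedge\nu,\mu\wedge\rho)$. With these choices the four sums in \eqref{eq:ADS-skew} are exactly the four skew Grothendieck functions in the statement. Terms with $\tau\not\subseteq\sigma$ vanish by the convention $\fs_{\sigma/\tau}=0$, so nothing extra needs checking there.

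The remaining work is to verify \eqref{eq:AD-cond-skew}, namely
\[
\fa(\sigma,\tau)\,\fb(\sigma',\tau') \ \le \ \fc(\sigma\vee\sigma',\tau\vee\tau')\,\fd(\sigma\wedge\sigma',\tau\wedge\tau').
\]
We may assume the left side is nonzero, so that $\tau=\mu$ and $\tau'=\rho$. Then the second coordinates on the right are $\mu\vee\rho$ and $\mu\wedge\rho$, as required by the indicators $\mathbf{1}_{\{\tau=\cdot\}}$ in $\fc,\fd$. The condition $\sigma,\sigma'\in\cP^\ell$ passes to $\sigma\vee\sigma'$ and $\sigma\wedge\sigma'$. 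The $\ff$-factors are handled by Lemma~\ref{lem:ff}, applied with $(\lambda,\sigma,\nu,\sigma')$. The $\fg$-factors are handled by Lemma~\ref{lem:fg}, which applies because $\lambda\subseteq\sigma$ and $\nu\subseteq\sigma'$:
\[
\fg_{\lambda,\sigma}\,\fg_{\nu,\sigma'} \ \le \ \fg_{\lambda\vee\nu,\,\sigma\vee\sigma'}\,\fg_{\lambda\wedge\nu,\,\sigma\wedge\sigma'}.
\]
Multiplying these nonnegative inequalities factor by factor gives the condition, and Theorem~\ref{thm:ADS-skew} then yields the claim.

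The main obstacle is Lemma~\ref{lem:fg}, the log-supermodularity of the increasing tableaux counts. It is stated earlier, so here it is only invoked. We should still double-check that its lattice encoding by $\pm\infty$-padded fillings really respects the row restriction $\{1,\dots,i-1\}$ under entrywise max and min. A secondary point is to confirm that the support of $\fg_{\lambda,\sigma}$ agrees with the range $\lambda\subseteq\sigma\subseteq\widehat\lambda$, since a bounded entry in row $i$ forces $\sigma_i\le\widehat\lambda_i$. The $\ff$ factor is kept in $\fa,\fb,\fc,\fd$ anyway, so the construction does not depend on this.

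Finally, Theorem~\ref{thm:Gro-LPP} should follow by setting $\mu=\rho=\varnothing$ and taking $\ell$ large. Taking $\ell\ge\max\{\ell(\widehat\lambda),\ell(\widehat\nu)\}$ means the truncation to $\cP^\ell$ removes nothing.
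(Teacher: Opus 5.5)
Your proposal is correct and is essentially the paper's own proof: you use the same four weight functions (the $\cP^\ell$ truncation, the indicator fixing the inner shape, and the factor $\ff\cdot\fg$), verify \eqref{eq:AD-cond-skew} by multiplying Lemma~\ref{lem:ff} and Lemma~\ref{lem:fg} in the same way, and conclude from Theorem~\ref{thm:ADS-skew}. One correction, affecting only your closing aside about Theorem~\ref{thm:Gro-LPP}: for $\lambda\neq\varnothing$ the partition $\widehat\lambda$ has infinitely many nonzero parts (e.g.\ $\widehat{(1)}=(1,1,1,\ldots)$, reflecting $\tG_{1}=\sum_{k\ge1}\fs_{1^k}$), so no finite $\ell$ makes the truncation vacuous; instead note that for $\ell\ge|\kappa|$ the truncation does not change the coefficient of $\fs_\kappa$ on either side, which is what ``letting $\ell\to\infty$'' amounts to.
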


\smallskip

Theorem~\ref{thm:Gro-LPP} follows from Theorem~\ref{thm:Gro-LPP-skew}, by substituting
\ts $\nu \gets \mu$; \ts $\mu,\rho \gets \emp$,  and letting \ts $\ell \to \infty$.

\smallskip

\begin{proof}[Proof of Theorem~\ref{thm:Gro-LPP-skew}]
	Let  \. $\fa,\fb,\fc, \fd: \cP \times \cP \to \nn$ \. be the functions given by
	\begin{align*}
		\fa(\alpha,\beta) \ &:= \  \ff_{\lambda,\alpha} \. \fg_{\lambda,\alpha} \. \mathbf{1}_{\{\alpha_{\ell+1}=0\}} \. \mathbf{1}_{\{\beta= \mu\}}\.,\\
		\fb(\alpha,\beta) \ &:= \  \ff_{\nu,\alpha} \. \fg_{\nu,\alpha} \. \mathbf{1}_{\{\alpha_{\ell+1}=0\}} \. \mathbf{1}_{\{\beta= \rho\}}\.,\\
		\fc(\alpha,\beta) \ &:= \  \ff_{\lambda \vee \nu,\alpha} \. \fg_{\lambda \vee \nu,\alpha} \. \mathbf{1}_{\{\alpha_{\ell+1}=0\}} \.
\mathbf{1}_{\{\beta= \mu \vee \rho\}}\.,\\
		\fd(\alpha,\beta) \ &:= \  \ff_{\lambda\wedge \nu,\alpha} \. \fg_{\lambda\wedge \nu,\alpha} \. \mathbf{1}_{\{\alpha_{\ell+1}=0\}} \.  \mathbf{1}_{\{\beta= \mu \wedge \rho\}}.
	\end{align*}
It follows from Lemma~\ref{lem:ff} and Lemma~\ref{lem:fg}, that four functions \. $\fa,\fb,\fc,\fd$ \.
satisfy the assumptions \eqref{eq:AD-cond-skew}.
It also follows from the definition of \. $\fa,\fb,\fc,\fd$, that
\begin{alignat*}{2}
	&\sum_{\alpha,\beta \in \cP } \. \fa(\alpha,\beta) \. \fs_{\alpha/\beta} \ = \
	\tG_{\lambda,\mu}^{(\ell)}\., \quad
	&&\sum_{\alpha,\beta \in \cP } \. \fb(\alpha,\beta)\. \fs_{\alpha/\beta} \ = \
\tG_{\nu,\rho}^{(\ell)}\., \\
	&\sum_{\alpha,\beta \in \cP } \. \fc(\alpha,\beta)\. \fs_{\alpha/\beta} \ = \
\tG_{\lambda \vee \nu,\mu \vee \rho}^{(\ell)}\., \quad
	&&\sum_{\alpha,\beta \in \cP }\. \fa(\alpha,\beta) \. \fs_{\alpha/\beta} \ = \
\tG_{\lambda \wedge \nu,\mu \wedge \rho}^{(\ell)}\ts.
\end{alignat*}
The result now follows from Theorem~\ref{thm:ADS-skew}.
\end{proof}

\smallskip

\subsection{{Ahlswede--Daykin--Grothendieck inequality}} \label{ss:Groth-ADG}
We now give an alternative proof of Theorem~\ref{thm:Gro-LPP}, via a different
generalization of the ADS inequality.  The proof structures are very similar and
both proofs use the same ingredients.

\smallskip

\begin{thm}[{\rm \defn{ADG inequality}\ts}{}]
\label{thm:ADS-Groth}
	Let \ts $\fa,\fb,\fc,\fd: \cP \to \rr_{\geq 0}$ \ts be functions satisfying \eqref{eq:AD-cond}.
	Then we have:
	\begin{equation}\label{eq:ADS-Groth}\tag{ADG}
		\bigg( \sum_{\lambda \ts \in \ts \cP} \. \fa({\lambda}) \. \tG_{\la}(\xb) \bigg)  	\bigg( \sum_{\lambda  \ts \in \ts  \cP} \. \fb({\lambda}) \. \tG_{\la}(\xb) \bigg)  \ \leqslant_{\fs} \ 	\bigg( \sum_{\lambda  \ts \in \ts  \cP} \. \fc({\lambda}) \. \tG_{\la}(\xb) \bigg)
		\bigg( \sum_{\lambda  \ts \in \ts  \cP} \. \fd({\lambda}) \. \tG_{\la}(\xb) \bigg).
	\end{equation}
\end{thm}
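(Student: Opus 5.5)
We reduce the ADG inequality to the (straight) ADS inequality, Theorem~\ref{thm:ADS}, by expanding each $\tG_\la$ in the Schur basis via \eqref{eq:tG-later} and producing new weight functions on $\cP$. Concretely, set
\[
\fa^\sharp(\rho) \, := \, \sum_{\lambda \in \cP} \fa(\lambda)\, \ff_{\lambda,\rho}\, \fg_{\lambda,\rho},
\]
and define $\fb^\sharp,\fc^\sharp,\fd^\sharp$ analogously. Then $\sum_\la \fa(\la)\tG_\la = \sum_\rho \fa^\sharp(\rho)\fs_\rho$ by \eqref{eq:tG-later}, and similarly for the other three. Each $\fa^\sharp(\rho)$ is a finite sum, since $\ff_{\la,\rho}=1$ forces $\la\subseteq\rho$. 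It therefore suffices to check that $\fa^\sharp,\fb^\sharp,\fc^\sharp,\fd^\sharp$ satisfy \eqref{eq:AD-cond-first}.

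That condition reads $\fa^\sharp(\rho)\fb^\sharp(\sigma)\le \fc^\sharp(\rho\vee\sigma)\fd^\sharp(\rho\wedge\sigma)$, which is an AD-type inequality in its own right. Define on pairs $(\la,\rho)\in\cP\times\cP$ the functions
\[
A(\la,\rho):=\fa(\la)\ff_{\la,\rho}\fg_{\la,\rho},
\]
and likewise $B$, $C$, $D$. For fixed $\rho,\sigma$ we need
\[
\sum_{\la,\nu}A(\la,\rho)B(\nu,\sigma)\,\le\,\sum_{\la',\nu'}C(\la',\rho\vee\sigma)D(\nu',\rho\wedge\sigma).
\]
Pointwise, Lemmas~\ref{lem:ff} and~\ref{lem:fg}, combined with \eqref{eq:AD-cond} for $\fa,\fb,\fc,\fd$, give
\[
A(\la,\rho)B(\nu,\sigma)\le C(\la\vee\nu,\rho\vee\sigma)\,D(\la\wedge\nu,\rho\wedge\sigma).
\]
Hence the restrictions $\la\mapsto A(\la,\rho)$, $\nu\mapsto B(\nu,\sigma)$, $\la'\mapsto C(\la',\rho\vee\sigma)$, $\nu'\mapsto D(\nu',\rho\wedge\sigma)$ satisfy \eqref{eq:AD-cond} on a finite distributive sublattice of $\cP$: all partitions contained in $\rho\vee\sigma$ suffice, since $\ff$ forces $\la\subseteq\rho$ and $\nu\subseteq\sigma$. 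The classical AD inequality (Theorem~\ref{thm:AD}), in its finite distributive lattice form (Remark~\ref{rem:dist}), then yields the summed inequality. Applying Theorem~\ref{thm:ADS} to $\fa^\sharp,\fb^\sharp,\fc^\sharp,\fd^\sharp$ finishes the proof.

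The main point to verify carefully is this lattice-restriction step. The join and meet of partitions inside the box $\rho\vee\sigma$ stay inside it, and terms with $\la\not\subseteq\rho$ vanish on both sides, so the finite AD inequality applies. The pointwise bound also needs checking: Lemma~\ref{lem:fg} assumes $\la\subseteq\rho$ and $\nu\subseteq\sigma$, which holds whenever the left-hand side is nonzero. If instead one uses the skew ADS route, the same bookkeeping is done with second coordinate $\emptyset$; the argument above avoids it.
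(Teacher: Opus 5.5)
Your proposal is correct and follows essentially the same route as the paper. Your $\fa^\sharp(\rho)$ is exactly the paper's $f_a(\rho)=\sum_{\mu\in\cQ(\rho)}\fa(\mu)\,\fg_{\mu,\rho}$, and the paper likewise verifies \eqref{eq:AD-cond} for these functions via the AD inequality (with the product of \eqref{eq:indic}, \eqref{eq:fg} and \eqref{eq:AD-cond} as the local condition) before invoking Theorem~\ref{thm:ADS}; your explicit restriction to the finite lattice of partitions inside $\rho\vee\sigma$ just spells out a detail the paper leaves implicit.
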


\begin{proof}
Definition \eqref{eq:tG} implies that
\begin{equation*}
\sum_{\lambda \ts \in \ts \cP} \. \fa({\lambda}) \. \tG_{\la}(\xb) \ = \ \sum_{\lambda \ts \in \ts \cP} \. f_a(\lambda) \. \fs_{\lambda}(\xb),
\end{equation*}
where
\begin{equation*}
f_a(\lambda) \, := \, \sum_{\mu \ts \in \ts \cQ(\la)} \. \fa({\mu})\.\fg_{\mu,\lambda} \quad \text{and} \quad
\cQ(\la) \, := \, \big\{\ts\mu\in \cP \. : \. \mu \subseteq \lambda \subseteq \widehat{\mu}\ts\big\}.
\end{equation*}
Define \. $f_b,f_c,f_d: \cP \to \rr_{\geq 0}$ \. in a similar way. Then  \eqref{eq:ADS-Groth} can be
rewritten as an instance of the ADS inequality:
\begin{equation*}
		\bigg( \sum_{\lambda  \ts \in \ts  \cP} f_a(\lambda)\fs_{\lambda}(\xb) \bigg)\bigg( \sum_{\lambda  \ts \in \ts  \cP} f_b(\lambda)\fs_{\lambda}(\xb) \bigg)   \ \leqslant_{\fs} \ 	\bigg( \sum_{\lambda  \ts \in \ts  \cP} f_c(\lambda)\fs_{\lambda}(\xb) \bigg)\bigg( \sum_{\lambda  \ts \in \ts  \cP} f_d(\lambda)\fs_{\lambda}(\xb) \bigg)  	.
	\end{equation*}
Therefore, it suffices to show that functions \ts $f_a,f_b,f_c,f_d$ \ts satisfy the assumptions \eqref{eq:AD-cond}:
\begin{equation}\label{eq:local-ADG}
    f_a(\lambda)\. f_b(\nu) \, \leq \, f_c(\lambda \vee \nu) \. f_d(\lambda \wedge \nu) \quad\text{for all} \quad \la,\nu\in \cP.
\end{equation}
Indeed, in the notation above the inequality \eqref{eq:local-ADG} can be rewritten as
\begin{equation*}
\bigg(\sum_{\mu\in \cQ(\la)}  \. \fa({\mu})\fg_{\mu,\lambda}\bigg)
\bigg(\sum_{\mu\in \cQ(\nu)} \. \fb({\mu})\. \fg_{\mu,\nu}\bigg) \ \leq \
\bigg(\sum_{\mu\in \cQ(\la\vee\nu)} \. \fc({\mu})\. \fg_{\mu,\lambda \vee \nu}\bigg)
\bigg(\sum_{\mu\in \cQ(\la\wedge \nu)}  \. \fd({\mu})\. \fg_{\mu,\lambda \wedge \nu}\bigg).
\end{equation*}
The above inequality follows from AD inequality where the \eqref{eq:AD-cond} is given as the product of inequalities
\eqref{eq:indic}, \eqref{eq:fg} and \eqref{eq:AD-cond}.
This completes the proof.
\end{proof}

\smallskip

\subsection{Log-concavity}\label{ss:Gro-LC}
The following result is an immediate corollary of Theorem~\ref{thm:Gro-LPP-skew}
and Speyer's Theorem~\ref{t:speyer}, by setting \. $f: \lambda \mapsto \tG^{(\ell)}_{\lambda}$ \.
and taking the limit \ts $\ell \to \infty$.  It is the analogue of Okounkov inequality
(Corollary~\ref{cor:sh-Oko}) for stable Grothendieck polynomials.

\smallskip

\begin{cor}[{\rm \defn{Grothendieck log-concavity}\ts}{}]\label{cor:Gro-LC}
	For all partitions \. $\lambda, \mu \in \cP$, we have:
\[ \tG_{\lambda} \. \tG_{\mu} \ \leqslant_{\fs} \   \tG_{\left \lceil (\lambda+\mu)/2 \right\rceil}
\. \tG_{\left \lfloor (\lambda+\mu)/2 \right\rfloor}.
\]
\end{cor}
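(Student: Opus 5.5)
The plan is to apply Speyer's Theorem~\ref{t:speyer} to the map $f\colon \cP^\ell \to \La$, $f(\lambda) := \tG^{(\ell)}_{\lambda} = \tG^{(\ell)}_{\lambda/\emp}$, and then let $\ell \to \infty$. The log-supermodularity hypothesis of Speyer's theorem is exactly the case $\mu=\rho=\emp$ of Theorem~\ref{thm:Gro-LPP-skew}:
\[
\tG^{(\ell)}_{\lambda}\,\tG^{(\ell)}_{\nu} \ \leqslant_{\fs} \ \tG^{(\ell)}_{\lambda\vee\nu}\,\tG^{(\ell)}_{\lambda\wedge\nu}
\qquad\text{for all } \lambda,\nu\in\cP^\ell.
\]
Given both hypotheses, Speyer's theorem yields
\[
\tG^{(\ell)}_{\lambda}\,\tG^{(\ell)}_{\mu} \ \leqslant_{\fs} \ \tG^{(\ell)}_{\lceil(\lambda+\mu)/2\rceil}\,\tG^{(\ell)}_{\lfloor(\lambda+\mu)/2\rfloor}
\]
for all $\lambda,\mu\in\cP^\ell$. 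Note that the averages of two partitions in $\cP^\ell$ are again in $\cP^\ell$.

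The main obstacle is the translation invariance hypothesis $f(\lambda+\mathbf{1})=f(\lambda)$, which fails literally, since the degree shifts by $\ell$. I would handle this in one of two ways.

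The first option is to check that Speyer's argument is really insensitive to multiplying $f(\lambda+\mathbf 1)$ by a fixed Schur-positive factor. Here I expect $\tG^{(\ell)}_{\lambda+\mathbf 1}=\fs_{1^\ell}\cdot(\ldots)$ in the relevant sense: adding a full column of length $\ell$ to all shapes restricted to $\cP^\ell$.

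The second, more robust option is to reformulate with a modified $f$ that is genuinely invariant. Working in $\ell$ variables, the Schur polynomial $\fs_{\lambda+\mathbf 1}(x_1,\dots,x_\ell)=x_1\cdots x_\ell\,\fs_\lambda$. The key check is then that $\widehat{\lambda+\mathbf 1}=\widehat\lambda+\mathbf 1$ and $\fg_{\lambda+\mathbf 1,\rho+\mathbf 1}=\fg_{\lambda,\rho}$. The first identity holds by \eqref{eq:widehat}. For the second, the skew shape $(\rho+\mathbf1)/(\lambda+\mathbf1)$ is a translate of $\rho/\lambda$ by one column, so the row constraints on increasing tableaux are unchanged. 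Hence $\tG^{(\ell)}_{\lambda+\mathbf1}(x_1,\dots,x_\ell)=x_1\cdots x_\ell\,\tG^{(\ell)}_{\lambda}(x_1,\dots,x_\ell)$. Dividing by the monomial $x_1\cdots x_\ell$, or equivalently passing to $\GL_\ell$-characters where determinant twists preserve Schur positivity, makes $f$ invariant in the needed sense. I would check that Speyer's proof goes through for $\GL_\ell$-character-valued $f$, since that is its natural setting.

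After that, I need to pass from $\ell$ variables back to symmetric functions. I would take $\ell$ large compared to the lengths of all partitions involved, including $\widehat\lambda$ truncated at length $\ell$. Since Schur positivity of a polynomial identity in $\ell$ variables determines the coefficients of $\fs_\nu$ with $\ell(\nu)\le\ell$, and the $\ell$-truncated $\tG^{(\ell)}_\lambda$ agrees with $\tG_\lambda$ on all $\fs_\rho$ with $\ell(\rho)\le\ell$, each fixed Schur coefficient of the difference stabilizes as $\ell\to\infty$. So its nonnegativity for all large $\ell$ gives the corollary.
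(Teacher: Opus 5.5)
Your proposal is correct and is essentially the paper's argument: apply Speyer's Theorem~\ref{t:speyer} to $f(\lambda)=\tG^{(\ell)}_{\lambda}$, with the log-supermodularity hypothesis supplied by the case $\mu=\rho=\emp$ of Theorem~\ref{thm:Gro-LPP-skew}, and then let $\ell\to\infty$. The paper treats the translation hypothesis and the limit as immediate, and your proposal fills in both. You check that $\tG^{(\ell)}_{\lambda+\mathbf 1}(x_1,\ldots,x_\ell)=x_1\cdots x_\ell\,\tG^{(\ell)}_{\lambda}(x_1,\ldots,x_\ell)$, so the hypothesis holds in the same $\GL_\ell$ sense as for $\fs_\lambda$; normalizing by $(x_1\cdots x_\ell)^{-\lambda_\ell}$ is harmless because $\lambda_\ell+\mu_\ell$ is preserved by $\vee,\wedge$ and by both averages. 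Your coefficientwise stabilization in $\ell$ then handles the limit.
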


\smallskip

The corollary can be generalized to a skew version, which further generalizes
the {skew Okounkov inequality} (Theorem~\ref{thm:Oko-skew}), in a similar way as
Theorem~\ref{thm:Oko-skew-global}.

\smallskip

\begin{thm}\label{thm:Gro-LC}
	For all \. $\lambda,\mu, \nu,\rho \in \cP$, we have:
\[  \tG_{\lambda/\mu} \. \tG_{\nu/\rho} \ \leqslant_{\fs} \
\tG_{\left \lceil (\lambda+\nu)/2 \right\rceil /  \left \lceil (\mu+\rho)/2 \right\rceil} \.
\tG_{\left \lfloor (\lambda+\nu)/2 \right\rfloor /\left \lfloor (\mu+\rho)/2 \right\rfloor
}.   \]
\end{thm}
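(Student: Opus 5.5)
Here $\tG_{\lambda/\mu}$ is read as $\tG^{(\ell)}_{\lambda/\mu}$ for fixed $\ell$; the unrestricted statement then follows by letting $\ell\to\infty$, as in the derivation of Corollary~\ref{cor:Gro-LC}. The plan is to follow the way Lam--Postnikov--Pylyavskyy derive the skew Okounkov inequality (Theorem~\ref{thm:Oko-skew}) from the skew LPP inequality. We replace Schur functions by the skew Grothendieck functions $\tG^{(\ell)}_{\lambda/\mu}$ and use Theorem~\ref{thm:Gro-LPP-skew} as the basic local move. The map $(\lambda,\mu)\mapsto \tG^{(\ell)}_{\lambda/\mu}$ is not invariant under $\lambda\mapsto\lambda+\mathbf 1$, so Speyer's Theorem~\ref{t:speyer} cannot be applied directly. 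Instead we iterate log-supermodularity on pairs of shapes.

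Write $A=(\lambda,\mu)$ and $B=(\nu,\rho)$, viewed as points of $\nn^{2\ell}$ with the componentwise lattice structure. Set $F(A):=\tG^{(\ell)}_{\lambda/\mu}$, with $F(A)=0$ whenever $\lambda$ or $\mu$ is not a partition. Theorem~\ref{thm:Gro-LPP-skew} gives $F(A)F(B)\leqslant_{\fs}F(A\vee B)F(A\wedge B)$ for all partitions, and hence trivially for all vectors where the left side is nonzero. The target is $F(A)F(B)\leqslant_{\fs} F(\lceil (A+B)/2\rceil)F(\lfloor (A+B)/2\rfloor)$.

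We argue by induction on the potential $\Delta(A,B):=\sum_i |A_i-B_i|$, moving $A$ and $B$ toward each other while preserving $A+B$ coordinatewise. The model is LPP's argument: apply Theorem~\ref{thm:Gro-LPP-skew} to $A$ and a suitable translate $B'$ related to $B$, so that $A\vee B'$ and $A\wedge B'$ form a pair with the same sum but smaller $\Delta$, each still a pair of partitions. A direct join and meet does not decrease $\Delta$ by itself. Following \cite[Thm.~11]{LPP07}, we therefore decompose the index set into the coordinates where $A_i>B_i$ and those where $A_i<B_i$. We then shift part of $A$ down and part of $B$ up by the half-differences $\lceil (A_i-B_i)/2\rceil$ and $\lfloor (A_i-B_i)/2\rfloor$, choosing the shift so that join and meet produce exactly the two averaged pairs.

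Concretely, set $C=\lceil (A+B)/2\rceil$ and $D=\lfloor (A+B)/2\rfloor$. We aim to exhibit auxiliary pairs $A',B'$ with
\[
F(A)F(B)\leqslant_{\fs}F(A')F(B'),\qquad A'\vee B'=C,\qquad A'\wedge B'=D,
\]
which could come from a chain of applications of Theorem~\ref{thm:Gro-LPP-skew}. A single further application then finishes the proof.

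The main obstacle is that the translations used in LPP rely on the translation invariance $\fs_{(\lambda+\mathbf 1)/(\mu+\mathbf 1)}$-type identities, or on row-shift symmetries of Jacobi--Trudi matrices. These identities fail for $\tG$, because $\widehat\lambda$ and the coefficients $\fg_{\lambda,\rho}$ depend on absolute row indices. I would try first to bypass this at the level of coefficients rather than functions. Expand every $\tG^{(\ell)}_{\lambda/\mu}$ as $\sum_\alpha \ff_{\lambda,\alpha}\fg_{\lambda,\alpha}\fs_{\alpha/\mu}$. Then apply the skew Okounkov inequality (Theorem~\ref{thm:Oko-skew}) term by term to the Schur parts, and reduce to a midpoint log-concavity of the weights $(\lambda,\alpha)\mapsto \ff_{\lambda,\alpha}\fg_{\lambda,\alpha}$. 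This requires an AD-type ``averaging'' version of the skew ADS argument, namely LSS (Theorem~\ref{thm:SAAD}) with $\lceil\cdot\rceil/\lfloor\cdot\rfloor$ replacing $\vee/\wedge$. The weight inequality needed there would be proved as in Lemmas~\ref{lem:ff} and~\ref{lem:fg}, using the formula $\widehat{\lambda}_i=\min_j(\lambda_j+j-1)$ and an AD-style argument on increasing tableaux. Checking that $\widehat\cdot$ behaves well under ceiling and floor averaging is where I expect the real difficulty.
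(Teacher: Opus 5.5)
Your proposal does not yet contain a proof. The pairs $A',B'$ are never constructed, and the fallback route rests on an unproved statement. The decisive error is your claim that the translation identities fail for $\tG$. For the diagonal translation they hold. For $\lambda,\mu,\rho\in\cP^\ell$ and $c\ge 0$, write $\lambda+c:=(\lambda_1+c,\ldots,\lambda_\ell+c)$. Then:
\begin{enumerate}
\item $\widehat{\lambda+c}_i=\min_{j\le i}(\lambda_j+c+j-1)=\widehat{\lambda}_i+c$ for $i\le\ell$;
\item $\fg_{\lambda+c,\rho+c}=\fg_{\lambda,\rho}$, because $(\rho+c)/(\lambda+c)$ is a horizontal translate of $\rho/\lambda$ in the same rows, and the increasing-tableau conditions only involve row indices and relative positions;
\item $\fs_{(\rho+c)/(\mu+c)}=\fs_{\rho/\mu}$.
\end{enumerate}
Since $\rho\mapsto\rho+c$ bijects the index sets, $\tG^{(\ell)}_{(\lambda+c)/(\mu+c)}=\tG^{(\ell)}_{\lambda/\mu}$. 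The dependence on absolute row indices is harmless because the shift is by columns. The failure of invariance under $\lambda\mapsto\lambda+\mathbf 1$ alone is also irrelevant. Speyer's Theorem~\ref{t:speyer} should be applied on the $2\ell$-coordinate lattice of pairs $(\lambda,\mu)$, whose all-ones vector moves $\lambda$ and $\mu$ together.

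Without this identity your first route cannot work at all. Each application of Theorem~\ref{thm:Gro-LPP-skew} preserves the coordinatewise multisets $\{A_i,B_i\}$, while $\{C_i,D_i\}\neq\{A_i,B_i\}$ as soon as $|A_i-B_i|\ge 2$. So the ``suitable translate'' is the entire content of the argument.

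With the identity, your first route closes. This is the log-supermodularity to midpoint reduction the paper has in mind (LPP's derivation of skew Okounkov from skew LPP, or Speyer's theorem on pairs), with Theorem~\ref{thm:Gro-LPP-skew} as input. Put $F(A)=\tG^{(\ell)}_{\lambda/\mu}$ and $d=B-A\in\zz^{2\ell}$, and let $A+k$ add $k$ to all $2\ell$ coordinates. If $M:=\max_i d_i\ge 2$, take $k=M-1$. Then
\[
F(A)\,F(B)\,=\,F(A+k)\,F(B)\,\leqslant_{\fs}\,F\big((A+k)\vee B\big)\,F\big((A+k)\wedge B\big)\,=\,F\big(A\vee(B-k)\big)\,F\big((A+k)\wedge B\big),
\]
where every argument is a pair of partitions in $\cP^\ell$; note $A\vee(B-k)\ge A\ge 0$.

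The new pair has the same sum as $(A,B)$. Each coordinate with $d_i=M$ becomes $M-2$, and all other coordinates are unchanged. The case $\min_i d_i\le -2$ is symmetric. Thus $\sum_i|d_i|$ strictly decreases until $|d_i|\le 1$ for all $i$. One last application of Theorem~\ref{thm:Gro-LPP-skew} then gives join $\lceil (A+B)/2\rceil$ and meet $\lfloor (A+B)/2\rfloor$, which is the claimed inequality.

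Your fallback does not help. Applying the skew Okounkov inequality term by term does not control the cross terms (cf.~$\S$\ref{ss:under-subtle}). The ``averaging LSS'' you would need is a skew Schur positive Klartag--Lehec inequality, and even its straight version is still open (Conjecture~\ref{conj:SHKS}).
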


\smallskip

This proof closely follows that of Theorem~\ref{thm:Oko-skew-global}, and will also
be omitted.  As we mentioned earlier, these are not the ``right'' skew stable
Grothendieck polynomials.  Finding a generalization of Corollary~\ref{cor:Gro-LC}
to the latter remains an open problem, see~$\S$\ref{ss:finrem-Mih}.

\medskip

%%%%%%%%%%%%%%%%%%%%%%%%%%%%%%%%%%%%%%%%%%%%%%%%%%%%%%%%%%%%%%%%%%%%%%%%%%%%%%%%%%%%%%%%%%%
%%%%%%%%%%%%%%%%%%%%%%%%%%%%%%%%%%%%%%%%%%%%%%%%%%%%%%%%%%%%%%%%%%%%%%%%%%%%%%%%%%%%%%%%%%%
%%%%%%%%%%%%%%%%%%%%%%%%%%%%%%%%%%%%%%%%%%%%%%%%%%%%%%%%%%%%%%%%%%%%%%%%%%%%%%%%%%%%%%%%%%%
%%%%%%%%%%%%%%%%%%%%%%%%%%%%%%%%%%%%%%%%%%%%%%%%%%%%%%%%%%%%%%%%%%%%%%%%%%%%%%%%%%%%%%%%%%%
%%%%%%%%%%%%%%%%%%%%%%%%%%%%%%%%%%%%%%%%%%%%%%%%%%%%%%%%%%%%%%%%%%%%%%%%%%%%%%%%%%%%%%%%%%%
%%%%%%%%%%%%%%%%%%%%%%%%%%%%%%%%%%%%%%%%%%%%%%%%%%%%%%%%%%%%%%%%%%%%%%%%%%%%%%%%%%%%%%%%%%%
%%%%%%%%%%%%%%%%%%%%%%%%%%%%%%%%%%%%%%%%%%%%%%%%%%%%%%%%%%%%%%%%%%%%%%%%%%%%%%%%%%%%%%%%%%%

%\newpage

\section{Dual stable Grothendieck polynomials} \label{s:dual}

In this section we prove the dual Grothendieck LPP inequality
(Theorem~\ref{thm:Gro-LPP-dual}).  We then generalize the Okounkov
inequality \eqref{eq:Oko} to this setting (Corollary~\ref{cor:Gro-LC-dual}).
The proof uses log-supermodularity property for the numbers of elegant fillings
(Lemma~\ref{lem:fg*}), a result of independent interest.

\subsection{Two log-supermodularity lemmas}\label{ss:Gro-dual}
For partitions \ts $\lambda,\mu  \in \cP$, let \ts $\ff_{\lambda,\mu}^*$ \ts
be the indicator function for the condition \ts $\mu \subseteq \lambda$.

\smallskip
\begin{lemma}\label{lem:ff*}
	For all \ts $\lambda,\mu,\nu,\rho \in \cP$, we have:
\begin{equation*}
	\ff_{\lambda, \mu}^* \. \cdot \. \ff_{\nu,\rho}^*   \ \leq \
\ff_{\lambda\vee \nu, \mu \vee \rho}^*  \.\cdot \.\ff_{\lambda\wedge \nu, \mu \wedge \rho}^*.
\end{equation*}
\end{lemma}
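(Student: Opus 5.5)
The plan mirrors the proof of Lemma~\ref{lem:ff}, but is simpler. Both sides take values in $\{0,1\}$, so it suffices to treat the case where the left-hand side equals~$1$. In that case $\mu \subseteq \lambda$ and $\rho \subseteq \nu$, and we must show $\mu\vee\rho \subseteq \lambda\vee\nu$ and $\mu\wedge\rho\subseteq\lambda\wedge\nu$.

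Both inclusions are checked row by row. For each $i\ge 1$ we have $\mu_i\le\lambda_i$ and $\rho_i\le\nu_i$. Since $\max$ and $\min$ are monotone in each argument, this gives
\[
\max\{\mu_i,\rho_i\}\le\max\{\lambda_i,\nu_i\} \qquad\text{and}\qquad \min\{\mu_i,\rho_i\}\le\min\{\lambda_i,\nu_i\}.
\]
By the definition of $\vee$ and $\wedge$ in~$\S$\ref{ss:notation-part}, these are exactly the two desired containments of Young diagrams. We also use that $(\cP,\vee,\wedge)$ is closed under these operations, so all four shapes are genuine partitions and the indicator $\ff^*$ is well defined on them.

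Nothing here is an obstacle. Unlike Lemma~\ref{lem:ff}, there is no upper bound $\widehat{\lambda}$ to control, so we do not need the identity $\widehat{\lambda}\wedge\widehat{\nu}=\widehat{\lambda\wedge\nu}$ or the inclusion $\widehat{\lambda}\vee\widehat{\nu}\subseteq\widehat{\lambda\vee\nu}$. The only point needing care is the order of indices: $\ff^*_{\lambda,\mu}$ encodes $\mu\subseteq\lambda$, which is the reverse of the convention for $\ff_{\lambda,\mu}$. The argument is symmetric in this respect in any case.
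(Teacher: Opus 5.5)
Your proof is correct. It is exactly the argument the paper has in mind: the paper omits it, saying only that the proof is straightforward, and it matches the containment part of the proof of Lemma~\ref{lem:ff}. You reduce to the case where the left side equals $1$, then verify $\mu\vee\rho\subseteq\lambda\vee\nu$ and $\mu\wedge\rho\subseteq\lambda\wedge\nu$ row by row using monotonicity of $\max$ and $\min$.
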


\smallskip

The proof is straightforward.
Next, recall that, for the number \ts $\fg^*_{\lambda,\mu}=|\EF(\la/\mu)|$ \ts
is the number of elegant fillings of~$\la/\mu$, see~$\S$\ref{ss:notation-dual}.

\smallskip

\begin{lemma}\label{lem:fg*}
	For all \. $\lambda,\mu,\nu,\rho \in \cP$ \. such that
\. $\lambda \supseteq  \mu$ \. and \. $\nu \supseteq \rho$, we have:
	\begin{equation*}
		\fg_{\lambda, \mu}^* \.\cdot\. \fg_{\nu,\rho}^*   \ \leq \
\fg_{\lambda\vee \nu, \mu \vee \rho}^* \.\cdot\. \fg_{\lambda\wedge \nu, \mu \wedge \rho}^*.
	\end{equation*}
\end{lemma}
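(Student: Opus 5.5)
We mimic the proof of Lemma~\ref{lem:fg}: we encode elegant fillings as elements of a distributive lattice of padded functions and apply the classical AD inequality (Theorem~\ref{thm:AD}).

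Let $\ell$ and $w$ be the length and width of $\lambda\vee\nu$. Let $\aL$ be the set of functions $T\colon[\ell]\times[w]\to\nn_{\ge1}\cup\{-\infty,\infty\}$ satisfying three conditions:
\[
T(i,j+1)\ge T(i,j), \qquad T(i+1,j)\ge T(i,j)+1, \qquad T(i,j)\in\{1,\ldots,i-1\}\cup\{\pm\infty\},
\]
with the conventions $\pm\infty+1=\pm\infty$. Since each condition is of the form ``coordinate $\ge$ coordinate plus constant'' or membership in a chain, $\aL$ is closed under entrywise max and min. Hence it is a distributive lattice, isomorphic to a sublattice of some $\cX^{(\ell',w')}$ after relabeling values, so Theorem~\ref{thm:AD} applies (cf.\ Remark~\ref{rem:dist}).

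Define $\fa(T)$ to be the indicator that $T=-\infty$ on $\mu$, that $T$ is finite on $\lambda/\mu$, and that $T=\infty$ off $\lambda$. Define $\fb$ in the same way with $(\nu,\rho)$. Define $\fd$ with the pair $(\lambda\vee\nu,\mu\vee\rho)$ and $\fc$ with $(\lambda\wedge\nu,\mu\wedge\rho)$, assigning these so that the join receives the larger shapes.

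On the support of $\fa$, the finite entries form a semistandard filling of $\lambda/\mu$ with row-$i$ entries in $[i-1]$, that is, an element of $\EF(\lambda/\mu)$. Conversely, such a filling padded by $-\infty$ on $\mu$ and $+\infty$ outside $\lambda$ satisfies the monotonicity conditions, because $\mu$ and $\lambda$ are order ideals. Hence $\sum_T\fa(T)=\fg^*_{\lambda,\mu}$, and similarly for $\fb$, $\fc$, $\fd$.

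The key step is checking \eqref{eq:AD-cond}: if $\fa(T)=\fb(T')=1$, then $\fd(T\vee T')=\fc(T\wedge T')=1$. Entrywise, $T\vee T'$ equals $-\infty$ exactly on $\mu\cap\rho=\mu\wedge\rho$. This shows that the naive assignment is reversed: the max is $-\infty$ only where both are $-\infty$, and is $+\infty$ wherever either is $+\infty$, that is, off $\lambda\wedge\nu$. So the join $T\vee T'$ realizes the shape $(\lambda\wedge\nu)/(\mu\wedge\rho)$, and the meet $T\wedge T'$ realizes $(\lambda\vee\nu)/(\mu\vee\rho)$. We therefore set $\fc$ (attached to the join) to the pair $(\lambda\wedge\nu,\mu\wedge\rho)$ and $\fd$ (attached to the meet) to the pair $(\lambda\vee\nu,\mu\vee\rho)$, exactly as in the proof of Lemma~\ref{lem:fg}.

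It remains to see that the finite entries stay in the allowed ranges. For example, on $(\lambda\wedge\nu)/(\mu\wedge\rho)$ the value of $T\vee T'$ is a max of a finite entry and either a finite entry or $-\infty$, so it is finite and lies in $[i-1]$. The symmetric check holds for the meet. The lattice conditions are preserved automatically by closure.

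Applying Theorem~\ref{thm:AD} now gives
\[
\fg^*_{\lambda,\mu}\,\fg^*_{\nu,\rho}\ \le\ \fg^*_{\lambda\wedge\nu,\mu\wedge\rho}\,\fg^*_{\lambda\vee\nu,\mu\vee\rho},
\]
which is the claimed inequality.

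The main obstacle is the cell-by-cell case check that the padded shapes combine correctly under max and min, including the need to assign the join and meet to the correct shapes. A secondary check is that $\aL$ is a genuine distributive lattice when $\pm\infty$ is added to a bounded range. This holds since $\aL$ is a sublattice of a product of finite chains.
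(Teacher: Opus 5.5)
Your proposal is correct and follows the paper's proof essentially verbatim. It uses the same lattice $\aL$ of padded fillings of $[\ell]\times[w]$ (with $\ell,w$ the length and width of $\lambda\vee\nu$), the same indicator functions with $\fc$ attached to $(\lambda\wedge\nu,\mu\wedge\rho)$ and $\fd$ to $(\lambda\vee\nu,\mu\vee\rho)$, and a single application of the AD inequality. The only blemish is the parenthetical ``so that the join receives the larger shapes'' in your definition of $\fc,\fd$: it contradicts your own (correct) later check that the entrywise-max join realizes the smaller shape $(\lambda\wedge\nu)/(\mu\wedge\rho)$, though the assignment you actually write down is the right one.
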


\smallskip

The proof of Lemma~\ref{lem:fg*} closely parallels the proof of Lemma~\ref{lem:fg}.
Since the necessary modifications involve subtle technical details,
we include the full argument for completeness.

\smallskip

\begin{proof}[Proof of Lemma~\ref{lem:fg*}]
	Let $\ell$ and $w$ be the length and width of the partition $\lambda \vee \nu$,
	respectively. Let \ts $\aL$ \ts be the set of functions \. $T \colon [\ell] \times [w]
	\to \nn_{\ge 1} \cup \{-\infty, \infty\}$ \. satisfying
	\begin{align*}
		& T(i+1, j) \ge T(i, j) + 1 \quad \text{for all } \ (i, j)\ts, \. (i+1, j) \in [\ell] \times [w], \\
		& T(i, j+1) \ge T(i, j)  \quad \text{for all } \ (i, j)\ts, \. (i, j+1) \in [\ell] \times [w],\\
		&T(i,j) \in \{1,2,\ldots,i-1\} \cup \{-\infty,\infty\} \quad \text{ for all } \ \, (i,j) \in [\ell] \times [w].
	\end{align*}
	Note that here we adopt the convention that $\infty + 1 = \infty$ and
	$-\infty + 1 = -\infty$. It is straightforward to check that \ts $\aL$ \ts is a
	distributive lattice, where $\vee$ is given by entrywise maximum and $\wedge$
	is given by entrywise minimum.
	
	Let $\fa,\fb,\fc,\fd:\aL \to \{0,1\}$ be the function given by
{\small
	\begin{align*}
		\fa(T) &:= \mathbf{1}_{\left\{
		T(i,j)
		\right\}}, \quad \text{where} \quad T(i,j)
        \begin{cases}
			= -\infty & \text{if } (i,j) \in \mu, \\
			\in \nn_{\ge 1} & \text{if } (i,j) \in \lambda/\mu, \\
			= \infty & \text{if } (i,j) \notin \lambda
		\end{cases}\\
        \fb(T) &:= \mathbf{1}_{\left\{
		T(i,j)
		\right\}}, \quad \text{where} \quad T(i,j)
        \begin{cases}
			= -\infty & \text{if } (i,j) \in \rho, \\
			\in \nn_{\ge 1} & \text{if } (i,j) \in \nu/\rho, \\
			= \infty & \text{if } (i,j) \notin \nu
		\end{cases}\\
        \fc(T) &:= \mathbf{1}_{\left\{
		T(i,j)
		\right\}}, \quad \text{where} \quad T(i,j)
        \begin{cases}
			= -\infty & \text{if } (i,j) \in \mu \wedge \rho, \\
			\in \nn_{\ge 1} & \text{if } (i,j) \in (\lambda \wedge \nu)/(\mu \wedge \rho), \\
			= \infty & \text{if } (i,j) \notin \lambda \wedge \nu
		\end{cases}\\
        \fd(T) &:= \mathbf{1}_{\left\{
		T(i,j)
		\right\}}, \quad \text{where} \quad T(i,j)
        \begin{cases}
			= -\infty & \text{if } (i,j) \in \mu \vee \rho, \\
			\in \nn_{\ge 1} & \text{if } (i,j) \in (\lambda \vee \nu)/(\mu \vee \rho), \\
			= \infty & \text{if } (i,j) \notin \lambda \vee \nu
		\end{cases}
	\end{align*}
}

	Note that the support of $\fa$ consists of functions $T$ that
	correspond to tableaux of skew shape~$\la/\mu$, obtained by
	ignoring the entries equal to $\pm\infty$.
	Analogous observations apply to $\fa,\fb,\fc,\fd$.
	This implies that
	\begin{align*}
		\sum_{T \in \aL} \fa(T) \ = \  \fg_{\lambda,\mu}^*\., \quad
		\sum_{T \in \aL} \fb(T) \ = \ \fg_{\nu,\rho}^*\., \quad
		\sum_{T \in \aL} \fc(T) \ = \  \fg_{\lambda \wedge \nu, \mu \wedge \rho}^*\., \quad
		\sum_{T \in \aL} \fd(T) \ = \  \fg_{\lambda \vee \nu, \mu \vee \rho}^*\..
	\end{align*}
	It is also straightforward to verify that \ts $\fa,\fb,\fc,\fd$ \ts satisfies \eqref{eq:AD-cond}.
	The conclusion of the lemma now follows from the classical AD inequality~(Theorem~\ref{thm:AD}).
\end{proof}

\smallskip

\subsection{Proof of Theorem~\ref{thm:Gro-LPP-dual}}
The proofs of Theorem~\ref{thm:Gro-LPP-dual} closely follow the proofs
of the Grothendieck LPP inequality (Theorem~\ref{thm:Gro-LPP}),
with the only difference being Lemma~\ref{lem:ff} and Lemma~\ref{lem:fg}
being substituted with two lemmas above. We omit the details. \qed

\smallskip

\subsection{Log-concavity}\label{ss:dual-further}
We can now obtain a natural counterpart of Corollary~\ref{cor:Gro-LC} for
the dual stable Grothendieck polynomials.

\smallskip

\begin{cor}[{\rm \defn{dual Grothendieck log-concavity}\ts}{}]\label{cor:Gro-LC-dual}
	For any partitions $\lambda, \mu \in \cP$, we have:
	\[ \fG_{\lambda}^* \. \fG_{\mu}^* \ \leqslant_{\fs} \
    \fG^*_{\left \lceil (\lambda+\mu)/2 \right\rceil}
    \.	\fG^*_{\left \lfloor (\lambda+\mu)/2 \right\rfloor}.
	\]
\end{cor}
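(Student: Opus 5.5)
The plan mirrors the derivation of Corollary~\ref{cor:Gro-LC}: feed a log-supermodular family into Speyer's Theorem~\ref{t:speyer}.

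First, introduce the truncated family $f(\lambda) := \fG^*_{\lambda}$ restricted to $\lambda \in \cP^\ell$. Every $\mu \subseteq \lambda$ also has at most $\ell$ parts, so no truncation of the sum is actually needed. Theorem~\ref{thm:Gro-LPP-dual} restricted to $\cP^\ell$ gives
\[
f(\lambda)\, f(\mu) \ \leqslant_{\fs}\ f(\lambda\vee\mu)\, f(\lambda\wedge\mu) \qquad \text{for all } \ \lambda,\mu\in\cP^\ell.
\]
This is the log-supermodularity hypothesis of Theorem~\ref{t:speyer}, since $\vee$ and $\wedge$ preserve $\cP^\ell$.

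The main obstacle is Speyer's other hypothesis, $f(\lambda+\mathbf{1}) = f(\lambda)$, which fails literally because $\fG^*_{\lambda+\mathbf 1}$ has higher degree. I would handle this in one of two ways.

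The first is to inspect \cite[Rem.~4.19]{Spe26} and check that his argument only uses invariance up to multiplication by a fixed Schur positive factor, or only uses the hypothesis to reduce to finitely many shapes. In that case I would verify the weaker statement for $\fG^*$.

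The second, which I expect is what the authors intend (as with $\tG^{(\ell)}$ in Corollary~\ref{cor:Gro-LC}, where invariance also fails), is to pass to a periodic extension. Define $f$ on all of $\zz^\ell$-weakly-decreasing sequences by shifting: $f(\lambda) := f(\lambda - \lambda_\ell\mathbf 1)$. Then check that the log-supermodular inequality survives for pairs with different $\lambda_\ell$. This is the hard check, and it would require a dual-Grothendieck analogue of the column-removal identity.

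If that check fails, the fallback is to reprove Speyer's averaging argument directly: iterate the LPP-type step $\lambda,\mu \mapsto \lambda\vee\mu,\lambda\wedge\mu$ combined with translations, as in \cite{Spe26}. The only input would be the ADS-based inequality for the skew analogues $\sum_{\mu}\fg^*_{\lambda,\mu}\fs_{\mu/\rho}$, obtained exactly as in Theorem~\ref{thm:Gro-LPP-skew} from Lemmas~\ref{lem:ff*} and~\ref{lem:fg*} via Theorem~\ref{thm:ADS-skew}. The skew version supplies the flexibility that replaces translation invariance, just as the skew LPP inequality is used for the skew Okounkov inequality in \cite[Thm.~11]{LPP07}.

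Finally, Speyer's theorem, or the LPP07-style argument, yields
\[
\fG^*_\lambda\,\fG^*_\mu \ \leqslant_{\fs}\ \fG^*_{\lceil(\lambda+\mu)/2\rceil}\,\fG^*_{\lfloor(\lambda+\mu)/2\rfloor}
\]
for $\lambda,\mu\in\cP^\ell$. Since every pair of partitions lies in some $\cP^\ell$, letting $\ell\to\infty$ (no limit is even needed) gives the corollary.
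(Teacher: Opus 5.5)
Your strategy (log-supermodularity of $\fG^*$ fed into Speyer's Theorem~\ref{t:speyer}) is the paper's. You are also right that the hypothesis $f(\lambda+\mathbf 1)=f(\lambda)$ fails for $\lambda\mapsto\fG^*_\lambda$. But the proposal never secures that hypothesis: it offers three contingent routes and leaves the decisive one unverified, so as written it is not yet a proof.

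Your first route does not help, because $\fG^*_{\lambda+\mathbf 1}$ is not a fixed multiple of $\fG^*_\lambda$. For $\ell=2$, $\fG^*_{(1,1)}=\fs_{11}+\fs_1$, yet $\fG^*_{(2,1)}=\fs_{21}+\fs_2\neq(\fs_{11}+\fs_1)\,\fG^*_{(1)}$. Your third route is left unspecified. Your second route is the right one, and the check you call ``hard'' and attribute to a column-removal identity is in fact immediate.

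For $\lambda\in\cP^\ell$, write the periodic extension as $f(\lambda):=\fG^*_{\lambda-\lambda_\ell\mathbf 1}=\sum_{\alpha\subseteq\lambda}\fg^*_{\lambda,\alpha}\,\fs_{\alpha/(\lambda_\ell\mathbf 1)}$. This rewriting uses only two facts: the counts $\fg^*$ and $\ell$-row skew Schur functions are both unchanged when both shapes are translated by $k\mathbf 1$. With this form, $f(\lambda+\mathbf 1)=f(\lambda)$ holds by construction.

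Log-supermodularity of this $f$ on all of $\cP^\ell$, including pairs with $\lambda_\ell\neq\mu_\ell$, is a direct instance of the skew ADS inequality (Theorem~\ref{thm:ADS-skew}). Take $\fa(\alpha,\beta):=\ff^*_{\lambda,\alpha}\,\fg^*_{\lambda,\alpha}\,\mathbf 1_{\{\beta=\lambda_\ell\mathbf 1\}}$, and define $\fb,\fc,\fd$ the same way from $\mu$, $\lambda\vee\mu$ and $\lambda\wedge\mu$. Condition \eqref{eq:AD-cond-skew} then follows from Lemmas~\ref{lem:ff*} and~\ref{lem:fg*}, together with $\lambda_\ell\mathbf 1\vee\mu_\ell\mathbf 1=(\lambda\vee\mu)_\ell\mathbf 1$ and $\lambda_\ell\mathbf 1\wedge\mu_\ell\mathbf 1=(\lambda\wedge\mu)_\ell\mathbf 1$. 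In other words, this is the dual analogue of Theorem~\ref{thm:Gro-LPP-skew} with inner shapes $\lambda_\ell\mathbf 1$ and $\mu_\ell\mathbf 1$. That is the job the skew theorem does in the paper's terse derivation of Corollary~\ref{cor:Gro-LC}, which the dual proof copies.

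Now apply Theorem~\ref{t:speyer}, taking $\ell$ larger than the number of parts of $\lambda$ and of $\mu$, so that $\lambda_\ell=\mu_\ell=0$. Then $f$ coincides with $\fG^*$ at $\lambda$, at $\mu$, and at both averages, which gives the corollary with no limit needed.
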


\smallskip

The proofs of Corollary~\ref{cor:Gro-LC-dual} closely follows the proof of the Grothendieck log-concavity~(Corollary~\ref{cor:Gro-LC}),
with the only difference being Lemma~\ref{lem:ff} and Lemma~\ref{lem:fg}
being substituted with two lemmas above.
It is the analogue of Okounkov inequality
(Corollary~\ref{cor:sh-Oko}) for the dual stable Grothendieck polynomials.

\medskip

\section{Further results and applications}\label{s:further}

In this section we present four largely unrelated stories of
further applications and generalizations of the ADS inequality.
We present them without proofs or details, and with only a brief explanation
of their background and significance.\footnote{In the ideal world, each story
would be expanded to a separate section, where we would include complete proofs
and further explore these variations.  Alas, we do not live in that world.
The details are available upon request.}

\subsection{Schur measures}\label{ss:Schur measures}
The ADS inequality (Theorem~\ref{thm:ADS}) can be further generalized
to positivity results for functions on Schur measures.  Namely,
let \ts $\xb := (x_1, x_2, \ldots)$ \ts and \ts
$\yb := (y_1, y_2, \ldots)$ \ts be sets of variables, and let
\ts $\fs_{\lambda}(\xb)$ \ts and \ts $\fs_{\mu}(\yb)$
\ts denote Schur polynomials in these sets of variables.
Recall that \ts $\La(\xb) \otimes \La(\yb)$ \ts has a basis \.
$\cD:=\{\fs_{\lambda}(\xb)\fs_{\mu}(\yb) \.:\. \lambda, \mu \in \mathcal{P}\}$.
For \ts $f,g \in \La(\xb) \otimes \La(\yb)$, we write \. $f \geqslant_\otimes g$ \.
if \ts $(f-g)$ \ts has non-negative coefficients when expanded in~$\cD$.

\smallskip

\begin{thm}[{\rm \defn{Ahlswede--Daykin inequality for Schur measures}}]\label{thm:ADS-Oko}
	Let \. $\fa,\fb,\fc,\fd: \cP \to \rr_{\geq 0}$ \. be four functions satisfying \eqref{eq:AD-cond}.
	Then we have:
{\small
	\begin{equation*}
		\bigg( \sum_{\lambda \in \cP} \fa({\lambda}) \fs_{\lambda}(\xb) \fs_{\lambda}(\yb)\bigg)  	\bigg( \sum_{\lambda \in \cP} \fb({\lambda}) \fs_{\lambda}(\xb) \fs_{\lambda}(\yb)\bigg)  \, \leqslant_{\otimes} \, 	\bigg( \sum_{\lambda \in \cP} \fc({\lambda}) \fs_{\lambda}(\xb) \fs_{\lambda}(\yb)\bigg)
			\bigg( \sum_{\lambda \in \cP} \fd({\lambda}) \fs_{\lambda}(\xb) \fs_{\lambda}(\yb)\bigg).
	\end{equation*}
}\end{thm}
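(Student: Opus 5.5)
The plan is to mirror the proof of Theorem~\ref{thm:ADS-skew}: expand the difference RHS$-$LHS, group terms by the lattice data, and reduce to a bivariate version of the Schur orchestra inequality.

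\emph{Reduction to local sums.} The difference RHS$-$LHS equals
\[
\sum_{\lambda,\nu\in\cP}\big(\fc(\lambda)\fd(\nu)-\fa(\lambda)\fb(\nu)\big)\,\fs_\lambda(\xb)\fs_\nu(\xb)\,\fs_\lambda(\yb)\fs_\nu(\yb).
\]
Group the pairs $(\lambda,\nu)$ according to $\alpha:=\lambda\wedge\nu$ and $\alpha+\gamma:=\lambda\vee\nu$. Define $\cO(E):=\fc(\lambda(E))\fd(\lambda(\overline E))-\fa(\lambda(E))\fb(\lambda(\overline E))$ on admissible $E\subseteq\cE$, and $\cO(E):=0$ otherwise. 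This is the straight case $\beta=\delta=0$, with $\cF$ playing no role.

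Exactly as in the proof of Theorem~\ref{thm:SAAD}, the RLS inequality (Theorem~\ref{t:RLS}) applied to the induced functions on $2^{[t]}$ shows that $\cO$ satisfies \eqref{eq:Orch} for every set partition. It therefore suffices to prove the \emph{bivariate Schur orchestra inequality}
\[
\sum_{E\subseteq\cE}\cO(E)\,\fs_{\lambda(E)}(\xb)\fs_{\lambda(\overline E)}(\xb)\,\fs_{\lambda(E)}(\yb)\fs_{\lambda(\overline E)}(\yb)\ \geqslant_\otimes\ 0 .
\]

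\emph{Two immanant expansions.} Apply Proposition~\ref{prop:RS} separately in $\xb$ and in $\yb$, with the same multisets $\cM=M_\alpha\uplus M_{\alpha+\gamma}$ and $\cN=N_0\uplus N_0$. This writes each summand as
\[
\sum_{\kappa\in\Theta(E)}\ \sum_{\kappa'\in\Theta(E)}\Imm_\kappa(\aH_{\cN,\cM}(\xb))\,\Imm_{\kappa'}(\aH_{\cN,\cM}(\yb)).
\]
Each product $\Imm_\kappa(\xb)\Imm_{\kappa'}(\yb)$ is $\otimes$-positive by Lemma~\ref{lem:RS-2}, since a tensor product of two Schur positive functions expands nonnegatively in $\cD$.

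Lemma~\ref{lem:Theta} gives $\Theta(E)\subseteq\Theta(\cE)$, so we can swap the order of summation. It then suffices to show that for each fixed pair $\kappa,\kappa'\in\Theta(\cE)$,
\[
\sum_E c(E,\kappa)\,c(E,\kappa')\,\cO(E)\ \ge\ 0 .
\]

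\emph{The key step.} By Lemmas~\ref{lem:admin} and~\ref{lem:comp-kappa}, the condition $c(E,\kappa)\,c(E,\kappa')=1$ holds if and only if $E$ is a union of classes of the equivalence relation generated jointly by $\sim$, $\sim_\kappa$ and $\sim_{\kappa'}$. The joined relation is still an equivalence relation, so its classes form a set partition $\{B_1,\dots,B_t\}$ of $\cE$. The sum above is then precisely an orchestra sum $\sum_{H\subseteq[t]}\cO\big(\bigsqcup_{h\in H}B_h\big)$, which is nonnegative because $\cO$ lies in the AD cone.

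This is the point where the full cone (all set partitions, not just the finest one) is needed. It is also the content of the remark that Theorem~\ref{thm:orch-ext} and its proof are relevant here.

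\emph{Main obstacle.} The step I expect to need the most care is the bookkeeping that the multiplicity factor $2^N$ (from $\gamma_i=0$) and the convention for non-partitions match on both tensor factors. Since the same $E$ indexes both factors and the sets $\lambda(E)$ are identical, I expect this to go through verbatim as in the proof of Theorem~\ref{thm:SAAD}.
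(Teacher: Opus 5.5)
Your proposal is correct and follows the argument the paper has in mind. The paper's sketch, ``a consecutive application of Theorem~\ref{thm:orch-ext} and Theorem~\ref{thm:orch}'', expands the $\xb$-factor in Temperley--Lieb immanants and then applies Theorem~\ref{thm:orch-ext} in the $\yb$-variables to the partition generated by $\sim$ and $\sim_\kappa$. Unrolling that theorem's omitted proof gives exactly your double expansion, with the orchestra inequality applied to the partition generated jointly by $\sim$, $\sim_\kappa$ and $\sim_{\kappa'}$.

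One small correction to your aside: coarse set partitions, and hence the full AD cone, are already needed in the univariate Theorem~\ref{thm:orch}, since the classes of $\sim$ and $\sim_\kappa$ are generally not singletons. The bivariate case is not where this first enters.
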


\smallskip

The proof of this theorem is completely analogous to that of the ADS
inequality (Theorem~\ref{thm:ADS}), following from a consecutive application
of Theorem~\ref{thm:orch-ext} and  Theorem~\ref{thm:orch}. We omit the details.

The theorem has a natural application to the setting of \emph{Schur measures}.
Fix two vectors \. $\aai=(a_1,a_2,\ldots)$, \.
$\bbi=(b_1,b_2,\ldots) \in \rr_{\geq 0}^\infty$ \. with finite support.
The corresponding \defnb{Schur measure} \ts on $\cP$, introduced in \cite{Oko01}, is defined by
\begin{align*}
	\Pb_{\aai,\bbi} [\lambda] \ := \ \frac{1}{Z_{\ab,\bb}} \fs_{\lambda}(\ab)
	\fs_{\lambda}(\bb)\ts,
\end{align*}
where \ts $Z_{\aai,\bbi}$ \ts is given by the \emph{Cauchy identity} (see, e.g., \cite{Mac95,EC}):
\[ Z_{\aai,\bbi} \ := \
\sum_{\lambda \in \cP}\. \fs_{\lambda}(\aai) \.\fs_{\lambda}(\bbi) \ = \
\prod_{i, j\geq 1} \frac{1}{1 - a_i b_j}\..
\]
The following result by Baslingker--Krishnapur--Madiman shows
that the marginal distribution of each part  is log-concave:

\smallskip

\begin{thm}[{\cite[Thm.~5]{BKM}}]\label{thm:BKM}
For all $i \geq 1$ and $k \geq 1$ and any $\ab,\bb \in \rr_{\geq 0}^\infty$,
\[   \Pb_{\ab,\bb}[\lambda_i=k+1] \.  \Pb_{\ab,\bb}[\lambda_i=k-1] \ \leq \
\big(\Pb_{\ab,\bb}[\lambda_i=k]\big)^2.
\]
\end{thm}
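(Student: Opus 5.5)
The natural route is to specialize Theorem~\ref{thm:ADS-Oko} at $\xb\gets\aai$, $\yb\gets\bbi$. Every basis element $\fs_\lambda(\xb)\fs_\mu(\yb)$ of $\cD$ then evaluates to a nonnegative number, so any $\leqslant_\otimes$ relation becomes a numerical inequality. Dividing by $Z_{\aai,\bbi}^2$ turns the four sums into probabilities under $\Pb_{\aai,\bbi}$. The difficulty is that Theorem~\ref{thm:BKM} is a \emph{midpoint} (log-concavity) statement, not a lattice (log-supermodularity) statement. The obvious choice $\fa=\mathbf 1_{\{\lambda_i=k+1\}}$, $\fb=\mathbf 1_{\{\lambda_i=k-1\}}$, $\fc=\fd=\mathbf 1_{\{\lambda_i=k\}}$ violates \eqref{eq:AD-cond}, since $(\lambda\vee\mu)_i=k+1$. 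So I plan two steps: first upgrade Theorem~\ref{thm:ADS-Oko} to an Okounkov-type midpoint inequality, then specialize.

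\emph{Step 1: midpoint inequality for Schur measures.} I will prove that if $\fa,\fb,\fc,\fd:\cP\to\rr_{\ge0}$ satisfy
\[
\fa(\lambda)\,\fb(\mu)\ \le\ \fc\big(\lceil\tfrac{\lambda+\mu}2\rceil\big)\,\fd\big(\lfloor\tfrac{\lambda+\mu}2\rfloor\big),
\]
then the conclusion of Theorem~\ref{thm:ADS-Oko} still holds. I would copy the strategy behind Speyer's Theorem~\ref{t:speyer} and the skew Okounkov inequality of \cite{LPP07}, namely iterating lattice inequalities along a chain of shifts. Concretely, I would work with the skew/shifted version of Theorem~\ref{thm:ADS-Oko}: the Schur-measure analogue of Theorem~\ref{thm:ADS-skew}, obtained by the same orchestra argument, i.e.\ Theorem~\ref{thm:orch-ext} followed by Theorem~\ref{thm:orch}. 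The Jacobi--Trudi index sets $M_\lambda$ are translation-covariant, so replacing $\lambda$ by $\lambda+\mathbf 1$ only shifts the multisets $\cM,\cN$; this is the invariance that Speyer's argument needs. The averaging is then achieved by the standard LPP trick: compare $\lambda$ with a shifted copy of $\mu$ and apply the lattice inequality repeatedly.

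\emph{Step 2: specialization.} Take $\fa=\mathbf 1_{\{\lambda_i=k+1\}}$, $\fb=\mathbf 1_{\{\lambda_i=k-1\}}$ and $\fc=\fd=\mathbf 1_{\{\lambda_i=k\}}$. The midpoint hypothesis holds, because $\lceil\tfrac{(k+1)+(k-1)}2\rceil=\lfloor\tfrac{(k+1)+(k-1)}2\rfloor=k$; here we use that the $i$-th coordinate of the midpoint depends only on the $i$-th coordinates of $\lambda$ and $\mu$. Evaluating at $(\aai,\bbi)$ and dividing by $Z_{\aai,\bbi}^2$ then gives exactly the inequality in Theorem~\ref{thm:BKM}.

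\emph{Main obstacle.} I expect Step~1 to be the hard part. Theorem~\ref{t:speyer} is stated for a single function $f$ with $f(\lambda+\mathbf 1)=f(\lambda)$, whereas here there are four weight functions and a tensor-product positivity $\leqslant_\otimes$. What I would try first is to redo Speyer's reduction at the level of the orchestra cone. Each elementary averaging move should correspond to a set partition $\{B_1,\dots,B_t\}$ as in Theorem~\ref{thm:orch-ext}, applied to suitably shifted vectors $\alpha,\gamma$. The task is then to check that the midpoint hypothesis implies the orchestra inequalities \eqref{eq:Orch} for the resulting function $\cO$, via the RLS inequality exactly as in the proof of Theorem~\ref{thm:SAAD}. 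If this fails, the fallback is to work at the numerical level only. Fix the $i$-th coordinate and factor the weight $\fs_\lambda(\aai)\fs_\lambda(\bbi)$ using the Jacobi--Trudi determinant. Then prove discrete log-concavity in $\lambda_i$ by a Lindstr\"om--Gessel--Viennot path-swapping injection, in the spirit of \cite{BKM}.
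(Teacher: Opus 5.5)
Your Step~2 is fine, but Step~1 has a genuine gap, and all of the difficulty sits there.

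\textbf{The gap.} Step~1 claims a four-function inequality under the midpoint hypothesis \eqref{eq:HKS-cond}, with $\leqslant_\otimes$ positivity. This is a Schur-measure analogue of Conjecture~\ref{conj:SHKS}, which the paper leaves open; it also says it is unknown whether Speyer's framework extends to four-function inequalities. Your sketch does not supply that extension.

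Theorem~\ref{t:speyer} goes from a \emph{lattice} hypothesis to a midpoint conclusion, and only for a single function with $f(\lambda+\mathbf 1)=f(\lambda)$. Your four functions satisfy only a midpoint hypothesis. This gives no condition of type \eqref{eq:AD-cond}, as you observed for your own indicators, so Theorem~\ref{thm:ADS-Oko} cannot even be invoked.

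Rerunning the orchestra machinery fails for a concrete reason. Theorems~\ref{thm:orch} and~\ref{thm:orch-ext} compare only products $\fs_{\lambda(E)}\fs_{\lambda(\overline E)}$ obtained by exchanging rows. So on each block the multiset of $i$-th parts of the two factors is $\{\alpha_i,\alpha_i+\gamma_i\}$, independent of $E$. Hence pairs with $i$-th parts $(k+1,k-1)$ and pairs with $(k,k)$ never share a block. Take a block with $\alpha_i=k-1$, $\gamma_i=2$. There the analogue of $\cO$ in \eqref{eq:fv} is $\le 0$ everywhere, with $\cO(\varnothing)=0$ and $\cO(\cE\sqcup\cF)=-1$. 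So \eqref{eq:Orch} already fails for the one-block partition, and no blockwise positivity argument can work. The LGV fallback is a gesture, not an argument.

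\textbf{The repair.} You never need $\leqslant_\otimes$, because Step~2 evaluates at $(\aai,\bbi)$ anyway. At the numerical level, Step~1 follows from results stated in the paper.
\begin{enumerate}
\item Put $w(\lambda):=\fs_\lambda(\aai)\fs_\lambda(\bbi)$. It vanishes unless $\lambda\in\cP^\ell$, where $\ell$ is the smaller support size; extend it by $0$ to $\zz^\ell$. If $i>\ell$ the claim is trivial.
\item Evaluate Theorem~\ref{thm:Oko} at the nonnegative vectors $\aai$ and $\bbi$ and multiply. This gives $w(\lambda)\,w(\mu)\le w(\lceil\tfrac{\lambda+\mu}{2}\rceil)\,w(\lfloor\tfrac{\lambda+\mu}{2}\rfloor)$.
\item Midpoints of partitions are partitions, and both have $i$-th part $k$ when $\lambda_i=k+1$ and $\mu_i=k-1$. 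So the functions $\fa w,\fb w,\fc w,\fd w$ built from your indicators satisfy \eqref{eq:HKS-cond}.
\item Apply Theorem~\ref{thm:HKS}, restricted to the midpoint-closed boxes $\{\lambda_1\le N\}$ and then letting $N\to\infty$. With $F(k):=\sum_{\lambda_i=k}w(\lambda)$, this gives $F(k+1)F(k-1)\le F(k)^2$. Divide by $Z_{\aai,\bbi}^2$.
\end{enumerate}
This is the ``KL plus LPP'' evaluation argument the paper mentions in $\S$\ref{ss:derivatives}.

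\textbf{Comparison with the paper.} The paper does not prove Theorem~\ref{thm:BKM}; it quotes it from \cite{BKM}. It asserts the Schur-positive strengthening Theorem~\ref{prop:BKM}, whose evaluation at $(\aai,\bbi)$ is Theorem~\ref{thm:BKM}. That strengthening is derived from the lattice inequality Theorem~\ref{thm:ADS-Oko} by an LPP-style argument specific to these single-coordinate indicators, not from any general midpoint theorem. If you want $\leqslant_\otimes$, that specialized route, not your Step~1, is the one to pursue.
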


\smallskip

By Theorem~\ref{thm:ADS-Oko}, we can strengthen the inequality
in Theorem~\ref{thm:BKM} to Schur positivity:

\smallskip

\begin{thm}\label{prop:BKM}
	For all \ts $i \geq 1$ \ts and \ts $k\geq 1$, we have:
	\begin{align*}
		\bigg( \sum_{\lambda \in \cP, \.  \lambda_i =k+1} \fs_{\lambda}(\xb) \fs_{\lambda}(\yb) \bigg)
				\bigg( \sum_{\lambda \in \cP, \.  \lambda_i =k-1} \fs_{\lambda}(\xb) \fs_{\lambda}(\yb) \bigg)
				\ \leq_{\otimes}
			\bigg( \sum_{\lambda \in \cP, \.  \lambda_i =k} \fs_{\lambda}(\xb) \fs_{\lambda}(\yb) \bigg)^2.
	\end{align*}
\end{thm}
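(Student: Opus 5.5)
The plan is to apply Theorem~\ref{thm:ADS-Oko} with indicator functions of the level sets of the $i$-th part. Set
\[
\fa \. := \. \mathbf{1}_{\{\lambda\.:\. \lambda_i=k+1\}}, \quad \fb \. := \. \mathbf{1}_{\{\lambda\.:\. \lambda_i=k-1\}}, \quad \fc \. := \. \fd \. := \. \mathbf{1}_{\{\lambda\.:\. \lambda_i=k\}}.
\]
Substituting these into the conclusion of Theorem~\ref{thm:ADS-Oko} gives exactly the claimed $\leqslant_\otimes$ inequality. The whole task is therefore to verify \eqref{eq:AD-cond} for this quadruple.

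That verification is where things can go wrong. Take $\lambda,\mu$ with $\lambda_i=k+1$ and $\mu_i=k-1$. Then $(\lambda\vee\mu)_i=k+1$ and $(\lambda\wedge\mu)_i=k-1$. The right-hand side of \eqref{eq:AD-cond} would require the $i$-th part to equal~$k$ in both, so it vanishes while the left-hand side is~$1$. So the naive choice fails: the lattice $\vee,\wedge$ on $\cP$ does not mix values in a single coordinate. The fix is to reduce to a lattice on which this single-coordinate log-concavity is genuinely a log-supermodularity statement. This is the main obstacle.

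The first approach I would try is to pass through Speyer's Theorem~\ref{t:speyer}, or an analogue of it for $\leqslant_\otimes$, applied to the function
\[
f(\lambda) \, := \, \sum_{\nu \. : \. \nu_j=\lambda_j \ \forall\. j\le \ell} \. \fs_\nu(\xb)\ts\fs_\nu(\yb),
\]
a ``shadow'' Schur-measure function. Theorem~\ref{thm:ADS-Oko}, applied with indicator functions as in the proof of Theorem~\ref{thm:LPP-global-skew}, gives $f(\lambda)f(\mu)\leqslant_\otimes f(\lambda\vee\mu)f(\lambda\wedge\mu)$. An averaging step of Okounkov type, as in Theorem~\ref{thm:Oko-skew-global}, would then give midpoint log-concavity.

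A cleaner alternative avoids averaging altogether. Shift the index so the two constraints live in different coordinates, using a Lov\'asz--Saks style refinement. Concretely, take $\fa$ supported on $\{\lambda_i\ge k+1\}$ and $\fb$ on $\{\lambda_i\le k-1\}$, and write the level-set sums as differences of tail sums. Differences are not directly amenable to this machinery, though. So I would go back to the refined statement Theorem~\ref{thm:SAAD} (the LSS inequality), in its Schur-measure form obtained via Theorem~\ref{thm:orch-ext}.

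Fix all coordinates other than $i$ via $\alpha$ and $\gamma$. In the LSS inequality, pairs $(\lambda,\mu)$ with $\lambda\vee\mu=\alpha+\gamma$ and $\lambda\wedge\mu=\alpha$ are grouped together. For the $i$-th coordinate, the pair of values $\{k+1,k-1\}$ on the left must be compared with $\{k,k\}$ on the right, and these have different join and meet. So I would instead pair the term $\lambda_i=k+1,\ \mu_i=k-1$ with the right-hand term $\lambda_i=\mu_i=k$ through a bijection on the remaining coordinates. This is the Okounkov/LPP-type step from \cite[Thm.~11]{LPP07}: shifting the Jacobi--Trudi indices $M_\lambda$ by one in row~$i$, so that both sides use the same multisets $\cM,\cN$ and Temperley--Lieb immanants. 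Compatibility then only enlarges when passing from $\{k+1,k-1\}$ to $\{k,k\}$, by the argument of Lemma~\ref{lem:Theta}.

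The step I expect to be hardest is exactly this: reorganizing the sums so that each Temperley--Lieb immanant coefficient reduces to an orchestra inequality on the remaining coordinates. I would first check the case of a single coordinate ($\ell=1$) directly, and then run the proof of Theorem~\ref{thm:orch} with coordinate $i$ frozen.
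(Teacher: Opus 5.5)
Your proposal does not contain a proof. You correctly see that the plain indicators of $\{\lambda_i=k+1\}$, $\{\lambda_i=k-1\}$, $\{\lambda_i=k\}$ violate \eqref{eq:AD-cond}, but neither repair you sketch closes that gap.

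\textbf{The Speyer route.} Theorem~\ref{t:speyer} does not apply, since your $f$ is not invariant under $\lambda\mapsto\lambda+\mathbf 1$. More seriously, suppose you had the pointwise bound $f(\alpha)f(\beta)\leqslant_\otimes f(\lceil\frac{\alpha+\beta}{2}\rceil)f(\lfloor\frac{\alpha+\beta}{2}\rfloor)$. You still could not sum it over $\alpha_i=k+1$, $\beta_i=k-1$ to get the marginal inequality. The averaging map is not injective, so the right-hand side overcounts. Summing correctly is a four-function KL-type statement, which is Conjecture~\ref{conj:SHKS} territory.

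\textbf{The Temperley--Lieb route.} The key step is false. Changing the Jacobi--Trudi index in row~$i$ alone is not an identity of Schur functions. Moreover, $i$th parts $\{k+1,k-1\}$ and $\{k,k\}$ give different multisets $\cM$: the entries are $k\pm 1+\ell+1-i$ versus $k+\ell+1-i$ twice. So the two sides are not TL-immanant expansions of the same matrix, and the argument of Lemma~\ref{lem:Theta} has nothing to compare.

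\textbf{What is missing.} The paper says the proof follows the derivation of the Okounkov inequality in \cite{LPP07}, with Theorem~\ref{thm:ADS-Oko} in place of the ADS inequality. The concrete ingredient you need is a \emph{global} shift of one factor, which turns the averaging step into a join/meet step.

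Let $F(k):=\sum_{\lambda_i=k}\fs_\lambda(\xb)\fs_\lambda(\yb)$. Work in $\ell\ge i$ variables $x_1,\ldots,x_\ell$ and $y_1,\ldots,y_\ell$. There $\fs_{\mu+\mathbf 1}=e_\ell\,\fs_\mu$ for $\mu\in\cP^\ell$, where $e_\ell$ is the elementary symmetric polynomial. Shifting every $\mu$ with $\mu_i=k-1$ to $\mu+\mathbf 1$ puts the $i$th parts at $k+1$ and $k$, which join and meet handle directly. Take, on $\cP^\ell$ and zero elsewhere,
\[
\fa=\mathbf 1_{\{\lambda_i=k+1\}},\qquad \fb=\mathbf 1_{\{\lambda_i=k,\ \lambda_\ell\ge 1\}},\qquad \fc=\mathbf 1_{\{\lambda_i=k+1,\ \lambda_\ell\ge 1\}},\qquad \fd=\mathbf 1_{\{\lambda_i=k\}}.
\]
Condition \eqref{eq:AD-cond} is immediate: the join has $i$th part $k+1$ and last part at least $1$, and the meet has $i$th part $k$. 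Theorem~\ref{thm:ADS-Oko}, restricted to $\ell$ variables, then reads $e_\ell(\xb)e_\ell(\yb)\,F(k+1)F(k-1)\leqslant_\otimes e_\ell(\xb)e_\ell(\yb)\,F(k)^2$.

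Multiplication by $e_\ell(\xb)e_\ell(\yb)$ sends the basis element $\fs_\alpha(\xb)\fs_\beta(\yb)$ to $\fs_{\alpha+\mathbf 1}(\xb)\fs_{\beta+\mathbf 1}(\yb)$. So this factor cancels coefficientwise, and letting $\ell\to\infty$ covers every coefficient. No new Temperley--Lieb analysis is needed.
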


\smallskip

The proof of Theorem~\ref{prop:BKM} proceeds analogously to the proof of
the Okounkov inequality (Theorem~\ref{thm:Oko}) given in \cite{LPP07},
but with Theorem~\ref{thm:ADS-Oko} playing the role of the ADS inequality
(cf.\ Corollary~\ref{cor:sh-Oko}).
We omit the details.

\smallskip

\subsection{Klartag--Lehec inequality}\label{ss:HKS}
We now consider the following \defn{Klartag--Lehec {\rm (KL)} inequality},
which was first discovered as a discrete version of a result in convex geometry,
see a discussion in~\cite{MM24}.  It can be viewed as version of the AD inequality
(Theorem~\ref{thm:AD}), where the log-supermodularity assumption is replaced by log-concavity.

\smallskip

\begin{thm}[{\rm \defn{KL~inequality{}\ts}~\cite[Thm.~1.4]{KL}}{}]\label{thm:HKS}
Let \. $\fa,\fb,\fc,\fd:\zz^\ell \to \rr_{\geq 0}$ \. be functions satisfying
\begin{equation}\label{eq:HKS-cond}\tag{KL-cond}
	\fa(\lambda) \, \fb(\mu) \ \leq \ \fc (\left \lceil (\lambda+\mu)/2\right \rceil) \.
	\fd (\left\lfloor(\lambda+\mu)/2\right \rfloor).
\end{equation}
Then we have:
\begin{equation}\label{eq:KL}\tag{KL}
\bigg( \sum_{\lambda \in \zz^\ell} \fa({\lambda})\bigg)
\bigg( \sum_{\lambda \in \zz^\ell} \fb({\lambda}) \bigg)
\ \leq \
\bigg( \sum_{\lambda \in \zz^\ell} \fc({\lambda}) \bigg)
\bigg( \sum_{\lambda \in \zz^\ell} \fd({\lambda}) \bigg).
\end{equation}
\end{thm}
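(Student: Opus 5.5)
We plan to prove \eqref{eq:KL} by induction on the dimension~$\ell$, using the same tensorization mechanism as the classical proofs of the AD and Pr\'ekopa--Leindler inequalities. The key observation is that the operations $\lambda\mapsto\lceil(\lambda+\mu)/2\rceil$ and $\lambda\mapsto\lfloor(\lambda+\mu)/2\rfloor$ act coordinatewise. We may also assume throughout that $\fa,\fb,\fc,\fd$ have finite support, since the general case follows by monotone convergence.

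\emph{Inductive step.} Write $\lambda=(x,t)$ with $x\in\zz^{\ell-1}$ and $t\in\zz$. Define marginals $A(x):=\sum_{t}\fa(x,t)$, and $B,C,D$ analogously. Fix $x,y\in\zz^{\ell-1}$, and set $z:=\lceil (x+y)/2\rceil$ and $w:=\lfloor (x+y)/2\rfloor$. Consider the one-variable functions $t\mapsto\fa(x,t)$, $s\mapsto\fb(y,s)$, $u\mapsto\fc(z,u)$ and $v\mapsto\fd(w,v)$. Condition \eqref{eq:HKS-cond} for the pair $(x,t),(y,s)$ says precisely that these four functions satisfy the one-dimensional version of \eqref{eq:HKS-cond}. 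The case $\ell=1$ then gives
\[
A(x)\,B(y)\ \le\ C\big(\lceil (x+y)/2\rceil\big)\,D\big(\lfloor (x+y)/2\rfloor\big).
\]
So $A,B,C,D$ satisfy \eqref{eq:HKS-cond} on $\zz^{\ell-1}$, and the induction hypothesis yields \eqref{eq:KL}, because summing the marginals recovers the full sums. Thus everything reduces to $\ell=1$.

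\emph{Base case $\ell=1$: the main obstacle.} Here the map $(t,s)\mapsto(\lceil (t+s)/2\rceil,\lfloor (t+s)/2\rfloor)$ is far from injective, so the naive termwise comparison used in Theorem~\ref{thm:AD} fails. We would first establish the set version: for finite nonempty $S,T\subset\zz$,
\[
|S|+|T|\ \le\ \big|\lceil (S+T)/2\rceil\big|+\big|\lfloor (S+T)/2\rfloor\big|.
\]
This follows from $|S+T|\ge |S|+|T|-1$, together with the fact that the two rounded images of $S+T$ overlap in a controlled way: an even sum contributes the same point to both images, while each odd sum contributes two consecutive distinct points.

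We would then upgrade from sets to functions. First normalize by scaling: replace $\fa,\fb,\fc,\fd$ by $\fa/\|\fa\|_\infty$, $\fb/\|\fb\|_\infty$, and $\fc,\fd$ rescaled so that $\|\fc\|_\infty\|\fd\|_\infty=\|\fa\|_\infty\|\fb\|_\infty$, balancing the sup norms of $\fc$ and $\fd$ against those of $\fa$ and $\fb$ as in the standard proof of Pr\'ekopa--Leindler. Next apply the layer-cake formula to the superlevel sets $\{\fa>r\}$, $\{\fb>r\}$. For each $r$, the condition gives $\lceil(\{\fa>r\}+\{\fb>r\})/2\rceil\subseteq\{\fc>r'\}$ and $\lfloor(\{\fa>r\}+\{\fb>r\})/2\rfloor\subseteq\{\fd>r''\}$ for suitable $r',r''$ with $r'r''\ge r^2$. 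Integrating in $r$ should produce the additive inequality $\sum\fa+\sum\fb\le\sum\fc+\sum\fd$ after normalization, and then AM--GM yields the multiplicative form \eqref{eq:KL}.

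The delicate point is choosing the normalization so that the threshold pairs $(r',r'')$ stay compatible for all four functions simultaneously. If this breaks down, the fallback is a direct discrete argument on finite supports: induct on the total support size, peeling off the maximal element of the supports of $\fa$ and $\fb$, in the spirit of the RLS inequality (Theorem~\ref{t:RLS}). The aim there is to match each product $\fa(t)\fb(s)$ to a distinct term $\fc(u)\fd(v)$ via a Hall-type matching along the sorted orders.
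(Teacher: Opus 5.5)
Your reduction to $\ell=1$ is fine. Rounding acts coordinatewise, so for fixed $x,y\in\zz^{\ell-1}$ the hypothesis \eqref{eq:HKS-cond} is exactly the one-dimensional condition for $\fa(x,\cdot),\fb(y,\cdot),\fc(z,\cdot),\fd(w,\cdot)$, and the marginals then satisfy \eqref{eq:HKS-cond} on $\zz^{\ell-1}$. (The paper does not reprove Theorem~\ref{thm:HKS}; it quotes \cite{KL} and remarks that the inequality can also be derived from the AD inequality \cite{HKS}.) After this reduction, however, the entire content of the theorem sits in the case $\ell=1$, and there your argument has a genuine gap.

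\textbf{The layer-cake step.} This is the Pr\'ekopa--Leindler mechanism, which works because a single function appears on the right of the hypothesis. Here the hypothesis bounds only the product $\fc(u)\fd(v)$, so $\fa(t)>r$ and $\fb(s)>r$ say nothing about the superlevel sets of $\fc$ and $\fd$ individually. Concretely, let $\fa=\fb=\mathbf{1}_{\{0,1\}}$, $\fc(0)=\fd(0)=1$, $\fc(1)=M$, $\fd(1)=1/M$ with $M>1$, and all other values $0$. The hypothesis holds, since $\fc(0)\fd(0)=1$, $\fc(1)\fd(0)=M$ and $\fc(1)\fd(1)=1$. For every $r\in(0,1)$ we have $\lceil(\{\fa>r\}+\{\fb>r\})/2\rceil=\lfloor(\{\fa>r\}+\{\fb>r\})/2\rfloor=\{0,1\}$. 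So your containments force $r'<1$ and $r''<1/M$, hence $r'r''<1/M$, and this bound is unchanged by the rescaling $(\fc,\fd)\mapsto(\kappa\fc,\kappa^{-1}\fd)$. Thus for $r>M^{-1/2}$ no thresholds with $r'r''\ge r^2$ exist, whatever the normalization. The inequality holds in this example only because the large term $\fc(1)\fd(0)=M$ compensates the small term $\fc(0)\fd(1)=1/M$, and no level-by-level comparison can detect this.

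\textbf{The AM--GM step.} Even an additive bound $\sum\fa+\sum\fb\le\sum\fc+\sum\fd$ yields \eqref{eq:KL} via AM--GM only if you have also balanced $\sum\fc=\sum\fd$; balancing sup norms does not suffice. The reason is that $\sum\fc\cdot\sum\fd\le\big(\tfrac12(\sum\fc+\sum\fd)\big)^2$ points the wrong way. For the same reason, your set inequality $|S|+|T|\le|\lceil (S+T)/2\rceil|+|\lfloor (S+T)/2\rfloor|$, though true, is not the indicator case of the theorem. That case asks for $|S|\cdot|T|\le|\lceil (S+T)/2\rceil|\cdot|\lfloor (S+T)/2\rfloor|$.

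\textbf{The fallback.} Your matching argument fails on the same example. All four products $\fa(t)\fb(s)$ equal $1$, while the nonzero terms $\fc(u)\fd(v)$ are $1,1,M,1/M$. So no injection with termwise domination exists, and a Hall-type matching is impossible in principle.

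What is needed is the compensation step from the proof of Theorem~\ref{thm:AD}. Set $\alpha=\fa(0)\fb(1)$, $\beta=\fa(1)\fb(0)$, $\gamma=\fc(1)\fd(0)$ and $\delta=\fc(0)\fd(1)$. The hypothesis gives $\alpha,\beta\le\gamma$ and $\alpha\beta=\fa(0)\fb(0)\cdot\fa(1)\fb(1)\le\gamma\delta$. Then $(\gamma-\alpha)(\gamma-\beta)\ge0$ yields $\alpha+\beta\le\gamma+\delta$. A proof of the one-dimensional case must carry out such compensations systematically over all of $\zz$. This is the real work done in \cite{KL}, or alternatively in the derivation from the four functions theorem in \cite{HKS}, and your proposal contains no substitute for it.
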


\smallskip

It is natural to ask whether this inequality has a Schur positive version:

\smallskip

\begin{conj}\label{conj:SHKS}
	Let \. $\fa,\fb,\fc,\fd:\cP^\ell \to \rr_{\geq 0}$ \. be functions with finite support,
satisfying \eqref{eq:HKS-cond}.  Then we have:
\begin{equation}\label{eq:KLS}\tag{KLS}
\bigg( \sum_{\lambda \in \cP^\ell} \fa({\lambda}) \fs_\lambda\bigg)
	\bigg( \sum_{\lambda \in \cP^\ell} \fb({\lambda}) \fs_{\lambda} \bigg)
	\ \leqslant_{\fs} \
	\bigg( \sum_{\lambda \in \cP^\ell} \fc({\lambda}) \fs_\lambda \bigg)
	\bigg( \sum_{\lambda \in \cP^\ell} \fd({\lambda}) \fs_\lambda\bigg).
\end{equation}	
\end{conj}

\smallskip

We checked the conjecture computationally, on all partitions of length at most~$3$
and size at most~$10$.
The conjecture is best understood in the context of discrete convex analysis,
see \cite{Mur03}, by comparing it with Speyer's framework in \cite{Spe26}. There,
Speyer presents four successively stronger
positivity properties that happen to be equivalent for single real functions
\cite[Thm.~4.4]{Spe26}, but not for single Schur positivity for symmetric functions
\cite[Rem.~4.19]{Spe26}.  In this context, Speyer's Theorem~\ref{t:speyer}
says that log-supermodularity implies log-concavity for single symmetric functions.

It would be interesting to investigate if Speyer's framework extends to
four functions inequalities, i.e.\  whether \eqref{eq:ADS} is stronger
than \eqref{eq:KLS}?  What makes Conjecture~\ref{conj:SHKS} so challenging,
is that it is neither weaker nor stronger than the AD inequality.
If true, the conjecture would imply Schur differential log-concavity
(Conjecture~\ref{conj:SLC} below).  It is also worth noting that  the KL inequality
(Theorem~\ref{thm:HKS}) admits a unified proof via the AD inequality \cite{HKS}.
This lends further credence to the conjecture (cf.~$\S$\ref{ss:finrem-four}).

\smallskip

\smallskip

\subsection{Differential operators}\label{ss:derivatives}
Let \ts $\xb = (x_1, x_2, \ldots)$ \ts and \ts $\zb = (z_1, z_2, \ldots)$ \ts
be two sets of variables.  Let \ts $\La(\xb)[[\zb]]$ \ts denote the ring of
formal power series in variables~$\ts\zb$, with coefficients in the ring of
symmetric functions \ts $\La(\xb)$.
For a sequence \ts $\la\in \nn^\infty$, define the \defnb{differential operator}
\. $\partial^{\lambda}$ \. with respect to~$\zb$,  by
$$ \partial^{\lambda} \ :=  \ \prod_{i\geq 1} \left(\tfrac{\partial}{\partial z_i}\right)^{\lambda_i}. $$
We denote by $\eb_i$ the  $i$-th standard unit vector in $\rr^\infty$.

Differential operators provide a natural framework for the theory of \emph{stable}
and \emph{Lorentzian polynomials}, see, e.g., \cite{BBL, BH}. They
act as fundamental operations that preserve the underlying geometric
properties. Motivated by these connections, we define the
\defnb{normalized Schur generating function} \ts by
\begin{equation}\label{eq:nS}
	\mathfrak{S}(\zb) \ := \ \sum_{\lambda \ts\in \ts\cP}  \, \fs_{\lambda}(\xb) \,  \prod_{i\geq 1} \frac{z_i^{\lambda_i}}{\lambda_i!} \ \in \. \La(\xb)[[\zb]]\ts.
\end{equation}

\smallskip

\begin{thm}[{\rm \defn{Schur differential total positivity}\ts}{}]\label{thm:STP}
For  all sequences \. $\lambda \in \rr^{\infty}_{\geq 0}$, \. $\zb \in \rr_{\geq 0}^\infty$ \.
with finite support, and for all distinct \ts $i,j\geq 1$, we have:
\begin{equation}\label{eq:STP}\tag{STP}
	\big(\partial^{\lambda+\eb_i}\mathfrak{S}(\zb)\big) \.  \big( \partial^{\lambda+\eb_j} \mathfrak{S}(\zb) \big) \ \leqslant_{\fs} \  \big(\partial^{\lambda+\eb_i+\eb_j} \mathfrak{S}(\zb)\big) \. \big(\partial^{\lambda}\mathfrak{S}(\zb)\big).
\end{equation}
\end{thm}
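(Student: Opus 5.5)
We plan to reduce \eqref{eq:STP} to the ADS inequality (Theorem~\ref{thm:ADS}) applied to four carefully chosen coefficient functions. Differentiating the normalized generating function termwise gives, for every $\la\in\nn^\infty$ with finite support,
\[
\partial^{\la}\mathfrak{S}(\zb) \ = \ \sum_{\nu\in\cP,\ \nu\supseteq \la} \fs_\nu(\xb)\,\prod_{i\ge1}\frac{z_i^{\nu_i-\la_i}}{(\nu_i-\la_i)!}\,,
\]
where the containment $\nu\supseteq\la$ is coordinatewise, so $\la$ need not be a partition; if $\la$ is real and non-integral, we interpret the derivative via the same formula with $\Gamma$-functions, or simply restrict to integer $\la$, which is the substantive case. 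Thus each of the four factors in \eqref{eq:STP} is of the form $\sum_\nu \fa(\nu)\fs_\nu$ with
\[
\fa(\nu)=w_{\la+\eb_i}(\nu),\quad \fb(\nu)=w_{\la+\eb_j}(\nu),\quad \fc(\nu)=w_{\la+\eb_i+\eb_j}(\nu),\quad \fd(\nu)=w_{\la}(\nu),
\]
where $w_\theta(\nu):=\mathbf 1_{\{\nu\supseteq\theta\}}\prod_k z_k^{\nu_k-\theta_k}/(\nu_k-\theta_k)!$. Since $\zb\ge0$, all four are nonnegative, and the proof reduces to verifying \eqref{eq:AD-cond-first}.

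To verify $\fa(\nu)\fb(\rho)\le \fc(\nu\vee\rho)\fd(\nu\wedge\rho)$, we note that everything factors coordinatewise. Powers of $z_k$ match on both sides, because $\nu_k+\rho_k=\max+\min$ and the shifts $\theta$ sum to $2\la+\eb_i+\eb_j$ on both sides. So it suffices to check, for each coordinate $k$, the one–dimensional inequality for $g_t(n):=\mathbf 1_{\{n\ge t\}}/(n-t)!$. For $k\ne i,j$, it reads $g_{a}(n)g_a(m)\le g_a(\max)g_a(\min)$, which is an equality. For $k=i$ (symmetrically $k=j$), with $a=\la_i$, it reads
\[
g_{a+1}(n)\,g_a(m)\ \le\ g_{a+1}(\max\{n,m\})\,g_a(\min\{n,m\}).
\]
If $n\ge m$, this is an equality. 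If $n<m$, it becomes $g_{a+1}(n)g_a(m)\le g_{a+1}(m)g_a(n)$, i.e.\ log-supermodularity of $(t,n)\mapsto g_t(n)$. This holds because $g_{a+1}(n)/g_a(n)=(n-a)$ is nondecreasing in $n$, with the support indicators also working out: if $n\ge a+1$, then $m>n\ge a+1$.

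The main point of care is this local verification: the mixed shifts $\eb_i,\eb_j$ land on different coordinates and the support conditions must be handled. The argument above shows this goes through because $1/(n-t)!$ is a totally positive kernel in $(t,n)$. Finally, the series are infinite. Each Schur coefficient in the products, however, involves only the finitely many pairs with $|\nu|+|\rho|$ fixed. So we apply Theorem~\ref{thm:ADS} after truncating to partitions of bounded size, or to finitely many variables $z_1,\dots,z_N$, which still satisfy \eqref{eq:AD-cond-first} since truncation by $|\nu|\le D$ is not lattice-closed. For that reason we prefer to compare coefficients degree by degree directly in the full sum, using the finiteness per degree. Schur positivity of the difference then follows coefficientwise.
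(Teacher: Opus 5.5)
Your proposal is correct and is essentially the paper's argument: the paper also deduces \eqref{eq:STP} from the ADS machinery, in the manner of the proof of Theorem~\ref{thm:LPP-global}, with the four functions reweighted by $\prod_k z_k^{\nu_k-\theta_k}/(\nu_k-\theta_k)!$. Your coordinatewise check, via log-supermodularity of $(t,n)\mapsto \mathbf{1}_{\{n\ge t\}}/(n-t)!$ and matching $z$-exponents, supplies exactly the verification of \eqref{eq:AD-cond-first} that the paper omits. Your closing paragraph on truncation is muddled, and truncating at $|\nu|\le D$ would actually break \eqref{eq:AD-cond-first}, because $|\nu\vee\rho|$ can exceed $D$. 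No truncation is needed, however: Theorem~\ref{thm:ADS} has no finite-support hypothesis and each Schur coefficient of the products is a finite sum, so you can apply it directly to the full weight functions, as you ultimately do.
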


\smallskip
Without Schur positivity, the inequality
in \eqref{eq:STP} is known in the literature as
\defng{multivariate total positivity} \ts of order~$2$, see e.g.\ \cite{KR}.
When the inequality is reversed, it is known as the
\defng{Rayleigh property}, see e.g.\ \cite{Bra15}.
The proof of Theorem~\ref{thm:STP} follows the same argument as the
proof of Theorem~\ref{thm:LPP-global},
except that the four functions are modified to keep track of the
weight \. $\prod_{i\geq 1} ({z_i^{\lambda_i}})/{\lambda_i!}$.
We omit the details.

\smallskip

\begin{conj}[{\rm \defn{Schur differential log-concavity}\ts}{}]\label{conj:SLC}
	For  all  \. $\lambda \in \rr^{\infty}_{\geq 0}$, $\zb \in \rr_{\geq 0}^\infty$ \. with finite support and any  $i\geq 1$,
	\begin{equation}\label{eq:SLC}\tag{SLC}
		\big(\partial^{\lambda+2\eb_i}\mathfrak{S}(\zb)\big) \.  \big( \partial^{\lambda} \mathfrak{S}(\zb) \big) \ \leqslant_{\fs} \  \big(\partial^{\lambda+\eb_i} \mathfrak{S}(\zb)\big)^2.
	\end{equation}
\end{conj}

\smallskip

Unfortunately, our approach via the skew ADS inequality (Theorem~\ref{thm:ADS})
no longer applies for certain ranges of parameters.  Nevertheless,
there is a good reason to believe that Conjecture~\ref{conj:SLC} holds (cf.~$\S$\ref{ss:Lor}).
First, without Schur positivity, the inequality holds for  evaluations
at \ts $\xb = \ab$, for all finitely supported \ts $\ab \in \rr_{\geq 0}^\infty$.
Indeed, this follows by combining the KL inequality (Theorem~\ref{thm:HKS})
and the LPP inequality~(Theorem~\ref{thm:LPP}).  Second, relatedly, Conjecture~\ref{conj:SLC}
follows from Conjecture~\ref{conj:SHKS}, the Schur positive version of the KL inequality.

\smallskip

\subsection{Supersymmetric Schur functions}\label{ss:Super}
As before, let \ts $\xb := (x_1, x_2, \ldots)$ \ts and \ts
$\yb := (y_1, y_2, \ldots)$ \ts be two sets of variables.
We say that a symmetric function \ts $p(\xb, \yb)\in \La(\bx)\otimes \La(\by)$ \ts
is \defnb{supersymmetric}, if under the substitution \ts $x_1 = t$,  $y_1 = -t$,
the resulting series is independent of~$t$.  We denote by \ts $\La(\xb/\yb)$ \ts
the ring  of supersymmetric polynomials in~$\xb$ and~$\yb$.  See \cite[$\S$6]{Mac92}
for an overview.

For a partition \ts $\lambda \in \cP$, the \defnb{supersymmetric Schur function} \.
$\fs_{\lambda}(\xb/\yb)$, defined by Berele and Regev~\cite[p.~152]{BR87}, is given by
\begin{equation*}
	\fs_{\lambda}(\xb/\yb) \ := \  \sum_{\mu \subseteq \lambda} \. \fs_{\mu}(\xb) \. \fs_{\lambda'/\mu'}(\yb),
\end{equation*}
Note that we recover the classical Schur functions \ts $\fs_{\lambda}(\xb)$ \ts
by setting \ts $\yb = \mathbf{0}$.

Supersymmetric Schur functions \. $\{\fs_{\lambda}(\xb/\yb)\}$ \. form a basis for the ring \ts $\La(\xb/\yb)$, see~\cite{Ste}.
For elements \ts $f,g \in \La(\xb/\yb)$, we write \ts $f \geqslant_{\oslash} g$ \ts if \ts
$(f-g)$ \ts nonnegative coefficients when expanded in this basis.
The ADS inequality (Theorem~\ref{thm:ADS}) has the following generalization to supersymmetric functions.

\smallskip

\begin{thm}[{\rm \defn{supersymmetric ADS inequality}\ts}{}]\label{thm:ADS-Super}
	Let \. $\fa,\fb,\fc,\fd: \cP \to \rr_{\geq 0}$ \. be functions satisfying \eqref{eq:AD-cond}.
	Then we have:
{\small
	\begin{equation}\label{eq:ADS-Super}
		\bigg( \sum_{\lambda \in \cP} \fa({\lambda}) \fs_{\lambda}(\xb/\yb) \bigg)  	\bigg( \sum_{\lambda \in \cP} \fb({\lambda}) \fs_{\lambda}(\xb/\yb) \bigg)  \ \leqslant_\oslash \ 	\bigg( \sum_{\lambda \in \cP} \fc({\lambda}) \fs_{\lambda}(\xb/\yb) \bigg)
		\bigg( \sum_{\lambda \in \cP} \fd({\lambda}) \fs_{\lambda}(\xb/\yb) \bigg).
	\end{equation}
}
\end{thm}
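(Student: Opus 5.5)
We reduce the supersymmetric statement to the skew ADS inequality (Theorem~\ref{thm:ADS-skew}) in the variables~$\yb$, combined with the ordinary ADS machinery in~$\xb$. The first step is to make positivity in the basis $\{\fs_\lambda(\xb/\yb)\}$ accessible. We use the standard fact that the map $\omega_{\yb}$-twisted coproduct $\Delta\colon \La \to \La(\xb)\otimes\La(\yb)$, $f\mapsto f(\xb/\yb)$, is an injective ring homomorphism sending $\fs_\lambda\mapsto\fs_\lambda(\xb/\yb)$. Its image is $\La(\xb/\yb)$, and $\{\fs_\lambda(\xb/\yb)\}$ is the image of the Schur basis. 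Hence for $f,g\in\La$ we have $\Delta f \leqslant_\oslash \Delta g$ if and only if $f\leqslant_{\fs} g$. Since both sides of \eqref{eq:ADS-Super} are images under $\Delta$ of the corresponding sides of \eqref{eq:ADS}, the theorem follows immediately from Theorem~\ref{thm:ADS}.

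Concretely, the plan is as follows.
\begin{enumerate}
\item Verify that $\fs_\lambda(\xb/\yb)=\sum_\mu \fs_\mu(\xb)\fs_{\lambda'/\mu'}(\yb)$ equals $(1\otimes\omega)\Delta(\fs_\lambda)$, using $\Delta\fs_\lambda=\sum_\mu \fs_\mu\otimes\fs_{\lambda/\mu}$ and $\omega\fs_{\lambda/\mu}=\fs_{\lambda'/\mu'}$.
\item Note that $(1\otimes\omega)\Delta$ is a ring homomorphism, as a composition of ring homomorphisms.
\item Show injectivity by setting $\yb=0$, which recovers $f(\xb)$.
\item Write $A=\sum \fa(\lambda)\fs_\lambda$, and similarly $B,C,D$. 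Then the left- and right-hand sides of \eqref{eq:ADS-Super} are the images of $AB$ and $CD$.
\item By Theorem~\ref{thm:ADS}, $CD-AB=\sum_\nu c_\nu \fs_\nu$ with $c_\nu\ge 0$. Applying the map gives $\sum_\nu c_\nu\fs_\nu(\xb/\yb)$.
\item Since the $\fs_\nu(\xb/\yb)$ form a basis of $\La(\xb/\yb)$ by~\cite{Ste}, this expansion is the unique one, so its coefficients are nonnegative.
\end{enumerate}

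The expected obstacle is only bookkeeping. One must confirm the Berele--Regev definition matches $(1\otimes\omega)\Delta$, including the convention of conjugating both $\lambda$ and $\mu$. One must also handle infinite sums. If $\fa$ has infinite support, the products are formal, but each homogeneous degree involves finitely many $\lambda$. The map preserves total degree in $(\xb,\yb)$, so the argument applies degree by degree, with the same convention on infinite coefficients as in the footnote to Theorem~\ref{thm:ADS-skew}.
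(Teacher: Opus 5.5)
Your proposal is correct and is essentially the paper's argument. The paper expands the defect in the basis $\{\fs_\lambda(\xb/\yb)\}$ and specializes $\yb=\0$, which identifies its coefficients with the Schur coefficients of the \eqref{eq:ADS} defect; this is the same isomorphism $\La\cong\La(\xb/\yb)$ you use, only read in the inverse direction. Your opening reference to the skew ADS inequality is never actually used, since Theorem~\ref{thm:ADS} alone suffices, exactly as in your steps.
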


\smallskip

\begin{proof}
	Write the defect of \eqref{eq:ADS-Super}, i.e.\ the difference of two sides, in the basis \. $\{\fs_{\lambda}(\xb/\yb)\}$ \. as
\[ \sum_{\lambda \in \cP} \. \ff_{\lambda} \cdot \fs_{\lambda}(\xb/\yb)\ts.
\]
	From above, it suffices to show that \ts $\ff_{\lambda}\ge 0$, for all \ts $\la\in \cP$.
	Substituting \ts $\yb \gets \0$ \ts into \eqref{eq:ADS-Super}, gives the inequality
\eqref{eq:ADS}, which
is Schur positive by Theorem~\ref{thm:ADS}.
On the other hand, this is also equal to
		\[ \sum_{\lambda \in \cP} \. \ff_{\lambda} \cdot \fs_{\lambda}(\xb)\ts. \]
Thus, all \ts $\ff_{\lambda}$'s are nonnegative, as desired.
\end{proof}

\medskip
\section{Final remarks and open problems} \label{s:finrem}

\subsection{Correlation inequalities}\label{ss:finrem-more-corr}
From the ocean of other correlation inequalities in probability and
statistical physics, let us single out the \defng{Griffiths inequality} \cite{Gri67}
for the Ising model, and the \defng{van den Berg–Kesten {\rm (BK)} inequality}
\cite{BK85} for events occurring with disjoint evidence. These
inequalities are also proved by induction and have a number of
advanced generalizations, see \cite{KS68,Gin70,GHS70} and \cite{BCR99,Smy13,Rei00},
respectively.
When the inductive approach fails, such as for the monotonicity of percolation
\cite[$\S$8.4]{GPZ25}, establishing the inequality becomes a major challenge.
See also \cite{Lig85} for applications of correlation inequalities
to other problems in statistical physics.

\subsection{Equality conditions}\label{ss:finrem-eq}
There is long ongoing effort to characterize the equality conditions
for more and more general correlation inequalities, from  HK inequality
to the FKG inequality, and all the way to the AD inequality, see
\cite{DKW79,Beck90,MR94,AK95}.  The problem was ultimately resolved in a
forthcoming paper by the first and third authors \cite{CP-ADeq},
which also characterized equality condition of the skew LPP and skew
Okounkov inequalities.

It would be an interesting and difficult challenge to characterize
equality conditions for the ADS inequality, as even the equality conditions
for the RLS inequality remain open.  Note that there is no reason
these equality conditions should have a tractable characterizations,
since many inequalities do not, see \cite{IP22,CP-coinc,CP-AF}.

\subsection{Combinatorial interpretations}\label{ss:finrem-CI}
In a forthcoming paper \cite{PS26}, the last two authors used
a generalized Littlewood--Richardson rule by Nguyen--Pylyavskyy \cite{NP25},
to give a combinatorial interpretation for the defect \ts $\de(\la,\mu,\nu)$ \ts
of Schur multiplicities in \eqref{eq:LPP}:
$$
\de(\la,\mu,\nu) \, := \, c^\la_{\mu\vee\nu,\ts \mu\wedge\nu} \. - \. c^\la_{\mu, \nu}\..
$$
This resolved the open problem in \cite[$\S$7.4]{Pak-OPAC}.

It would be interesting to extend the result to the defect of Schur multiplicities
of \eqref{eq:ADS}.  Note that one cannot use our proof of ADS inequality as a black
box, since it uses \eqref{eq:AD}, among other ingredients.  However, the defect of the
AD inequality \emph{does not} \ts have a combinatorial interpretation, understood
as being in $\SP$, under certain complexity assumptions \cite[Ch.~6]{Gla-thesis}.
In a positive direction, the defect of the HK inequality \eqref{eq:HK} does
have a combinatorial interpretation \cite[Prop.~5.8]{Pak-OPAC}.

\subsection{Representation theory aspects}\label{ss:finrem-Oko}
In \cite{Oko01,Oko03}, Okounkov studied log-concavity of Kostka numbers and
LR coefficients from representation theoretic point of view.  Although his most
ambitious conjecture was disproved in~\cite{CDW07}, the LPP inequality
(Theorem~\ref{thm:LPP}) and the M-convexity for Kostka numbers \cite{HMMS22}
can be viewed as special cases.  One can follow Okounkov's approach to
restate \eqref{eq:ADS} in the language of certain $S_n$-modules being
submodules of others; we avoid the temptation.

Let us also mention a conjecture by Gui \cite[Conj.~5.1]{Gui25},
generalizing the Okounkov inequality \eqref{eq:Oko} to the reduced
(stable) Kronecker coefficients.  By the result in \cite{AS20},
this conjecture can be interpreted as log-concavity for
a certain basis of symmetric functions.

\subsection{Speyer's theorem}\label{ss:finrem-Speyer}
In \cite{Spe26}, Speyer proved a Schur positive inequality conjectured
previously by Lam--Postnikov--Pylyavskyy, see \cite{DP07}.
\defng{Speyer's inequality} \ts extended \eqref{eq:LPP}
in the direction not covered in this paper.
Curiously, \eqref{eq:AD} is a key technical tool in the proof of Speyer's theorem.
It would be interesting to find a skew version of this result.\footnote{After this
paper was written, we learned of a forthcoming paper \cite{LN26} which proves a
skew version of Speyer's inequality.} Even better would be to obtain a
four functions version of his inequality, see below.

\subsection{Skew stable Grothendieck polynomials}\label{ss:finrem-Mih}
It would be interesting to find a skew version of supermodularity of
Grothendieck polynomials (Theorem~\ref{thm:Gro-LPP})
for the definition by Buch~\cite{Buch02}.  As emphasized previously,
the polynomials used in Theorem~\ref{thm:Gro-LPP-skew} are a
simplified variant tailored specifically to our proof,
rather than the conventional definition.
One potential approach to proving this general skew inequality would be to
combine the skew ADS inequality with
combinatorial  formulas for skew stable Grothendieck polynomials
developed by Chan--Pflueger \cite{CP-not-us}.

\subsection{Lorentzian Schur positivity}\label{ss:Lor}
In view of discussion in~$\S$\ref{ss:derivatives}, it would be interesting
to develop Schur positive analogues of stable and Lorentzian polynomials.
This could allow one to translate the wealth of existing intuition and
techniques from the geometry of polynomials directly into the realm of
Schur positivity.
This could lead to many new Schur positive inequalities,
potentially resolving Conjecture~\ref{conj:SLC}.

\subsection{Four functions style discrete convex analysis}\label{ss:finrem-four}
Continuing the discussion above and in~$\S$\ref{ss:HKS}, it would be interesting
to develop a comprehensive study of the discrete convex analysis on distributive
lattices in the style of \eqref{eq:AD}, where convexity conditions such as
log-supermodularity \eqref{eq:lsm} is substituted with four functions conditions
such as \eqref{eq:AD-cond}.  A good roadmap would be four conditions in \cite[Rem.~4.19]{Spe26}.
One can view both \eqref{eq:AD}, \eqref{eq:RLS} and \eqref{eq:KL} as basic result
in this theory, all leading to their Schur positive analogues: \eqref{eq:ADS} in Theorem~\ref{thm:ADS},
\eqref{eq:LSS} in Theorem~\ref{thm:SAAD}, and \eqref{eq:KLS} in Conjecture~\ref{conj:SHKS}.

\vskip.7cm

\section*{Acknowledgements}
We are grateful to Leonardo Mihalcea for introducing us to conjectures in \cite{Mih} that led to Theorem~\ref{thm:Gro-LPP} and
Corollary~\ref{cor:Gro-LC}.
We also thank Nikita Gladkov, June Huh, Tom Hutchcroft, Son Nguyen, Pasha Pylyavskyy,
Colleen Robichaux and Linus Setiabarata for inspiring discussions and helpful remarks.
In preparation of this paper, we extensively consulted with Per Alexandersson's
online catalogue \cite{Ale20}, which we found to be a rich source of
results and helpful references.

The first author (SHC) was partially supported by the NSF grant DMS-2246845,
and would like to thank the National University of Singapore for their
warm hospitality during his sabbatical in the Spring of 2026. 
The second author (HC) was partially supported by the Rutgers University 
SAS Fellowship.  The third author (IP) was partially supported by the 
NSF grant CCF-2302173.

\end{document}